\documentclass{article}
\usepackage[colorinlistoftodos]{todonotes}

\input{insbox}
\usepackage{ifxetex} \ifxetex \usepackage{fontspec} \else
\usepackage[T1]{fontenc} \usepackage[utf8]{inputenc}
\usepackage{lmodern} \fi \usepackage{longtable, multicol}
\usepackage{graphicx} \usepackage{verbatim} \usepackage{enumerate}  \usepackage{dsfont}
\usepackage[colorlinks=true, linkcolor=blue, citecolor=blue, pagebackref=true]{hyperref} 
\usepackage{caption}
\usepackage{enumitem}
\usepackage[normalem]{ulem}
\usepackage{animate}

\usepackage{tikz} 
\usepackage{tikz-layers}
\usepackage{wrapfig} 
\usepackage{mwe}
\usepackage{tkz-graph}
\usetikzlibrary{3d,shapes,snakes,shapes,positioning, angles,calc,patterns,quotes}
\usetikzlibrary{arrows}
\usetikzlibrary{arrows.meta}
\usetikzlibrary{shapes.misc}
\usetikzlibrary{calc,arrows.meta}
\usepackage{rotating}
\usetikzlibrary{decorations.markings}
\tikzset{cross/.style={cross out, draw=black, fill=none, minimum size=2*(#1-\pgflinewidth), inner sep=0pt, outer sep=0pt}, cross/.default={2pt}}
\tikzstyle arrowstyle=[scale=1]
\tikzstyle directed=[postaction={decorate,decoration={markings,
    mark=at position .65 with {\arrow[arrowstyle]{stealth}}}}]
\tikzstyle reverse directed=[postaction={decorate,decoration={markings,
    mark=at position .65 with {\arrowreversed[arrowstyle]{stealth};}}}]

\newcommand{\AxisRotator}[1][rotate=0]{%
    \tikz [x=0.25cm,y=0.60cm,line width=.2ex,-stealth,#1] \draw (0,0) arc (-150:150:1 and 1);%
}

\usepackage{comment}

\usepackage{amsmath, amssymb, amsthm}
\usepackage{thmtools} 
\usepackage{cleveref}
\usepackage{mathtools}
\usepackage[margin=1in]{geometry}     
\usepackage{xcolor}

\usepackage{float}
\usepackage[caption = false]{subfig}

\allowdisplaybreaks

\theoremstyle{plain}
\newtheorem{theorem}{Theorem}[section]
\newtheorem{proposition}[theorem]{Proposition}
\newtheorem{lemma}[theorem]{Lemma}
\newtheorem{corollary}[theorem]{Corollary}

\theoremstyle{remark}
\newtheorem{remark}{Remark}[section]
\newtheorem{remark*}{Remark}

\theoremstyle{definition}
\newtheorem{question}{Question}
\theoremstyle{definition}
\newtheorem{definition}{Definition}
\newtheorem{example}{Example}

\newcommand{\NN}{\mathbb{N}}

\newcommand{\QQ}{\mathbb{Q}}

\newcommand{\RR}{\mathbb{R}}
\newcommand{\PP}{\mathbb{P}}

\newcommand{\ZZ}{\mathbb{Z}}

\newcommand{\calC}{\mathcal C}

\newcommand{\calF}{\mathcal F}

\newcommand{\calH}{\mathcal H}

\newcommand{\calL}{\mathcal L}

\newcommand{\calO}{\mathcal O}
\newcommand{\calP}{\mathcal{P}}

\newcommand{\calS}{\mathcal{S}}

\newcommand{\bzero}{\mathbf{0}}

\newcommand{\bt}{\mathbf{t}}

\newcommand{\bv}{\mathbf{v}}

\newcommand{\HH}{\mathbb{H}}
\newcommand{\Aff}{\mathrm{Aff}}
\newcommand{\Sp}{\mathrm{Sp}}
\newcommand{\Stab}{\mathrm{Stab}}
\newcommand{\cube}{\mathbf{K}}
\newcommand{\SL}{\mathrm{SL}}
\newcommand{\GL}{\mathrm{GL}}
\newcommand{\PSL}{\mathrm{PSL}}
\newcommand{\TT}{\mathbb{T}}
\renewcommand{\P}{\mathrm{P}}
\newcommand{\PGL}{\mathrm{PGL}}
\newcommand{\Id}{\mathrm{Id}}

\DeclarePairedDelimiter{\norm}{\lVert}{\rVert}
\DeclarePairedDelimiter{\abs}{\lvert}{\rvert}
\DeclarePairedDelimiter{\set}{\lbrace}{\rbrace}
\DeclarePairedDelimiter{\floor}{\lfloor}{\rfloor}

\DeclarePairedDelimiter{\parens}{\lparen}{\rparen}
\DeclarePairedDelimiter{\brackets}{\lbrack}{\rbrack}
\DeclarePairedDelimiter{\ideal}{\langle}{\rangle}

\DeclareMathOperator{\len}{length}
\DeclareMathOperator{\slope}{slope}
\DeclareMathOperator{\wid}{width}


\makeatletter

\newenvironment{customtheorem}[1]
  {\count@\c@theorem
   \global\c@theorem#1 %
    \global\advance\c@theorem\m@ne
   \theorem}
  {\endtheorem
   \global\c@theorem\count@}

\makeatother

\title{Straight-line trajectories on the Mucube}
\author{Andre Oliveira\footnote{Wesleyan University, \texttt{aoliveira@wesleyan.edu}} \and Felipe A. Ram\'irez\footnote{Wesleyan University, \texttt{framirez@wesleyan.edu}} \and Chandrika Sadanand\footnote{Bowdoin College, \texttt{c.sadanand@bowdoin.edu}} \and Sunrose T. Shrestha\footnote{Carleton College, \texttt{sshrestha@carleton.edu}}}

\date{}

\begin{document}
\maketitle
\begin{abstract}
  The dynamics of straight line flows on compact half-translation
  surfaces (surfaces formed by gluing Euclidean polygons edge-to-edge
  via translations possibly composed with rotation by $\pi$) has been
  widely studied due to their connections to polygonal billiards and
  Teichm\"uller theory. However, much less is known when the
  underlying surface is non-compact or infinite type. In this paper,
  we consider the straight line flow of the Mucube---an infinite
  $\ZZ^3$-periodic half-translation square-tiled surface---first
  written about by Coxeter and Petrie and more recently studied by
  Athreya--Lee and Guti\'errez-Romo--Lee--S{\'a}nchez. We give a
  geometric description of the flow's periodic and drift orbits in
  terms of the Mucube's rigid symmetries, and we give a complete
  characterization of the set of directions in which the straight line
  flow is periodic on the Mucube---first in terms of a genus one
  quotient and second in terms of an infinitely generated subgroup of
  $\SL_2(\ZZ)$. We use the latter characterization to obtain the Veech
  group (i.e. group of derivatives of affine diffeomorphisms) of the
  Mucube. Finally, we prove density of the sets of periodic and
  ergodic directions.
\end{abstract}
\vfill
\pagebreak

\tableofcontents
\pagebreak

\section{Introduction}\label{sec:intro}

The study of the behavior of straight line trajectories on compact,
finite-type {\em translation surfaces} (flat surfaces built by gluing
finitely many Euclidean polygons edge-to-edge via translations) was
first motivated by the desire to understand billiard trajectories in
rational polygons (polygons with angles that are rational multiples of
$\pi$). Katok--Zemlyakov \cite{katzem} showed that one can relate
billiard trajectories on a rational polygon to straight line
trajectories on a finite translation surface via a process that is now
popularly known as ``unfolding a billiard table.''

\begin{wrapfigure}{r}{0.35\textwidth}
\centering 
\subfloat[A part of the infinite staircase.]{
\includegraphics[scale=0.4]{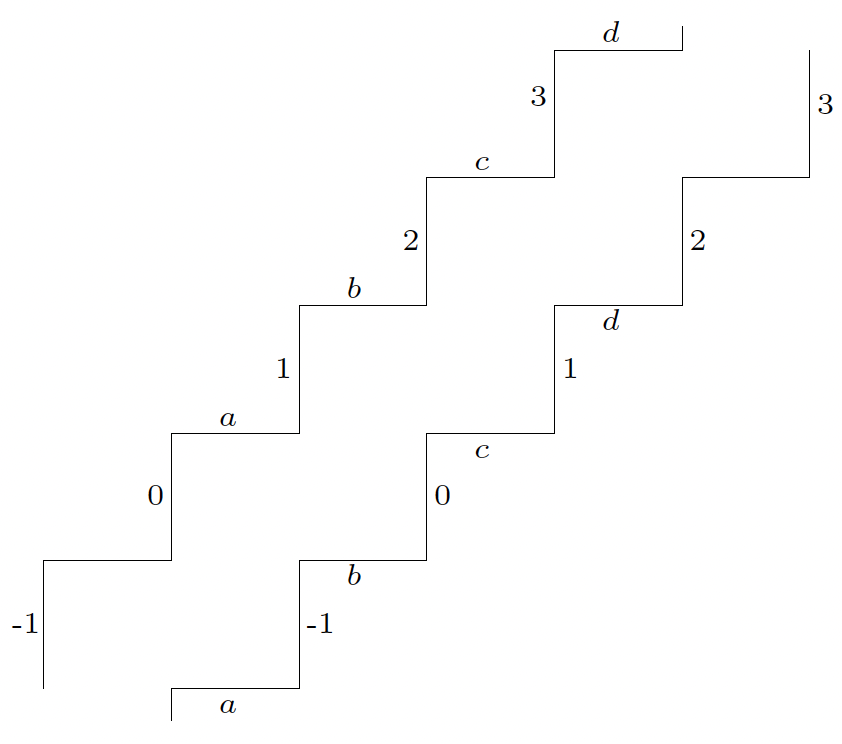}
\label{fig:infinitestaircase}
}

\subfloat[A part of the Ehrenfest wind-tree billiard.]{
\includegraphics[scale=0.5]{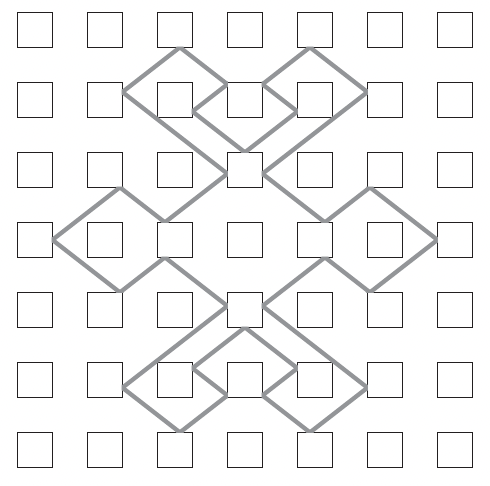}
\label{fig:windtree}
}
\caption{Two infinite settings to study the linear flow. Figures taken from~\cite{hoophubweiss} and \cite{hubleltrou}~respectively.}
\end{wrapfigure}

Since then, straight line trajectories on translation surfaces have
greatly interested many authors, spurring several deep results on the
dynamics and statistics of trajectories on translation surfaces. For
instance, it is known that a certain class of compact translation
surfaces called {\em Veech surfaces} (surfaces whose groups of
derivatives of affine diffeomorphisms form lattices in $\SL_2(\RR)$),
exhibit optimal dynamics for the straight line flow on the surface, in
the sense that in any given direction, either every orbit of the
straight-line flow is periodic or the flow is uniquely ergodic. This
is the celebrated Veech Dichotomy theorem \cite{masur1,
  veech1}. Another instance is the result proven by Masur
  \cite{masurDensity} that every compact translation surface has a dense set
  of directions in which there exists a closed straight line
  trajectory.

By comparison to the compact case, less is known about the dynamics of
the linear flow on general translation surfaces of infinite type. Even
the things that are known, are for specific families of surfaces. The
study of infinite-type translation surfaces was in part motivated by
Valdez \cite{valdez}, who showed that applying the Katok--Zemlyakov
unfolding construction to billiard trajectories on an irrational
polygon leads to trajectories on infinite translation surfaces
homeomorphic to the {\em Loch Ness monster}, a surface with infinite
genus and one end. Hooper \cite{hooper1, hooper2} created a family of
infinite Veech-type surfaces and, in particular, studied the dynamics
of pseudo Anosovs on such surfaces. Subsequently,
Hooper--Hubert--Weiss \cite{hoophubweiss} studied the linear flow on
the {\em infinite staircase} (a $\ZZ$-cover of the square torus, see
Figure \ref{fig:infinitestaircase}), providing the first example of an
infinite surface that satisfies a similar dynamical dichotomy such as
the one satisfied by Veech surfaces in the finite setting. General
$\ZZ$-covers of compact translation surfaces were studied by
Hooper--Weiss \cite{hoopweiss}, who found necessary and sufficient
conditions for recurrence of the straight line flows on such surfaces.

Another example of an infinite surface that has generated interest in
recent years is the translation surface arising from the {\em
  Ehrenfest wind-tree model}. The most basic family of Ehrenfest
wind-tree models are billiards in
$\RR^2$ with rectangular obstacles of varied dimensions placed at the
$\ZZ^2$ lattice points, see Figure \ref{fig:windtree}.  Using the
Katok--Zemlyakov construction and the theory of translation surfaces,
Hubert--Leli\`evre--Troubetzkoy \cite{hubleltrou} showed that the
unfolding of a wind-tree billiard is a $\ZZ^2$-cover of a compact
translation surface. By studying the straight line flow on the
corresponding infinite surface, they showed the existence of
completely periodic directions for the wind-tree billiard
(i.e. directions in which every non-singular billiard trajectory is
periodic) corresponding to a class of rational parameters of the
obstacles and also demonstrated the existence of rational parameters
for which the trajectories escape. Subsequently, M\'alaga
Sabogal--Troubetzkoy \cite{sabtrou1, sabtrou2, sabtrou3, sabtrou4} and
Fr\c aczek--Ulcigrai \cite{FraUlci} studied their minimality and
ergodicity properties while Pardo \cite{pardo} studied the asymptotics
for the number of closed billiard trajectories up to a fixed length.

In this note, we study the straight line flow of the {\em Mucube}, a
$\ZZ^3$-periodic half-translation surface homeomorphic to the Loch
Ness monster. In order to get an intuitive understanding of the
Mucube, consider a solid cube of sidelength two, $\cube$, centered at
the origin (the vertices are at $(\pm 1,\pm 1,\pm 1)$). Embedded in
$\cube$, consider the surface (with boundary) formed in the interior
by drilling out square cylinders through the $1 \times 1$ center
squares in each of the faces of $\cube$. The Mucube is the union of
all $(2\ZZ)^3$ translates of this surface. We will postpone further
descriptions of the Mucube, including the parallel fields and rigid
symmetries of its natural embedding in $\RR^3$, its half-translation
structure,
and its fundamental group to
Sections~\ref{sec:parallel-fields-m},~\ref{sec:rigidSym},~\ref{sec:MucubeXYHalftranslation}
and \ref{sec:MucubeFundamentalGroup}, respectively. For now, see
Figure \ref{fig:MucubeConstruction} for an illustration of this construction of
the Mucube.

Although first introduced in 1937 by Coxeter and Petrie \cite{coxeter}
as an example of an infinite polyhedral surface, the Mucube has
generated significant interest in recent years. In \cite{athlee},
Athreya--Lee study the affine symmetry group of the $\ZZ^3$ quotient
of the Mucube and quadratic asymptotics for counts of cylinders
curves. Additionally, Guit\'errez-Romo--Sanchez--Lee \cite{GutLeeSan}
consider the translation cover of this $\ZZ^3$ quotient and compute
the Zariski closure of its Kontsevich--Zorich monodromy group. Our
results about the Mucube, which we state in the following section are
more dynamically oriented.

\begin{figure}[h!]
\centering
\subfloat[The building block of the Mucube. The surface is shaded in light gray (equivalently, dark gray)]{

\centering
\begin{tikzpicture}[scale=0.7, shift={(0,0,4)}]
\coordinate (O) at (0,0,0);
\coordinate (A) at (0,6,0);
\coordinate (B) at (0,6,6);
\coordinate (C) at (0,0,6);
\coordinate (D) at (6,0,0);
\coordinate (E) at (6,6,0);
\coordinate (F) at (6,6,6);
\coordinate (G) at (6,0,6);

\coordinate (B2) at (1.5,4.5,6);
\coordinate (C2) at (1.5,1.5,6);
\coordinate (F2) at (4.5,4.5,6);
\coordinate (G2) at (4.5,1.5,6);

\coordinate (A3) at (1.5,4.5,4.5);
\coordinate (B3) at (1.5,4.5,6);
\coordinate (E3) at (4.5,4.5,4.5);
\coordinate (F3) at (4.5,4.5,6);

\coordinate (D4) at (4.5,1.5,4.5);
\coordinate (E4) at (4.5,4.5,4.5);
\coordinate (F4) at (4.5,4.5,6);
\coordinate (G4) at (4.5,1.5,6);

\coordinate (B5) at (1.5,6,4.5);
\coordinate (C5) at (1.5,4.5,4.5);
\coordinate (F5) at (4.5,6,4.5);
\coordinate (G5) at (4.5,4.5,4.5);

\coordinate (A6) at (1.5,6,1.5);
\coordinate (B6) at (1.5,6,4.5);
\coordinate (E6) at (4.5,6,1.5);
\coordinate (F6) at (4.5,6,4.5);

\coordinate (D7) at (4.5,4.5,1.5);
\coordinate (E7) at (4.5,6,1.5);
\coordinate (F7) at (4.5,6,4.5);
\coordinate (G7) at (4.5,4.5,4.5);

\coordinate (B8) at (4.5,4.5,4.5);
\coordinate (C8) at (4.5,1.5,4.5);
\coordinate (F8) at (6,4.5,4.5);
\coordinate (G8) at (6,1.5,4.5);

\coordinate (A9) at (4.5,4.5,1.5);
\coordinate (B9) at (4.5,4.5,4.5);
\coordinate (E9) at (6,4.5,1.5);
\coordinate (F9) at (6,4.5,4.5);

\coordinate (D10) at (6,1.5,1.5);
\coordinate (E10) at (6,4.5,1.5);
\coordinate (F10) at (6,4.5,4.5);
\coordinate (G10) at (6,1.5,4.5);

\coordinate (B11) at (0,4.5,4.5);
\coordinate (C11) at (0,1.5,4.5);
\coordinate (F11) at (A3);
\coordinate (G11) at (1.5,1.5,4.5);

\coordinate (A12) at (0,4.5,1.5);
\coordinate (B12) at (B11);
\coordinate (E12) at (1.5,4.5,1.5);
\coordinate (F12) at (F11);

\coordinate (B13) at (G11);
\coordinate (C13) at (1.5,0,4.5);
\coordinate (F13) at (D4);
\coordinate (G13) at (4.5,0,4.5);

\coordinate (D14) at (4.5,0,1.5);
\coordinate (E14) at (4.5,1.5,1.5);
\coordinate (F14) at (F13);
\coordinate (G14) at (G13);

\coordinate (A15) at (1.5,4.5,0);
\coordinate (B15) at (E12);
\coordinate (E15) at (4.5, 4.5, 0);
\coordinate (F15) at (A9);

\coordinate (D16) at (4.5,1.5,0);
\coordinate (E16) at (E15);
\coordinate (F16) at (F15);
\coordinate (G16) at (E14);

\begin{scope}[on glass layer]


\draw[] (C2) -- (B2) -- (F2) -- (G2) -- cycle;

\draw[fill=gray!80] (A3) -- (B3) -- (F3) -- (E3) -- cycle;

\draw[fill=gray!80] (D4) -- (E4) -- (F4) -- (G4) -- cycle;



\draw[fill=gray!80] (C5) -- (B5) -- (F5) -- (G5) -- cycle;

\draw[] (A6) -- (B6) -- (F6) -- (E6) -- cycle;

\draw[fill=gray!80] (D7) -- (E7) -- (F7) -- (G7) -- cycle;


\draw[fill=gray!80] (C8) -- (B8) -- (F8) -- (G8) -- cycle;

\draw[fill=gray!80] (A9) -- (B9) -- (F9) -- (E9) -- cycle;

\end{scope}

\begin{scope}[on above layer]
\draw[fill=gray!30] (G11) -- (C2) -- (B2) -- (F11) -- cycle;

\draw[fill=gray!30] (G11) -- (C2) -- (G2) -- (D4) -- cycle;


\draw[fill=gray!30] (E9) -- (D10) -- (E14) -- (A9) -- cycle;

\draw[fill=gray!30] (A6) -- (E6) -- (D7) -- (B15) -- cycle;

\end{scope}

\begin{scope} 

\draw[fill=gray!80] (B11) -- (C11) -- (G11) -- (F11) -- cycle;

\draw[fill=gray!80] (A12) -- (B12) -- (F12) -- (E12) -- cycle;


\draw[fill=gray!80] (B13) -- (C13) -- (G13) -- (F13) -- cycle;

\draw[fill=gray!80] (D14) -- (E14) -- (F14) -- (G14) -- cycle;

\draw[fill=gray!80] (A15) -- (B15) -- (F15) -- (E15) -- cycle;

\draw[fill=gray!80] (D16) -- (E16) -- (F16) -- (G16) -- cycle;
\end{scope}

\begin{scope}[on behind layer]

\draw[fill=gray!30] (G11) -- (C11) -- (0, 1.5, 1.5) -- (1.5, 1.5, 1.5) -- cycle;

\draw[fill=gray!30] (0, 1.5, 1.5) -- (1.5, 1.5, 1.5) -- (E12) -- (A12) --cycle;


\draw[fill=gray!30](1.5, 1.5, 1.5)-- (G16) -- (D16) -- (1.5, 1.5, 0) -- cycle;

\draw[](1.5, 1.5, 0) -- (A15) -- (E15) -- (D16) --  cycle;

\draw[fill=gray!30] (1.5, 1.5, 1.5) -- (G16) -- (D14) -- (1.5, 0, 1.5) -- cycle;

\draw[fill=gray!30] (1.5, 1.5, 1.5) -- (G11) -- (C13) -- (1.5, 0, 1.5) -- cycle;

\draw[fill=gray!30] (1.5, 1.5, 1.5) -- (1.5, 1.5, 0) -- (1.5, 4.5, 0)  -- (1.5, 4.5, 1.5) -- cycle;

\end{scope}

\begin{scope}[on background layer]
\draw[fill=gray!30] (D10) -- (E10) -- (F10) -- (G10) -- cycle;

\draw[fill=gray!30] (A6) -- (B6) -- (F6) -- (E6) -- cycle;
\end{scope}

\end{tikzpicture} \label{fig:mucubeFD}
}
\hspace{1cm}
\subfloat[A part of the Mucube formed by $\ZZ^3$ translates of the building block]{\includegraphics[scale=0.4]{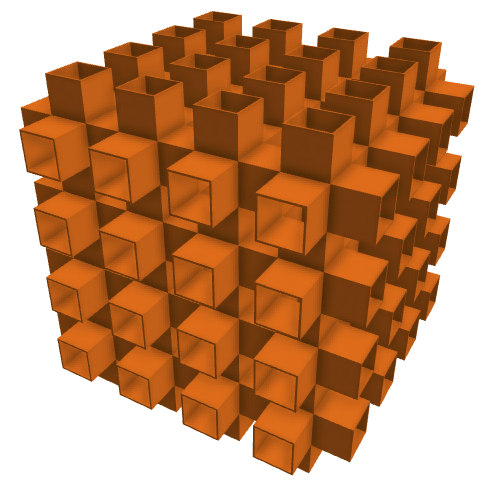}
}
\caption{The Mucube and its fundamental domain.}
\label{fig:MucubeConstruction}
\end{figure}

\subsection{Main results}

Given a point $x\in M$ and a direction $v\in T_x M$, we ask whether
the straight-line trajectory passing through $x$ in direction $v$ is
periodic. We specify $v$ by identifying the square face in which $x$
lies with the unit square $[0,1]^2\subset\RR^2$ and reporting the
components $v=(p,q)$ with respect to that identification. (It is a
consequence of the symmetries of $M$ that the answer to the question
of periodicity does not depend on which of the four natural
identifications we choose.) A moment's reflection reveals that a
trajectory can only be periodic if its slope $q/p$ is
rational. Furthermore, we will see that the question of periodicity
does not depend on the starting point $x$, as long as the forward and
backward trajectories do not encounter a corner
(Theorem~\ref{thm:cylinders}). The question, therefore, becomes: Which
integer vectors $(p,q)\in\ZZ^2$ (with relatively prime $x$ and $y$ coordinates) define periodic directions on $M$?

We give a complete algebraic characterization of the periodic
directions. In order to state it precisely, let
\begin{equation*}
  H := \left\langle\begin{bmatrix} 1 & 4 \\ 0 & 1\end{bmatrix}, \begin{bmatrix} 0 & -1 \\ 1 & 0\end{bmatrix}, \begin{bmatrix} 5 & -8 \\ 2 & -3\end{bmatrix}\right\rangle \leq \SL_2(\ZZ)
\end{equation*}
and 
\begin{equation*}
  \Gamma = \left\langle\begin{bmatrix}1 & 4 \\ 0 & 1\end{bmatrix}, \left\{h\begin{bmatrix} 0 & -1 \\ 1 & 0\end{bmatrix}h^{-1} : h \in H\right\} \right\rangle \leq H.
\end{equation*}
Given a matrix $N \in \SL_2(\ZZ)$, denote by $N \cdot (p,q)$ the
product $N\binom{p}{q}$.

\begin{restatable}[Characterization of periodic directions]{theorem}{veechgrpchar}\label{thm:veechgrpchar}
  Let $\Gamma\leq H \leq \SL_2(\ZZ)$ be as defined above.  A vector
  $(p,q)\in\ZZ^2$ with $\gcd(p,q) = 1$ defines a periodic direction in $M$ if and only if
  there exists $N \in \Gamma$ such that $N\cdot (1,0) = (p,q)$.

\end{restatable}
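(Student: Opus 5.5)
Our strategy is to reduce periodicity on $M$ to a condition on a compact quotient and then identify that condition with an orbit of $\Gamma$.

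\textbf{Reduction to the quotient.} $M$ is a $\ZZ^3$-cover of the compact quotient $X = M/(2\ZZ)^3$. By Theorem~\ref{thm:cylinders}, periodicity in direction $(p,q)$ does not depend on the starting point. $X$ is square-tiled, so every rational direction on $X$ is completely periodic and $X$ decomposes into cylinders. A trajectory on $M$ is periodic exactly when the core curve of its cylinder lifts to a closed curve, that is, when its class has zero image under the holonomy $\rho\colon H_1(X;\ZZ)\to\ZZ^3$ recording displacement in $\RR^3$. Using the parallel fields and rigid symmetries, I expect $\rho$ to factor through a genus one quotient $T$, a torus obtained from the $\ZZ^3$-quotient by its rigid symmetries. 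The upshot should be the first characterization mentioned in the introduction: $(p,q)$ is periodic if and only if the closed geodesic in direction $(p,q)$ on $T$ satisfies an explicit congruence or parity condition, for instance on $(p,q)$ modulo $4$ together with a parity of an associated winding count.

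\textbf{Sufficiency.} Let $P$ be the set of periodic primitive directions; I must show $\Gamma\cdot(1,0)\subseteq P$.
\begin{enumerate}
\item The direction $(1,0)$ is periodic: a horizontal trajectory travels around a tunnel and closes up.
\item Each generator of $H$ is the derivative of an affine automorphism of $M$. The rotation is a rigid symmetry. The parabolic $\begin{bmatrix}1&4\\0&1\end{bmatrix}$ is a product of Dehn multitwists in the horizontal cylinders, which have moduli dividing $4$. For $\begin{bmatrix}5&-8\\2&-3\end{bmatrix}$, I would check that it is parabolic (trace $2$) and fixes the direction $(2,1)$, then realize it as a multitwist in the cylinders of that direction.
\item Affine maps carry periodic directions to periodic directions, so $H$ acts on $P$.
\end{enumerate}
This alone would give $H\cdot(1,0)\subseteq P$. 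That is too strong if $\Gamma\ne H$, so presumably $(2,1)$ is a drift direction and only some of the parabolics actually act on $M$; the previous step has to be adjusted accordingly. The consistent picture is the following. $\begin{bmatrix}1&4\\0&1\end{bmatrix}$ is a genuine affine map of $M$, since horizontal cylinders are periodic. Each conjugate $hRh^{-1}$, with $R = \begin{bmatrix}0&-1\\1&0\end{bmatrix}$, is a half-turn about the direction $h\cdot(1,0)$, and it is realized on $M$ by a rigid or affine symmetry precisely because that direction is periodic. So $\Gamma$ is generated by elements whose action on $M$ one verifies inductively, following the structure of the generators.

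\textbf{Necessity.} This is the main obstacle. Given a periodic primitive $(p,q)$, I must produce $N\in\Gamma$ with $N\cdot(1,0)=(p,q)$.
\begin{enumerate}
\item Run a Euclidean (Farey) descent on $(p,q)$ using the generators of $H$, strictly decreasing $\max(|p|,|q|)$. This should be possible because $H$ is a finite-index congruence-type subgroup whose fundamental domain is a Farey polygon with cusps at the orbits of $(1,0)$ and $(2,1)$.
\item Use the congruence criterion from the torus $T$ to show that a periodic direction never descends into the $(2,1)$ cusp, so the descent ends at $(1,0)$.
\item Show that the word obtained can be rewritten in the generators of $\Gamma$. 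Here $\Gamma$ is the normal-closure-type subgroup generated by the conjugates of $R$ together with $T_4 = \begin{bmatrix}1&4\\0&1\end{bmatrix}$. So I would show that the stabilizer in $H$ of the periodic set is exactly $\Gamma$, by tracking a $\ZZ/2$-valued homomorphism on $H$ that detects passage through $\begin{bmatrix}5&-8\\2&-3\end{bmatrix}$, and checking that $P$ is precisely the orbit of the kernel-type subgroup.
\end{enumerate}
Matching this combinatorial invariant with the holonomy condition on $T$ is the step I expect to require the most care.
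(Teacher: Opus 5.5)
There is a genuine gap. The invariant that actually decides periodicity never appears in your proposal, and the substitutes you suggest cannot work.

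\textbf{The criterion is wrong.} The paper's criterion (Theorem~\ref{thm:characterization}) is that $Y$ decomposes into a single cylinder in the direction $(p,q)$, and the core class $\gamma_{(p,q)}\in H_1(Y;\ZZ)$ satisfies $i_a(\gamma_{(p,q)},\sigma)=0$ \emph{exactly}. Reaching this needs the cylinder and symmetry analysis of Theorems~\ref{thm:cylinders} and~\ref{thm:displacementvector}: three cylinders of $X$ cycled by $\theta_{2\pi/3}$, together with $a+b+c=0$, force the displacement $(a,b,c)$ to vanish. The displacement map $H_1(X;\ZZ)\to\ZZ^3$ does not itself factor through the torus, since its image has rank $3$; only $a+b+c$ factors. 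Because $Y$ is only a half-translation surface, $\gamma_{(p,q)}$ is not a linear function of $(p,q)$: the affine map with derivative $\Theta$ fixes both $\sigma$ and $\eta$. Consequently no congruence condition on $(p,q)$ can be the criterion.
\begin{itemize}
\item The direction $(18,13)$ is periodic (it is the first column of $B^{-1}A\Theta A^{-1}B\in\Gamma$), while $(2,1)$ is not, although they agree mod $4$.
\item More generally, if periodicity were decided modulo $n$, the principal congruence subgroup of level $n$ would preserve the periodic set. The argument in the proof of Theorem~\ref{thm:veechgrpM} shows that the stabilizer of that set in $\SL_2(\ZZ)$ is $\Gamma$ itself, which has infinite index.
\item A parity condition on a winding count fails too. $(5,2)=B\cdot(1,0)$ and $(9,4)=B^2\cdot(1,0)$ are one-cylinder directions of $Y$ with core classes $3\sigma+4\eta$ and $5\sigma+8\eta$, so their intersections with $\sigma$ are $\pm4$ and $\pm8$ (both even). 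Both are drift directions.
\end{itemize}

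\textbf{Both directions of your argument break for the same reason.}

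For sufficiency, $h\Theta h^{-1}$ is an elliptic involution sending $h\cdot(1,0)$ to $h\cdot(0,1)$; it is not a ``half-turn about'' $h\cdot(1,0)$. Nor is it realized on $M$ ``because $h\cdot(1,0)$ is periodic'': for $h=B$ that direction is $(5,2)$, a drift direction, yet $B\Theta B^{-1}\in\Gamma$. The real reason is homological. $\Theta$ acts trivially (up to sign) on $H_1(Y;\ZZ)$, hence so does every $\hat V$-conjugate, and $A$ fixes $\sigma$. So every $N\in\Gamma$ satisfies $N_*\sigma=\pm\sigma$, and the $Y$-criterion then applies to $N\cdot(1,0)$.

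For necessity, a $\ZZ/2$-valued homomorphism has kernel of index at most $2$ in $H$, whereas $\Gamma$ has infinite index there. Concretely, $B^2$ lies in your kernel, but $B^2\cdot(1,0)=(9,4)$ is not periodic.

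\textbf{What you need instead.} Use the representation $\rho$ of $V(Y)$ on $H_1(Y;\ZZ)$ in the basis $\{\sigma,\eta\}$:
\begin{itemize}
\item $\rho(\Theta)=1$, $\rho(A)=\left[\begin{smallmatrix}1&1\\0&1\end{smallmatrix}\right]$ and $\rho(B)=\left[\begin{smallmatrix}3&-1\\4&-1\end{smallmatrix}\right]$;
\item $\rho(A)$ and $\rho(B)$ generate a free group.
\end{itemize}
Then for $N\in\hat V$, the direction $N\cdot(1,0)$ is periodic iff $\rho(N)$ fixes $\sigma$ up to sign, iff $\rho(N)\in\langle\rho(A)\rangle$. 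Freeness lets you move the $\Theta$-conjugates in a word for $N$ to one side and conclude that the remaining word in $A,B$ is a power of $A$, i.e.\ $N\in\Gamma$.

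Your Farey descent supplies only the first half: that $(p,q)\in\hat V\cdot(1,0)$, i.e.\ that $(p,q)$ is a one-cylinder direction of $Y$. The paper reads this off from the cusp structure of $V(Y)$, which has three cusp classes, $\infty$, $1$ and $2$, not the two you describe.
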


\begin{remark}
  It is not immediate from the algebraic characterization in Theorem
  \ref{thm:veechgrpchar} that $(p,q)$ defines a periodic direction in
  $M$ if and only if $(q,p)$ defines a periodic direction. However,
  this fact will be evident from the point of view of the rigid
  symmetries of the Mucube described in Section \ref{sec:rigidSym}.
\end{remark}

\begin{remark}
  Subsequently, the non-periodic directions defined by
  $(p,q) \in \ZZ^2$ will be called \emph{drift periodic directions}
  and the corresponding trajectories will be called \emph{drift
    periodic trajectories}. The reason for this is that in the
  mucube's embedding in $\RR^3$, any such trajectory is sent to itself
  by a translation---it drifts to infinity. (See
  Theorem~\ref{thm:cylinders}.) Drift trajectories project to periodic
  directions on the quotient surface pictured in \ref{fig:mucubeFD}.
\end{remark}

In addition to the characterization of periodic directions, we exhibit
a families of directions whose trajectories have nice properties such
as periodicity or recurrence (see Definition~\ref{def:recurrence}).

\begin{restatable}[Periodic/Recurrent Families]{theorem}{family}\label{thm:family}
  Suppose $\xi \in \RR$ has a finite or infinite continued fraction
  expansion of the form
  \begin{equation*} [4a_0; 4a_1, 4a_2, \dots, 4a_n, \dots ] := 4a_0 +
    \frac{1}{\displaystyle 4a_1 + \frac{1}{\displaystyle 4a_2
        +\frac{1}{\displaystyle \ddots \frac{1}{\displaystyle 4a_n + \frac{1}{\ddots}}}}},
   \end{equation*}
   where $a_0\in \ZZ$ and $a_i \in \ZZ\setminus\set{0}$.
   \begin{enumerate}[label=(\arabic*)]
   \item If the continued fraction
     is finite (i.e., $\xi=[4a_0: 4a_1, \dots, 4a_n]$), then every
     trajectory with slope $\xi$ is periodic. Moreover, the family of
     such trajectories contains members of arbitrarily large diameter
     (say, in the $\norm{\cdot}_\infty$-norm of $\RR^3$)
   
   \item \label{part:caseConePoint} If the continued fraction is infinite and
     $\lim a_na_{n+1} \neq -1$, then every trajectory with slope $\xi$
     emanating from a cone point is recurrent.
     
   \item \label{part:caseAll} If $\liminf a_n>0$ and $\limsup a_n >1$, then every
     cone-point avoiding trajectory with slope $\xi$ is recurrent.  If
     $\liminf\abs{a_n}>1$ and $\lim a_na_{n+1} \neq -4$, then every
     cone-point avoiding trajectory with slope $\xi$ is recurrent.
\end{enumerate}
\end{restatable}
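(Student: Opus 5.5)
The plan is to treat $M$ as a $\ZZ^3$-cover of the compact genus-three quotient $X = M/(2\ZZ)^3$ pictured in Figure~\ref{fig:mucubeFD}. Periodicity and recurrence on $M$ are then governed by a $\ZZ^3$-valued displacement cocycle over the linear flow on $X$. I would use the two matrices $T=\begin{bmatrix}1&4\\0&1\end{bmatrix}$ and $R=\begin{bmatrix}0&-1\\1&0\end{bmatrix}$. The aim is to show that each is the derivative of an affine diffeomorphism of $M$, or at least of $X$, that respects the cocycle up to an automorphism of $\ZZ^3$.

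For $T$, I would decompose $X$ (and $M$) in the horizontal direction into cylinders and check that every cylinder has modulus dividing $4$. Then $T$ acts as a product of Dehn multitwists and fixes every horizontal cylinder core. For $R$, I would use the rotation by $\pi/2$ built from the rigid symmetries of Section~\ref{sec:rigidSym}. A continued fraction $[4a_0;4a_1,\dots,4a_n]$ is the image of $\infty$ (or $0$) under a word in $T^{a_i}$ and $R$. This is the standard identity $\xi=T^{a_0}R\,T^{-a_1}R\cdots$, up to signs. So for finite expansions, $(p,q)=N\cdot(1,0)$ with $N$ a product of $T$'s and conjugates of $R$ by elements of $H$, i.e.\ $N\in\Gamma$. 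Part (1) then follows from Theorem~\ref{thm:veechgrpchar}, together with the fact that the horizontal direction is periodic.

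For the diameter claim in part (1), I would track the $\ZZ^3$-displacement of the images of horizontal cylinder cores. The affine map is a symmetry of $M$, so it is bi-Lipschitz with constant $\norm{N}$. Alternatively, cylinder lengths grow like the denominator $|q|$, and a cylinder in $M$ of length $L$ made of unit squares meeting boundedly many cubes has diameter at least roughly $L/C$. I would use the second approach via a counting argument: each fundamental cube contains only $24$ squares.

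For parts (2) and (3), I would renormalize. Set $\xi_n=[4a_n;4a_{n+1},\dots]$. Applying the inverse affine maps sends a long trajectory segment of slope $\xi$ to a segment of slope $\xi_n$. Recurrence of a trajectory is then reduced to showing that the renormalized segments return close to their start with zero $\ZZ^3$-displacement infinitely often. Concretely, approximating cylinders in the directions of the convergents $p_n/q_n$ are periodic on $M$ by part (1). A trajectory of slope $\xi$ that stays inside the cylinder of direction $p_n/q_n$ for one full period comes back near its starting point in $M$. For a trajectory from a cone point, the segment follows the saddle connections of slope $p_n/q_n$ closely. Closeness for infinitely many $n$ amounts to $|q_n\xi-p_n|\cdot(\text{length})$ being small compared with the cylinder width. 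The condition $a_na_{n+1}\neq -1$ in the limit excludes the degenerate case where consecutive convergents collapse, so that $|q_n\xi-p_n|q_n$ stays bounded away from the width threshold infinitely often.

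For cone-point-avoiding trajectories in part (3), the trajectory need not lie in the thin part of the cylinder. So I would need a uniform lower bound on the proportion of the slope-$p_n/q_n$ cylinder (of width $\asymp 1/q_n$) that a slope-$\xi$ trajectory crosses while fully traversing it. The sign and size hypotheses ($\liminf a_n>0$, $\limsup a_n>1$, or $\liminf|a_n|>1$ with $\lim a_na_{n+1}\neq-4$) control the ratio $q_{n+1}/q_n$ and the position of $\xi$ relative to consecutive convergents. This step, getting the quantitative cylinder-width versus deviation estimate to hold for every starting point rather than for a full-measure set, is where I expect the main difficulty. I would first try a direct estimate using $|\xi-p_n/q_n|=1/(q_n(q_{n+1}+\xi_{n+2}^{-1}q_n))$ with the explicit cylinder widths computed from the $T,R$ action.
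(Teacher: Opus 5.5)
Your periodicity argument for part (1) is fine. It is exactly the alternative route of Remark~\ref{rem:thm1conseq}: the relevant word in $A$ and $\Theta$ lies in $\Gamma$, so Theorem~\ref{thm:veechgrpchar} applies.

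\textbf{The diameter claim fails.} The principle you rely on is that a periodic cylinder of length $L$ has $\RR^3$-diameter at least about $L/C$ because each fundamental cube contains only $24$ squares. This is false. The count of $24$ bounds the number of \emph{distinct} faces visited, but a periodic trajectory can cross the same face arbitrarily often. Concretely, let $\calO$ be a horizontal core curve and consider the slope-$4a$ trajectories $T_V^{a}(\calO)$, where $V$ is vertical. They are Fourey-fraction slopes, so they belong to the family in part (1). They stay inside the at most four vertical cylinders that $\calO$ crosses, which comprise at most $16$ faces. So their diameters are bounded uniformly in $a$, while their lengths $4\sqrt{1+16a^2}$ tend to infinity. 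Your bi-Lipschitz remark does not help either: it only bounds how far images can spread, not how far they must spread. What you need is an argument that the set of distinct faces grows. The paper gets this by alternating $T_V$ and $T_H$: each new trajectory passes through every face of every cylinder crossed by the previous one. It then uses that a bounded region of $\RR^3$ contains finitely many faces.

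\textbf{Parts (2) and (3) are missing their key estimates.} Your concrete plan is the paper's strategy: convergent directions $p_n/q_n$ are periodic, and a trajectory that stays in the slope-$p_n/q_n$ cylinder for a full circuit returns within one width of where it started. The renormalization and $\ZZ^3$-displacement detour is unnecessary. However, the two quantitative ingredients that actually prove these parts are absent.

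First, the cylinders have area $4$ and length $4\sqrt{p_n^2+q_n^2}$, so their width is $1/\sqrt{p_n^2+q_n^2}$. Staying in the cylinder for one circuit from a cone point is therefore equivalent to $q_n\abs{q_n\xi-p_n}<1/4$. To get this infinitely often you need a Hurwitz-type lemma for partial quotients in $4\ZZ\setminus\{0\}$ (Lemma~\ref{lem:hurwitz}). Write $q_n\xi-p_n=(-1)^n/\bigl(q_n(\xi_{n+1}+q_{n-1}/q_n)\bigr)$ and bound $\abs{\xi_{n+1}+q_{n-1}/q_n}$ from below. The extremal case is a tail of partial quotients alternating $4,-4$, which only gives the constant $1/(2\sqrt3)>1/4$. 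Excluding that tail is precisely what $\lim a_na_{n+1}\neq-1$ does; it is not about convergents ``collapsing''. With that tail excluded you get $<1/4$ infinitely often.

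Second, part (3), which you leave open, is handled by extending the trajectory backward to the boundary of the cylinder and requiring it to stay inside for \emph{two} circuits. This needs $q_n\abs{q_n\xi-p_n}<1/8$ infinitely often. In the eventually-positive case this follows from $\abs{q_n\xi-p_n}<1/q_{n+1}\le 1/(4a_{n+1}q_n)$ together with $a_{n+1}\geq 2$ infinitely often. When $\liminf\abs{a_n}>1$ and the tail does not alternate $2,-2$, it follows from the $k=2$ case of the Hurwitz-type lemma.
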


\begin{remark}
  Part 2 applies when the sequence $(a_n)$ does not have a tail of
  alternating $1$s and $-1$s. Part 3 applies when either $(a_n)$ is
  eventually-positive and does not end in a tail of $1$s, or when
  $(\abs{a_n})$ is eventually $\geq 2$ and $(a_n)$ does not have a
  tail of alternating $2$s and $-2$s. Together, these conditions
  account for all but countably many possibilities. 
  
\end{remark}

Finally, as a consequence of the characterization of periodic slopes,
we are also able to understand the affine diffeomorhisms of the
Mucube. Given a half-translation surface, $S$, there is a well-defined
derivative map
$$D: \Aff(S) \rightarrow \PGL_2(\RR)$$
where $\Aff(S)$ is the group of affine diffeomorphisms of $S$. The
image $D(\Aff(S))$ is called the \emph{Veech group} of
$S$. Theorem~\ref{thm:veechgrpchar} yields the Veech group of $M$ as a
consequence.  For notational convenience, given a group
$G \leq \GL_2(\RR)$, we will denote by $\P G$ its projectivized image
in $\PGL_2(\RR)$.

\begin{restatable}[Veech group of $M$]{theorem}{veechgrpM}\label{thm:veechgrpM}
  The group $\P\Gamma \leq \PSL_2(\ZZ)$ is the Veech group of $M$.
\end{restatable}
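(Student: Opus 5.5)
We prove Theorem~\ref{thm:veechgrpM} by two inclusions, using Theorem~\ref{thm:veechgrpchar} as the bridge between affine maps and periodic directions. The Mucube $M$ is a (half-translation) square-tiled surface covering the genus one quotient. By the usual argument, cone points of angle $3\pi$ must go to cone points, so any affine diffeomorphism induces an affine map of the lattice of saddle connections. Hence its derivative lies in $\P\SL_2(\ZZ)$ up to sign; we first justify that the Veech group sits inside $\PSL_2(\ZZ)$ (determinant $\pm1$, and orientation reversal is excluded or absorbed by the projective quotient).

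\emph{Inclusion $\P\Gamma \subseteq$ Veech group.} It suffices to realize each generator of $\Gamma$ as the derivative of an affine diffeomorphism of $M$. For $\begin{bmatrix}1&4\\0&1\end{bmatrix}$, note that $(1,0)$ is a periodic direction: horizontal trajectories close up. By Theorem~\ref{thm:cylinders}, $M$ decomposes into horizontal cylinders, each of modulus $1/4$ (circumference $4$, height $1$), as seen from the structure of the building block. Twisting each cylinder once gives an affine multitwist with this derivative. For the conjugates $hRh^{-1}$, where $R$ is rotation by $\pi/2$ and $h\in H$, we use the rigid symmetries of Section~\ref{sec:rigidSym}. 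A rotation of $\RR^3$ about a coordinate axis through a suitable point preserves $M$ and acts on it with derivative $R$. It then remains to see that conjugating by $h$ is realized geometrically. Since $R$ itself may not be a derivative on $M$ but only on a suitable lift, I would instead argue directionally: $h\cdot(1,0)$ and $h\cdot(0,1)$ both lie in $\Gamma\cdot(1,0)$. By Theorem~\ref{thm:veechgrpchar} the corresponding directions are periodic, with cylinder decompositions of equal moduli. Composing the multitwists in those two directions then produces an element with derivative $hRh^{-1}$, mimicking $R = T_1 T_2^{-1} T_1$-type relations.

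\emph{Inclusion Veech group $\subseteq \P\Gamma$.} Let $\phi\in\Aff(M)$ with $D\phi = N\in\SL_2(\ZZ)$. Then $\phi$ maps the horizontal periodic direction to the direction $N\cdot(1,0)$, and periodicity is preserved by affine diffeomorphisms. So by Theorem~\ref{thm:veechgrpchar} there exists $g\in\Gamma$ with $g\cdot(1,0) = N\cdot(1,0)$. Since $\Gamma$ is already realized, $g^{-1}N$ is also in the Veech group and fixes $(1,0)$, so $g^{-1}N = \pm\begin{bmatrix}1&k\\0&1\end{bmatrix}$. An affine map with parabolic derivative fixing the horizontal direction preserves the horizontal cylinder decomposition. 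Because all cylinders have modulus $1/4$, it must act by twisting them, which forces $4\mid k$. Hence $N\in \pm\Gamma$.

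The main obstacle is the first inclusion, specifically showing that the conjugates $hRh^{-1}$ are genuinely derivatives of affine diffeomorphisms of $M$. This step requires knowing that the periodic directions $h\cdot(1,0)$ carry cylinder decompositions with uniform moduli that fit together as in the horizontal case. I would try first to get this from the rigid symmetries together with the multitwist in direction $h\cdot(1,0)$, transported by an element of $\Gamma$ already constructed, and proceed inductively on word length in $H$.
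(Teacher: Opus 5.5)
Your two-inclusion structure, and your reduction of an arbitrary element of $V(M)$ to a parabolic fixing $(1,0)$, match the paper. The realization of the generators $h\Theta h^{-1}$, however, fails as proposed, and the idea that makes it work is missing. There are three problems.

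First, the quarter-turns in $G$ fix every parallel field (Lemma~\ref{lem:parallelism}(3)). They therefore act on $M$ with derivative $\pm\Id$, not $\Theta$; they are the quarter-translations along cylinders from Theorem~\ref{thm:cylinders}.

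Second, $h\cdot(1,0)$ is not periodic for general $h\in H$. For example, $B\cdot(1,0)=(5,2)$ is a drift direction: the paper notes this in its discussion of $X$, and $B\cdot\sigma=3\sigma+4\eta$ has nonzero intersection with $\sigma$.

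Third, and decisively, no composition of multitwists and elements of $G$ can have derivative $h\Theta h^{-1}$. By Theorem~\ref{thm:cylinders}, cylinders exist only in periodic directions $(p,q)$, where each has area $4$ and circumference $4\sqrt{p^2+q^2}$. The multitwist therefore has derivative $I+4\begin{bmatrix}-pq&p^2\\-q^2&pq\end{bmatrix}\equiv I\pmod 4$. All such products lie in $\{\pm I\}$ times the kernel of $\SL_2(\ZZ)\to\SL_2(\ZZ/4\ZZ)$, whose image in $\PSL_2(\ZZ)$ is torsion-free. But $h\Theta h^{-1}$ has order $2$ in $\PSL_2(\ZZ)$.

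The missing idea, used in Corollary~\ref{cor:veechgroup}, is to build the maps on the compact quotient and lift them. Since $\P\Gamma\le V(X)=\P H$, each $N\in\Gamma$ equals $Df$ for some $f\in\Aff(X)$. By Proposition~\ref{prop:HWlift}, $f$ lifts to $M$ as soon as $f_*$ preserves $\ker\iota_*=\pi_1(M)$. By Proposition~\ref{prop:genforfundgroupM}, it suffices to test loops freely homotopic to horizontal or vertical core curves. Then $f$ carries these to core curves in directions $N\cdot(1,0)$ and $N\cdot(0,1)=N\Theta\cdot(1,0)$, which are periodic by the ($\Leftarrow$) half of Theorem~\ref{thm:veechgrpchar}, so their lifts close up.

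There is a second, smaller gap in the reverse inclusion: the assertion that modulus $1/4$ forces $4\mid k$ does not follow. Every vertex on the boundary of a horizontal cylinder is a cone point, so matching cone points on the two boundary components only gives $k\in\ZZ$. A map with derivative $\begin{bmatrix}1&j\\0&1\end{bmatrix}$, $j\in\{1,2,3\}$, that permutes and shifts the infinitely many horizontal cylinders can only be excluded by using how the cylinders are glued globally.

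The paper extracts this from periodicity again. Conjugating by $\Theta\in\Gamma\subseteq V(M)$ turns $\begin{bmatrix}1&k\\0&1\end{bmatrix}$ into $\begin{bmatrix}1&0\\-k&1\end{bmatrix}$; equivalently, the periodic vertical direction is sent to $(k,1)$. So $(1,-k)$ must be periodic. Lemma~\ref{lem:1n}, which combines Corollary~\ref{cor:oddodd}, the Fourey slopes, and the fact that $(1,2)$ is a drift direction, then gives $4\mid k$.
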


\begin{remark}
  Athreya--Lee \cite{athlee} show that $H$, as defined above, is the
  Veech group\footnote{Some authors define the Veech group to be a
    subgroup of $\GL_2(\RR)$ while others take the projectivized image
    in $\PGL_2(\RR)$.} of the translation cover of the genus 3
  quotient of the Mucube by $\ZZ^3$. Since $\Gamma$ is infinite index
  in $H$, Theorem \ref{thm:veechgrpM} shows, in particular, that there
  are many elements of the Veech group of the genus 3 quotient that do
  not lift to the Veech group of $M$.
\end{remark}

Additionally, we show that the set of periodic directions is dense in
$S^1$.

\begin{restatable}[Density of periodic directions]{theorem}{densityofperiodics}\label{thm:density}
  The set of periodic directions of the Mucube is dense in the set of
  all directions.
\end{restatable}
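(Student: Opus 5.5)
We will deduce the density of periodic directions from Theorem~\ref{thm:family}(1). That result says every slope $\xi$ with a finite continued fraction expansion $[4a_0;4a_1,\dots,4a_n]$ (with $a_0\in\ZZ$ and $a_i\in\ZZ\setminus\{0\}$) is a periodic direction. It therefore suffices to show that such slopes are dense in $\RR\cup\{\infty\}$, since slopes parametrize directions in each square up to the $\pm$ ambiguity of the half-translation structure. Density in slope space pulls back to density in $S^1$.

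The key step is as follows. Given a real $x$ and $\epsilon>0$, we construct a finite expansion of the allowed form within $\epsilon$ of $x$. Since negative partial quotients are allowed, the natural tool is a nearest-multiple-of-$4$ continued fraction algorithm. Set $x_0=x$ and choose $4a_0$ to be the multiple of $4$ nearest $x_0$, so that $|x_0-4a_0|\le 2$. If the remainder is nonzero, write $x_1=1/(x_0-4a_0)$, which satisfies $|x_1|\ge 1/2$, and iterate. The difficulty is that $|x_1|$ may be small, say $1/2\le|x_1|<2$, in which case the nearest multiple of $4$ is $0$, which is forbidden.

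To handle this, we use the structure from Theorem~\ref{thm:veechgrpchar}: periodic directions form the $\Gamma$-orbit of $(1,0)$, and $\Gamma\ni T^4=\begin{bmatrix}1&4\\0&1\end{bmatrix}$ as well as conjugates of $S=\begin{bmatrix}0&-1\\1&0\end{bmatrix}$. Indeed $T^{4a_0}ST^{4a_1}S\cdots$ realizes the continued fraction, and it always lies in $\Gamma$ because $S=\mathrm{Id}\,S\,\mathrm{Id}^{-1}$. Its Möbius action maps $\infty$ to the finite continued fraction. Hence the set $P$ of periodic slopes contains $\Gamma\cdot\infty$, and $P$ is invariant under $\Gamma$.

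The density argument then runs as follows. The set $P$ contains $4\ZZ$ and is invariant under $z\mapsto -1/z$ and $z\mapsto z+4$. Because $P\ni 4k$, it contains $-1/(4k)$ and hence accumulates at $0$. Translating by $4\ZZ$, it accumulates at every point of $4\ZZ$. Applying $S$ again, it accumulates at every $-1/(4k+\text{small})$. The points of $P$ thus form a closed-under-limits picture invariant under the group $G=\langle T^4,S\rangle$. The closure $\overline P$ is a nonempty closed $G$-invariant subset of $\RR\cup\{\infty\}$. Now $G$ is a non-elementary Fuchsian group: it is the free product $\ZZ/2*\ZZ$ of index-infinity type, but it contains the hyperbolic element $T^4S$ with trace $4$. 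Any closed invariant set therefore contains the limit set $\Lambda(G)$.

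The main obstacle is that $\langle T^4,S\rangle$ is \emph{not} a lattice, since its limit set is a Cantor set rather than the whole circle. So we cannot stop here; we need the extra generators of $\Gamma$, namely the conjugates $hSh^{-1}$ with $h\in H$, and in particular $h=\begin{bmatrix}5&-8\\2&-3\end{bmatrix}$. We will show that $\Gamma$ has full limit set $\RR\cup\{\infty\}$. To do this, we argue that $\Gamma$ contains the normal closure of $S$ in $H$, which is normal in $H$. A nontrivial normal subgroup of the non-elementary group $H$ has the same limit set as $H$. The group $H$ is finite index in $\SL_2(\ZZ)$ (this is Athreya--Lee's lattice Veech group), so its limit set is the full circle. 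Consequently $\Lambda(\Gamma)=\RR\cup\{\infty\}$.

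Finally, $P\supseteq\Gamma\cdot\infty$, and the orbit of any point under a non-elementary group is dense in its limit set. Hence $P$ is dense, which proves Theorem~\ref{thm:density}. The point needing most care is verifying that $\Gamma$ contains a nontrivial normal subgroup of $H$ and that normal subgroups share limit sets. Both are standard facts, and the first follows directly from the definition of $\Gamma$ as generated by all $H$-conjugates of $S$.
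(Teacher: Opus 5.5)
Your final argument is correct and is essentially the paper's proof. You get $\Lambda(\Gamma)=\RR\cup\{\infty\}$ exactly as in Proposition~\ref{prop:gammaproperties}, because $\Gamma$ contains the normal closure of $S$ in the finite-index group $H$. You then conclude by minimality of the limit set that $\Gamma\cdot\infty$ is dense; the paper instead phrases this last step through density of the cusps of $\Gamma$ and the limits $N^m\cdot(1,0)\to\xi$ for a parabolic $N\in\Gamma$ fixing $\xi$.

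You should simply delete the opening reduction to Theorem~\ref{thm:family}(1) and the detour through $\langle T^4,S\rangle$. The Fourey fractions together with $\infty$ are exactly the orbit $\langle T^4,S\rangle\cdot\infty$, whose closure is the Cantor limit set of that Hecke group (it misses, e.g., a neighborhood of $1$), so that route could never give density.
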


The Mucube is a Hooper--Thurston--Veech surface, see
\cite{Hooper-Thurston-Veech}. Together with Theorem~\ref{thm:density}
and a recent computation of Hausdorff dimension of limit sets of Hecke
triangle groups by Fedosova \cite{Fedosova}, we obtain the following
corollary.

\begin{restatable}[Density of ergodic directions]{corollary}
  {ergodicdirections}\label{cor:ergodic directions}
  The Lebesgue measure is ergodic for the straight line flow in a
  dense set of directions with Hausdorff dimension greater than
  $0.68$.
\end{restatable}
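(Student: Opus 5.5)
We derive the corollary from Theorem~\ref{thm:density} together with the Hooper--Thurston--Veech structure of $M$. The first step is to realize $M$ as a Hooper--Thurston--Veech surface, following \cite{Hooper-Thurston-Veech}. This means exhibiting two transverse multicurves (horizontal and vertical) made of cylinders of constant modulus. In that case the group generated by the two multitwists acts on $M$ by affine diffeomorphisms, and its derivative image is a Hecke-type triangle group $G_\lambda=\left\langle \begin{bmatrix}1&\lambda\\0&1\end{bmatrix},\begin{bmatrix}1&0\\-\lambda&1\end{bmatrix}\right\rangle$. The value of $\lambda$ is determined by the Perron--Frobenius eigenvalue of the (infinite) intersection graph; since the Mucube's square structure is $\ZZ^3$-periodic, we expect $\lambda=2$ or a related value. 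We will compute the cylinder moduli explicitly using the squares of $M$ and identify this subgroup inside $\P\Gamma$, which is compatible with Theorem~\ref{thm:veechgrpM}.

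The second step invokes the ergodicity criterion of Hooper--Thurston--Veech surfaces from \cite{Hooper-Thurston-Veech}. For such surfaces, Lebesgue measure is ergodic for the straight line flow in every direction that is a radial limit point of the Hecke group's action on $\partial\HH^2=\RR\cup\{\infty\}$. Conical limit points are those reached by orbit sequences that stay within bounded distance of a geodesic ray. The limit set of an infinite-covolume (non-lattice) Hecke group $G_\lambda$ with $\lambda\geq 2$ is a Cantor set. Fedosova~\cite{Fedosova} gives numerical lower bounds for its Hausdorff dimension, and we expect the bound $>0.68$ for our $\lambda$. Since the parabolic fixed points are countable, they carry no dimension, so the radial limit set has the same Hausdorff dimension (Bishop--Jones). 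This gives a set of ergodic directions with Hausdorff dimension greater than $0.68$.

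The third step is density. The limit set of $G_\lambda$ alone is not dense in $S^1$, so we take as our set of ergodic directions the union of its images under the full Veech group, namely the directions $N\cdot\theta$ with $N\in\Gamma$ and $\theta$ a radial limit point. Affine diffeomorphisms conjugate the flow in direction $\theta$ to the flow in direction $DN\cdot\theta$ and preserve Lebesgue measure, so these directions are also ergodic. The union has the same Hausdorff dimension, because Möbius maps are bi-Lipschitz on compact sets. For density, fix any periodic direction $u$. By Theorem~\ref{thm:veechgrpchar} it has the form $N\cdot(1,0)$, and $(1,0)$ is a parabolic fixed point of $G_\lambda$, hence lies in its limit set and is accumulated by radial limit points. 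So every periodic direction is accumulated by ergodic directions, and Theorem~\ref{thm:density} then yields density.

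The main obstacle is the first step. We must verify that the moduli are genuinely constant across the infinitely many cylinders, and correctly identify $\lambda$ so that the value in Fedosova's table is the one exceeding $0.68$. We will do this by direct inspection of the horizontal and vertical cylinder decompositions of the fundamental block in Figure~\ref{fig:mucubeFD}.
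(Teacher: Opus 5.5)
Your outline is the paper's argument: $M$ is a Hooper--Thurston--Veech surface, and Hooper's results (Theorem G.3 and Corollary G.4 of \cite{Hooper-Thurston-Veech}) give ergodicity of Lebesgue measure at the non-parabolic limit points of the multitwist group. Fedosova's computation then gives the dimension bound, and density comes from moving this set by the Veech group and applying Theorem~\ref{thm:density} at the cusp $(1,0)$.

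The one real correction is your expected value of $\lambda$: it is $4$, not $2$. Every horizontal and vertical cylinder of $M$ has circumference $4$ and height $1$. Each cylinder contains four squares, and the constant function is the relevant eigenfunction. So the two multitwists have derivatives $A=\begin{bmatrix}1&4\\0&1\end{bmatrix}$ and $\Theta A\Theta^{-1}=\begin{bmatrix}1&0\\-4&1\end{bmatrix}$, both of which lie in $\Gamma$. Lemma~\ref{lem:1n} in fact rules out $\lambda=2$, since $\begin{bmatrix}1&0\\2&1\end{bmatrix}$ would carry the periodic direction $(1,0)$ to the non-periodic direction $(1,2)$. Note also that at $\lambda=2$ the group is a lattice, so your Cantor-set claim requires $\lambda>2$, not $\lambda\geq 2$.

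With $\lambda=4$, the link to Fedosova goes as follows. Her computation concerns the Hecke triangle group $\langle A,\Theta\rangle$, and $\langle A,\Theta A\Theta^{-1}\rangle$ is normal of index two in it. So the two limit sets coincide (Proposition~\ref{part:normallimitset}), which is exactly how the paper obtains the bound $0.68$.

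Finally, ergodicity of Lebesgue measure in these directions is not a general property of all Hooper--Thurston--Veech surfaces. For $M$ it comes from Hooper's results for $\ZZ^d$-periodic surfaces, where Lebesgue measure is the Maharam measure of the trivial character. You should cite those results rather than a blanket criterion.
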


\subsection{Analogous Results in Other Settings}

The majority of our main theorems about the Mucube pertain to
characterizing the periodic trajectories and periodic directions for the linear flow. We note that to date, approaches individualized to small families of surfaces have been required. Several other papers exemplify this. We have already mentioned the wind-tree model, and we share other notable examples below.

In 2017, Davis--Dods--Traub--Yang \cite{DDTY17} studied the existence
of periodic linear trajectories on platonic solids that start and end on the same vertex
without passing through any other vertices (also known as \emph{closed
  saddle connections}), proving non-existence of
such trajectories for the tetrahedron and the cube. Fuchs \cite{Fuchs}
proved that these trajectories exist neither on the octahedron nor the
icosahedron and conjectured their existence on the
dodecahedron. Indeed, Athreya--Aulicino \cite{AthAul} constructed a
closed saddle connection on the dodecahedron.  Subsequently,
Athreya--Aulicino--Hooper \cite{AthAulHoop} found 31 equivalence
classes of closed saddle connections on the dodecahedron, using the
affine diffeomorphism group of the translation cover of the
dodecahedron.

\begin{figure}[h!]
    \centering
    \includegraphics[scale=0.3]{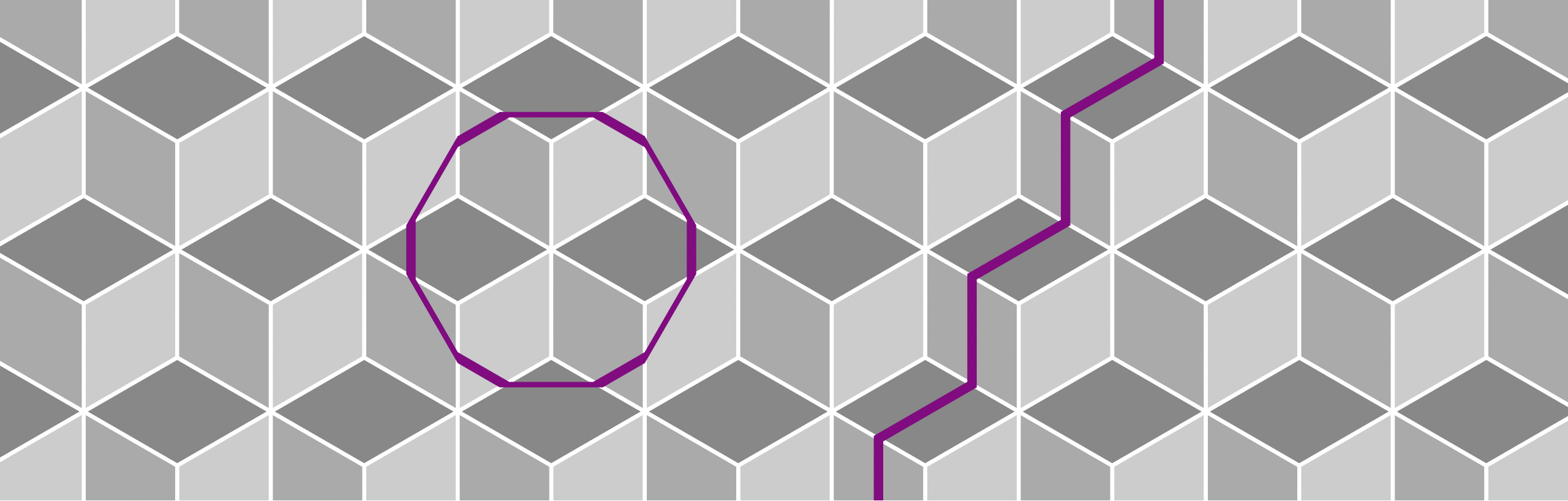}
    \caption{A part of the Necker Cube surface with periodic and drifting trajectories. Figure taken from \cite{HoopJav}.}
    \label{fig:neckercube}
\end{figure}

For the infinite staircase (see Figure~\ref{fig:infinitestaircase}), Hooper--Hubert--Weiss \cite{hoophubweiss}
proved that a direction with slope $\alpha \in \RR$ for the linear
flow is \emph{completely periodic} (i.e. the surface completely
decomposes into cylinders) if and only if
$\alpha = \frac{p}{q} \in \QQ$ with $\gcd(p,q)=1$ and one of $p$ or
$q$ is even. For the infinite translation surface obtained from
unfolding the standard Ehrenfest wind-tree billiard (with
$\frac{1}{2} \times \frac{1}{2}$ square obstacles),
Hubert--Leli{\`e}vre-Troubetzkoy \cite{hubleltrou} proved that a
direction with slope $\alpha \in \RR$ for the linear flow is
completely periodic if and only if $\alpha = \frac{p}{q} \in \QQ$ for
two odd integers $p$ and $q$. More recently, Hooper--Javornik
\cite{HoopJav} studied the \emph{Necker cube surface}, a periodic
surface built from unit squares in 3-space that are arranged so that
the squares are parallel to the coordinate planes in the pattern
pictured in Figure \ref{fig:neckercube}. They proved that a direction
with slope $\alpha \in \RR$ for the linear flow is completely periodic
if and only if $\alpha = \frac{p}{q} \in \QQ$ for two odd integers $p$
and $q$. Likewise, in forthcoming work, Fairchild--Lee--Shrestha study
the linear flow on the Mutetrahedron, another triply periodic surface
embedded in $\RR^3$.

\subsection{Further remarks/questions on the Mucube}

Our main results concern periodicity of the linear flow. However, we
have more questions, particularly concerning the non-periodic
directions of the linear flow.

We start with some remarks that dispose of a few of these questions
easily. The first of these concerns unboundedness of non-periodic
orbits.

\begin{remark}[Bounded orbits are periodic]\label{rem:boundedorbits}
  In the Mucube, every bounded linear orbit is periodic. To see this
  suppose $\calO$ is a bounded orbit. Then it is contained in some
  connected finite union $U$ of translated copies of the fundamental
  domain, and furthermore we may suppose that $U$ is large enough that
  $\calO$ is not dense in $U$. Now, by identifying opposite open ends
  of $U$, we obtain a compact surface $U/\sim$ tiled by squares. Such
  surfaces are known to be a Veech surfaces. Since $\calO$ is a
  non-dense orbit in $U/\sim$, the Veech dichotomy implies that it
  orbit must be periodic.
\end{remark}

Since non-periodic orbits are necessarily unbounded, it is a natural
next question to ask whether non-periodic orbits can be dense.

\begin{remark}[Existence of dense trajectories]
  It follows from density of periodic directions and machinery
  developed in \cite{Hooper-Thurston-Veech} that there is a dense set
  of directions with Hausdorff dimension at least $0.68$ in which the
  straight-line flow is ergodic with respect to Lebesgue (see
  Corollary~\ref{cor:ergodic directions}). Trajectories in these
  directions are dense. Furthermore, we know that the straight-line
  flow is not uniquely ergodic in these directions. See Theorem G.3
  from \cite{Hooper-Thurston-Veech}.
\end{remark}

Theorem~\ref{thm:density} and Corollary~\ref{cor:ergodic directions}
give existence of dense sets of periodic and ergodic directions
respectively, but they do not illuminate the behavior of a typical
trajectory in a typical direction. We pose this as a question.

\begin{question}\label{ques:typical}
    What is the typical behavior of a straight-line trajectory?
\end{question}

We note that Avila--Hubert \cite{avihub} and Fr\k{a}czek--Hubert
\cite{frahub} answer Question~\ref{ques:typical} for windtree
billiards (and generalized versions) by showing that in almost every
direction almost every trajectory is recurrent. Also for windtree
billiards, Delecroix \cite{del} give explicit examples of divergent trajectories and
subsequently Delecroix-Hubert-Leli\`evre~\cite{delhublel} compute
diffusion rates. It is natural to ask for such results in the context
of the Mucube as well:

\begin{question}
  Are there explicit examples of divergent, non-drift trajectories? What is the diffusion rate for the Mucube?
\end{question}

Relatedly, one also wonders what behavior non-dense, non-periodic
trajectories exhibit. For instance:

\begin{question}
  If a non-periodic trajectory is not dense, is it also nowhere dense?
\end{question}

Another question that has drawn recent attention is that of illumination. Given points $x$ and $y$ on a translation surface $S$, we say that $y$ is \emph{illuminated by $x$} if there exists a
straight-line trajectory from $x$ to $y$. Leli\`evre--Monteil--Weiss \cite{lelmontweiss} proved that for a
compact translation surface $S$ and any point $x \in S$, the set of
points which are not illuminated by $x$ is finite. Any point not illuminated by $x$ has infinitely many lifts to any periodic cover of $S$, and those lifts are non-illuminated by any lift of $x$. In Remark~\ref{rem:nonillumination} we give examples of points $x\in M$ that fail to illuminate infinitely many points of the Mucube.

\subsection{Structure}

We divide this paper into two parts. In
Part~\ref{part:geometricCharacterization}, we consider the natural
embedding of the Mucube in $\RR^3$ to obtain a geometric
characterization of periodic directions on the Mucube.  We start by by precisely expressing $M$ as a subset of $\RR^3$ in  Section~\ref{sec:subset} and establishing the notion of a parallel field on $M$ in Section~\ref{sec:parallel-fields-m}. We then proceed using the group of rigid symmetries of $\RR^3$ that preserve the
Mucube (see Section~\ref{sec:rigidSym}) to show that in a given
rational direction, the Mucube decomposes completely into isometric
cylinders or strips, and we obtain geometric information about these
cylinders/strips (see Theorem~\ref{thm:cylinders} in
Section~\ref{sec:cylinderDecomp}). Next, working towards a
characterization of periodic directions, we introduce two finite
quotient surfaces $X$ and $Y$ (Sections~\ref{sec:descentX} and
\ref{sec:descentToY}) and show that the periodic directions on the
Mucube are characterized by certain geometric conditions on their
images in $X$ and $Y$  (see Theorems~\ref{thm:displacementvector} and~\ref{thm:characterization} in
Section~\ref{sec:cylinderDecomp}). In Section~\ref{sec:fourey} we
give explicit examples and constructions of trajectories with various
geometric properties. In particular we introduce an operation between
periodic orbits that allows us to prove Theorem~\ref{thm:family}. This
concludes Part~\ref{part:geometricCharacterization}.

In Part~\ref{part:algebraicCharacterization}, we view the Mucube as a
square-tiled half-translation surface and use this structure coupled
with a homological approach to give a complete algebraic
characterization of the periodic directions. We start by introducing
preliminaries on translation surfaces, their Veech groups and
homology, and Fuchsian groups and their limit sets in
Section~\ref{sec:bg}. In Section~\ref{sec:MucubeXYHalftranslation} we
endow the Mucube and its quotients $X$ and $Y$ with half-translation
structures and in Section~\ref{sec:MucubeFundamentalGroup} we obtain
the fundamental group of the Mucube. In Section~\ref{sec:veechgrpchar}
we prove Theorem~\ref{thm:veechgrpchar}, a complete algebraic
characterization of the periodic directions on the Mucube, using a
Fuchsian group within $\SL_2(\ZZ)$. Subsequently, in
Section~\ref{sec:veech} we use Theorem~\ref{thm:veechgrpchar} to
obtain the Veech group of the Mucube (see
Theorem~\ref{thm:veechgrpM}). Finally, in Section~\ref{sec:density},
we use the algebraic characterization of periodic directions to prove
density of the set of periodic directions (see
Theorem~\ref{thm:density}) and, consequently, density of the set of
ergodic directions (see Corollary~\ref{cor:ergodic directions}). This
concludes the paper.

\subsection*{Acknowledgements}

We are deeply indebted to Moon Duchin for organizing the Polygonal
Billiards Research Cluster held at Tufts University in 2017 where this
project began and during which time our work was supported by Moon's
NSF grant number DMSCAREER-1255442. We thank Pat Hooper for
introducing us to the Mucube. We would also like to thank Barak Weiss,
Emily Stark, Angel Pardo, Marissa Loving, and Jane Wang for helpful
conversations at various stages of the project. S.S. was supported for
this research by an AMS-Simons Research Enhancement Grant for
Primarily Undergraduate Institution Faculty.

\part{Geometric Characterization}\label{part:geometricCharacterization}

We derive a series of results on the geometry of trajectories on $M$
by leveraging the symmetries of the natural embedding of $M$ in
$\mathbb{R}^3$. Taking quotients of $M$ by suitable subgroups of those
symmetries, we obtain compact, square-tiled surfaces. All trajectories
on $M$ \textit{descend} to trajectories on these quotients, and we
characterize periodic linear trajectories on $M$ by the behavior of
their descents.

Continuing this embedding and symmetry-based approach, we define a
process through which two given periodic directions give rise to a
third. Using this process, we produce a family of periodic
trajectories containing members of arbitrarily large diameter and a
family of recurrent trajectories with Hausdorff dimension
approximately 0.68.

\section{The Mucube $M$ as a subset of $\RR^3$}\label{sec:subset}

The Mucube $M$ has a natural embedding in $\RR^3$ where its faces are
all parallel to coordinate planes. Under this embedding, $M$ casts a
shadow on each coordinate plane that resembles a checker board. To be
precise, let
\begin{equation*}
  C = \overline{\set*{(x,y)\in\RR^2 : \floor{x-1/2} \textrm{ and } \floor{y-1/2} \textrm{ have opposite parity}}}.
\end{equation*}
Then we may take the Mucube to be
\begin{equation*}
  M = \set{(x,y,z)\in \RR^3 : (x,y)\in C, (x,z)\in C, \textrm{ and } (y,z)\in C}. 
\end{equation*}
Now let
\begin{equation*}
  F = M \cap \cube,
\end{equation*}
recalling that $\cube = [-1, 1]^3$. Then we have
\begin{equation*}
  M = \bigcup_{\bt\in\ZZ^3} F_\bt
\end{equation*}
where $F_\bt := F + 2\bt$. In this embedding, $\ZZ^3$ acts on $M$ by
translations $\bv\mapsto \bv + 2\bt$, and $F_\bzero = F$ is a
fundamental domain for the action.

\section{Parallel fields on $M$}\label{sec:parallel-fields-m}

The Mucube, $M$, is flat except for at its cone points---points where
six square faces meet at a shared corner. Let
\begin{equation*}
  \calC = \set*{\textrm{cone points of $M$}}.
\end{equation*}
and
\begin{equation*}
  M_0 = M - \calC.
\end{equation*}
Each cone point of $M$ has angle $3\pi$.  Therefore, if $x\in M_0$,
then the parallel transport of a vector $v\in T_x M$ around a path in
$M_0$ enclosing a cone point is $-v\in T_x M$, and the parallel
transport around an arbitrary closed path in $M_0$ is
$\pm v \in T_x M$. Consequently, there are no continuous parallel
vector fields on $M_0$, but there are parallel direction fields. Every
vector $(x,v)\in TM$ defines a parallel direction field $V$ on $M_0$,
by parallel transport. For each $y\in M_0$, $V(y)$ is the parallel
transport of $(x,v)$ to $y$, viewed as representing an element of the
projectivized unit tangent plane $\PP(T_y^1M)$.

In turn, a parallel field $V$ on $M_0$ determines a lamination of
$M_0$ by linear trajectories in direction $V$. That is,
\begin{equation*}
  M_0 = \bigcup_{x\in M} \calO_{x,V},
\end{equation*}
where $\calO_{x,V}$ is the linear trajectory passing through $x$ in
direction $V$. Note that it is possible for such a trajectory to have
a finite past or future, ending at cone points.

\section{Rigid symmetries of $M$}
\label{sec:rigidSym}

The Mucube has a rich collection of rigid symmetries, already apparent
when looking at Figure~\ref{fig:mucubeFD}(b). Its realization as
\begin{equation*}
  M = \bigcup_{\bt\in\ZZ^3} F_\bt
\end{equation*}
shows that $\ZZ^3$ acts on $M$ by rigid motions: the element
$\bt\in\ZZ^3$ acts by translating by $2\bt$. Let
$O\subset \mathrm{SO}(3)$ be the octahedral group---the group of
rotational symmetries of the cube $\cube$. We quickly see that $O$
also acts by rigid motions on $M$ in its natural embedding in $\RR^3$.

For $\theta\in O$ and $\bt\in\ZZ^3$, the composite $\bt\circ\theta$ acts
on $M$ by first rotating by $\theta$ and then translating by $2\bt$. The
acting group here is the semidirect product $G = \ZZ^3\rtimes O$ with
multiplication
\begin{equation*}
  (\bt_1, \theta_1)\cdot(\bt_2, \theta_2) = (\bt_1 + \theta_1 \bt_2,\theta_1\theta_2).
\end{equation*}
There is a convenient embedding $G\hookrightarrow \SL(4,\RR)$, defined by
\begin{equation*}
  (\bt, \theta) \mapsto
  \begin{bmatrix}
    \theta & 2\bt \\
    \bzero^T & 1
  \end{bmatrix}
\end{equation*}
and action by matrix multiplication, with $\RR^3$ embedded in $\RR^4$
by $\bv \mapsto (\bv, 1)$.

\begin{lemma}\label{lem:periodicrotations}
  The torsion elements of $G$ are rotations about lines in $\RR^3$
  having order no greater than $4$. If $\ell\subset\RR^3$ is the axis for a $\pi$ rotation in $G$, then $\ell\cap M$ is either empty, or it consists entirely of center points of square faces of $M$, or it consists entirely of midpoints of edges of square faces of $M$.
\end{lemma}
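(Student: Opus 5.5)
The plan has two parts. First, classify the torsion elements of $G$ as rotations about lines. Second, compute the fixed lines of the $\pi$-rotations explicitly, using the description of $M$ through the checkerboard set $C$ from Section~\ref{sec:subset}.

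\emph{Torsion elements.} Let $g=(\bt,\theta)\in G$, viewed as the affine map $v\mapsto \theta v+2\bt$ of $\RR^3$, and suppose $g^n=\Id$.
\begin{enumerate}
\item Averaging the orbit of any point, $p=\frac1n\sum_{k=0}^{n-1}g^k(0)$, produces a point fixed by $g$.
\item Conjugating by translation by $p$ turns $g$ into the linear map $\theta\in O\subset\mathrm{SO}(3)$. Hence $g$ is a rotation about the line $p+\ker(\theta-I)$, and its order equals the order of $\theta$.
\item Elements of the octahedral group have order $1,2,3$ or $4$, which gives the first claim.
\end{enumerate}

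\emph{Rotations by $\pi$.} For the second claim, $\theta$ is a rotation by $\pi$ in $O$, so its axis is either a coordinate axis or a face diagonal such as $(1,1,0)$. The fixed set of $g$ is $\{v:(I-\theta)v=2\bt\}$. Conjugation by elements of $O$ preserves $M$ and its square-face structure, so it suffices to treat one representative of each type: $\theta=\mathrm{diag}(-1,-1,1)$ and $\theta(x,y,z)=(y,x,-z)$. Throughout I use that $M$ is a union of unit squares of the grid $(\tfrac12+\ZZ)^3$. So a point lying in the relative interior of such a grid square (respectively, grid edge) belongs to $M$ only if the whole square belongs to $M$. It is then the center of a face (respectively, the midpoint of an edge of a face).

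\emph{Coordinate axis.} The fixed line is $\ell=\{(m,n,z)\}$ with $m=t_1,\ n=t_2\in\ZZ$ and $t_3=0$. The point $(m,n)$ is the center of the cell $[m-\tfrac12,m+\tfrac12]\times[n-\tfrac12,n+\tfrac12]$.
\begin{itemize}
\item If $m+n$ is even, this cell is not in $C$, so $\ell\cap M=\emptyset$.
\item If $m+n$ is odd, we also need $(m,z),(n,z)\in C$. Since $m,n$ have opposite parities, this is impossible for $z$ in an open cell. It therefore forces $z\in\tfrac12+\ZZ$.
\end{itemize}
Every point of $\ell\cap M$ is then $(m,n,z)$ with $z$ a half-integer, i.e. the center of the horizontal grid square $[m\pm\tfrac12]\times[n\pm\tfrac12]\times\{z\}$, and hence the center of a face.

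\emph{Face diagonal.} Solving $(x-y,\,y-x,\,2z)=2\bt$ gives $t_2=-t_1$ and $\ell=\{(s+t_1,s,t_3):s\in\RR\}$, so $x-y\in\ZZ$ and $z\in\ZZ$. Since $z$ is an integer, it is the center of a $z$-cell, so $(x,z)\in C$ and $(y,z)\in C$ cannot both hold with $x,y$ in open cells unless their cell indices have the same parity. That in turn forces $(x,y)\notin C$, because $x-y$ is an integer. Hence every point of $\ell\cap M$ has $x,y\in\tfrac12+\ZZ$ and $z\in\ZZ$. Such a point is the midpoint of the vertical grid edge $\{x\}\times\{y\}\times[z-\tfrac12,z+\tfrac12]$, and therefore the midpoint of an edge of a face of $M$.

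The main obstacle is the parity bookkeeping with the closure in the definition of $C$, in particular making sure boundary points are handled correctly. I would organize it through the grid-square observation above, which converts the membership conditions into a single parity check.
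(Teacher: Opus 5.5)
Your proof is correct and follows essentially the paper's route: you show a torsion element $g=(\bt,\theta)$ is a rotation conjugate to $\theta\in O$ (via the barycenter fixed point, where the paper instead computes $g^k$ and uses $(1+\theta+\dots+\theta^{k-1})\bt=0$), then reduce by $O$-conjugacy to the representatives $\mathrm{diag}(-1,-1,1)$ and $(x,y,z)\mapsto(y,x,-z)$, and read off $\ell\cap M$ from the checkerboard parity, which you justify in more detail than the paper does. One harmless slip: the face-diagonal axis is $\{(s+2t_1,s,t_3)\}$, not $\{(s+t_1,s,t_3)\}$; this does not matter because you only use $x-y\in\ZZ$, and your conclusion $x,y\in\frac12+\ZZ$, $z\in\ZZ$ is the correct form of the paper's displayed set (the paper prints the parameter in $\ZZ$, which gives integer points that do not lie on $M$).
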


\begin{proof}
  Suppose
  \begin{equation*}
    g = \begin{bmatrix}
      \theta & 2\bt \\
      \bzero^T & 1
    \end{bmatrix}\in G
  \end{equation*}
  is an element of order $k$. Since 
\begin{equation*}
    g^k =   \begin{bmatrix}
      \theta & 2\bt \\
      \bzero^T & 1
    \end{bmatrix}^k = 
    \begin{bmatrix}
      \theta^k & (1 + \theta + \dots + \theta^{k-1})2\bt \\
      \bzero^T & 1
    \end{bmatrix},
  \end{equation*}
  we must have $\theta^k = I_3$, the identity, and we must have
  \begin{equation}\label{eq:btcycle}
    (1 + \theta + \dots + \theta^{k-1})\bt = \bzero.
  \end{equation}
  Therefore $2\bt$ lies in the plane that passes through the origin and is perpendicular to the axis of
  rotation of $\theta$  and so $g$ acts as a rotation 
  over some line in $\RR^3$ which is parallel to the axis of rotation
  of $\theta$. Furthermore, if $k'\geq 1$ is any other integer such
  that $\theta^{k'}=I_3$, then~(\ref{eq:btcycle}) holds with $k'$ in
  place of $k$, and so we will have $g^{k'}=I_4$. This means that the
  order of $g$ coincides with the order of $\theta$. Since all elements
  of $O$ have order $1, 2, 3$, or $4$, we are done with the first assertion.

  Now, suppose that $(\theta, \bt)$ is a $\pi$ rotation with $\bt=(m,n,k)$ and let $\ell$ denote its axis of rotation. If
  \begin{equation*}
      \theta = \begin{bmatrix}
          -1 & 0 & 0 \\
          0 & -1 & 0 \\
          0 & 0 & 1
      \end{bmatrix},
  \end{equation*}
  then $k$ must be $0$, and $\ell = \set{(m,n,z): z\in \RR}$. We have $\ell\cap M = \emptyset$ if $m\equiv n\pmod 2$, and $\ell\cap M = \set{(m,n,z) : z\in 1/2 + \ZZ}$ if $m\not\equiv n\pmod 2$. The latter set consists of centers of square faces of $M$. If
    \begin{equation*}
      \theta = \begin{bmatrix}
          0 & 1 & 0 \\
          1 & 0 & 0 \\
          0 & 0 & -1
      \end{bmatrix},
  \end{equation*}
  then we must have $m=-n$, and  $\ell=\set{(x+m, x-m, k) : x\in \RR}$ and $\ell\cap M = \set{(x+m, x-m, k) : x\in \ZZ}$, which consists entirely of midpoints as claimed.

  Every other $\pi$ rotation is treated similarly, with coordinates' roles interchanged.
\end{proof}

\begin{wrapfigure}[15]{r}{0.4\textwidth}
  \begin{center}
      \begin{tikzpicture}[sq/.style=
  {shape=regular polygon, regular polygon sides=4, draw, minimum width=2.828cm}]


\draw[fill=gray!10] (-1,-1) -- (-1, 1) -- (-1+0.5, 1+0.5) -- (-1+0.5, -1+0.5)--cycle; 
\draw[fill=gray!10] (1,-1) -- (1+0.5, -1+0.5) --(-1+0.5,-1+0.5) -- (-1,-1) --cycle; 
\draw[fill=gray!40]  (-1, 1) -- (-1+0.5, 1+0.5) -- (1+0.5, 1+0.5) -- (1,1) --cycle; 
\draw[fill=gray!40] (1,1) -- (1+0.5, 1+0.5) --(1+0.5,-1+0.5) -- (1,-1) --cycle; 



  \node[sq] at (0,0){};
  \node[sq] at (0.5,0.5){};
  \draw[thin] (-1,-1) -- ++(0.5, 0.5);
  \draw[thin] (-1,1) -- ++(0.5, 0.5);
  \draw[thin] (1,-1) -- ++(0.5, 0.5);
  \draw[thin] (1,1) -- ++(0.5, 0.5);
  
\begin{scope}[shift = {(0.5, 2.5)}]

\draw[fill=gray!10] (-1,-1) -- (-1, 1) -- (-1+0.5, 1+0.5) -- (-1+0.5, -1+0.5)--cycle; 
\draw[fill=gray!10]  (-1+0.5, -1+0.5) -- (-1+0.5, 1+0.5) -- (1+0.5, 1+0.5) -- (1+0.5,-1+0.5) --cycle; 
\draw[fill=gray!40]  (-1, -1) -- (-1, 1) -- (1, 1) -- (1,-1) --cycle; 
\draw[fill=gray!40] (1,1) -- (1+0.5, 1+0.5) --(1+0.5,-1+0.5) -- (1,-1) --cycle; 

 \node[sq] at (0,0){};
  \node[sq] at (0.5,0.5){};
  \draw[thin] (-1,-1) -- ++(0.5, 0.5);
  \draw[thin] (-1,1) -- ++(0.5, 0.5);
  \draw[thin] (1,-1) -- ++(0.5, 0.5);
  \draw[thin] (1,1) -- ++(0.5, 0.5);
\end{scope}

\begin{scope}[shift = {(2.5, 0.5)}]


\draw[fill=gray!10]  (-1+0.5, -1+0.5) -- (-1+0.5, 1+0.5) -- (1+0.5, 1+0.5) -- (1+0.5,-1+0.5) --cycle; 
\draw[fill=gray!10] (1,-1) -- (1+0.5, -1+0.5) --(-1+0.5,-1+0.5) -- (-1,-1) --cycle; 
\draw[fill=gray!40]  (-1, 1) -- (-1+0.5, 1+0.5) -- (1+0.5, 1+0.5) -- (1,1) --cycle; 

\draw[fill=gray!40]  (-1, -1) -- (-1, 1) -- (1, 1) -- (1,-1) --cycle; 


 \node[sq] at (0,0){};
  \node[sq] at (0.5,0.5){};
  \draw[thin] (-1,-1) -- ++(0.5, 0.5);
  \draw[thin] (-1,1) -- ++(0.5, 0.5);
  \draw[thin] (1,-1) -- ++(0.5, 0.5);
  \draw[thin] (1,1) -- ++(0.5, 0.5);
\end{scope}

\end{tikzpicture}
\caption{The shifted fundamental domain $F'$}
\label{fig:shiftedF}
\end{center}
  \end{wrapfigure}

The following lemma states that we can map any face of $M$ onto any
other face of $M$ with a rigid motion of $M$, and we can do the
same with the cone points. 

\begin{lemma}\label{lem:parallelism}~
  \begin{enumerate}
  \item The action of $G$ is transitive on the set of faces of $M$;
    the stabilizer of every face of $M$ under the action of $G$ has
    order $2$ and is generated by the $\pi$-rotation about the normal
    line through the center of that face. 
  \item The action of $G$ is transitive on the set of cone points of
    $M$. If $p\in F_\bt$ is a cone point, then its stabilizer under
    the action of $G$ has order $3$ and is generated by a
    $2\pi/3$-rotation about the diagonal in $\cube+ 2\bt$ containing
    $p$.
  \item Parallel fields on $M$ are fixed by the action of $G$.
  \end{enumerate}
\end{lemma}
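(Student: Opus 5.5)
The plan is a counting argument modulo the translation subgroup $\ZZ^3$, combined with Lemma~\ref{lem:periodicrotations}. Since $\ZZ^3\trianglelefteq G$ with quotient $O$ of order $24$, orbits of $G$ on $\ZZ^3$-classes of faces (resp.\ cone points) have size $24/|\Stab|$.

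\textbf{Counting faces and cone points in $F$.} First I count the faces and cone points of $F$ up to translation. Every cone point has angle $3\pi$, so by Gauss--Bonnet the quotient $M/\ZZ^3$ has Euler characteristic $-\tfrac12\cdot\#\{\text{cone points}\}$. This quotient is the genus $3$ surface, which I will check directly from the description of $M$ in Section~\ref{sec:subset}. Hence it has $8$ cone points. Since six squares meet at each cone point and each square has four corners, it has $8\cdot 6/4=12$ faces. I will double-check these numbers by listing the faces of $F$ explicitly using the set $C$.

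\textbf{Part 1 (faces).} Fix one explicit face $S$, say the one centred at $(0,1,\tfrac12)$ in a horizontal plane. The $\pi$-rotation about the normal line through its centre lies in $G$; this is verified by writing it as $(\bt,\theta)$, in the same way as in the proof of Lemma~\ref{lem:periodicrotations}. Any $g\in\Stab(S)$ fixes the centre of $S$ and preserves its normal line. Its rotational part is therefore one of the following: a rotation about the normal by $\pi/2$, $\pi$ or $3\pi/2$, or a $\pi$-rotation about a line lying in the plane of $S$.
\begin{itemize}
\item The last type is excluded by Lemma~\ref{lem:periodicrotations}. Its axis would meet $M$ in a whole segment of $S$, whereas $\ell\cap M$ must consist of isolated centres or midpoints.
\item The quarter turns are excluded by inspecting the four faces of $M$ adjacent to $S$ along its edges. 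Some of these bend up and others bend down, and a quarter turn would not preserve this pattern.
\end{itemize}
Thus $|\Stab(S)|=2$. The orbit of $S$ then contains $24/2=12$ classes modulo $\ZZ^3$, which is all of them, so $G$ is transitive on faces. Conjugating by $G$ gives the stabilizer statement for every face.

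\textbf{Part 2 (cone points).} For a cone point $p$, the $2\pi/3$-rotation about the cube diagonal through $p$ lies in $\Stab(p)$. Conversely, any element of $\Stab(p)$ of even order would have a power that is a $\pi$-rotation fixing $p$. By Lemma~\ref{lem:periodicrotations} such a rotation can only fix face centres or edge midpoints, never a cone point. So every element of $\Stab(p)$ has order $1$ or $3$. The stabilizer is finite (it embeds in $O$), and a subgroup of $O$ all of whose elements have order $1$ or $3$ has order $1$ or $3$. Hence $|\Stab(p)|=3$, the orbit of $p$ has size $8$, and $G$ is transitive on cone points.

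\textbf{Part 3 (parallel fields).} Since $g\in G$ is an isometry of $M$ preserving $\calC$, $g_*V$ is again a parallel direction field. A parallel direction field is determined by its value at a single point, so it suffices to find one point where $g_*V$ and $V$ agree. For the generator $r$ of the stabilizer of a face, the derivative at the face centre is rotation by $\pi$ in the face plane, which fixes every line; hence $r_*V=V$. For the order-$3$ stabilizer of a cone point, the rotation turns the $3\pi$-cone by cone-angle $\pi$, so it sends each direction $v$ to $-v$, which is the same projectively; again the field is preserved.

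It then remains to show that these face-stabilizer and cone-stabilizer elements generate $G$. This is the step I expect to be the main obstacle. I would attack it in two steps.
\begin{itemize}
\item \emph{Translations.} Composing $\pi$-rotations about two parallel face normals, such as the lines $(m,n,z)$ and $(m',n',z)$ with $m\not\equiv n$ and $m'\not\equiv n'$, yields translations such as $2(1,-1,0)$ and its images under the coordinate symmetries. Together with the $3$-fold rotations, this should give all of $2\ZZ^3$; if some coset is missing, I will also use compositions of $\pi$-rotations about non-parallel axes.
\item \emph{Rotational parts.} The $\pi$-rotations about coordinate axes and the $2\pi/3$-rotations about diagonals generate the rotational part $O$.
\end{itemize}
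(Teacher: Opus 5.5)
Your Parts 1 and 2 are sound and differ from the paper only in organisation. You compute the stabilizers first, using Lemma~\ref{lem:periodicrotations} and the up/down pattern of neighbouring faces, and then get transitivity by orbit--stabilizer modulo $\ZZ^3$. The paper instead exhibits explicit elements of $O$ that move the faces of $F'$ and the cone points of $F$ onto one another.

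The genuine gap is in Part 3, at the generation step. All faces are axis-parallel, so the face stabilizers have rotational parts $\mathrm{diag}(-1,-1,1)$, $\mathrm{diag}(-1,1,-1)$ and $\mathrm{diag}(1,-1,-1)$. The cone stabilizers have as rotational parts the eight $2\pi/3$-rotations about cube diagonals. Together with the identity these form the tetrahedral group $T\cong A_4$, which is closed under multiplication and has index $2$ in $O$. Since $(\bt,\theta)\mapsto\theta$ is a homomorphism, everything your generators produce lies in $\ZZ^3\rtimes T$, and no element with rotational part $\theta_x,\theta_y,\theta_z$ is ever reached. So your claim that these rotations generate $O$ is false, and you have proved invariance only for an index-$2$ subgroup. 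Your fixed-point trick also cannot be applied to the quarter turns themselves. Any element of $G$ with rotational part $\theta_z$ is either fixed-point free in $\RR^3$, or rotates about a line $\{(a,b,z)\}$ with $a\equiv b\pmod 2$, and such a line misses $M$.

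Because $[O:T]=2$, one more invariant element whose rotational part lies outside $T$ closes the gap. Your translation step is fine: composing face rotations about normals in different coordinate directions already yields $2(0,1,0)$. Within your own method, the easiest choice is an edge-midpoint $\pi$-rotation from Lemma~\ref{lem:periodicrotations}, for example $(x,y,z)\mapsto(y,x,-z)$. It swaps the two faces of $M$ meeting along the edge $\{(\tfrac12,\tfrac12,z):\abs{z}\le\tfrac12\}$ and fixes the midpoint $(\tfrac12,\tfrac12,0)\in M_0$. There it reverses the edge direction and exchanges the two transverse half-planes, so it acts on the tangent plane by $-\mathrm{Id}$ and therefore fixes every parallel direction field. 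Alternatively, verify a quarter turn directly by parallel transport, which is what the paper's edge-by-edge argument does. Transport from the face centred at $(0,1,\tfrac12)$ through the faces in $x=-\tfrac12$ and $y=\tfrac12$ around the cone point $(-\tfrac12,\tfrac12,\tfrac12)$. You arrive at its $\theta_z$-image, centred at $(-1,0,\tfrac12)$, and the transport sends $e_x\mapsto -e_y$ and $e_y\mapsto e_x$. This equals $-d\theta_z$, so it agrees with $d\theta_z$ projectively.
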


\begin{proof}
  1. We may take
  \begin{equation*}
    F' = M\cap\brackets*{-\frac{1}{2},\frac{3}{2}}^3
  \end{equation*}
  as a fundamental domain for the $\ZZ^3$-action on $M$. (See
  Figure~\ref{fig:shiftedF}.)

  Each face of $M$ is uniquely identified with
  one of the faces of $F'$ by a translation. In turn, any face of $F'$
  can be mapped to any other face of $F'$ by an element of $O$. (One
  applies
  \begin{equation}\label{eq:xyzcycle}
    \theta_{2\pi/3} = \begin{bmatrix}
      0 & 1 & 0 \\
      0 & 0 & 1 \\
      1 & 0 & 0 
    \end{bmatrix}
  \end{equation}
  as needed, followed by some power of either
  \begin{equation}\label{eq:quarterturns}
    \theta_x = \begin{bmatrix}
      1 & 0 & 0 \\
      0 & 0 & -1 \\
      0 & 1 & 0 
    \end{bmatrix},\quad
    \theta_y = \begin{bmatrix}
      0 & 0 & -1 \\
      0 & 1 & 0 \\
      1 & 0 & 0 
    \end{bmatrix},\quad\textrm{or}\quad
    \theta_z  = \begin{bmatrix}
      0 & -1 & 0 \\
      1 & 0 & 0 \\
      0 & 0 & 1 
    \end{bmatrix},
  \end{equation}
  the quarter-turns.) This shows that $G$ acts transitively on the set
  of faces of $M$. The stabilizer of a face of $M$ must be a subgroup
  of the cyclic group of order $4$ (the orientation-preserving
  symmetries of a square), and one sees by inspection that it consists
  precisely of the identity and the $\pi$-rotation about the normal
  line to the square at its center.

  2. Taking now
  \begin{equation*}
    F = M\cap\brackets*{-1, 1}^3
  \end{equation*}
  as a fundamental domain for the $\ZZ^3$-action on $M$, as in
  Figure~\ref{fig:mucubeFD}, one sees that each cone point of $M$ is
  uniquely identified with one of the cone points of $F$ by a
  translation (by $-2\bt$ if the initial cone point lies in
  $F_\bt$). In turn, any cone point of $F$ can be mapped to any other
  cone point of $F$ by an element of $O$. (One may apply the quarter
  turns $\theta_x, \theta_y, \theta_z$ as needed.) This shows that $G$
  acts transitively on the set of cone points of $M$. The stabilizer
  of a cone point of $M$ is a conjugate of the stabilizer of the cone
  point of $F$ to which it is identified by the $\ZZ^3$-action. And
  this is precisely the stabilizer of a cube's corner in the
  octahedral group $O$. It has order $3$ and is generated by one of
  \begin{equation*}
    \theta_{2\pi/3},\quad \theta_z\theta_{2\pi/3}\theta_z^{-1},\quad \theta_z^2\theta_{2\pi/3}\theta_z^{-2},\quad\textrm{or}\quad \theta_z^3\theta_{2\pi/3}\theta_z^{-3},
  \end{equation*}
  the $2\pi/3$-rotations about diagonals of the cube $\cube$.

  3. Suppose $V$ is a parallel field on one of the rectangular faces
  of $F$, and consider its extension by parallel transport to an
  adjacent rectangular face. Note that there is a composition of quarter turns taking the first rectangular face to the second. Moreover, the extension of $V$ to the second face coincides with
  its image under the composition. Since $\theta_x, \theta_y, \theta_z$
  generate $O$, we have that $O$ fixes the parallel extension of $V$
  to all of $F$. Note that $M$ is obtained by gluing translated copies
  of $F$ along edges that lie in flat components of $M$. The parallel
  extension of $V$ (the parallel field on $F$) across such an edge
  coincides with its image under the corresponding
  translation. Therefore, the parallel extension of $V$ to all of $M$
  is fixed under the $\ZZ^3$-action, hence under $G$.
\end{proof}

\subsection{Orientation-reversing rigid motion and reflections}

The group $G$ is not the full set of rigid motions of $M$, but rather,
the orientation-preserving subgroup of the full group. Every element
of $G$ sends both connected components of $\RR^3\setminus M$ to
themselves: if they are designated the ``outside'' and the ``inside''
of $M$, then the outside goes to the outside and the inside goes to
the inside. On the other hand, the translation in $\RR^3$ by
$(1,1,1)$---which is not an element of $G$---maps $M$ to $M$ rigidly
and interchanges the inside and outside of $M$, thereby reversing
orientation. One way to phrase this is that the Mucube looks the same
from the inside as it does from the outside.

\begin{lemma}\label{lem:111}
  The translation in $\RR^3$ by $(1,1,1)$ sends any parallel direction
  field $V$ on $M_0$ to its perpendicular field $V^\perp$.
\end{lemma}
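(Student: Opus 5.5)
Write $\tau$ for the translation $\bv\mapsto\bv+(1,1,1)$. The plan is to show that $\tau_*V$ and $V^\perp$ agree on a single face of $M$ and then let parallel transport carry the equality everywhere.

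\textbf{Both fields are parallel.} Since $\tau$ is an isometry of $\RR^3$ with $\tau(M)=M$ and $\tau(\calC)=\calC$, the pushforward $\tau_*V$ is again a parallel direction field on $M_0$. The field $V^\perp$ (rotate each line of $V$ by $\pi/2$ in the tangent plane) is also parallel. This holds because parallel transport on the flat surface $M_0$ commutes with rotation by $\pi/2$ as a map of lines; the sign ambiguity $\pm v$ coming from the cone points does not matter for lines. Two parallel direction fields on the connected surface $M_0$ that agree at one point agree everywhere. So it suffices to find one face $Q$ of $M$ and check that $\tau_*V$ and $V^\perp$ coincide on $\tau(Q)$.

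\textbf{Reduction to two horizontal faces.} I will use the explicit description of parallel fields obtained in the proof of Lemma~\ref{lem:parallelism}(3). There, the parallel extension of $V$ across an edge between faces of $F$ equals its image under a suitable quarter turn, and extension across the gluing edges of $F$ agrees with translation. So I fix a face $Q$ parallel to the $xy$-plane, say with centre $(0,\tfrac12,1)$ (as in Figure~\ref{fig:shiftedF}). Suppose $V$ on $Q$ is the line spanned by a horizontal vector $w=(a,b,0)$. Then $\tau(Q)$ is the horizontal face centred at $(1,\tfrac32,2)$. The task is to parallel-transport $w$ from $Q$ to $\tau(Q)$ along an explicit path in $M$ and show that it arrives as a multiple of $(-b,a,0)$.

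\textbf{The explicit path.} A natural path climbs from $Q$ to $\tau(Q)$ through a few faces: over one edge onto a vertical face, along a tunnel, and over another edge onto the target face. Across an edge where two faces meet at a right angle, transport acts on the ambient vector as the rotation about that edge's line. Composing these, the total transport from the plane $z=1$ to the plane $z=2$ should be a rotation $\rho\in O$ fixing the $z$-axis direction up to sign. I expect $\rho$ to restrict on horizontal vectors to $\pm\theta_z$, possibly composed with a reflection, which gives the $\pi/2$ rotation.

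\textbf{Main obstacle.} The main obstacle is the bookkeeping of this path: choosing the faces correctly from the definition of $C$, and tracking orientations so that the net effect really is a quarter turn and not the identity or a half turn. As an independent check, I would use the fact that $\tau$ swaps the inside and outside of $M$ and so reverses orientation. A parallel line field whose image is also parallel must be sent to $\pm$ itself or to its perpendicular, and orientation reversal combined with Lemma~\ref{lem:parallelism}(3) should exclude the first option once it is checked in one direction, for example $w=(1,0,0)$.
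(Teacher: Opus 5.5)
There is a genuine gap. The one step that carries all the content, the parallel transport from $Q$ to $\tau(Q)$, is left as an expectation, and that expectation is false. (A minor slip first: horizontal faces of $M$ lie in the planes $z\in\tfrac12+\ZZ$, so the face centred at $(0,\tfrac12,1)$ has normal $e_y$, not $e_z$.)

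Take $Q$ to be the horizontal face centred at $(0,1,\tfrac12)$, so $\tau(Q)$ is centred at $(1,2,\tfrac32)$. Pass through the faces $P\subset\{y=\tfrac32\}$ centred at $(0,\tfrac32,1)$ and $R\subset\{x=\tfrac12\}$ centred at $(\tfrac12,2,1)$. Unfolding across the three shared edges gives
\[
(a,b,0)\longmapsto(a,0,b)\longmapsto(0,a,b)\longmapsto(b,a,0),
\]
while $d\tau$ is the identity. So on $\tau(Q)$ the field $V$ is spanned by $(b,a,0)$ and $\tau_*V$ by $(a,b,0)$. These are perpendicular only when $ab=0$, and for $a=b$ they coincide, so $\tau_*V=V$.

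Equivalently, $g(x,y,z)=(y,x+2,2-z)$ lies in $G$ and satisfies $g(Q)=\tau(Q)$. In coordinates centred at the centre of $Q$, $g^{-1}\tau$ restricted to $Q$ is the diagonal reflection $(u,w)\mapsto(w,u)$, not a quarter turn. The paper's own proof asserts a quarter turn at exactly this point of its inspection, so it has the same defect. As stated, the lemma holds only when $V$ is parallel to the edges of the squares.

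Your proposed orientation check refutes the general claim rather than supporting it. Since $\tau$ swaps the two components of $\RR^3\setminus M$, it reverses the orientation of $M$, while parallel transport preserves orientation. Hence $d\tau$ followed by transport back to a base point is an orientation-reversing isometry of the tangent plane. It therefore acts on directions as a reflection, which fixes two directions, whereas $V\mapsto V^\perp$ fixes none.

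For the same reason, checking a single direction such as $w=(1,0,0)$ (which does go to its perpendicular) cannot be propagated to all $V$. The dichotomy ``sent to itself or to its perpendicular'' is also false: the field of slope $2$ in face coordinates goes to the field of slope $\tfrac12$.

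What your reduction does prove, once the computation above is inserted, is the corrected statement. The translation $\tau$ acts on directions by $(p,q)\mapsto(q,p)$ in face coordinates (slope $s\mapsto 1/s$), exchanging the horizontal and vertical fields. That is what the paper actually uses later: the remark after Theorem~\ref{thm:veechgrpchar} on $(p,q)$ versus $(q,p)$, and the reciprocal-slope step in Proposition~\ref{prop:foureyslopes}.
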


\begin{proof}
  The proof is a simple inspection. Pick an arbitrary square face
  $\calF\subset M$. Its image under the translation $h$ by $(1,1,1)$
  is another square face $h\calF = \calF'\subset M$. By
  Lemma~\ref{lem:parallelism}, there is a rigid motion of $g\in G$
  such that $g\calF = \calF'$. The reader will note that the
  restriction of $h$ to $\calF$ is equal to the composition of $g$
  with a quarter turn of $\calF'$. Therefore, since $g$ preserves
  parallel fields, a vector in $(x,v)\in T\calF$ is sent by $h$ to a
  vector that is perpendicular to the parallel field defined by
  $(x,v)$.
\end{proof}

It is also worth noting that reflection over any plane parallel to the
one of the coordinate planes of the form $x=2k$, $y=2k$ or $z=2k$ for
$k \in \ZZ$ is an orientation-reversing rigid isometry of $M$.

\section{Unfolding, strips, and cylinders}
Let $x\in M_0$ and $V$ a direction field on $M_0$. Suppose the face
$\mathcal F\subset M$ containing $x$ is given (one of) the natural
isometric coordinate chart $\psi: \calF \to [0,1]^2\subset\RR^2$. Then
$\psi(\calF\cap \calO_{x,V})$ is a line segment in
$[0,1]^2\subset\RR^2$, which we may extend linearly in both directions
either indefinitely or until it encounters $\ZZ^2\subset\RR^2$. Denote
by $\calL$ this extended line, and note that the parametrizing map
$\psi^{-1}:[0,1]^2\to\calF$ can be extended locally isometrically to
the union of unit squares through which $\calL$ passes. Let us denote
that extension $\phi$, so that $\phi\mid_{[0,1]^2} = \psi^{-1}$. Note
that $\phi(\calL) = \calO_{x,V}$. In this way, we have unfolded square
faces of $M$ through which $\calO_{x,V}$ passes. (See
Figure~\ref{fig:unfolding}.)

\begin{wrapfigure}[20]{r}{0.4\textwidth}
\vspace{-1cm}
  \centering
  \begin{tikzpicture}[scale=1.2, every node/.style={font=\small}]
    \def\x{0.5}   
    \def\y{0.5}   
    \def\slope{1.5}

    \draw[step=1, line width=0.3pt, gray!60] (-1,-1) grid (3,4);

    
    \fill[gray!10] (-1,-1) rectangle (0,0);
    \fill[gray!10]  (0,-1) rectangle (1,0);
    \fill[gray!50] (0,0) rectangle (1,1);
    \fill[gray!10]  (0,1) rectangle (1,2);
    \fill[gray!10]  (1,1) rectangle (2,2);
    \fill[gray!10]  (1,2) rectangle (2,3);
    \fill[gray!10]  (2,2) rectangle (3,3);
    \fill[gray!10]  (2,3) rectangle (3,4);

    \draw[ultra thick] (-1,-1) rectangle (0,0);
    \draw[ultra thick]  (0,-1) rectangle (1,0);
    \draw[ultra thick] (0,0) rectangle (1,1);
    \draw[ultra thick]  (0,1) rectangle (1,2);
    \draw[ultra thick]  (1,1) rectangle (2,2);
    \draw[ultra thick]  (1,2) rectangle (2,3);
    \draw[ultra thick]  (2,2) rectangle (3,3);
    \draw[ultra thick]  (2,3) rectangle (3,4);

    \foreach \i in {-1,0,1,2,3}
    \foreach \j in {-1,0,1,2,3}
    \fill (\i,\j) circle (0.6pt);


    \node[blue] at (2, 3.5) {$\calL$};
    \node[green] at (0, -1.5) {$S_\calL$};

    \draw[preaction={draw=green!90, line width=10pt, opacity=0.5, -, shorten >=2pt, shorten <=2pt}, blue, thick, <->]
    (-0.75, { \y + \slope*(-0.75 - \x) })
    -- ( 3, { \y + \slope*( 3 - \x) });

    \coordinate (x) at (\x,\y);
    \fill[red] (x) circle (2pt) node[above left] {$\psi(x)$};

    \fill[purple] (2.5, 3.5) circle (2pt) node[right] {$\psi(x) + \binom{q}{p}$};

  \end{tikzpicture}
  \caption{The unfolding $\calL$ of a linear trajectory
    $\calO\subset M$ and the square faces through which $\calO$
    passes. In green, the maximal $\ZZ^2$-avoiding strip $S_\calL$
    containing $\calL$. The darker gray square is
    $[0,1]^2\subset\RR^2$.}
  \label{fig:unfolding}
\end{wrapfigure}

Notice that if $\calO_{x,V}$ is periodic, then the image of $\calL$ in $\RR^2/\ZZ^2$ is a closed
linear trajectory in the torus, that is, there exist integers $(q,p)$ with $\gcd(p,q)=1$ such that $\calL$ is mapped to
itself by the translation
\begin{equation}\label{eq:Tpsi}
  T_\psi(v) = v + \binom{q}{p}. 
\end{equation}
Note that $\binom{q}{p}$ is determined by $\psi$ up to sign, and is
determined by $V$ up to permutation of coordinates. This implies the
following basic fact about trajectories on the Mucube:

\begin{proposition}\label{prop:PeriodicHasToBeRational}
    If $\calO$ is a periodic trajectory on the Mucube, then the slope of $\calO$ is rational. 
\end{proposition}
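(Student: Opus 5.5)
The argument runs through the unfolding $\phi$ of a periodic trajectory. Suppose $\calO = \calO_{x,V}$ is periodic, with $x\in M_0$ lying in the face $\calF$, and let $\psi:\calF\to[0,1]^2$ be a natural chart. Let $\calL$ and $\phi$ be as constructed above. Since $\calO$ is periodic, it avoids cone points, so $\calL$ is a full line in $\RR^2$ avoiding $\ZZ^2$, with $\phi(\calL)=\calO$. Parametrize $\calL$ by arclength as $t\mapsto \psi(x)+t u$, where $u$ is the unit direction of $\calL$ in the chart. Periodicity gives a smallest $\tau>0$ with $\phi(\psi(x)+\tau u)=x$, and the tangent direction there is $\pm$ the original, by the parallel-field discussion in Section~\ref{sec:parallel-fields-m}.

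The main step is to show that the point $\psi(x)+\tau u$ differs from $\psi(x)$ by an integer vector. The point $\psi(x)+\tau u$ lies in some unit square $[a,a+1]\times[b,b+1]$ with $(a,b)\in\ZZ^2$, and $\phi$ restricted to that square is a local isometry onto the face $\calF$ (the face containing $x$). Faces of $M$ are unit squares, and the locally isometric extension $\phi$ carries the unit-square grid of $\RR^2$ onto the square tiling of $M$ edge-to-edge. Hence the restriction of $\phi$ to that square, composed with $\psi$, is an isometry of $[0,1]^2$ after translating by $(a,b)$. That is, it is $v\mapsto R(v-(a,b))$ for a symmetry $R$ of the unit square. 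Since $R$ must send $\psi(x)+\tau u-(a,b)$ to $\psi(x)$ and carry the direction $u$ to $\pm u$, we cannot immediately conclude $R=\Id$.

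This is the obstacle, and there are two ways around it. The first is to replace $\tau$ by $2\tau$ or $4\tau$: traversing the closed orbit again and composing, the total holonomy $R^k$ becomes the identity for some $k\le 4$, since $R$ lies in the dihedral group of order $8$. After that many circuits, $\psi(x)+k\tau u=\psi(x)+(m,n)$ for some $(m,n)\in\ZZ^2\setminus\{0\}$; nonzero because $k\tau>0$. The second is simply to note that the orientation-preserving, direction-preserving local structure forces $R\in\{\Id,-\Id\}$ up to the chart choice, with $-\Id$ fixing $u$ only up to sign. Either way we obtain $k\tau u=(m,n)$ with integers $m,n$ not both zero. This is exactly the translation $T_\psi$ in \eqref{eq:Tpsi}, after dividing out the gcd.

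Finally, the slope of $\calO$ in the chart $\psi$ is the slope of $u$, namely $n/m$ (or infinite if $m=0$). This is rational, and the slope in any other natural chart is obtained by a symmetry of the square, which sends $n/m$ to one of $\pm n/m$ or $\pm m/n$, still rational. So the slope of $\calO$ is rational, as claimed in Proposition~\ref{prop:PeriodicHasToBeRational}.
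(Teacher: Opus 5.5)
Your argument is essentially the paper's: the paper simply notes that, through the unfolding $\phi$, periodicity of $\calO$ makes the image of $\calL$ in $\RR^2/\ZZ^2$ a closed linear trajectory, i.e.\ $\calL$ is invariant under a nonzero integer translation $T_\psi$, so the slope is rational; you additionally spell out the chart ambiguity (the square symmetry $R$) that the paper leaves implicit. The one point to tighten is that at the first return time the velocity is exactly $u$, not merely $\pm u$: if it were $-u$, the geodesic would coincide with its own time-reversal and have zero velocity at time $\tau/2$. Your iteration argument silently relies on this, and together with orientability of $M$ it already forces $R=\Id$ and $\tau u\in\ZZ^2$, whereas your second workaround as written never rules out the $\pi$-rotation $R=-\Id$.
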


Suppose now that $\calO:=\calO_{x,V}$ avoids cone points and has
rational slope, in the sense that $\calL$ has rational slope or is
vertical in $\RR^2$. (Note that whether the slope of $\calL$ is
rational is independent of which of the natural identifications
$\calF \sim [0,1]^2$ we started with.) Then $\calL$ extends
bi-infinitely and stays a positive distance from $\ZZ^2$. For
$a\leq 0\leq b$, define the strip
\begin{equation*}
  S_{[a,b]} := \bigcup_{t\in[a,b]} \set*{\calL + (t,0)}.
\end{equation*}

Since $\calL$ stays a positive distance from $\ZZ^2$, it is contained
in a strip
\begin{equation}\label{eq:S_L}
  S_\calL := \overline{\bigcup_{\substack{a\leq 0 \leq b \\ S_{[a,b]}\subset \RR^2\setminus\ZZ^2}}S_{[a,b]}}
\end{equation}
which avoids $\ZZ^2$ except on its boundary. (Pictured in green in 
Figure~\ref{fig:unfolding}.) The image
\begin{equation*}
  S_\calO :=\phi(S_\calL)
\end{equation*}
is either a strip or a cylinder in $M$ embedded in $M$ by folding, and
meeting cone points on (and only on) the image of the boundary
$\phi(\partial S_\calL)$. We will call $S_\calO$ the maximal strip or
maximal cylinder containing the trajectory $\calO$.

\begin{remark}\label{ontotorus}
  Notice that $S_\calL$ covers $\RR^2/\ZZ^2$ under the quotient map
  $\RR^2\to \RR^2/\ZZ^2$. The two boundary components of $S_{\calL}$
  are sent to a single closed linear trajectory in the torus
  $\RR^2/\ZZ^2$ in the direction of $\calL$, passing through $(0,0)$.
\end{remark}

The following lemma introduces two rigid key motions of $M$ that arise from orbits in a given rational direction. The first of which is intertwined with $T$ via $\phi$. The second is intertwined, via $\phi$, with an involution $\iota$ that preserves the $\ZZ^2$ lattice and a given  maximal strip. 

\begin{lemma}\label{lem:intertwining}
  Let $V$ be a parallel field on $M_0$ with rational direction and
  $\calO$ a cone-point avoiding linear trajectory along $V$ passing
  through the point $x$.
\begin{enumerate}
\item Let $\psi$ and $T:=T_\psi$ be as in~\eqref{eq:Tpsi}. Then there
  is a unique rigid motion $g:=g_{\calO}\in G$ such that
  \begin{equation}\label{eq:intertwinedtranslation}
  g\circ \phi \mid_{S_{\calL}}= \phi\circ T \mid_{S_{\calL}},
\end{equation}
where $S_{\calL}$ is the maximal strip~\eqref{eq:S_L}. Furthermore,
$g^{\pm 1}$ is determined by $\calO$ and does not depend on the choice
of $x\in\calO$ or coordinatization $\psi$.
\item Let $\iota:\RR^2\to\RR^2$ be a $\pi$-rotation sending
  $\ZZ^2\to \ZZ^2$ and $S_{\calL}\to S_{\calL}$. Then there is an
  order $2$ rigid motion $h\in G$ such that
\begin{equation}\label{eq:intertwinedrotation}
  h\circ \phi \mid_{S_{\calL}}= \phi\circ \iota \mid_{S_{\calL}}.
\end{equation}
\end{enumerate}
\end{lemma}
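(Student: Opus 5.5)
The plan is to exploit the transitivity of $G$ on faces (Lemma~\ref{lem:parallelism}) together with the fact that $G$ preserves parallel fields. For both parts the key observation is the same. Let $\phi: S_\calL \to M$ be the unfolding map. Any local isometry of $M$ that agrees with $\phi\circ\sigma\circ\phi^{-1}$ on a single unit square of $S_\calL$, where $\sigma$ is an isometry of $\RR^2$ preserving $\ZZ^2$ and $S_\calL$, must agree with it on all of $S_\calL$. This follows by analytic continuation along the strip: $\phi$ is a local isometry on the union of unit squares met by $S_\calL$. A rigid motion of $\RR^3$ preserving $M$ restricts to a local isometry of $M_0$. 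Two local isometries of a connected flat surface that agree on an open set agree everywhere on the connected domain where both are defined. I would apply this to the connected set $S_\calL$ minus its boundary cone points, then extend by continuity.

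For part 1, let $Q=[0,1]^2$ be the square containing $\psi(x)$ and $Q' = Q+\binom{q}{p}$. Since $\calL$ passes through $Q'$, $\phi(Q')=\calF'$ is a face of $M$. By Lemma~\ref{lem:parallelism}(1) there are exactly two elements of $G$ sending $\calF$ to $\calF'$, differing by the $\pi$-rotation of $\calF$. The $\pi$-rotation acts on parallel direction fields trivially, but on $T\calF$ it acts by $-1$. Lemma~\ref{lem:parallelism}(3) says each such $g$ preserves the direction field $V$. So exactly one of the two, $g$, satisfies $g\circ\psi^{-1} = \phi\circ T$ on $Q$; the other differs by the point reflection of the square.

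I need to justify that the candidate map really is one of the two. The map $\phi\circ T\circ\psi$ is an orientation-preserving isometry $\calF\to\calF'$ that carries $V$ to $V$ (since $T$ is a translation along $\calL$, and the holonomy is $\pm1$). The two elements of $G$ realize the two orientation-preserving isometries $\calF\to\calF'$ that are compatible with the parallel direction field. I expect this matching of isometries to be the main obstacle. An orientation-preserving isometry of squares has four options, and preserving the direction field $V$ (not merely its perpendicular) cuts them to two, exactly those realized by $G$; here Lemma~\ref{lem:111}, that the other coset gives $V^\perp$, is suggestive for generic $V$. When $V$ is horizontal or vertical relative to the square, the field does not distinguish the quarter turns, so I would additionally use that $\phi\circ T\circ\psi$ respects the $\phi$-unfolded geometry. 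I would check this by noting that the face-stabilizer has order $2$, so the map from $G$-elements to isometries is injective, and handle the degenerate case directly.

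Uniqueness follows from injectivity: agreement on an open set forces equality of rigid motions. For independence of the choices, changing $x$ along $\calO$ or changing $\psi$ among the four identifications conjugates or inverts $T$ (since $\binom{q}{p}$ is defined up to sign). By uniqueness, the new $g$ is $g$ or $g^{-1}$.

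For part 2, $\iota$ fixes some point $c\in\frac12\ZZ^2$ lying in $S_\calL$, and $\iota$ maps the unit square (or pair of squares) around $c$ to itself. I choose a square $Q_0\subset S_\calL$ with $\iota(Q_0)$ also in $S_\calL$. Then I take $h\in G$ sending $\phi(Q_0)$ to $\phi(\iota Q_0)$ compatibly as above. If $c$ is a face center, $h$ is the stabilizing $\pi$-rotation of Lemma~\ref{lem:parallelism}(1). If $c$ is an edge midpoint, $h$ is the $\pi$-rotation about the axis through that midpoint, which exists by Lemma~\ref{lem:periodicrotations}. Continuation along $S_\calL$ gives \eqref{eq:intertwinedrotation}. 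Moreover, $h^2$ agrees with the identity on $S_\calL$, so $h^2=\Id$ and $h\neq \Id$, hence $h$ has order $2$.
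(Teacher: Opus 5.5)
Your proposal is correct and takes the same route as the paper: realize $\phi\circ T\circ\psi$ (and, for part 2, the map induced by $\iota$) on a single face by an element of $G$ using Lemma~\ref{lem:parallelism}, then extend by uniqueness of locally isometric continuation along $S_\calL$. You actually do more than the paper's proof, which simply asserts that such a $g$ exists: you justify existence via orientation and preservation of parallel fields, and you also address the uniqueness and independence claims that the paper leaves implicit.

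Two small corrections. First, there is no degenerate case when $V$ is horizontal or vertical. An orientation-preserving isometry $\calF\to\calF'$ that is a quarter turn off from parallel transport sends every parallel direction field $W$ to $W^\perp\neq W$, so requiring that $V$ be preserved selects the correct pair uniformly. Second, the existence of the edge-midpoint $\pi$-rotation in part 2 comes from that same face-matching argument, not from Lemma~\ref{lem:periodicrotations}. That lemma only constrains where such axes meet $M$.
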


\begin{proof}
  For the first part, let $\calF\subset M$ be the square face containing
  $x$, and let $g\in G$ be the rigid motion such that
  \begin{equation*}
    g\mid_{\calF} = \phi\circ T \circ \psi.
  \end{equation*}
  The element $g$ is guaranteed by Lemma~\ref{lem:parallelism} to exist. Then we have
  \begin{equation*}
    g \circ \phi \mid_{[0,1]^2} = \phi \circ T \mid_{[0,1]^2}. 
  \end{equation*}
  The maps on either side must have the same locally isometric
  extension to $S_\calL$, and so~(\ref{eq:intertwinedtranslation}) follows.

  For the second part, the unique fixed point $v$ of $\iota$ must lie
  on the center line through $S_\calL$ and, since that center line
  does not intersect $\ZZ^2$, $v$ must have at least one coordinate
  $\equiv 1/2\pmod 1$. That is, $v$ lies either at the center of an
  integer translate $\calH$ of $[0,1]^2$ or at the midpoint of one of
  its edges. Let $\calF' = \phi(\calH)$.

  If $\phi(v)$ is at the center of $\calF'$, then let $h\in G$ be the
  $\pi$-rotation sending $\calF'$ to itself.  On the other hand, if
  $\phi(v)$ is the midpoint of an edge of $\calF'$, let $h\in G$ be
  the $\pi$-rotation interchaging $\calF'$ with the square face
  sharing that edge. In either case, we have
  \begin{equation*}
    h\circ \phi \mid_{\calH}= \phi\circ \iota \mid_{\calH},
  \end{equation*}
  and again both sides must have the same locally isometric extension
  to $S_\calL$, which gives~(\ref{eq:intertwinedrotation}).
\end{proof}

\section{Cylinder Decompositions of $M$}\label{sec:cylinderDecomp}

Given a parallel direction field $V$ with rational slope, one obtains
a tiling of $M$ by maximal strips/cylinders: 
\begin{equation}\label{eq:tiling}
  M = \bigcup_{S\in \calS_V} S,
\end{equation}
where
\begin{equation*}
  \calS_V = \set{S : S = S_{\calO_{x,V}} \textrm{ for some } x \in M_0}.
\end{equation*}
In fact, more is true. We will show the following.

\begin{theorem}\label{thm:cylinders}~
  \begin{enumerate}[label=(\alph*)]

  \item Every periodic trajectory gives rise to a decomposition of $M$
    into isometric cylinders of area $4$. Each of these cylinders has
    an order-$4$ symmetry by a $\pi/2$-rotation of $\RR^3$ over an
    axis parallel to one of the coordinate axes; it is
      disjoint from all of its $(2\ZZ)^3$-translates; and
      it is disjoint from all of its $2\pi/3$-rotations.
  \item Every non-periodic trajectory in a rational direction gives
    rise to a decomposition of $M$ into isometric bi-infinite
    strips. Each of these strips is sent to itself by a translation by
    an element in $(2\ZZ)^3$.
  \end{enumerate}
\end{theorem}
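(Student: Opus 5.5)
The plan is to work with the rigid motions supplied by Lemma~\ref{lem:intertwining}. Fix a rational parallel field $V$ and a cone-point avoiding trajectory $\calO=\calO_{x,V}$. Let $S=S_\calO=\phi(S_\calL)$ be its maximal strip/cylinder, and let $g=g_\calO$ be the rigid motion intertwined with the translation $T$.

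\emph{Step 1: all maximal pieces are isometric.} Take another maximal piece $S'=S_{\calO'}$. By Lemma~\ref{lem:parallelism}(1) there is $k\in G$ carrying the face containing $x$ to the face containing a point of $\calO'$. By Lemma~\ref{lem:parallelism}(3), $k$ preserves $V$, so it maps $V$-trajectories to $V$-trajectories. Remark~\ref{ontotorus} says each $S_\calL$ projects onto the whole torus $\RR^2/\ZZ^2$, so every face meets every maximal piece in the same pattern of parallel segments. Hence $k$ may be chosen (after composing with the face's $\pi$-rotation if necessary) so that $k(S)=S'$. This shows that all elements of $\calS_V$ are images of one another under $G$, hence isometric. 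As a consequence, either every trajectory in direction $V$ is periodic or none is, which justifies the dichotomy between (a) and (b).

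\emph{Step 2: periodic versus drift.} $\calO$ is periodic exactly when $\phi$ factors through a finite power of $T$, i.e.\ when $g^n\circ\phi=\phi\circ T^n=\phi$ on $S_\calL$ for some $n\ge1$. A rigid motion fixing an open set is the identity, so this holds iff $g$ is torsion in $G$.

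\begin{itemize}
\item If $g$ has infinite order, write $g=(\bt,\theta)$. Then $g^{m}$ for $m$ the order of $\theta$ (at most $4$, by Lemma~\ref{lem:periodicrotations}) is a pure translation $(\mathbf s,I)$ with $\mathbf s\in\ZZ^3$, $\mathbf s\neq0$. It maps $S$ to itself, since it is intertwined with $T^m$. This gives (b).
\item If $g$ is torsion, Lemma~\ref{lem:periodicrotations} says $g$ is a rotation of order $2$, $3$ or $4$ about a line $\ell$, and the cylinder $S$ is a union of $g$-orbits. The period of $\calO$ in the torus is the segment from $\psi(x)$ to $\psi(x)+(q,p)$. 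Since $S_\calL/\langle T\rangle$ has area $|(q,p)|\cdot\frac{1}{|(q,p)|}=1$, the cylinder has area $n$, where $n$ is the order of $g$.
\end{itemize}

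\emph{Step 3: order exactly $4$ and the axis.} This is where the main obstacle lies.

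\begin{itemize}
\item \emph{Ruling out order $3$.} A $2\pi/3$-rotation axis is a cube diagonal and passes through cone points. The rotation then acts as a rotation about each point of $\ell\cap M$, and these are cone points. But $g$ translates $\calO$ along itself, so it can fix no point of the flat cylinder interior, whereas cone points lie only on $\partial S$. I expect that to get a contradiction one must check that the rotation about the diagonal, restricted near those cone points, cannot act as a translation along a core curve. I would try to settle this by a direct computation of the displacement of the face containing $x$.
\item \emph{Ruling out order $2$.} Use Lemma~\ref{lem:periodicrotations}: the $\pi$-rotation axes meet $M$ only at face centers or edge midpoints. There $g$ would fix a point of $M_0$ and act as $-\mathrm{id}$ on its tangent plane. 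This is incompatible with $g$ acting as the translation $T$ (derivative $+\mathrm{id}$) on $S$, unless the axis misses $M$. In that case the area-$2$ cylinder would have to be excluded by a parity argument on $(q,p)$ modulo $2$; this is the step I expect to be most delicate.
\item \emph{Axis direction.} Once order $4$ is established, $\theta$ is a quarter turn about a coordinate axis, so $\ell$ is parallel to one. The area is then $4$.
\end{itemize}

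\emph{Step 4: disjointness.} If a $(2\ZZ)^3$-translate $\tau S$ met $S$, then by maximality $\tau S=S$. In that case $\tau$ would commute with the action on $S$, and composing with $g$ would give a non-torsion element acting on the compact cylinder $S$. A rigid motion of $\RR^3$ preserving a compact set is torsion, which is a contradiction. Similarly, if a $2\pi/3$-rotation $r$ satisfied $rS=S$, then $\langle g,r\rangle$ would be a finite group preserving $S$ and containing elements of orders $3$ and $4$ with non-parallel axes. That group would be $O$ and would fix a point; I would check that this forces a cone point into the interior of $S$, contradicting maximality.
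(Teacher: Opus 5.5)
Your Steps 1 and 2 and the drift case (b) follow the paper (Proposition~\ref{prop:Mcylinders}, Lemma~\ref{lem:torsion}, and the pure translation $g^m$). Step~3 is the real content of part (a), and it is not established.

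\begin{itemize}
\item \emph{Order $1$.} The case $g=\mathrm{id}$ (an area-$1$ cylinder) is never excluded, although Lemma~\ref{lem:periodicrotations} allows it.
\item \emph{Order $3$.} Knowing that the axis of $g$ meets $M$ only in cone points gives nothing. Those fixed points need not lie in $S$, so the behaviour of $g$ near its axis says nothing about its action on $S$. The computation you promise is not supplied.
\item \emph{Order $2$.} The derivative argument fails for the same reason: the fixed point need not lie in $S$. More fundamentally, $M$ is only a half-translation surface. Parallel transport around a cone point reverses vectors, and $G$ preserves direction fields, not vector fields (Lemma~\ref{lem:parallelism}). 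Comparing $dg_p=-\mathrm{id}$ at a fixed point $p$ with $g$ acting by a translation on $S$ only shows that a certain loop through $p$ and $S$ has holonomy $-1$. That is no contradiction.
\item \emph{Order $2$, axis missing $M$.} This case is left to an unspecified parity argument.
\end{itemize}

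The missing idea is part 2 of Lemma~\ref{lem:intertwining}, which you never use. A $\pi$-rotation $\iota$ of $S_\calL$ preserving $\ZZ^2$ yields a $\pi$-rotation $h\in G$ that preserves $S$ and reverses its core curve $\calO$. So $h$ fixes two antipodal points $x,y\in\calO$. By Lemma~\ref{lem:periodicrotations} both are face centers or both are edge midpoints, so they occupy the same relative position in their faces, and $y=g^\ell x$ with $1\le\ell<k$. Since $g$ advances $\calO$ by $1/k$ of its length, this forces $k=2\ell$, which disposes of orders $1$ and $3$. The paper then applies the same reasoning to the $\pi$-rotation $g^\ell h$, whose fixed points on $\calO$ lie halfway between $x$ and $y$. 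It concludes that $T^\ell v-v$ cannot be primitive, so $\ell=2$ and $k=4$. This also yields the four face centers on the core curve (Corollary~\ref{lem:midpoints}), which is how the paper rules out a $2\pi/3$-symmetry.

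Your Step~4 sketch for that claim does not work as written. The common fixed point of the finite group $\langle g,r\rangle$ is the intersection of the two axes, which is a point of $\ZZ^3$. Since $\ZZ^3\cap M=\emptyset$, no cone point is forced into $S$. Once $g$ is known to have order $4$, a quick fix is available. An orientation-preserving isometry of order $3$ of the flat cylinder $S$ must be a rotation of it, so $r$ and $g$ commute on $S$ and hence in $G$. That is impossible for rotations about non-parallel axes.
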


We start by proving that ``tiles'' in
(\ref{eq:tiling}) are isometric to one another through rigid motions.

The idea is this. A linear trajectory in a rational direction lies
in either a maximal cylinder or a maximal strip, $S\subset M$, whose
boundary $\partial S$ passes through cone points of $M$. The images of
$S$ under the action of $G$ are isometric copies of $S$ that either
coincide with $S$ itself, or whose interiors are disjoint from
$S$. Furthermore, by the transitivity statements of
Lemma~\ref{lem:parallelism}, their union covers $M$.

\begin{proposition}\label{prop:Mcylinders}
  Let $V$ be a direction field on $M_0$ with rational slope. Then in
  the $V$ direction, either $M$ completely decomposes into isometric
  cylinders or $M$ decomposes into isometric bi-infinite strips.
\end{proposition}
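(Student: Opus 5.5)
Fix a cone-point avoiding trajectory $\calO=\calO_{x,V}$ and let $S=S_\calO$ be its maximal strip or cylinder. The plan is to show that the $G$-translates of $S$ tile $M$, overlapping only along boundaries, and that every element of $\calS_V$ is one of these translates. Since elements of $G$ are isometries, this gives the decomposition into isometric pieces. The dichotomy between cylinders and strips then comes for free, because a $G$-image of a cylinder is a cylinder and a $G$-image of a strip is a strip.

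\textbf{Step 1: $G$-images of maximal pieces are maximal pieces.} Take $g\in G$. By Lemma~\ref{lem:parallelism}(3), $g$ preserves the parallel field $V$, so $g$ maps trajectories in direction $V$ to trajectories in direction $V$. It also maps cone points to cone points. The maximal strip $S_\calO$ is characterized intrinsically: it is the union of the parallel $V$-trajectories adjacent to $\calO$ that can be reached without crossing a cone point, closed up. Since $g$ is an isometry preserving $V$ and $\calC$, this characterization is preserved, so $g(S_\calO)=S_{g\calO}$.

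\textbf{Step 2: distinct pieces have disjoint interiors.} Let $y$ lie in the interior of both $S_\calO$ and $S_{\calO'}$. Then the $V$-trajectory through $y$ lies in both maximal pieces. Maximality, as encoded in the definition~\eqref{eq:S_L}, forces both pieces to equal the maximal strip of that common trajectory. Hence $S_\calO=S_{\calO'}$. In particular $g(S)$ either equals $S$ or has interior disjoint from $S$.

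\textbf{Step 3: the translates cover $M$.} Let $\calF$ be the face containing $x$. By Remark~\ref{ontotorus}, $S_\calL$ surjects onto $\RR^2/\ZZ^2$, so $S$ meets every point of $\calF$ modulo the $\ZZ^2$-folding. More concretely, for every $y\in\calF$ the unfolded strip $S_\calL$ contains an integer translate $\calH$ of $[0,1]^2$ containing a preimage of the corresponding point. Set $\calF''=\phi(\calH)$; it is a face of $M$ contained in $S$. Because $V$ is a direction field rather than a vector field, the natural coordinate on $\calF''$ may agree with that on $\calF$ only up to a $\pi$-rotation.

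Now take an arbitrary face $\calF'$ of $M$. By Lemma~\ref{lem:parallelism}(1), some $g\in G$ maps $\calF''$ onto $\calF'$. The remaining freedom is the order-$2$ stabilizer, which acts as the $\pi$-rotation. Since $g$ preserves $V$, it carries the identified point of $\calF''$ to any prescribed point of $\calF'$, up to this $\pi$-ambiguity, which the stabilizer absorbs. Thus every point of $M_0$ lies in some $g(S)$, and so $M=\bigcup_{g\in G} g(S)$. Combined with Steps 1 and 2, every element of $\calS_V$ equals some $g(S)$.

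\textbf{Main obstacle.} I expect Step 3 to be the delicate part. One must check that the point of $\calF'$ reached by $g$ is correctly positioned relative to $V$, and not merely that $g$ maps the right face to the right face. Since $V$ is only defined up to sign and coordinates are only defined up to rotation, this requires care. The intended resolution is that Lemma~\ref{lem:parallelism}(3) makes $g$ compatible with $V$, so $g$ sends the coordinate identification of $\calF''$ to one of the two $V$-compatible identifications of $\calF'$. The $\pi$-rotation in the stabilizer then switches between these two identifications. Together with the surjectivity of $S_\calL\to\RR^2/\ZZ^2$, this covers every point.
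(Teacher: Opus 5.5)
Your proposal is correct and follows the paper's proof. Both show $g(S_\calO)=S_{g\calO}$ using Lemma~\ref{lem:parallelism}(3), then prove $M=\bigcup_{g\in G}g(S_\calO)$ via Remark~\ref{ontotorus}, face-transitivity and the order-$2$ face stabilizer; your Step~2 (disjoint interiors) and your treatment of the $\pi$-rotation ambiguity make explicit what the paper leaves to the phrase ``same relative position.'' That treatment implicitly uses that $G$ preserves the orientation of $M$, and one wording fix is needed: $S_\calL$ generally contains no whole unit square and $S$ no whole face, but you only ever use the point $\phi(v)\in S\cap\phi(\calH)$.
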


\begin{proof}
  Let $\calO$ be a trajectory in $M_0$ in direction $V$, and $S_\calO$
  its maximal strip or cylinder. Then for any $g\in G$, the image
  $g\calO$ is also a linear trajectory in direction $V$, by
  Lemma~\ref{lem:parallelism}, and we have that
  \begin{equation*}
    g(S_{\calO}) = S_{g\calO}
  \end{equation*}
  is the maximal strip or cylinder around $g\calO$. Since $G$ acts by
  isometries, $S_{g\calO}$ is isometric to $S_\calO$. We will be done
  once we show that
  \begin{equation}\label{eq:monotiling}
    M = \bigcup_{g\in G} S_{g\calO} = \bigcup_{g\in G} g(S_{\calO}). 
  \end{equation}
  Let $x\in \calF \subset M$ where $\calF$ is the square face containing $x$. By Remark~\ref{ontotorus}, there is some
  point $y\in S_\calO$ holding the same relative position in its square
  face, say, $\calF'$, as $x$ holds in
  $\calF$. By~Lemma~\ref{lem:parallelism}, there is a rigid motion
  $g\in G$ such that $g\calF' = \calF$ and $gy=x$. Note, then, that
  $x\in g(S_\calO)$.  Since $x$ was an arbitrary
  point,~(\ref{eq:monotiling}) holds.
\end{proof}

Next, we prove that given a trajectory $\calO$ in a rational direction, its periodicity is detected by the associated rigid motion $g_\calO$ defined in Lemma~\ref{lem:intertwining}. 

\begin{lemma}\label{lem:torsion}
Let $V$ be a parallel field on $M_0$ with rational direction and $\calO$ a cone-point avoiding linear trajectory along $V$. Let $g:=g_\calO$ be determined as in Lemma~\ref{lem:intertwining}, noting that it is unique up to inversion. The trajectory $\calO$ is periodic if
  and only if $g$ is of finite order. In this case, the area of the maximal cylinder $S_{\calO}$
  is the order of $g$.
\end{lemma}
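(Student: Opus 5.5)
The proof rests on the intertwining relation \eqref{eq:intertwinedtranslation}, $g\circ\phi|_{S_\calL}=\phi\circ T|_{S_\calL}$, iterated to $g^n\circ\phi=\phi\circ T^n$ on $S_\calL$ for all $n\in\ZZ$. The unfolded line $\calL$ is invariant under $T$. A fundamental segment of $\calL$ is a segment from a point $v$ to $T(v)$; its image under $\phi$ is a piece of $\calO$ of length $\sqrt{p^2+q^2}$. The whole trajectory $\calO=\phi(\calL)$ is the union of the images $g^n\phi(\text{segment})$ for $n\in\ZZ$. Likewise $S_\calO=\phi(S_\calL)$ is the union of the $g^n$-translates of $\phi(D)$, where $D\subset S_\calL$ is a fundamental domain for $\langle T\rangle$ acting on $S_\calL$.

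\textbf{Finite order implies periodic.} Suppose $g^k=\Id$. Then for $v\in\calL$,
\[
\phi(T^k v)=g^k\phi(v)=\phi(v).
\]
So the trajectory returns to its starting point $\phi(v)$ with the same direction (the unfolding is by translations along $\calL$). Hence $\calO$ is periodic.

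\textbf{Periodic implies finite order.} Suppose $\calO$ is periodic. Then $\phi(v+w)=\phi(v)$ for some nonzero $w$ parallel to $\calL$, with the direction preserved. Because $\phi$ is determined by unfolding squares, the return must happen at a point in the same relative position of a square, so $w$ is an integer multiple of $\binom{q}{p}$; say $w=k\binom{q}{p}$, meaning $\phi\circ T^k=\phi$ near $v$. Then $g^k$ fixes $\phi(v)$ together with a neighborhood of it in $S_\calO$, i.e.\ an open subset of a face of $M$. By Lemma~\ref{lem:parallelism}(1), the only nonidentity element of $G$ preserving a face is the $\pi$-rotation, and it fixes only the center point. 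Hence $g^k=\Id$. (Alternatively: $G$ acts faithfully by isometries of $\RR^3$, and an isometry fixing an open planar region is the identity or a reflection, and $G$ is orientation preserving.)

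\textbf{Area equals the order.} Let $k$ be the order of $g$. First we show the least $m>0$ with $\phi\circ T^m=\phi$ on $S_\calL$ is exactly $k$. One direction: $\phi\circ T^m=\phi$ gives $g^m\circ\phi=\phi$, so $g^m$ fixes an open set and $g^m=\Id$, forcing $k\mid m$. The other direction is the previous paragraph. Consequently, the maximal cylinder $S_\calO$ is the quotient $S_\calL/\langle T^k\rangle$, embedded isometrically except along its boundary.

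Next, Remark~\ref{ontotorus} says $S_\calL/\langle T\rangle$ maps onto the torus $\RR^2/\ZZ^2$. I will check that this map is a bijection off the boundary, hence area-preserving. Indeed, the width of $S_\calL$ is $1/\sqrt{p^2+q^2}$, because consecutive lattice lines parallel to $(q,p)$ are at that distance. Multiplying by the period length $\sqrt{p^2+q^2}$ gives area $1$. Therefore
\[
\operatorname{area}(S_\calO)=k\cdot\operatorname{area}\bigl(S_\calL/\langle T\rangle\bigr)=k.
\]

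\textbf{Main obstacle.} The delicate point is justifying that $\phi$ restricted to $S_\calL/\langle T^k\rangle$ is injective on the interior, so that area is not overcounted. Equivalently, the cylinder must not wrap onto itself by some non-translation identification. I would handle this as follows. Suppose $\phi(a)=\phi(b)$ for interior points $a,b$. Local isometry and a matching parallel direction force $b=\sigma(a)$, where $\sigma$ is either a translation along $\calL$ or a $\pi$-rotation like $\iota$. A $\pi$-rotation would reverse the direction field $V$ as an oriented field along $\calO$. Since $\phi(a)=\phi(b)$ also requires the oriented tangent of the trajectory at that point to agree (a single trajectory passes through it), a $\pi$-rotation is excluded. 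Therefore $\sigma$ is a translation $T^m$, and the argument above gives $k\mid m$.
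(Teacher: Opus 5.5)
Your proof is correct and follows essentially the same route as the paper. You iterate the intertwining relation to see that $\phi\circ T^k=\phi$ holds exactly when $g^k=\Id$, since a nontrivial rigid motion in $G$ cannot fix an open piece of $M$. You then read off the area of $S_\calO$ from a fundamental domain of the covering $\phi:S_\calL\to S_\calO$, which consists of $k$ copies of a region of area $1$.

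Your extra paragraph on injectivity of $\phi$ modulo $\langle T^k\rangle$ covers a point the paper takes for granted. The step ruling out a $\pi$-rotation $\sigma$ with $\phi\circ\sigma=\phi$ is imprecise when $a,b$ lie on different leaves. The cleanest fix is to note that such a $\sigma$ must preserve $S_\calL$ and hence fix a point of its center line, where $\phi$ is locally injective.
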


\begin{proof}
  Recall that $\phi$ defines a locally isometric covering
  \begin{equation*}
    \phi: S_{\calL} \to S_{\calO}.
  \end{equation*}
  If $g \in G$ is of order $k$, then
  by~(\ref{eq:intertwinedtranslation}) we have
  $\phi = \phi\circ T_{\psi}^k$. This implies that $\calO$ is
  periodic.

  Conversely, suppose the trajectory $\calO$ is periodic in
  $M$. Then there is some minimal $k$ such that
  $\phi = \phi\circ T_{\psi}^k$. Then,
  by~(\ref{eq:intertwinedtranslation}), we have
  $g^k\circ \phi = \phi$. That is, $g^k$ fixes every point of $\calO$ so
  must be the identity, and $g$ is torsion of order $k$.

  In this case, note that the covering map $\phi$ admits fundamental
  domain 
  
  \begin{equation*}
    S_{\calL}\cap \parens*{\RR\times[0,kp]}
  \end{equation*}
  if $p\neq 0$ and 
  \begin{equation*}
    S_{\calL}\cap \parens*{[0,kq]\times \RR}
  \end{equation*}
  otherwise where $p,q$ are such that $V$ has slope
  $\frac{p}{q}$. Either way, the parallelogram has area $k$, therefore
  $S_{\calO}$ has area $k$.
\end{proof}

For a periodic trajectory $\calO$, the previous lemma asserts that its maximal cylinder is given by the order of the associated rigid motion $g_\calO$. This fact can now be used to show that every maximal cylinder in the Mucube has the same area. 
\begin{proposition}\label{lem:areafour}
  Every maximal cylinder of the Mucube has area 4 and is mapped to itself by a $\pi/2$ rotation from $G$.
\end{proposition}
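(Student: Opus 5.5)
By Lemma~\ref{lem:torsion}, the area of a maximal cylinder $S_\calO$ equals the order of $g=g_\calO$. By Lemma~\ref{lem:periodicrotations}, $g$ is a rotation about a line $\ell\subset\RR^3$ of order $1,2,3$ or $4$. So the proof reduces to ruling out orders $1,2,3$ and checking that $g$ itself is a $\pi/2$-rotation preserving $S_\calO$. That $g$ preserves $S_\calO$ is immediate from~\eqref{eq:intertwinedtranslation}: $g(S_\calO)=g\phi(S_\calL)=\phi(T S_\calL)=\phi(S_\calL)=S_\calO$, since $T$ preserves $S_\calL$.

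\emph{Order $1$.} Then $\phi=\phi\circ T$ on $S_\calL$. The parallelogram fundamental domain in the proof of Lemma~\ref{lem:torsion} would then have area $1$, so $S_\calO$ would consist of a single closed strip in one square face. This is impossible: a straight segment in a single unit square cannot close up. More precisely, the identity $g$ would have to send the face $\calF$ containing $x$ to a different face $\phi(T[0,1]^2)\neq\calF$, since $T$ moves $[0,1]^2$ to a square $\binom{q}{p}$ away, and $\phi$ restricted to the consecutive squares along $\calL$ passes through distinct adjacent faces. So $g\neq\Id$.

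\emph{Order $2$.} Here $g$ is a $\pi$-rotation, and I use the second assertion of Lemma~\ref{lem:periodicrotations}. The axis $\ell$ meets $M$ either not at all, or only in face centers, or only in edge midpoints. The cylinder $S_\calO$ is $g$-invariant and $g$ acts on it as the translation $T$ (a rotation of the core curve by half its length). I would argue that a $\pi$-rotation of $\RR^3$ preserving an embedded annulus and acting on it freely as a translation is inconsistent with the local picture near $\ell$. Concretely, the core curve $\calO$ is a closed curve invariant under a half-turn about $\ell$ acting freely on it, so it must link $\ell$. I would then use the explicit axes from the proof of Lemma~\ref{lem:periodicrotations} (lines $\{(m,n,z)\}$ and diagonals $\{(x+m,x-m,k)\}$) to check directly that no such linking curve of length $2\sqrt{p^2+q^2}$ lies in $M$ in a constant direction. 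Alternatively, I would pair $g$ with the involution $h$ from Lemma~\ref{lem:intertwining}(2): $h$ conjugates $g$ to $g^{-1}$, and I would use this to derive a contradiction with $g^2=\Id$ on the strip geometry.

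\emph{Order $3$.} I would rule this out using orientation/parallel-field considerations. The $2\pi/3$-rotations in $G$ permute the three coordinate directions cyclically, so they send faces parallel to one coordinate plane to faces parallel to another. Meanwhile $g$ acts on $S_\calO$ as a translation, so it preserves the direction field $V$ together with its orientation along $\calO$. I would check on a single face, using the explicit matrix $\theta_{2\pi/3}$ and the identification in Lemma~\ref{lem:parallelism}(3), that the induced map on $[0,1]^2$ coordinates is not a translation; this is a contradiction. The cleaner route may be a parity argument: along $\calO$, the faces visited alternate in a pattern determined by $(p,q)$ mod $2$, and one full period of $T$ has length coprime to $3$ in the relevant sense.

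I expect the order-$2$ case to be the main obstacle, and I would first attempt the direct linking/geometric check against the explicit $\pi$-rotation axes. Once orders $1,2,3$ are excluded, $g$ has order $4$, so the area is $4$ and $g$ is the required $\pi/2$-rotation.
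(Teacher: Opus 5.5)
Your reduction is the right one and matches the paper's first step: the area of $S_\calO$ equals the order $k$ of $g=g_\calO$, which lies in $\{1,2,3,4\}$, and $g$ preserves $S_\calO$. However, none of the three exclusions is actually established.

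\emph{Order $1$.} An area-$1$ cylinder need not lie in a single face, since a fundamental parallelogram for $\phi$ meets several unit squares. The claim $\phi(T[0,1]^2)\neq\calF$ is exactly what has to be proved. ``Consecutive faces are distinct'' does not stop the chain of faces from returning to $\calF$, and in the genuine case it does return, after $k=4$ periods.

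\emph{Order $3$.} There is no local contradiction to find. By~\eqref{eq:intertwinedtranslation}, $g$ acts as the translation $T$ in developed coordinates by construction, whatever its order. Your parity idea can be made precise: every edge of $M$ is a right-angle fold, so the rotational part of $g$ is a product of $|p|+|q|$ quarter-turns about coordinate axes and has sign $(-1)^{p+q}$ in $O\cong S_4$. But this only excludes orders $1$ and $3$ when $p+q$ is odd. It says nothing for odd/odd slopes or about order $2$.

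\emph{Order $2$.} The relation $hgh^{-1}=g^{-1}$ carries no information when $g$ is an involution. It just says $g$ and $h$ commute, as $g^2$ and $h$ do in the true order-$4$ case. The linking check over all $(p,q)$ comes with no mechanism.

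The missing idea is to use the \emph{fixed points} of $h$ rather than its conjugation relation.
\begin{enumerate}
\item Take $\calO$ to be the core curve of $S_\calO$. Then $h$ reverses $\calO$, so it fixes exactly two antipodal points $x=\phi(v)$ and $y$.
\item By Lemma~\ref{lem:periodicrotations} these are both face centers or both edge midpoints. So $y$ occupies the same relative position as $x$, and therefore $y=\phi(T^\ell v)=g^\ell x$ for some $1\le \ell<k$.
\item Antipodality gives $k=2\ell$. This kills orders $1$ and $3$ simultaneously and makes $g^\ell$ a $\pi$-rotation.
\item To rule out $\ell=1$, the paper applies the same reasoning to the second orientation-reversing involution $g^\ell h$ of $\calO$. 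Its fixed point $\phi\bigl(v+\tfrac{\ell}{2}\binom{q}{p}\bigr)$ must again be of the same type as $x$, which is impossible for $\ell=1$ since $\gcd(p,q)=1$.
\end{enumerate}
Hence $\ell=2$ and $k=4$.
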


\begin{proof}
  It follows from Lemmas~\ref{lem:torsion}
  and~\ref{lem:periodicrotations} that every maximal cylinder of the
  Mucube has area 1, 2, 3, or 4.
  
  Let $S_\calO$ be a maximal cylinder in $M$, covered by
  $\phi:S_\calL \to S_\calO$ and assume that $\calO$ is the core curve
  of the cylinder. Let $\iota$ be an involution as in
  Lemma~\ref{lem:intertwining}, fixing the point $v\in S_\calL$, and
  let $h\in G$ be the corresponding rigid motion
  satisfying~(\ref{eq:intertwinedrotation}). Since
  $\calO$ is the core curve of $S_\calO$, it is sent to itself by $h$
  with the opposite orientation. Therefore, there are two points,
   $x=\phi(v)$ and $y \in \calO$, that are fixed by $h$,
  and they are antipodal in $\calO$. The fixed points of $h$ are
  either all center points of square faces, or all midpoints of edges,
  as established in Lemma~\ref{lem:periodicrotations}. This means that $x$ and $y$ occupy
  the same relative positions in their respective faces, hence there
  is a smallest integer $\ell\geq1$ such that
  $\phi(T^\ell (v))=y$, where $T$ is as in~(\ref{eq:Tpsi}).

  Let $g$ be the rigid motion associated to $T$ in the sense of
  Lemma~\ref{lem:intertwining}. By~(\ref{eq:intertwinedtranslation}),
  $\phi(T^\ell(v)) = g^\ell x = y$, and it follows from $x\neq y$ that
  $\ell < k$, where $k$ is the order of $g$ and also the smallest
  integer such that $\phi\circ T^k = \phi$.  On the other hand, the
  fact that $y$ is the half-way point of $\calO$ (from $x$'s
  perspective) implies that $g^{2\ell}$ is the identity. Hence
  $g^\ell$ is a $\pi$-rotation, and $g$ is a
  $\frac{\pi}{\ell}$-rotation. This means that $g$ is either a
  $\pi$-rotation or a $\pi/2$-rotation.

  Note that $g^\ell h$ is a $\pi$-rotation sending $\calO$ to itself
  with opposite direction and interchanging $x$ and $y$. It therefore
  has two antipodal fixed points $w,z\in \calO$, half-way between the
  arcs of $\calO$ connecting $x$ and $y$, and having the same relative
  positions as $x$ and $y$ do---if $x,y$ are the centers of faces,
  then so are $w, z$; if $x,y$ are midpoints of edges, 
  then so are
  $w,z$. Therefore, $w$ is the image under $\phi$ of the midpoint
  between $v$ and $T^\ell v$ in $\calL$, and that point projects to
  the same point on $\RR^2/\ZZ^2$ as $v$ and $T^\ell v$ do. Hence the
  components of $T^\ell v- v$ are not coprime, and so $\ell>1$.

  Therefore, $\ell=2$ and $g$ has order $4$, and $S_\calO$ has area
  $4$, by Lemma~\ref{lem:torsion}.
\end{proof}

\begin{wrapfigure}[24]{r}{0.5\textwidth}
  \centering
  \begin{tikzpicture}[scale=0.75]

    \node at (6.5,7.5){$w$};
    \draw[-latex] (6.77, 7.75) -- (6.85, 7.85);

    \node at (6.5,3.5){$x$};
    \draw[-latex] (6.77, 3.28) -- (6.85, 3.2);

    \node at (2.5,7.5){$y$};
    \draw[-latex] (2.28, 7.75) -- (2.2, 7.85);

    \node at (2.5,3.5){$z$};
    \draw[-latex] (2.28, 3.28) -- (2.2, 3.2);

    \node at (6.5,5.5){$w'$};
    \draw[-latex] (6.77, 5.75) -- (6.85, 5.85);

    \node at (4.5,5.5){$y'$};
    \draw[-latex] (4.28, 5.75) -- (4.2, 5.85);

    \node at (4.5,3.5){$z'$};
    \draw[-latex] (4.28, 3.28) -- (4.2, 3.2);

    \draw[thick, blue, <-] (-0.25, 5.35) -- (0, 5.35);
    \draw[thick, blue, ->] (10, 5.35) -- (10.25, 5.35);
    
    \foreach \k in {0, ..., 4}
    {
      \draw[thick, blue] (2*\k,5.35) -- (2*\k+0.85, 5.35);
      \draw[thick, dotted] (2*\k+0.85,5.35) -- (2*\k+2, 5.35);
      }

    \draw[thick, red, <-] (-0.25, 4.35) -- (0, 4.35);
    \draw[thick, red, ->] (10, 4.35) -- (10.25, 4.35);
    
    \foreach \k in {0, ..., 4}
    {
      \draw[thick, dotted] (2*\k,4.35) -- (2*\k+1, 4.35);
      \draw[thick, red] (2*\k+1,4.35) -- (2*\k+2, 4.35);
      }

    \foreach \k in {0, ..., 4}
    {
      \foreach \l in {0, ..., 4}
      {
        \fill[opacity=.3] (2*\k,2*\l) rectangle (2*\k+1,2*\l+1);
        \fill[opacity=.3] (2*\k+1,2*\l+1) rectangle (2*\k+2,2*\l+2);
        
        \fill[opacity=.5] (2*\k+1,2*\l+0.7) rectangle (2*\k+1.7,2*\l+1);
        \fill[opacity=.4] (2*\k-0.3, 2*\l) rectangle (2*\k,2*\l+0.7);

        \fill[opacity=0.7] (2*\k + 1.7, 2*\l+0.7) -- (2*\k+2, 2*\l+1)
        -- (2*\k+1.7, 2*\l+1) -- (2*\k + 1.7, 2*\l+0.7);

        \fill (2*\k- 0.3, 2*\l+0.7) -- (2*\k, 2*\l+1)
        -- (2*\k, 2*\l+0.7) -- (2*\k -0.3, 2*\l+0.7);

        \fill[opacity=.1] (2*\k+1, 2*\l+1) -- (2*\k+1, 2*\l + 2) --
        (2*\k+0.7, 2*\l + 1.7) -- (2*\k+0.7, 2*\l+1) -- (2*\k+1, 2*\l+1);

        \fill[opacity=.2] (2*\k,2*\l) -- (2*\k+1,2*\l) --
        (2*\k+0.7,2*\l-0.3) -- (2*\k,2*\l-0.3) -- (2*\k,2*\l);

        }

      }

      \foreach \l in {0, ..., 4}
      {
        \fill[opacity=0.4] (-0.3, 2*\l-0.3) -- (0, 2*\l)
        -- (-0.3, 2*\l) -- (-0.3, 2*\l-0.3);

        \fill[opacity=0.2] (-0.3, 2*\l -0.3) -- (0, 2*\l)
        -- (0, 2*\l-0.3) -- (0.3, 2*\l -0.3);
      }

      \foreach \k in {1, ..., 4}
      {
        \fill[opacity=0.4] (2*\k-0.3, -0.3)
        -- (2*\k, 0) -- (2*\k-0.3, 0) -- (2*\k-0.3, -0.3);

        \fill[opacity=0.2] (2*\k-0.3, -0.3) -- (2*\k, 0)
        -- (2*\k, -0.3) -- (2*\k - 0.3,  -0.3);
      }

      \draw[thin, purple, opacity=0.2] (4.375, 3) -- (4.325, 2.7);

      \draw[thin, purple, opacity=0.2] (4.325, 2.7) -- (4.575, 1.7);
      
      \draw[thin, green] (4.125, 2) -- (4.375, 3);

      \draw[thin, green] (4.575, 1.7) -- (4.85, 1.85);

      \draw[thin, green] (4.85, 1.85) -- (5, 1.825);

      \draw[thin, green] (5, 1.825) -- (6, 1.625);

      \draw[thin, purple, opacity=0.2] (6, 1.625) -- (5.7, 1.075);
      
      \draw[thin, purple, opacity=0.2] (5.7, 1.075) -- (4.7, 0.825);

      \draw[thin, purple, opacity=0.2] (4.7, 0.825) -- (4.85, 0.85);

      \draw[thin, purple, opacity=0.2] (4.85, 0.85) -- (4.875, 1);

      \draw[thin, purple, opacity=0.2] (4.075, -0.3) -- (3.825, 0.7);

      \draw[thin, purple, opacity=0.2] (3.825, 0.7) -- (3.85, 0.85);

      \draw[thin, purple, opacity=0.2] (3.85, 0.85) -- (4, 1.125);
      
      \draw[thin, green] (4.875, 1) -- (4.625, 0);

      \draw[thin, green] (4.625, 0) -- (4.075, -0.3);

      \draw[thin, green] (4, 1.125) -- (3, 1.375);

      \draw[thin, green] (3, 1.375) -- (2.7, 1.325);

      \draw[thin, purple, opacity=0.2] (2.7, 1.325) -- (3.7, 1.575);

      \draw[thin, purple, opacity=0.2] (3.7, 1.575) -- (3.85, 1.85);

      \draw[thin, purple, opacity=0.2] (3.85, 1.85) -- (3.9875, 1.925);

      \draw[thin, green] (3.9875, 1.925) -- (4.125, 2);

  \end{tikzpicture}
  \caption{Positions of $x,w,y,z$, from the proof of
    Corollary~\ref{lem:midpoints}, and $w',y',z'$ from
    Remark~\ref{rem:nonillumination}. In blue, the line bisecting the
    line segments $\overline{xw}$ and $\overline{yz}$. In red, the
    line bisecting the line segments $\overline{xw'}$ and
    $\overline{y'z'}$. Note that the red line does not intersect $M$.}
  \label{fig:xwyz}
\end{wrapfigure}
The fact that every maximal cylinder in the Mucube has area 4 has a few important consequences. One of these consequences is that the core curves of cylinders must pass through precisely 4 centers of squares. 

\begin{corollary}\label{lem:midpoints}
  The core curve of every maximal cylinder in $M$ passes through the
  center points of four square faces in $M$.
\end{corollary}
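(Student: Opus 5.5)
The plan is to refine the proof of Proposition~\ref{lem:areafour}, which already produces four distinguished points $x,w,y,z$ on the core curve $\calO$. There $x,y$ are the fixed points of $h$, $w,z$ are the fixed points of $g^2h$, and all four are of the same type: all face centers or all edge midpoints. I will first show that $\calO$ meets \emph{exactly} four points of $\phi(\tfrac12\ZZ^2\setminus\ZZ^2)$ of one type and four of the other. Then I will show that the four points $x,w,y,z$ cannot be edge midpoints.

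\textbf{Step 1 (counting half-lattice points).} Let $(q,p)$ be the primitive vector of \eqref{eq:Tpsi}. Since $\partial S_\calL$ passes through lattice points, the center line $\calL$ passes through $u/2$ for some $u\in\ZZ^2$ with $\det(u,(q,p))=\pm1$. Hence the points of $\tfrac12\ZZ^2$ on $\calL$ are exactly $(u+j(q,p))/2$ for $j\in\ZZ$, and none of them lie in $\ZZ^2$. Because the determinant is odd, $u$, $(q,p)$ and $u+(q,p)$ are the three distinct nonzero classes of $(\ZZ/2)^2$. Consequently these points alternate, according to the parity of $j$, between the two classes $u/2$ and $(u+(q,p))/2$ modulo $\ZZ^2$. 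By Lemma~\ref{lem:torsion} and Proposition~\ref{lem:areafour}, one period of $\calO$ corresponds to $T^4$, that is, to $j=0,\dots,7$. So the core curve meets exactly four points of each class. A class is a face center exactly when it is $\equiv(1,1)\pmod 2$, and at most one of the two classes can be. Therefore $\calO$ passes through four face centers precisely when $(q,p)\not\equiv(1,1)\pmod 2$, and through none when $q,p$ are both odd.

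\textbf{Step 2 (excluding $q,p$ both odd).} This is the main obstacle. Suppose $q,p$ are both odd, so that $x,w,y,z$ are edge midpoints. Since $g$ is a quarter turn and $gx=w$, $g^2x=y$, $g^3x=z$, the points $x,w,y,z$ are the vertices of a square lying in a plane perpendicular to the axis $\ell$ of $g$. By Proposition~\ref{lem:areafour}, $\ell$ is parallel to a coordinate axis, say the $z$-axis.

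The line through $x$ and $y$ is the axis of the $\pi$-rotation $h$, and the line through $w$ and $z$ is the axis of $g^2h$. By the midpoint case of Lemma~\ref{lem:periodicrotations}, these axes are diagonal lines of the form $\set{(s+m,s-m,k)}$ or $\set{(s+m',-s+n',k)}$ in a horizontal plane $z=k$, and they meet $M$ only at the points with $s\in\ZZ$. Their intersection point lies on $\ell$, so $\ell$ is the vertical line through a point with coordinates of the form $(a,b,k)$.

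I would then check the parities of $a$, $b$ and $k$ against the explicit description of $M$ via the checkerboard set $C$. The goal is to show that the segment of $\calO$ from $x$ to $w$, which is a quarter of a straight trajectory, cannot stay on $M$ while realizing this square configuration. The expected contradiction is a mismatch between the holonomy vector $(q,p)$ with both entries odd and the displacement $w-x$ in $\RR^3$.

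If this direct computation becomes unwieldy, the fallback is different. The fixed point of the $\pi$-rotation $gh$ lies at the $j$-odd half-lattice points, and these are also edge midpoints in the odd-odd case. So every half-lattice point on $\calO$ would be the fixed point of a $\pi$-rotation with a diagonal axis. I would then derive a contradiction with $g$ having a coordinate-parallel axis by composing two such diagonal $\pi$-rotations, for instance $h$ and $gh$, whose product is $g^{-1}$ up to the ordering of the factors. The product of two $\pi$-rotations about axes lying in horizontal planes is a rotation about a vertical axis only if the two axes are coplanar, which constrains the configuration strongly. Once Step 2 is settled, Step 1 gives the statement.
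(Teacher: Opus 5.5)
Your Step 1 is correct. The half-lattice points on the center line are $(u+j(q,p))/2$ with $\det((q,p),u)$ odd. (It equals $2c+1$ when $S_\calL$ lies between the lattice lines of levels $c$ and $c+1$, so it need not be $\pm1$, but you only use oddness.) These points alternate between the two nonzero classes of $(\ZZ/2)^2$ other than $(q,p)\bmod 2$, and one period $T^4$ contains eight of them. So the corollary does reduce to excluding $q\equiv p\equiv 1\pmod 2$.

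That exclusion, however, is the whole geometric content, and Step 2 never proves it. The parity check is only announced (``the expected contradiction is\dots''), and the fallback stops at ``which constrains the configuration strongly.'' Knowing that the axes of $h$ and $gh$ meet $\ell$ perpendicularly at a common point is not contradictory by itself. What decides the matter is the angle between those axes, and you never compute it.

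Here is the missing step. From $hgh^{-1}=g^{-1}$ (conjugating $T$ by $\iota$), the linear part $\theta_h$ of $h$ reverses the axis of $g$. Take that axis vertical, so the linear part of $g$ is $\theta_z^{\pm1}$. Because the fixed points of $h$ are edge midpoints, the case analysis in the proof of Lemma~\ref{lem:periodicrotations} excludes $\mathrm{diag}(1,-1,-1)$ and $\mathrm{diag}(-1,1,-1)$. Hence $\theta_h\colon(x,y,z)\mapsto(\epsilon y,\epsilon x,-z)$ with $\epsilon=\pm1$.

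The linear part of $gh$ is then $\theta_z^{\pm1}\theta_h$, which equals $\mathrm{diag}(-\epsilon,\epsilon,-1)$ or $\mathrm{diag}(\epsilon,-\epsilon,-1)$. This is a half-turn about a coordinate direction. Equivalently, $h\cdot gh=g^{-1}$ is a quarter turn, so the axes of $h$ and $gh$ meet at angle $\pi/4$. A horizontal line at angle $\pi/4$ to a diagonal one is coordinate-parallel.

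By the same lemma, $gh$ fixes only face centers of $M$. Since $gh$ reverses $\calO$, it fixes two points of $\calO$, namely your $j$-odd half-lattice points. In the odd--odd case these would be edge midpoints, which is the contradiction that completes your argument.

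Note that $gh$ is exactly the paper's ``$\pi$-rotation over the axis bisecting $\overline{xw}$ and $\overline{yz}$.'' The paper applies it directly whenever $x$ is an edge midpoint, which includes non-odd--odd slopes, to produce a face center $p\in\calO$. It then takes $p,gp,g^2p,g^3p$, with no parity split. So your Step 1 is not needed for the corollary itself. Once Step 2 is supplied, though, it gives the sharper count of exactly four face centers and yields Corollary~\ref{cor:oddodd} at the same time.
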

\begin{proof}
  In the proof of \Cref{lem:areafour} it is observed that the
  fixed points of $h$ are either center points of squares, or
  midpoints of edges. In the former case, the proof establishes that
  $x$ and $y=g^2x$ are the center points of their respective faces,
  and so are $w = gx$ and $z = g^3 x$.

  Suppose on the other hand that the fixed points of $h$ are midpoints
  of edges. Then the proof of \Cref{lem:areafour} shows that the
  points $x, w, y, z$ are midpoints of edges, as in
  Figure~\ref{fig:xwyz}.  The $\pi$-rotation over the axis bisecting
  the line segments $\overline{xw}$ and $\overline{yz}$ takes
  $x\mapsto w$ and $y\mapsto z$ and so takes $\calO$ to itself with
  opposite direction. It therefore must fix the points on $\calO$
  which are midway between $x$ and $w$ and midway between $y$ and $z$
  along the trajectory. Such a rigid motion can only fix points of $M$
  which are at center points of square faces. 
  
  Therefore, $\calO$ must contain the center point $p$ of some square
  face in $M$, as well as its images $gp, g^2p, g^3p$, which are also
  the centers of their respective squares.
\end{proof}

\begin{remark}\label{rem:nonillumination}
  The above proof leads to the observation that the Mucube is not
  illuminated: Given a point $x\in M$, there are points in $M$ that
  cannot be reached by traveling from $x$ along a cone-point avoiding
  linear trajectory. For example, suppose that Figure~\ref{fig:xwyz}
  is contained in $\RR^2\times [-1/2, 1/2]$.  We claim that there can
  be no linear trajectory from $x$ to the point $w' = w - (0,2, 0)$.

  To see the claim, note that $x$ is sent to $w'$ by a
  $\pi/2$-rotation $g' \in G$. Therefore, a linear trajectory from $x$
  to $w'$ would have to be periodic, visiting the points
  $x, w', y' = y + (2, -2, 0)$ and $z' = z+(2,0,0)$. It would also
  have to be taken to itself by the $\pi$-rotation over the line
  bisecting $\overline{xw'}$ and $\overline{y'z'}$. But that line does
  not intersect $M$, therefore its $\pi$-rotation has no fixed points
  in $M$. Since orientation-reversing automorphisms of circles must
  have two fixed points, there can be no linear trajectory from $x$ to
  $w'$.
\end{remark}

The fact that every core curve of a cylinder passes through precisely 4 centers of squares restricts the slope of the cylinder.

\begin{corollary}\label{cor:oddodd}
  A periodic slope cannot have both an odd numerator and an odd
  denominator.
\end{corollary}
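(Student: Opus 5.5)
Suppose a periodic direction has slope $q/p$ with $\gcd(p,q)=1$ and both $p,q$ odd; we aim for a contradiction. Take a periodic trajectory $\calO$ in this direction. By Theorem~\ref{thm:cylinders}(a) (via Proposition~\ref{prop:Mcylinders}), every trajectory in this direction avoiding cone points is periodic, and its maximal cylinder has area $4$ (Proposition~\ref{lem:areafour}). So we may take $\calO$ to be the core curve of its maximal cylinder. By Corollary~\ref{lem:midpoints}, $\calO$ passes through the centers of four square faces, $c, gc, g^2c, g^3c$, where $g=g_\calO$ is the order-$4$ rotation from Lemma~\ref{lem:intertwining}.

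Now unfold. Choose the chart $\psi$ so that $\psi(c)=(1/2,1/2)$. Then $\calL$ is the line through $(1/2,1/2)$ with direction $(q,p)$ (up to the coordinate convention of~\eqref{eq:Tpsi}). By~\eqref{eq:intertwinedtranslation}, the translation $T$ by $(q,p)$ corresponds to $g$. So the full period of $\calO$ is the segment from $(1/2,1/2)$ to $(1/2,1/2)+4(q,p)$, and the four centers correspond to the points $(1/2,1/2)+j(q,p)$ for $j=0,1,2,3$.

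The key step is to show that if $p$ and $q$ are both odd, then $\calL$ hits additional square centers strictly between these. The midpoint $(1/2,1/2)+\tfrac12(q,p)=(\tfrac{q+1}{2},\tfrac{p+1}{2})$ is then a lattice point of $\ZZ^2$, which $\calL$ would have to hit. Concretely, the points of $\calL$ whose coordinates are both in $1/2+\ZZ$ are $(1/2,1/2)+t(q,p)$ with $tq,tp\in\ZZ$, which since $\gcd(p,q)=1$ forces $t\in\ZZ$. The points with both coordinates in $\ZZ$ are those with $t\in 1/2+\ZZ$, and these exist exactly when $p,q$ are both odd. So for odd/odd slopes, the line through a square center meets a lattice point $\ZZ^2$ at $t=1/2$. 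Its image under $\phi$ is then a corner of a square face of $M$. That corner is either a cone point or a regular vertex, the latter being impossible, since every vertex of the square tiling of $M$ is a cone point: six squares meet at each vertex.

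This contradicts $\calO$ being a cone-point avoiding core curve of a cylinder, since $\calL$ must stay a positive distance from $\ZZ^2$. The main point to check carefully is that the core curve passes through square centers at all, rather than only through edge midpoints; that is exactly what Corollary~\ref{lem:midpoints} supplies, including the edge-midpoint case. Given that, the parity computation above finishes the proof.
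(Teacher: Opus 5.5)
Your proof is correct and follows the same route as the paper: Corollary~\ref{lem:midpoints} puts a square center on the core curve, and a line through $(1/2,1/2)$ with reduced odd/odd slope must meet $\ZZ^2$ (at $t=1/2$), which unfolds to a cone point that the core curve cannot contain. You also spell out the parity computation and the fact that every square corner of $M$ is a cone point, both of which the paper's two-line proof leaves implicit.
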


\begin{proof}
  This is equivalent to the statement in Corollary~\ref{lem:midpoints}
  that the core curve of a periodic cylinder passes through centers of
  squares. A line in $\RR^2$ passing through the center of a unit
  square and having a slope whose numerator and denominator are odd,
  must intersect $\ZZ^2$. Conversely, a line in $\RR^2$ containing the
  center of some unit square and intersecting $\ZZ^2$ must have a
  slope with odd numerator and denominator (when expressed in reduced
  form).
\end{proof}

We end this sub-section by stating the proof of Theorem~\ref{thm:cylinders}.

\begin{proof}[Proof of Theorem~\ref{thm:cylinders}]
  (Part (a)) By Proposition~\ref{prop:Mcylinders} and
  \Cref{lem:areafour}, a periodic trajectory $\calO$ gives rise
  to a decomposition of $M$ into isometric cylinders of area $4$. Each
  must be disjoint from any $(2\ZZ)^3$-translate, for otherwise it
  would drift to $\infty$. By Corollary~\ref{lem:midpoints}, the core
  curve of any cylinder passes through $4$ center points of square
  faces of $M$, cycled by a $\pi/2$ rotation. Therefore, the cylinder
  cannot have a $2\pi/3$ symmetry, for such a mapping would introduce
  more centers of squares.

  (Part (b)) By Proposition~\ref{prop:Mcylinders}, a nonperiodic
  trajectory in a rational direction gives rise to a decomposition of
  $M$ into isometric bi-infinite strips. Let $S$ be one such strip,
  with core curve $\calO$. Let
  $g_\calO=(\bt, \theta)\in \ZZ^3 \rtimes O$ be the rigid motion
  associated to $\calO$ by Lemma~\ref{lem:intertwining}, and note that
  if $\theta$ has order $k$, then $g^k$ sends $S$ to itself by
  translating by $2\bv$, where
  \begin{equation*}
    \bv = (1 + \theta + \dots + \theta^{k-1})\bt.
  \end{equation*}
  This proves the theorem.
\end{proof}

In the following subsections, we elucidate the consequences of Theorem~\ref{thm:cylinders} for two natural compact quotients of $M$ and demonstrate characterizations of periodic directions on $M$ via these two quotients.

\subsection{Descent to $X$}\label{sec:descentX}

Let $X = M/\ZZ^3$, the quotient of $M$ by its $\ZZ^3$-action. Under
this map, any maximal cylinder $C\subset M$ is folded onto a
maximal cylinder in $X$. In fact, this folding is one-to-one on the
interior of $C$.

\begin{proposition}\label{prop:isometricprojection}
  Let $\pi: M \rightarrow X$ denote the quotient map. Then, if $C$ is
  any cylinder in $M$, then $\pi|_C:C \rightarrow \pi(C)$ is an
  isometry.
\end{proposition}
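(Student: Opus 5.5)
The plan is to show that $\pi|_C$ is injective on the interior of $C$; since $\pi$ is a local isometry away from cone points, this gives an isometry onto $\pi(C)$. For the boundary, the claim should be read as the natural one for maximal cylinders, namely that $\pi$ is an isometry with respect to the intrinsic flat structure of $C$. The only way injectivity can fail is if two distinct interior points $a,b\in C$ satisfy $b=a+2\bt$ for some nonzero $\bt\in\ZZ^3$. In that case the translation $\tau=(\bt,I)\in G$ maps an interior point of $C$ to an interior point of $C$.

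The first step is to note that $\tau$ preserves the parallel field $V$ (Lemma~\ref{lem:parallelism}, part 3). Hence $\tau$ maps the trajectory through $a$ to the trajectory through $b$. It therefore maps the maximal cylinder $C=S_{\calO_a}$ to $S_{\calO_b}=C$, by the identity $g(S_\calO)=S_{g\calO}$ from the proof of Proposition~\ref{prop:Mcylinders}. So $\tau(C)=C$, and $\tau$ acts on $C$ as a nontrivial isometry.

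The second step is to contradict Theorem~\ref{thm:cylinders}(a), which says $C$ is disjoint from all its $(2\ZZ)^3$-translates. For this to apply, I will interpret "disjoint" as disjoint interiors; this is what the proof there actually gives. An invariant cylinder drifting would give an unbounded orbit, whereas $C$ is compact. Concretely, the argument is as follows. $C$ is a compact subset of $\RR^3$, so $\tau^n(C)=C$ for all $n$ is impossible: $\tau^n$ translates by $2n\bt$, which is unbounded, while $C$ is bounded. This direct argument is cleaner than appealing to Theorem~\ref{thm:cylinders}, and I will use it. It gives injectivity on the interior.

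The main obstacle is the boundary of $C$. Distinct boundary cone points of $C$ could be $\ZZ^3$-equivalent, so $\pi|_C$ may fail to be injective there literally. I would handle this in one of two ways. The first is to restrict the statement to the open cylinder, whose closure in $X$ is $\pi(C)$. The second is to show that $\pi|_{\mathrm{int}\,C}$ is an isometric embedding for the induced path metrics. Since $\pi$ is a local isometry that is injective on $\mathrm{int}\,C$, and $\mathrm{int}\,C$ is determined by its flat structure, the map extends to an isometry of the completed cylinders $C\to\pi(C)$, where $\pi(C)$ is taken as the maximal cylinder in $X$ with its intrinsic metric. I would present the argument in that form, remarking that the area of $\pi(C)$ is then $4$.
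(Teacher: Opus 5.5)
Your proof is correct, but it takes a different route from the paper's.

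\textbf{The paper's route.} It reduces the statement to showing that $\pi|_C$ is single-sheeted, and proves this by counting. By Corollary~\ref{lem:midpoints}, the core curve of $C$ passes through exactly four square centers forming one orbit of a $\pi/2$-rotation $g$. The paper asserts that these have four distinct images in $X$, so the covering of core curves has degree one.

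\textbf{Your route.} You directly rule out any deck translation $\tau$ identifying two interior points. Such a $\tau$ would give $\tau(C)=\tau(S_{\calO_a})=S_{\calO_b}=C$, which is impossible for a nonzero translation and a compact $C\subset\RR^3$. This uses only that $G$ preserves parallel fields and permutes maximal cylinders. It needs neither the area-$4$ theorem nor the structure of centers on the core curve. In effect it is the rigorous version of the ``otherwise it would drift to $\infty$'' remark behind the disjointness claim in Theorem~\ref{thm:cylinders}(a).

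\textbf{What your route buys.} It supplies something the paper's counting quietly relies on. The only reason given there for the four centers having distinct images is that the deck group consists of translations, and that alone is not enough. For $g=\theta_z\in G$ and the face center $x=(1,0,\tfrac12)$, one gets $g^2x-x=(-2,0,0)\in(2\ZZ)^3$. What excludes such coincidences on a core curve is exactly your invariance argument, or equivalently Theorem~\ref{thm:cylinders}(a). Once that is available, the counting is no longer needed.

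Your caveat about boundary cone points agrees with the paper's own remark that the folding is one-to-one on the interior of $C$.
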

\begin{proof}
  First, note that the covering map $\pi$ is a local isometry, so it
  suffices to show that $\pi|_{C}$ is a single-sheeted covering map.

  From Theorem~\ref{thm:cylinders} and its proof we know that the
  core curve of $C$ passes through the midpoints of exactly 4 squares in
  $M$ which are in same $\pi/2$-rotation orbit. Since the deck group
  associated to the map $\pi$ is generated by translations along the
  three axes, the midpoints of these 4 squares projects to 4 distinct
  midpoints of squares in $X$. Hence, the core curve of $\pi(C)$
  passes through precisely 4 midpoints so that $\pi|_C$ must be single-sheeted.
\end{proof}

\begin{wrapfigure}{l}{0.35\textwidth}
\centering
\begin{tikzpicture}[scale=0.7, shift={(0,0,4)}]
\coordinate (O) at (0,0,0);
\coordinate (A) at (0,6,0);
\coordinate (B) at (0,6,6);
\coordinate (C) at (0,0,6);
\coordinate (D) at (6,0,0);
\coordinate (E) at (6,6,0);
\coordinate (F) at (6,6,6);
\coordinate (G) at (6,0,6);

\coordinate (B2) at (1.5,4.5,6);
\coordinate (C2) at (1.5,1.5,6);
\coordinate (F2) at (4.5,4.5,6);
\coordinate (G2) at (4.5,1.5,6);

\coordinate (A3) at (1.5,4.5,4.5);
\coordinate (B3) at (1.5,4.5,6);
\coordinate (E3) at (4.5,4.5,4.5);
\coordinate (F3) at (4.5,4.5,6);

\coordinate (D4) at (4.5,1.5,4.5);
\coordinate (E4) at (4.5,4.5,4.5);
\coordinate (F4) at (4.5,4.5,6);
\coordinate (G4) at (4.5,1.5,6);

\coordinate (B5) at (1.5,6,4.5);
\coordinate (C5) at (1.5,4.5,4.5);
\coordinate (F5) at (4.5,6,4.5);
\coordinate (G5) at (4.5,4.5,4.5);

\coordinate (A6) at (1.5,6,1.5);
\coordinate (B6) at (1.5,6,4.5);
\coordinate (E6) at (4.5,6,1.5);
\coordinate (F6) at (4.5,6,4.5);

\coordinate (D7) at (4.5,4.5,1.5);
\coordinate (E7) at (4.5,6,1.5);
\coordinate (F7) at (4.5,6,4.5);
\coordinate (G7) at (4.5,4.5,4.5);

\coordinate (B8) at (4.5,4.5,4.5);
\coordinate (C8) at (4.5,1.5,4.5);
\coordinate (F8) at (6,4.5,4.5);
\coordinate (G8) at (6,1.5,4.5);

\coordinate (A9) at (4.5,4.5,1.5);
\coordinate (B9) at (4.5,4.5,4.5);
\coordinate (E9) at (6,4.5,1.5);
\coordinate (F9) at (6,4.5,4.5);

\coordinate (D10) at (6,1.5,1.5);
\coordinate (E10) at (6,4.5,1.5);
\coordinate (F10) at (6,4.5,4.5);
\coordinate (G10) at (6,1.5,4.5);

\coordinate (B11) at (0,4.5,4.5);
\coordinate (C11) at (0,1.5,4.5);
\coordinate (F11) at (A3);
\coordinate (G11) at (1.5,1.5,4.5);

\coordinate (A12) at (0,4.5,1.5);
\coordinate (B12) at (B11);
\coordinate (E12) at (1.5,4.5,1.5);
\coordinate (F12) at (F11);

\coordinate (B13) at (G11);
\coordinate (C13) at (1.5,0,4.5);
\coordinate (F13) at (D4);
\coordinate (G13) at (4.5,0,4.5);

\coordinate (D14) at (4.5,0,1.5);
\coordinate (E14) at (4.5,1.5,1.5);
\coordinate (F14) at (F13);
\coordinate (G14) at (G13);

\coordinate (A15) at (1.5,4.5,0);
\coordinate (B15) at (E12);
\coordinate (E15) at (4.5, 4.5, 0);
\coordinate (F15) at (A9);

\coordinate (D16) at (4.5,1.5,0);
\coordinate (E16) at (E15);
\coordinate (F16) at (F15);
\coordinate (G16) at (E14);

\begin{scope}[on glass layer]


\draw[fill=gray!80] (A3) -- (B3) -- (F3) -- (E3) -- cycle;

\draw[fill=gray!80] (D4) -- (E4) -- (F4) -- (G4) -- cycle;


\draw[line width=0.5mm, red] (C2) -- (B2) -- (F2) -- (G2) -- cycle;


\draw[fill=gray!80] (C5) -- (B5) -- (F5) -- (G5) -- cycle;

\draw[] (A6) -- (B6) -- (F6) -- (E6) -- cycle;

\draw[fill=gray!80] (D7) -- (E7) -- (F7) -- (G7) -- cycle;

\draw[line width=0.5mm, blue] (A6) -- (B6) -- (F6) -- (E6) -- cycle;


\draw[fill=gray!80] (C8) -- (B8) -- (F8) -- (G8) -- cycle;

\draw[fill=gray!80] (A9) -- (B9) -- (F9) -- (E9) -- cycle;

\draw[line width=0.5mm, green] (F9) -- (E9) -- (D10) --  (G8)-- cycle;


\node[] at (0,0,3) {$I$}; 

\node[] at (5,0,0) {$J$}; 
\node[] at (0,5.25,0) {$K$}; 
\end{scope}

\begin{scope}[on above layer]
\draw[fill=gray!30] (G11) -- (C2) -- (B2) -- (F11) -- cycle;

\draw[fill=gray!30] (G11) -- (C2) -- (G2) -- (D4) -- cycle;


\draw[fill=gray!30] (E9) -- (D10) -- (E14) -- (A9) -- cycle;

\draw[fill=gray!30] (A6) -- (E6) -- (D7) -- (B15) -- cycle;

\end{scope}

\begin{scope} 

\draw[fill=gray!80] (B11) -- (C11) -- (G11) -- (F11) -- cycle;

\draw[line width=0.5mm, green] (B11) -- (C11);

\draw[fill=gray!80] (A12) -- (B12) -- (F12) -- (E12) -- cycle;

\draw[line width=0.5mm, green] (A12) -- (B12);


\draw[fill=gray!80] (B13) -- (C13) -- (G13) -- (F13) -- cycle;

\draw[line width=0.5mm, blue] (C13) -- (G13);

\draw[fill=gray!80] (D14) -- (E14) -- (F14) -- (G14) -- cycle;

\draw[line width=0.5mm, blue] (G14) -- (D14);

\draw[fill=gray!80] (A15) -- (B15) -- (F15) -- (E15) -- cycle;

\draw[fill=gray!80] (D16) -- (E16) -- (F16) -- (G16) -- cycle;

\end{scope}

\begin{scope}[on behind layer]

\draw[fill=gray!30] (G11) -- (C11) -- (0, 1.5, 1.5) -- (1.5, 1.5, 1.5) -- cycle;

\draw[fill=gray!30] (0, 1.5, 1.5) -- (1.5, 1.5, 1.5) -- (E12) -- (A12) --cycle;


\draw[fill=gray!30](1.5, 1.5, 1.5)-- (G16) -- (D16) -- (1.5, 1.5, 0) -- cycle;

\draw[fill=gray!30] (1.5, 1.5, 1.5) -- (G16) -- (D14) -- (1.5, 0, 1.5) -- cycle;

\draw[fill=gray!30] (1.5, 1.5, 1.5) -- (G11) -- (C13) -- (1.5, 0, 1.5) -- cycle;

\draw[fill=gray!30] (1.5, 1.5, 1.5) -- (1.5, 1.5, 0) -- (1.5, 4.5, 0)  -- (1.5, 4.5, 1.5) -- cycle;

\draw[line width=0.5mm, red](1.5, 1.5, 0) -- (A15) -- (E15) -- (D16) --  cycle;

\end{scope}

\begin{scope}[on background layer]
\draw[fill=gray!30] (D10) -- (E10) -- (F10) -- (G10) -- cycle;

\draw[fill=gray!30] (A6) -- (B6) -- (F6) -- (E6) -- cycle;

\end{scope}

\end{tikzpicture}
\caption{The fundamental domain $F_0$ with the curves basic curves $I$, $J$ and $K$ labelled.}
\label{fig:FwithBasicCurves}
\end{wrapfigure}

Now Theorem~\ref{thm:cylinders} tells us that the periodic directions
in the Mucube can only be those directions in which there is a
decomposition of $X$ into cylinders of area 4, each of whose core
curve passes through the center points of four square faces of
$X$. However, not all such directions lift as periodic directions in
the Mucube $M$. For instance, one finds by inspection that in the
direction $(5,2)$, the surface $X$ decomposes into cylinders of area
4, but in this direction the straight line flow of $M$ dissipates.

Let $I, J, K$ denote the three core curves defining the boundary of
the fundamental domain $F$, identified via the $\ZZ^3$ action, as
pictured in Figure \ref{fig:FwithBasicCurves}. To a smooth path
$\gamma:[0,T]\to X$ starting and ending off of $I, J, K$, and only
crossing $I,J,K$ transversally, we associate a \emph{displacement
  vector}
\begin{equation*}
  \bv_\gamma:[0,T]\to \ZZ_{\geq 0}^3  \qquad \bv_\gamma(t) = a(t)\mathbf{i} + b(t)\mathbf{j} + c(t)\mathbf{k},
\end{equation*}
where $a(t)$ is the number of times $\gamma$ crossed $I$ in the
$\mathbf{i}$ direction in the time interval $[0,t]\subset[0,T]$, and
so on. Note that any lift $\tilde \gamma: [0,T]\to M$ will cross the
lifts of $I, J, K$ whenever $\gamma$ crosses $I, J, K$, so if
$\tilde \gamma(0)\in F_0$, then
$\tilde \gamma(T) \in F_{\bv_\gamma(T)}$. Clearly, the lift of a path
$\gamma$ is periodic if and only if there exists some $T>0$ such that
$\gamma(0)=\gamma(T)$ (i.e., the path itself is periodic) and
$F_{\mathbf{v}_\gamma(T)} = F_0$, that is, if its displacement vector is
$\mathbf{v}_\gamma(T) = 0$.  We observe that for any
  $\theta\in O\subset G$,
  \begin{equation}\label{eq:coordpermute}
    \bv_{\theta\gamma}(T) = \theta \bv_\gamma(T). 
  \end{equation}
  
  After all, the lift $\widetilde{\theta\gamma}$ having
  $\widetilde{\theta\gamma}(0) \in F_0$ is precisely
  $\widetilde{\theta\gamma} = \theta \tilde{\gamma}$, therefore,
  $\widetilde{\theta\gamma}(T) \in \theta F_{\bv(T)} = F_{\theta\bv(T)}$.

  In fact, Theorem~\ref{thm:cylinders} leads to the following
  characterization of periodic trajectories in $M$.

\begin{theorem}[Periodicity characterization using
    $X$]\label{thm:displacementvector}
    Let $\gamma:[0,T] \to X$ be a straight-line trajectory with period
    $T$ and $\gamma(0) \notin I\cup J\cup K$, and let
    $\bv(t):=\bv_\gamma(t)$ denote its displacement at time $t$. The
    following are equivalent.
    \begin{enumerate}
    \item The trajectory $\gamma$ has a periodic lift to $M$.
    \item There is a $\pi/2$-rotation, $\theta$, over one of the
      coordinates ($\mathbf{i}, \mathbf{j}$, or $\mathbf{k}$) such
      that
      \begin{equation}\label{eq:dispcycle}
        \mathbf{v}\parens*{\frac{(i+1)T}{4}} - \mathbf{v}\parens*{\frac{iT}{4}} = \theta^i \mathbf{v}\parens*{\frac{T}{4}}
      \end{equation}
      for each $i=0, 1, 2, 3$. 
    \item In the direction of $\gamma$, $X$ decomposes
        into three maximal cylinders that are cycled by
        $2\pi/3$-rotations, and $a+b+c=0$.
    \end{enumerate}
\end{theorem}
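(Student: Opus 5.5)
We will prove the cycle $(1)\Rightarrow(2)\Rightarrow(1)$, together with $(1)\Leftrightarrow(3)$. The main tools are Theorem~\ref{thm:cylinders}, Proposition~\ref{prop:isometricprojection}, and the equivariance~\eqref{eq:coordpermute}. Throughout, $(a,b,c)=\bv(T)$ is the total displacement.

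\textbf{$(1)\Rightarrow(2)$.} Suppose the lift $\tilde\gamma$ is periodic and lies in a maximal cylinder $C\subset M$. By Proposition~\ref{prop:isometricprojection}, $\pi|_C$ is an isometry, so $\tilde\gamma$ has the same period $T$ as $\gamma$. By Theorem~\ref{thm:cylinders}(a) and the proof of Proposition~\ref{lem:areafour}, the associated motion $g=g_\calO=(\bt,\theta)$ is a $\pi/2$-rotation, and $\tilde\gamma(t+T/4)=g\tilde\gamma(t)$. Its linear part $\theta$ is a quarter turn about a coordinate axis, because $C$ has a $\pi/2$-symmetry about an axis parallel to a coordinate axis.

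Write $\gamma_i=\gamma|_{[iT/4,(i+1)T/4]}$. Then the lift of $\gamma_{i}$ is $g^i$ applied to the lift of $\gamma_0$. Hence $\gamma_i$ is the image under the $O$-part $\theta^i$ of $\gamma_0$, after translating the starting cube back to $F_0$. Applying~\eqref{eq:coordpermute} gives~\eqref{eq:dispcycle}.

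\textbf{$(2)\Rightarrow(1)$.} Summing~\eqref{eq:dispcycle} over $i=0,1,2,3$ gives
\[
\bv(T)=(1+\theta+\theta^2+\theta^3)\bv(T/4).
\]
For a quarter turn $\theta$, the operator $1+\theta+\theta^2+\theta^3$ kills the plane perpendicular to its axis and multiplies the axis component by $4$. So $\bv(T)$ is $4$ times the projection of $\bv(T/4)$ onto that axis.

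This step is the main obstacle. Summation alone gives only $\bv(T)\in 4\ZZ\mathbf{e}$, where $\mathbf{e}$ is the axis, not $\bv(T)=0$. To close the gap, I would show that~\eqref{eq:dispcycle} forces the rigid motion carrying $\tilde\gamma(0)$ to $\tilde\gamma(T/4)$ to be a torsion $\pi/2$-rotation. Concretely, the displacement condition says that $g=(\bt,\theta)$ with $2\bt$ realizing $\bv(T/4)$. The translational part must be consistent with a rotation fixing a line parallel to $\mathbf{e}$ that meets the cylinder's symmetry axis. Using Lemma~\ref{lem:periodicrotations}, I would check that the $\mathbf{e}$-component of $\bt$ must vanish. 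The key input is that the displacement along the axis is controlled by crossings of only one of $I,J,K$, and $\theta$ fixes that curve's class.

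If this argument fails, I would argue instead via Theorem~\ref{thm:cylinders}(b): a nonperiodic lift drifts by a translation $2\bv$, which must be nonzero and parallel to $\mathbf{e}$. I would then show this contradicts the centers-of-squares count of Corollary~\ref{lem:midpoints}, applied to the projected cylinder.

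\textbf{$(1)\Leftrightarrow(3)$.} Assume (1). Since $a+b+c=0$ is implied by $\bv(T)=0$, it remains to produce the three cylinders. By Theorem~\ref{thm:cylinders}(a), a cylinder $C$ is disjoint from its $2\pi/3$-rotations. By Proposition~\ref{prop:isometricprojection}, its image in $X$ has area $4$. The surface $X$ has area $12$ (it has $12$ squares), so $X$ is exactly the union of $\pi(C)$ and its two $2\pi/3$-rotates.

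Conversely, assume (3). Cycling by $\theta_{2\pi/3}$ and applying~\eqref{eq:coordpermute}, the three cylinders have displacements $(a,b,c)$, $(c,a,b)$ and $(b,c,a)$. Each has area $4$ and passes through four square centers. Lifting to $M$ and applying Theorem~\ref{thm:cylinders}, a nonperiodic lift drifts by a vector fixed by the quarter-turn axis, which is a coordinate direction. This forces two of $a,b,c$ to be $0$. Combined with $a+b+c=0$, all three are $0$, so the lift is periodic.
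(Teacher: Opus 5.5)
Your $(1)\Rightarrow(2)$ and $(1)\Rightarrow(3)$ are essentially the paper's arguments and are fine. The genuine gap is $(2)\Rightarrow(1)$. Your diagnosis there is correct, and sharper than the paper's proof, which simply asserts that $\sum_{i=0}^{3}\theta^i\bv(T/4)=0$. In fact $\bv(T)=(1+\theta+\theta^2+\theta^3)\bv(T/4)=4v_{\mathbf e}\mathbf e$, where $v_{\mathbf e}$ is the axis component of $\bv(T/4)$.

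Neither of your repairs can succeed, because the implication is false as stated. Take the direction $(5,2)=B\cdot(1,0)$. Since $B\in V(X)$ (Proposition~\ref{prop:veechgrpY}), $X$ decomposes there into cylinders of area $4$ (images of the horizontal ones), while $M$ drifts; the paper itself notes this in \S\ref{sec:descentX}. Let $\gamma$ be a closed $X$-trajectory in this direction, $\tilde\gamma=\calO$ its lift with $\tilde\gamma(0)\in F_0$, and $g_\calO=(\bt,\theta)$.

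\begin{itemize}
\item Because $\pi\circ g^j=\pi$ exactly when $\theta^j=\Id$, the computation of Lemma~\ref{lem:torsion} shows that the $X$-cylinder has area equal to the order of $\theta$.
\item Area $4$ therefore forces $\theta$ to be a quarter turn about a coordinate axis $\mathbf e$, and $T/4$ is one step of $T_\psi$.
\item Since $O$ preserves $F_0$, we get $\tilde\gamma(iT/4)=g^i\tilde\gamma(0)\in F_{\bt_i}$ with $\bt_i=(1+\theta+\dots+\theta^{i-1})\bt$. Hence $\bv(iT/4)=\bt_i$, and \eqref{eq:dispcycle} holds with this $\theta$.
\item Yet $\bv(T)=4t_{\mathbf e}\mathbf e\neq 0$. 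By \eqref{eq:iabc} and Proposition~\ref{prop:veechgrprep}, $a+b+c=\pm\, i_a(3\sigma+4\eta,\sigma)=\pm 4$, so $t_{\mathbf e}=\pm1$.
\end{itemize}

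So (2) holds and (1) fails. The axis component of $\bt$ you hoped to show vanishes is $\pm1$ here; crossings of the one curve fixed by $\theta$ are exactly what survives the summation. Your fallback through Corollary~\ref{lem:midpoints} is also unavailable. That corollary is about maximal cylinders of $M$, not drift strips, and a drift strip in direction $(5,2)$ passes through square centers anyway. What is true is that (2) together with $a+b+c=0$ implies (1), since $a+b+c=4v_{\mathbf e}$.

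Your $(3)\Rightarrow(1)$ takes a different and cleaner route than the paper's, which compares displacement triples under $\theta_z^2\theta_{2\pi/3}$ and solves a linear system. As written, however, it asserts without justification that a drifting lift moves along a quarter-turn axis. The missing step is the same area fact. The three cylinders are cycled by $\theta_{2\pi/3}$, so each has area $12/3=4$. Hence the linear part of $g_\calO$ has order $4$, and $\bv(T)=(1+\theta+\theta^2+\theta^3)\bt\in\ZZ\mathbf e$. Then $a+b+c=0$ forces $\bv(T)=0$. The remark that each cylinder passes through four square centers plays no role for a possibly drifting lift and should be dropped.
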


\begin{proof}
  (1. $\implies$ 2.) If the lift $\tilde\gamma$ is periodic in $M$,
  then by Theorem~\ref{thm:cylinders} there is a $\pi/2$-rotation
  $g=(\bt, \theta)\in G$ taking the trajectory to itself and taking
  $\tilde\gamma(iT/4)$ to $\tilde\gamma((i+1)T/4)$. Then $\theta$ acts
  on the displacement vectors as described by~(\ref{eq:dispcycle}).

  (1. $\implies$ 3.) If the lift $\tilde\gamma$ is
    periodic in $M$, then by Theorem~\ref{thm:cylinders} its maximum
    cylinder has area $4$ and is disjoint from all of its $2\pi/3$
    rotations. This implies that the maximal cylinder of $\gamma$ is
    disjoint from its images under $2\pi/3$ rotations of $X$, hence
    that $X$ decomposes into three cylinders in direction $\gamma$,
    which are cycled by $2\pi/3$ rotations, as claimed. Of course,
    since $\tilde\gamma$ is periodic, we have $\bv(T)=(0,0,0)$, so
    $\bv(T)\cdot (1,1,1)=0$.

  (2. $\implies$ 1.) Note that~(\ref{eq:dispcycle}) implies
  \begin{equation*}
    \sum_{i=0}^3 \parens*{\mathbf{v}\parens*{\frac{(i+1)T}{4}} - \mathbf{v}\parens*{\frac{iT}{4}}} = \sum_{i=0}^3\theta^i \mathbf{v}\parens*{\frac{T}{4}}.
  \end{equation*}
  The left-hand side is $\bv(T)$ and the right-hand side is
  $0$. Therefore the lift $\tilde\gamma$ is periodic with period $T$.

  (3. $\implies$ 1.) By applying $\theta_{2\pi/3}$, we
    see that the three cylinders must have displacement vectors
    $(a, b, c)$, $(b, c, a)$, and $(c, a, b)$,
    by~(\ref{eq:coordpermute}). Two applications of
    $\theta_z^2\theta_{2\pi/3}$
    
    shows that $(-b, -c, a)$ and
    $(c, -a, -b)$ coincide with $(b,c,a)$ and $(c, b, a)$ in some
    order, leading to the linear system
    \begin{align*}
      a + b + c &= 0 \\
      a -b -c &=0 \\
      -a - b + c &= 0,
    \end{align*}
    which has only the trivial solution. Therefore,
    $\bv(T) = (a,b,c)=0$ and $\gamma$ has a periodic lift to $M$.  
\end{proof}

\subsection{Descent to $Y$}\label{sec:descentToY}
Suppose $V$ is a parallel direction field in $M_0$. Since it is
preserved by the $\ZZ^3$-action, it descends to a parallel field on
$X_0$
\begin{equation*}
  X_0 =  X - \set{\textrm{cone points}}.
\end{equation*}
If $V$ is a periodic direction in $M_0$, then the corresponding
cylinder decomposition of $M$ descends to a decomposition of $X$ into
three isometric cylinders of area $4$, which are cycled by the action
of
\begin{equation*}
  \theta_{2\pi/3} = \begin{bmatrix}
    0 & 0 & 1 \\ 1 & 0 & 0 \\ 0 & 1 & 0
  \end{bmatrix}.
\end{equation*}
Let $Y = X/\theta_{2\pi/3}$, the quotient of $X$ by the
$\ZZ/3\ZZ$-action generated by $\theta_{2\pi/3}$. The surface $Y$ is
best visualized by seeing
\begin{equation*}
  F' = M\cap\brackets*{-\frac{1}{2},\frac{3}{2}}^3
\end{equation*}
as the fundamental domain of $X$, as in
Figure~\ref{fig:rotation}. Then $\theta_{2\pi/3}$ is the rotation that
cycles the three basic cylinders.
\begin{wrapfigure}[27]{r}{0.38\textwidth}
\vspace{-0.5cm}
\center
\subfloat[$X$ admits an order 3 rotation symmetry.]{
\begin{tikzpicture}[sq/.style=
  {shape=regular polygon, regular polygon sides=4, draw, minimum width=2.828cm}]


\draw[fill=gray!10] (-1,-1) -- (-1, 1) -- (-1+0.5, 1+0.5) -- (-1+0.5, -1+0.5)--cycle; 
\draw[fill=gray!10] (1,-1) -- (1+0.5, -1+0.5) --(-1+0.5,-1+0.5) -- (-1,-1) --cycle; 
\draw[fill=gray!40]  (-1, 1) -- (-1+0.5, 1+0.5) -- (1+0.5, 1+0.5) -- (1,1) --cycle; 
\draw[fill=gray!40] (1,1) -- (1+0.5, 1+0.5) --(1+0.5,-1+0.5) -- (1,-1) --cycle; 



  \node[sq] at (0,0){};
  \node[sq] at (0.5,0.5){};
  \draw[thin] (-1,-1) -- ++(0.5, 0.5);
  \draw[thin] (-1,1) -- ++(0.5, 0.5);
  \draw[thin] (1,-1) -- ++(0.5, 0.5);
  \draw[thin] (1,1) -- ++(0.5, 0.5);
  
\begin{scope}[shift = {(0.5, 2.5)}]

\draw[fill=gray!10] (-1,-1) -- (-1, 1) -- (-1+0.5, 1+0.5) -- (-1+0.5, -1+0.5)--cycle; 
\draw[fill=gray!10]  (-1+0.5, -1+0.5) -- (-1+0.5, 1+0.5) -- (1+0.5, 1+0.5) -- (1+0.5,-1+0.5) --cycle; 
\draw[fill=gray!40]  (-1, -1) -- (-1, 1) -- (1, 1) -- (1,-1) --cycle; 
\draw[fill=gray!40] (1,1) -- (1+0.5, 1+0.5) --(1+0.5,-1+0.5) -- (1,-1) --cycle; 

 \node[sq] at (0,0){};
  \node[sq] at (0.5,0.5){};
  \draw[thin] (-1,-1) -- ++(0.5, 0.5);
  \draw[thin] (-1,1) -- ++(0.5, 0.5);
  \draw[thin] (1,-1) -- ++(0.5, 0.5);
  \draw[thin] (1,1) -- ++(0.5, 0.5);
\end{scope}

\begin{scope}[shift = {(2.5, 0.5)}]


\draw[fill=gray!10]  (-1+0.5, -1+0.5) -- (-1+0.5, 1+0.5) -- (1+0.5, 1+0.5) -- (1+0.5,-1+0.5) --cycle; 
\draw[fill=gray!10] (1,-1) -- (1+0.5, -1+0.5) --(-1+0.5,-1+0.5) -- (-1,-1) --cycle; 
\draw[fill=gray!40]  (-1, 1) -- (-1+0.5, 1+0.5) -- (1+0.5, 1+0.5) -- (1,1) --cycle; 

\draw[fill=gray!40]  (-1, -1) -- (-1, 1) -- (1, 1) -- (1,-1) --cycle; 


 \node[sq] at (0,0){};
  \node[sq] at (0.5,0.5){};
  \draw[thin] (-1,-1) -- ++(0.5, 0.5);
  \draw[thin] (-1,1) -- ++(0.5, 0.5);
  \draw[thin] (1,-1) -- ++(0.5, 0.5);
  \draw[thin] (1,1) -- ++(0.5, 0.5);
\end{scope}


\draw[thick] (1.5,1.5) -- ++ (0.75*4,0.75*3) node [midway] {\AxisRotator[x=0.2cm,y=0.4cm,->,rotate=50, red]};

\draw[thick, dashed] (1.5,1.5) -- ++ (-4*0.75,-3*0.75);


\draw[thin] (-0.65,0.5) -- ++(0.3, 0);
\draw[thin] (3.5-0.15,0.5) -- ++(0.3, 0);


\draw[thin, rotate = 90 ] (-0.15,-0.35-0.5) -- ++(0.15,-0.3);
  \draw[thin,rotate = 90 ] (-0.15+0.15,-0.35-0.3-0.5) -- ++(0.15,0.3);
  
  \draw[thin, rotate = 90 ] (-0.15+1,-0.35-0.5-1) -- ++(0.15,-0.3);
  \draw[thin,rotate = 90 ] (-0.15+0.15+1,-0.35-0.3-0.5-1) -- ++(0.15,0.3);

  \draw[thin] (7.90-8,-0.65-0.25) -- ++(0,-0.3);
  \draw[thin] (8.00-8,-0.65-0.25) -- ++(0,-0.3);
  \draw[thin] (8.10-8,-0.65-0.25) -- ++(0,-0.3);
  
  \draw[thin] (0.90,-0.65+4.5) -- ++(0,0.3);
  \draw[thin] (1.00,-0.65+4.5) -- ++(0,0.3);
  \draw[thin] (1.10,-0.65+4.5) -- ++(0,0.3);


\node[rotate = 90, thick, font=\fontsize{15}{0}\selectfont, thick] at (-1,0){{\color{olive}$>$}};
\node[rotate = 90, thick, font=\fontsize{15}{0}\selectfont, thick] at (4,1){{\color{olive}$>$}};


\node[thick, font=\fontsize{15}{0}\selectfont, thick] at (0,1){{\color{green}$>$}};
\node[ thick, font=\fontsize{15}{0}\selectfont, thick] at (1,2){{\color{green}$>$}};


\node[thick, font=\fontsize{15}{0}\selectfont, thick] at (0.5,-0.5){{\color{black}$>$}};
\node[ thick, font=\fontsize{15}{0}\selectfont, thick] at (0.5,3.5){{\color{black}$>$}};


\node[rotate = 45, thick, font=\fontsize{15}{0}\selectfont, thick] at (3.75,1.75-2){{\color{blue}$>$}};

\node[rotate = 45, thick, font=\fontsize{15}{0}\selectfont, thick] at (3.75-4,1.75+2){{\color{blue}$>$}};


\node[rotate = 45, thick, font=\fontsize{15}{0}\selectfont, thick] at (3.75-2,1.75-2){{\color{purple}$>$}};

\node[rotate = 45, thick, font=\fontsize{15}{0}\selectfont, thick] at (3.75-4+2,1.75+2){{\color{purple}$>$}};
  
\node[rotate = 45, thick, font=\fontsize{15}{0}\selectfont, thick] at (3.75,1.75){{\color{orange}$>$}};

\node[rotate = 45, thick, font=\fontsize{15}{0}\selectfont, thick] at (3.75-4,1.75){{\color{orange}$>$}};

\end{tikzpicture}

}
\hspace{2cm}
\subfloat[The genus 1 quotient $Y$ under the order 3 rotation]{\begin{tikzpicture}
    [sq/.style=
  {shape=regular polygon, regular polygon sides=4, draw, minimum width=2.828cm}]


\draw[fill=gray!10] (-1,-1) -- (-1, 1) -- (-1+0.5, 1+0.5) -- (-1+0.5, -1+0.5)--cycle; 
\draw[fill=gray!10] (1,-1) -- (1+0.5, -1+0.5) --(-1+0.5,-1+0.5) -- (-1,-1) --cycle; 
\draw[fill=gray!40]  (-1, 1) -- (-1+0.5, 1+0.5) -- (1+0.5, 1+0.5) -- (1,1) --cycle; 
\draw[fill=gray!40] (1,1) -- (1+0.5, 1+0.5) --(1+0.5,-1+0.5) -- (1,-1) --cycle; 

  \node[sq] at (0,0){};
  \node[sq] at (0.5,0.5){};
  \draw[thin] (-1,-1) -- ++(0.5, 0.5);
  \draw[thin] (-1,1) -- ++(0.5, 0.5);
  \draw[thin] (1,-1) -- ++(0.5, 0.5);
  \draw[thin] (1,1) -- ++(0.5, 0.5);


\node[thick, font=\fontsize{15}{0}\selectfont, thick] at (0,1){{\color{green}$>$}};
\node[rotate=90, thick, font=\fontsize{15}{0}\selectfont, thick] at (-0.5,0.5){{\color{green}$>$}};

\node[rotate = 90, thick, font=\fontsize{15}{0}\selectfont, thick] at (-1,0){{\color{olive}$>$}};
\node[thick, font=\fontsize{15}{0}\selectfont, thick] at (0,-1){{\color{olive}$>$}};

\node[rotate=90, thick, font=\fontsize{15}{0}\selectfont, thick] at (1.5,0.5){{\color{blue}$>$}};
\node[thick, font=\fontsize{15}{0}\selectfont, thick] at (0.25,1.5){{\color{blue}$>$}};

\node[rotate=90, thick, font=\fontsize{15}{0}\selectfont, thick] at (1,0){{\color{black}$>$}};
\node[thick, font=\fontsize{15}{0}\selectfont, thick] at (0.5,-0.5){{\color{black}$>$}};

  \end{tikzpicture}}

\caption{Genus 3 to genus 1 quotient}
\label{fig:rotation}
\end{wrapfigure}

Note then that $I, J, K$ are identified in $Y$ as a single curve
$\gamma_0$. To a path in $Y$, we associate its algebraic intersection number 
\begin{equation*}
  i = \#\set{\textrm{upward crossings of } \gamma_0} - \#\set{\textrm{downward crossings of } \gamma_0}.
\end{equation*}
Each time a path in $X$ (or $M$) crosses any of $I, J, K$ in the
positive direction, the projected path in $Y$ crosses $\gamma_0$ in
the upward direction, and vice versa, hence we have
\begin{equation}\label{eq:iabc}
  i = a + b + c
\end{equation}
where $\bv=(a,b,c)$ is the displacement vector of the path in $X$.

We therefore have shown that any cylinder decomposition of $M$
descends to a decomposition of $Y$ into a single cylinder whose core
curve has algebraic intersection number $i=0$.

  In fact, Theorem~\ref{thm:displacementvector} shows that the
  converse is also true.

\begin{theorem}[Periodicity characterization using
  $Y$]\label{thm:characterization}
  A direction is periodic in $M$ if and only if $Y$ decomposes into a
  single cylinder in that direction and the algebraic intersection
  number of the core curve of this cylinder with the horizontal core
  curve is $0$.
\end{theorem}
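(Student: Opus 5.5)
The plan is to deduce both directions from Theorem~\ref{thm:displacementvector}, using the quotient map $X \to Y = X/\theta_{2\pi/3}$ and the identity~\eqref{eq:iabc}, $i = a+b+c$.

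\emph{Forward direction.} This is essentially already shown in the discussion preceding the statement. Suppose the direction is periodic in $M$. By Theorem~\ref{thm:cylinders}(a) and Proposition~\ref{prop:isometricprojection}, $M$ decomposes into area-$4$ cylinders. Each of these maps isometrically onto a cylinder of $X$. By the implication (1.$\Rightarrow$3.) of Theorem~\ref{thm:displacementvector}, $X$ decomposes into three area-$4$ cylinders cycled by $\theta_{2\pi/3}$. Their common image in $Y$ is a single cylinder, and the quotient map restricted to any one of them is an isometry onto it. Its core curve lifts to a closed core curve of $X$ with displacement $(a,b,c) = 0$, so $i = a+b+c = 0$.

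\emph{Converse.} Suppose $Y$ is a single cylinder $C_Y$ in the given direction, with core curve $\beta$ satisfying $i(\beta) = 0$. The preimage of $C_Y$ in $X$ is a union of cylinders on which $\ZZ/3\ZZ$ acts. I will show it consists of exactly three cylinders cycled by $\theta_{2\pi/3}$; this gives condition 3 of Theorem~\ref{thm:displacementvector}.

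The obstacle is ruling out the possibility that $\beta$ lifts to a single closed curve of three times the length, i.e.\ that the $\ZZ/3$ monodromy along $\beta$ is nontrivial.

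To handle it, I plan to identify the monodromy of the branched cover $X \to Y$ with intersection numbers against $\gamma_0$. Concretely, the fundamental domain $F'$ is a union of three rotated copies of a single sheet, glued along the curves $I, J, K$, and these project to $\gamma_0$. Crossing $\gamma_0$ upward moves the lift to the next sheet, via $\theta_{2\pi/3}$. Crossing it downward moves back. All other edge identifications within a sheet are respected by the gluing pattern of Figure~\ref{fig:rotation}. Hence the monodromy along a closed path is $\theta_{2\pi/3}^{\,i \bmod 3}$.

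I must also check that the cone points (the fixed points of the rotation) do not contribute extra monodromy. Since $\beta$ avoids cone points, a homotopy argument in $Y$ minus the branch points, together with the fact that the monodromy is determined by crossings of $\gamma_0$ as a cocycle, should suffice.

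With $i = 0$, the monodromy is trivial, so $\beta$ lifts to three disjoint closed curves in $X$ cycled by $\theta_{2\pi/3}$. Their maximal cylinders are three disjoint isometric copies of $C_Y$, and together they fill $X$.

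It remains to get $a+b+c = 0$, which is exactly $i = 0$ by~\eqref{eq:iabc}. Then Theorem~\ref{thm:displacementvector} (3.$\Rightarrow$1.) gives a periodic lift to $M$. Finally, Proposition~\ref{prop:Mcylinders} shows that the direction is then periodic everywhere in $M$.
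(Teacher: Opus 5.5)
Your forward direction is fine and follows the paper, which also runs both directions through Theorem~\ref{thm:displacementvector} and~\eqref{eq:iabc}. The converse, however, rests on a false claim. The monodromy of $X\to Y$ is \emph{not} $\theta_{2\pi/3}^{\,i \bmod 3}$, and the cone points are exactly where your argument breaks.

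The quotient $X\to Y$ is a branched cover. The rotation $\theta_{2\pi/3}$ fixes the two cone points $(\tfrac12,\tfrac12,\tfrac12)$ and $(-\tfrac12,-\tfrac12,-\tfrac12)$ of $X$; Riemann--Hurwitz, $-4=3\cdot 0-2\cdot 2$, forces exactly two fixed points. On each $3\pi$-cone it acts as a rotation by $\pi$, and the images are the angle-$\pi$ points $\alpha,\beta$ of $Y$. A small loop around $\alpha$ therefore has nontrivial monodromy. Yet its intersection number with $\gamma_0$ is $0$, since $I,J,K$ pass through midpoints of squares and avoid all cone points.

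What goes wrong is your sheet picture. The three $\theta_{2\pi/3}$-translates of a fundamental piece are the three square tubes whose core curves are $I$, $J$, $K$. They are glued to one another along their boundary edges, which run through the cone points, so crossing $I$ keeps you in the same tube. The crossing count $i$ records the $\ZZ^3$-displacement $a+b+c$, not the $\ZZ/3$ sheet.

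The failure also occurs for exactly the curves you need. In the direction $(5,2)=B\cdot(1,0)$, $Y$ is a single cylinder whose core has class $3\sigma+4\eta$ (proof of Proposition~\ref{prop:veechgrprep}), so $|i|=4\not\equiv 0\pmod 3$. Yet $X$ decomposes there into area-$4$ cylinders (Section~\ref{sec:descentX}), so the monodromy is trivial. Thus ``$i=0\Rightarrow$ trivial monodromy'' has no justification.

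What is true, and what the paper's one-line assertion of the lifting step actually relies on, is that the lift to three cylinders holds in \emph{every} one-cylinder direction of $Y$, independently of $i$. Argue as follows.
\begin{enumerate}
\item The preimage of the $Y$-cylinder $C_Y$ is either a single cylinder of area $12$, or three cylinders of area $4$, each mapping isometrically onto $C_Y$ and hence cyclically permuted by $\theta_{2\pi/3}$.
\item By Proposition~\ref{prop:YVeechGroupFacts}, every one-cylinder direction of $Y$ is $N\cdot(1,0)$ with $[N]\in V(Y)=V(X)$ (Proposition~\ref{prop:veechgrpY}).
\item An affine diffeomorphism of $X$ with derivative $N$ carries the three horizontal area-$4$ cylinders of $X$ to area-$4$ cylinders in direction $N\cdot(1,0)$. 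This excludes the single area-$12$ option.
\end{enumerate}
With that in hand, $i=0$ gives $a+b+c=0$ by~\eqref{eq:iabc}, and the implication $3\Rightarrow 1$ of Theorem~\ref{thm:displacementvector} completes the converse as you intended.
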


\begin{proof}
  Suppose $V$ is a periodic direction on $M$. By
  Theorem~\ref{thm:displacementvector}(3), the corresponding cylinder
  decomposition on $M$ descends to a decomposition of $X$ into three
  area-four cylinders that are cycled by $\theta_{2\pi/3}$-rotations,
  and have displacement vector $\bv=(a,b,c)$ with $a + b +
  c=0$. Therefore, this decomposition descends to a one-cylinder
  decomposition of $Y$ whose core curve has algebraic intersection
  $i=0$, by~\eqref{eq:iabc}.

  On the other hand, a single-cylinder decomposition of $Y$ having
  algebraic intersection $i=0$ lifts to a decomposition of $X$ into
  three cylinders that are cycled by $2\pi/3$ rotations, and whose
  displacement vectors are orthogonal to $(1,1,1)$. By
  Theorem~\ref{thm:displacementvector}, this lifts to a decomposition
  of $M$ into periodic cylinders.
\end{proof}

\section{Exhibiting trajectories with certain properties}\label{sec:fourey}

In this section we describe a process through which two periodic
directions give rise to a third. Using this process, we prove the
following theorem:

\family*

To prove this theorem, we organize this section in the following way:

\begin{itemize}
\item In Section~\ref{sec:arbitrarilyLarge}, we demonstrate a family of periodic trajectories of arbitrarily large diameter. This family is characterized by the continued fraction expansions of their slopes (see Theorem~\ref{thm:diameter} and Proposition~\ref{prop:foureyslopes}). This will prove part 1 of Theorem~\ref{thm:family}.

\item In Section~\ref{sec:recurrence}, we will prove parts 2 and 3 of Theorem~\ref{thm:family} by demonstrating an uncountable family of directions 
with recurrent trajectories. Although such a set's existence follows from
  Corollary~\ref{cor:ergodic directions}, we construct one explicitly. Furthermore, it is known that the Hausdorff dimension of the slopes satisfying 2 or 3 in Theorem~\ref{thm:family} is approximately 0.68 (see Remark~\ref{rem:HausdorffDimension}).
\end{itemize}

\begin{wrapfigure}[12]{r}{0.5\textwidth}
\centering
\vspace{-1cm}
\begin{tikzpicture}[scale=0.6]
  \draw[directed] (-4,-2) -- (6,2);
  \draw[dotted] (-4,-2.5) -- (6,1.5);
  \draw[dotted] (-4,-1.5) -- (6,2.5);

    \draw[dotted] (-4,-4) -- (-4,1);
    \draw[dotted] (-4,1) -- (-2,1);
    \draw[dotted] (-2,1) -- (-2,-4);
    \draw[dotted] (-2,-4) -- (-4,-4);

    \draw[dotted] (-2,-4) -- (0,-4);
    \draw[dotted] (0,-4) -- (0,1);
    \draw[dotted] (0,1) -- (-2,1);

    \draw[dotted] (0,2) -- (0,-3);
    \draw[dotted] (0,-3) -- (2,-3);
    \draw[dotted] (2,-3) -- (2,2);
    \draw[dotted] (2,2) -- (0,2);
    
    \draw[dotted] (2,4) -- (2,-1);
    \draw[dotted] (2,-1) -- (4,-1);
    \draw[dotted] (4,-1) -- (4,4);
    \draw[dotted] (4,4) -- (2,4);

    \draw[dotted] (2,4) -- (2,-1);
    \draw[dotted] (2,-1) -- (4,-1);
    \draw[dotted] (4,-1) -- (4,4);
    \draw[dotted] (4,4) -- (2,4);

    \draw[dotted] (4,3) -- (4,-2);
    \draw[dotted] (4,-2) -- (6,-2);
    \draw[dotted] (6,-2) -- (6,3);
    \draw[dotted] (6,3) -- (4,3);

    \draw[directed] (-4,-2) -- (-2.95,1);
    \draw[directed] (-2.95,-4) -- (-1.2,1);
    \draw[directed] (-1.2,-4) -- (0.9,2);
    \draw[directed] (0.9,-3) -- (3.1,4);
    \draw[directed] (3.2,-1) -- (4.7,3);
    \draw[directed] (4.7,-2) -- (6,2);

    \node[] at (1.2,0.4) {$\calO$};
    \node[] at (-4.1,0) {$T_V(\calO)$};
  	
\end{tikzpicture}  	
\caption{$\calO$ and $T_V(\calO)$ are shown together with the cylinders in direction $V$ as well as the interior of the cylinder containing $\calO$.}
\label{odot}
\end{wrapfigure}

We now define the process through which two periodic directions give rise to a third. This process in the definition below is equivalent to the application
of multiple distinct left Dehn twists to a given periodic trajectory.

\begin{definition}\label{def:geomOperation}
  Let $V$ be a periodic direction and fix an orientation on the Mucube
  $M$. Let $\calO$ be a periodic trajectory not parallel to $V$. Note
  that $\calO$ is decomposed into finitely many segments of equal
  length by the intersection with cylinders in direction $V$. Replace
  each segment with a straight line segment such that
  \begin{enumerate}
  \item the new segment has the same end points
  \item the new segment is contained in the same cylinder
  \item the interior of the new segment does not intersect $\calO$
  \item the new segment departs the starting point in a direction
    counterclockwise to that of the old segment.
  \end{enumerate}
  Define $T_V(\calO)$ as the resulting periodic trajectory.
\end{definition}

\begin{wrapfigure}[14]{r}{0.3\textwidth}
\centering
\begin{tikzpicture}[scale=0.8]
\begin{scope}[rotate=90,yscale=-1,xscale=1]
    \draw[] (-2,0) -- (-1,0);
    \draw[] (-1,0) -- (-1,1);
    \draw[] (-1,1) -- (-2,1);
    \draw[] (-2,1) -- (-2,0);

        \begin{scope}[xshift=1cm]
    \draw[] (-2,0) -- (-1,0);
    \draw[] (-1,0) -- (-1,1);
    \draw[] (-1,1) -- (-2,1);
    \draw[] (-2,1) -- (-2,0);
        \end{scope}

        \begin{scope}[xshift=2cm]
    \draw[] (-2,0) -- (-1,0);
    \draw[] (-1,0) -- (-1,1);
    \draw[] (-1,1) -- (-2,1);
    \draw[] (-2,1) -- (-2,0);
        \end{scope}

        \begin{scope}[xshift=3cm]
    \draw[] (-2,0) -- (-1,0);
    \draw[] (-1,0) -- (-1,1);
    \draw[] (-1,1) -- (-2,1);
    \draw[] (-2,1) -- (-2,0);
        \end{scope}

    \draw[] (-1,1) -- (-1.2,1.8);
    \draw[] (-1.2,1.8) -- (-0.2,1.8);
    \draw[] (-0.2,1.8) -- (0,1);

    \draw[] (-1.2,1.8) -- (-1.2,2.8);
    \draw[] (-1.2,2.8) -- (-0.2, 2.8);
    \draw[] (-0.2, 2.8) -- (-0.2, 1.8);

    \begin{scope}[yshift=1cm]
    \draw[] (-1.2,1.8) -- (-1.2,2.8);
    \draw[] (-1.2,2.8) -- (-0.2, 2.8);
    \draw[] (-0.2, 2.8) -- (-0.2, 1.8);
    \end{scope}
    \begin{scope}[xshift=-1cm]
    \draw[] (-1,1) -- (-1.4,1.6);
    \draw[] (-1.4,1.6) -- (-0.4,1.6);
    \draw[] (-0.4,1.6) -- (0,1);
    \end{scope}
    \begin{scope}[xshift=-1.2cm,yshift=-0.2cm]
       \draw[] (-1.2,1.8) -- (-1.2,2.8);
         \draw[] (-1.2,2.8) -- (-0.2, 2.8);
         \draw[] (-0.2, 2.8) -- (-0.2, 1.8);

         \begin{scope}[yshift=1cm]
            \draw[] (-1.2,1.8) -- (-1.2,2.8);
         \draw[] (-1.2,2.8) -- (-0.2, 2.8);
         \draw[] (-0.2, 2.8) -- (-0.2, 1.8);
            \end{scope} 
    \end{scope}
    \begin{scope}[yscale=1,xscale=-1]
        \draw[] (-1,1) -- (-1.2,1.8);
         \draw[] (-1.2,1.8) -- (-0.2,1.8);
         \draw[] (-0.2,1.8) -- (0,1);

        \draw[] (-1.2,1.8) -- (-1.2,2.8);
        \draw[] (-1.2,2.8) -- (-0.2, 2.8);
        \draw[] (-0.2, 2.8) -- (-0.2, 1.8);

        \begin{scope}[yshift=1cm]
        \draw[] (-1.2,1.8) -- (-1.2,2.8);
        \draw[] (-1.2,2.8) -- (-0.2, 2.8);
        \draw[] (-0.2, 2.8) -- (-0.2, 1.8);
        \end{scope}
        
        \begin{scope}[xshift=-1cm]
        \draw[] (-1,1) -- (-1.4,1.6);
        \draw[] (-1.4,1.6) -- (-0.4,1.6);
        \draw[] (-0.4,1.6) -- (0,1);
        \end{scope}

         \begin{scope}[xshift=-1.2cm,yshift=-0.2cm]
       \draw[] (-1.2,1.8) -- (-1.2,2.8);
         \draw[] (-1.2,2.8) -- (-0.2, 2.8);
         \draw[] (-0.2, 2.8) -- (-0.2, 1.8);

         \begin{scope}[yshift=1cm]
            \draw[] (-1.2,1.8) -- (-1.2,2.8);
            \draw[] (-1.2,2.8) -- (-0.2, 2.8);
            \draw[] (-0.2, 2.8) -- (-0.2, 1.8);
            \end{scope} 
    \end{scope}
    \end{scope}

    \draw[] (-2,0.5) -- (-2+0.125,1);
    \draw[] (-2+0.125,1) -- (-2.4+0.275, 1.6);
    \draw[] (-2.4+0.275, 1.6) -- (-2.4+0.525, 2.6);
    \draw[] (-2.4+0.525, 2.6) -- (-2.4+0.775,3.6);

    \draw[] (-1.125,0) --(-1+0.125,1);
    \draw[] (-1+0.125,1) -- (-1.2+0.375,1.8);
    \draw[] (-1.2+0.375,1.8) -- (-1.2+0.625,2.8);
    \draw[] (-1.2+0.625,2.8) -- (-1.2+0.875,3.8);

    \draw[] (-1+0.875,0) -- (0.125,1);
    \draw[] (0.125,1) -- (0.2+0.375,1.8);
    \draw[] (0.2+0.375,1.8) -- (0.2+0.625,2.8);
    \draw[] (0.2+0.625,2.8) -- (0.2+0.875,3.8);
    
    \draw[] (0.875,0) -- (1.125,1);
    \draw[] (1.125,1) -- (1.4+0.375,1.6);
    \draw[] (1.4+0.375,1.6) -- (1.4+0.625,2.6);
    \draw[] (1.4+0.625,2.6) -- (1.4+0.875,3.6);

    \draw[] (1+0.875,0) -- (2,0.5);
\end{scope}
\end{tikzpicture}
\caption{A trajectory with slope $1/4$ shown on a region of the Mucube. Note that this is the same trajectory as in Figure~\ref{fig:xwyz}.\protect\footnotemark }
\label{fig:fourey}
\end{wrapfigure}

\footnotetext{See \href{https://sites.google.com/view/sunroseshrestha/periodic-trajectory-on-mucube}{https://sites.google.com/view/sunroseshrestha/periodic-trajectory-on-mucube} for an animation of the trajectory.}

\emph{A priori}, the process described above results in a piece-wise
linear loop. However, since the cylinders are parallel (all in
direction $V$), the new linear segments are parallel as well. Indeed,
this forms a periodic straight line trajectory. See
Figure~\ref{odot}. We note that the angle that $T_V(\calO)$ makes with
the direction $V$ is smaller than the angle that $\calO$ makes with
$V$. 
\begin{example}
    Applying the construction in Definition~\ref{def:geomOperation} by taking $V$ to be the horizontal direction and $\calO$ to be a trajectory in the vertical direction yields a trajectory with slope $\frac{1}{4}$. See Figure~\ref{fig:fourey}.
\end{example}

\subsection{Arbitrarily large periodic trajectories}\label{sec:arbitrarilyLarge}

The next proposition describes the asymptotic growth of the length of a periodic trajectory with repeated application of $T_V$. When we write $f(k) \sim g(k)$ as $k \to \infty$ for functions $f$ and $g$, we mean $\lim_{k \rightarrow \infty} \frac{f(k)}{g(k)} = 1$. For a periodic direction $V$, $\len V$ will refer to the length of any periodic orbit in the direction $V$ (which is well defined, by Proposition~\ref{prop:Mcylinders}). Likewise, $\wid V$ will refer to the width of any cylinder in direction $V$.

\begin{proposition}\label{prop:AsymLengthOfTwist}
Let $V$ be a periodic direction and $\calO$ be a periodic trajectory with slope transverse to $V$. Let $\theta \in (0, \pi/2]$ be the angle between $V$ and $\calO$. Then, as $k \rightarrow \infty$,
    \begin{equation*}
        \len T_V^k(\calO) \sim   k \cdot \sin(\theta) \cdot\frac{\len V}{\wid V}\cdot
  \len \mathcal O
    \end{equation*}
\end{proposition}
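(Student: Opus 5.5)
We will compute the length of $T_V^k(\calO)$ exactly, segment by segment, and then let $k\to\infty$. By Proposition~\ref{prop:Mcylinders} and Theorem~\ref{thm:cylinders}, $M$ decomposes in direction $V$ into isometric cylinders of circumference $L:=\len V$ and width $w:=\wid V$, with $Lw = 4$. The trajectory $\calO$ crosses these cylinders transversally. Each crossing is a segment $\sigma$ joining the two boundary components of one cylinder, and it makes angle $\theta$ with $V$. Hence each such segment has length $w/\sin\theta$. If $n$ denotes the number of such segments, then
\begin{equation*}
  \len\calO = n\,w/\sin\theta .
\end{equation*}
In Definition~\ref{def:geomOperation} the endpoints of the segments lie on the cylinder boundaries and are unchanged. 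So $T_V^k(\calO)$ still consists of $n$ segments, one in each of the same cylinders, with the same endpoints.

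The key step is to identify the new segments. We develop one cylinder in the plane as the strip $\RR\times[0,w]$ modulo translation by $(L,0)$, with $V$ horizontal. In this picture the old segment runs from $(0,0)$ to $(w\cot\theta, w)$. One application of $T_V$ replaces it by the segment with the same endpoints in the cylinder that is next counterclockwise and does not meet the interior of the old one. I will argue that this is the segment from $(0,0)$ to $(w\cot\theta + L, w)$, up to the sign convention given by the orientation. This is exactly the image under the affine Dehn twist $\begin{bmatrix}1 & L/w\\ 0 & 1\end{bmatrix}$ of the cylinder.

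The main point to check is that conditions 1--4 of Definition~\ref{def:geomOperation} force a shift of exactly one period $L$. Condition 3 (disjoint interiors, forcing adjacency in the lift) together with condition 4 (the counterclockwise choice) should pin this down. The same applies to each successive $T_V$, since the inputs are again periodic trajectories crossing the same cylinders. By induction, the $k$-th iterate segment goes from $(0,0)$ to $(w\cot\theta + kL, w)$. Its length is therefore
\begin{equation*}
  \sqrt{(w\cot\theta + kL)^2 + w^2} \sim kL \quad\text{as } k\to\infty .
\end{equation*}
We also need that the twist direction does not depend on the cylinder, so that all $n$ segments shift the same way. This follows from the fixed orientation of $M$ and the fact that $G$ maps cylinders to cylinders, by Lemma~\ref{lem:parallelism}.

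Summing over the $n$ segments gives
\begin{equation*}
  \len T_V^k(\calO) \sim nkL = k\sin\theta\cdot\frac{L}{w}\cdot\frac{nw}{\sin\theta} = k\sin\theta\cdot\frac{\len V}{\wid V}\cdot\len\calO ,
\end{equation*}
which is the claimed asymptotic.

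The hardest part is the rigorous justification that Definition~\ref{def:geomOperation} yields precisely a single twist by $L$ in each cylinder. A secondary issue is that the resulting curve could close up earlier, i.e.\ be a multiple cover, which would change its length as a trajectory. I would first try to handle this by noting that the twist is induced by the affine map that is a multi-twist along the cylinders in direction $V$. Such a map is a homeomorphism of $M$, so it sends a simple closed trajectory to a simple closed one and preserves the segment count $n$.
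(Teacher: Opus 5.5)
Your proof is correct and follows the paper's route: the paper also counts the $\sin\theta\cdot\len\calO/\wid V$ crossings of $V$-cylinders, takes each twisted crossing segment to have length $\sqrt{(\wid V)^2+(k\len V+\cot\theta\,\wid V)^2}$, and multiplies. The paper reads the one-period shift per application of $T_V$ off its figure, so your identification of $T_V$ with the affine multi-twist along the $V$-cylinders is a useful addition: it settles the exact shift and disposes of the multiple-cover concern, and it is the sensible reading of Definition~\ref{def:geomOperation}, since condition~3 taken literally cannot be met when $\calO$ crosses some $V$-cylinder more than once (as happens, for instance, with a horizontal core curve and a slope-$4$ cylinder).
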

\begin{proof}

First we compute that the number of cylinders in the $V$ direction that $\calO$ passes through is given by $\sin(\theta) \cdot  \frac{\len \calO}{\wid V}$ (see Figure~\ref{fig:NumCylindersCutByO}). 

\begin{figure}[h!]
\centering
\subfloat[The number of cylinders in direction $V$ that the periodic trajectory $\cal O$ cuts through can be computed to be $\frac{\sin(\theta)\cdot \len \calO}{\wid V}$.]{
\resizebox{0.5\textwidth}{!}{
\begin{tikzpicture}
  \draw[directed] (-4,-2) -- (6,2);

    \draw[dotted] (-4,-4) -- (-4,1);
    \draw[dotted] (-4,1) -- (-2,1);
    \draw[dotted] (-2,1) -- (-2,-4);
    \draw[dotted] (-2,-4) -- (-4,-4);

    \draw[dotted] (-2,-4) -- (0,-4);
    \draw[dotted] (0,-4) -- (0,1);
    \draw[dotted] (0,1) -- (-2,1);

    \draw[dotted] (0,2) -- (0,-3);
    \draw[dotted] (0,-3) -- (2,-3);
    \draw[dotted] (2,-3) -- (2,2);
    \draw[dotted] (2,2) -- (0,2);
    
    \draw[dotted] (2,4) -- (2,-1);
    \draw[dotted] (2,-1) -- (4,-1);
    \draw[dotted] (4,-1) -- (4,4);
    \draw[dotted] (4,4) -- (2,4);

    \draw[dotted] (2,4) -- (2,-1);
    \draw[dotted] (2,-1) -- (4,-1);
    \draw[dotted] (4,-1) -- (4,4);
    \draw[dotted] (4,4) -- (2,4);

    \draw[dotted] (4,3) -- (4,-2);
    \draw[dotted] (4,-2) -- (6,-2);
    \draw[dotted] (6,-2) -- (6,3);
    \draw[dotted] (6,3) -- (4,3);

\draw[directed] (-4,-4) -- (-4,1);


\draw[] (-4,-2) -- (6,-2);

\coordinate (intersect) at (-4,-2);
\coordinate (V) at (-4,1);
\coordinate (O) at (6,2);
\pic [draw, ->, "$\theta$", angle eccentricity=1.5] {angle = O--intersect--V};

\draw [decorate,decoration={brace,amplitude=10pt}] (6,-2.1) -- (-4,-2.1)  node [black,midway, yshift=-0.6cm] { $\sin(\theta)\cdot \len \calO$};

\draw [decorate,decoration={brace,amplitude=5pt}] (-2,-4) -- (-4,-4)  node [black,midway, yshift=-0.5cm] { $\wid V$};

    \node[] at (1.2,0.4) {$\calO$};
    \node[] at (-4,1.2) {$V$};

\end{tikzpicture} 
\label{fig:NumCylindersCutByO}
}}
\hspace{1cm}
\subfloat[Here segments of $T^2_V(\calO)$ and $\calO$ within a single cylinder are shown together. The length of the segment shown of $T^2_V(\calO)$ can be computed to be $\sqrt{(\wid V)^2 + (2 \cdot \len V + \cot(\theta) \cdot \wid V)^2}$]{
\resizebox{0.4\textwidth}{!}{
\begin{tikzpicture}
  \draw[directed] (-4,-2) --++ (2,1);

    \draw[dotted] (-4,-4) -- (-4,1);
    \draw[dotted] (-4,1) -- (-2,1);
    \draw[dotted] (-2,1) -- (-2,-4);
    \draw[dotted] (-2,-4) -- (-4,-4);

    \draw[directed] (-4,-2) --++ (6/11,3);
    \draw[directed] (-4+6/11,-2+3-5) --++ (10/11,5);
    \draw[directed] (-4+6/11+10/11,-2+3-5+5-5) --++ (6/11,3);


\draw[] (-4,-2) -- (-2,-2);


\draw [decorate,decoration={brace,amplitude=5pt}] (-2,-4) -- (-4,-4)  node [black,midway, yshift=-0.5cm] { $\wid V$};

\draw [decorate,decoration={brace,amplitude=5pt}] (-2,-1) -- (-2,-2)  node [black,midway, xshift=1.5cm] { $\wid V \cdot \cot(\theta)$};

\draw [decorate,decoration={brace,amplitude=7pt}] (-4,-4) -- (-4,1)  node [black,midway, xshift=-1cm] { $\len V$};

    \node[] at (-1.8,-0.9) {$\calO$};
    \node[] at (-4+6/11,1.2) {$T^2_V(\calO)$};

\end{tikzpicture}  	
\label{fig:LengthOfTwist}
}}
\caption{The number of cylinders that $\calO$ passes through and the length of a segment of $T^2_V(\calO)$ within a single cylinder.}
\end{figure}

Next, in each cylinder that $\calO$ passes through, the length of the segment of $T^k_V(\calO)$ contained in that cylinder is given by (see Figure~\ref{fig:LengthOfTwist}),
$$\sqrt{(\wid V)^2 + (k \cdot \len V + \cot(\theta) \cdot \wid V)^2}.$$
Therefore, 
\begin{align*}
    \len T^k_V(\calO) &= \sin(\theta) \cdot  \frac{\len \calO}{\wid V}\cdot  \sqrt{(\wid V)^2 + (k \cdot \len V + \cot(\theta) \cdot \wid V)^2} \\ & \sim k \cdot \sin(\theta) \cdot  \frac{\len V}{\wid V} \cdot \len \calO \hspace{1cm} (k \rightarrow \infty)
\end{align*}
 where the asymptotic follows by noting $\theta, \len V, \len \calO$ and $\wid V$ are fixed.\end{proof}

Using Proposition~\ref{prop:AsymLengthOfTwist} we see that the length of periodic trajectories can get arbitrarily large. In fact, even the \emph{diameter} (say, in $\RR^3$) of periodic trajectories can get arbitrarily large, as the next result shows.

 \begin{theorem}[Periodic orbits of arbitrary diameter]\label{thm:diameter}
   There are periodic linear orbits of the Mucube, $M$, of
   arbitrarily large diameter (say, in the $\norm{\cdot}_\infty$-norm
      of $\RR^3$). 
 \end{theorem}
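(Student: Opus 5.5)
The plan is to produce an explicit sequence of periodic directions by iterating the twist operation $T_V$ of Definition~\ref{def:geomOperation}. Then show that the $\norm{\cdot}_\infty$-diameter of $T_V^k(\calO)$ in $\RR^3$ tends to infinity, rather than just the intrinsic length controlled by Proposition~\ref{prop:AsymLengthOfTwist}.

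\textbf{Setup.} Take $V$ horizontal, which is periodic since the horizontal core curves $I,J,K$ give area-4 cylinders. Take $\calO$ vertical, also periodic. Each $T_V^k(\calO)$ is periodic with slope $1/(4k)$ (or $4k$, after the symmetry of Lemma~\ref{lem:111}). By Theorem~\ref{thm:cylinders}, its maximal cylinder has area $4$. Hence its core curve has length $4/\wid$. The width of a cylinder of slope $1/(4k)$ in a unit square tiling is $1/\sqrt{1+16k^2}$, so the length is $4\sqrt{1+16k^2}\to\infty$, consistent with Proposition~\ref{prop:AsymLengthOfTwist}.

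\textbf{From length to diameter.} Length alone does not force large diameter. I would track the \emph{displacement} of partial arcs using the machinery of Section~\ref{sec:descentX}.
\begin{enumerate}
\item Project $\gamma_k=T_V^k(\calO)$ to $X$ and consider the quarter-arc from time $0$ to $T/4$.
\item By Theorem~\ref{thm:displacementvector}(2), the lift moves from $F_\bzero$ to $F_{\bv(T/4)}$, so the diameter is at least roughly $2\norm{\bv(T/4)}_\infty-2$.
\item It therefore suffices to show that $\norm{\bv(T/4)}_\infty\to\infty$.
\end{enumerate}
Concretely, a segment of slope $1/(4k)$ runs through about $k$ consecutive horizontal periods of a horizontal cylinder before crossing to the next cylinder. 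Each horizontal cylinder of $M$ is a closed loop of 4 squares around an axis, cycled by a $\pi/2$-rotation, with zero net displacement per full turn. So I would instead use the drift: twisting $k$ times inside a horizontal cylinder while also shifting vertically.

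\textbf{Alternative displacement argument (the one I would commit to).} Compare $\gamma_k$ with the line $\calL_k$ in the unfolding $\RR^2$. The unfolding map $\phi$ restricted to the horizontal strip $\RR\times[0,1]$ is periodic with the $\pi/2$-rotation $g_V$. The next horizontal strip $\RR\times[1,2]$ is glued via a different rotation axis, obtained from the vertical direction's rigid motion $g_\calO$. Then $\phi$ on the whole plane is equivariant for the group generated by $g_V$ and $g_\calO$, both lying in $G$. Over one quarter period, the trajectory of slope $1/(4k)$ applies $g_V^{k}$-type twists interleaved with $g_\calO$. The resulting element of $G$ is $g_{\gamma_k}$, computed as $g_V^{k}g_\calO$ up to normalization, and $\phi$ intertwines $T$ with $g_{\gamma_k}$ by Lemma~\ref{lem:intertwining}. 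The diameter is at least the distance between the axis of $g_{\gamma_k}$ and a point of the orbit. This equals the distance from $x$ to the rotation axis of $g_V^k g_\calO$.

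I would compute $g_V^kg_\calO$ explicitly as $(\bt_k,\theta_k)\in\ZZ^3\rtimes O$ using the $4\times4$ matrix embedding. Since $g_V$ has order 4, only $k\bmod 4$ matters for $\theta_k$. The orbit lies at distance from the axis governed by the translational part once the order is four. The main obstacle is that $g_V^k$ is periodic in $k$, so naive iteration gives bounded displacement. This is precisely why the construction must use nested twists (general continued fractions $[4a_0;4a_1,\dots]$) rather than a single $T_V^k$.

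So I would instead iterate alternately: $\calO_{n+1}=T_{V_n}(\calO_n)$ with $V_n$ the previous direction. I would then show by induction that the translational parts $\bt_n$ of the associated quarter-turn elements grow. Each new twist conjugates the axis by an element whose translation adds a nonzero vector not cancelled, giving $\norm{\bt_{n}}_\infty\ge n$.

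Verifying this non-cancellation in the induction is the step I expect to be hardest. I would check it by explicit matrix computation for the first few steps and then look for an invariant, such as parity of a coordinate, that forces strict growth.
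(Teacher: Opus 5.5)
Your proposal does not yet prove the theorem. The one step that would force large diameter is the claim that the translational parts $\bt_n$ of the quarter-turns attached to your iterated trajectories satisfy $\norm{\bt_n}_\infty\ge n$ (``each new twist adds a nonzero vector not cancelled''). That claim is only announced: you plan to check it on a few examples and then explain it by an invariant you have not found.

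The scaffolding around it is also shaky:
\begin{itemize}
\item There is no unfolding $\phi$ defined on all of $\RR^2$ and equivariant under $\langle g_V,g_\calO\rangle$. Lattice points correspond to cone points of angle $3\pi$, so laying out squares around a lattice point does not close up. The map $\phi$ exists only on the strip $S_\calL$. Indeed the faces directly above a horizontal cylinder do not all lie in one horizontal cylinder, so there is no single ``next horizontal strip''. Formulas such as $g_{\gamma_k}=g_V^kg_\calO$ are therefore heuristic.
\item $\norm{\bt_n}\to\infty$ only says that the axis of $g_{\calO_n}$ recedes from the origin. This bounds the diameter of $\calO_n$ from below only if all the $\calO_n$ pass through a common bounded region. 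That holds here, since $T_V(\calO)$ meets $\calO$ on the boundaries of the $V$-cylinders, but it must be said.
\end{itemize}
Your diagnosis that iterating $T_V$ alone is useless is correct: every $T_V^k(\calO)$ lies in the finite union of $V$-cylinders met by $\calO$.

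The missing idea is that no axes need to be located; it suffices to count faces. The paper's argument runs as follows. If $\calO$ is transverse to $V$, then $T_V(\calO)$ passes through every face of every $V$-cylinder that $\calO$ meets, hence through every face $\calO$ visits and more. Iterating with alternating horizontal and vertical twists yields periodic trajectories visiting arbitrarily many faces. Since a bounded subset of $\RR^3$ contains only finitely many faces of $M$, pigeonhole gives unbounded diameter.

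To make ``more'' rigorous, use the counterclockwise convention of Definition~\ref{def:geomOperation}. The alternating shears then compound rather than cancel, with directions $(1,0),(1,4),(-15,4),(-15,-56),(209,-56),\dots$. So each new segment runs more than $3$ of the $4$ units around its cylinder and meets all four of its faces. The visited sets $S_n$ then increase and are alternately saturated by horizontal and by vertical cylinders. If they stabilized, the limit would be closed under edge-adjacency, because faces sharing an edge share a horizontal or vertical cylinder. It would then contain all of the infinitely many faces of the connected surface $M$, contradicting finiteness.

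This counting argument replaces your unproven non-cancellation step entirely.
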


\begin{proof}
  Begin with a periodic trajectory $\calO$. Let $S$ be the union of
  the square faces of the Mucube that $\calO$ passes through. The
  trajectory $\calO$ must be transversal to either the vertical or the
  horizontal direction. Without loss of generality, suppose it is
  transversal to the vertical direction, $V$. Then $T_V(\calO)$ is a new
  periodic trajectory passing through every square face in $S$, as
  well as every square face of every basic cylinder intersecting
  $\calO$. Importantly, $T_V(\calO)$ passes through more square faces
  of $M$ than $\calO$ does.

  Iterating this process, we may produce periodic trajectories passing
  through arbitrarily many square faces of $M$. Since every bounded
  set in $\RR^3$ contains finitely many square faces of $M$, the
  result follows from the pigeonhole principle.
\end{proof}

\begin{remark}
  By a similar argument, beginning with a horizontal periodic
  trajectory and alternating applications of $T_V$ and $T_H$, we
  obtain a sequence of periodic trajectories $\{\calO_n\}$. This
  particular sequence has the property that $\calO_{n+1}$ has
  diameter exactly two greater than $\calO_{n}$.
\end{remark}

We now use the geometric operation in Definition
\ref{def:geomOperation} to give an explicit family of periodic
trajectories whose slopes have the following nice continued fraction
expansions, which for obvious reasons, we refer to as ``Fourey fractions.''

\begin{proposition}[Fourey fractions are periodic]\label{prop:foureyslopes}
  All rational numbers of the form
  \begin{equation*}
    [4a_0; 4a_1, 4a_2, \dots,4a_n] := 4a_0 + \frac{1}{\displaystyle 4a_1 + \frac{1}{\displaystyle 4a_2 +\frac{1}{\ddots +\frac{1}{4a_n}}}},
  \end{equation*}
  where $a_0\in\ZZ$ and $a_i \in\ZZ\setminus\set{0}$ for all
  $i\geq 1$, are slopes of periodic linear orbits of $M$.
\end{proposition}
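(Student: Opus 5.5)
The plan is to show that the twist operations $T_H$ and $T_V$ of Definition~\ref{def:geomOperation}, with $H$ the horizontal and $V$ the vertical direction, act on slopes through the unipotent matrices
\[
U=\begin{bmatrix}1&4\\0&1\end{bmatrix},\qquad L=\begin{bmatrix}1&0\\4&1\end{bmatrix},
\]
and their inverses. Fourey fractions are exactly the slopes produced from a horizontal or vertical seed by alternating such twists, so periodicity follows by induction.

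\textbf{Step 1: base directions.} Horizontal and vertical directions are periodic. The core curves of the basic cylinders of $F$ (for instance $I$, $J$, $K$ in Figure~\ref{fig:FwithBasicCurves}) are closed horizontal trajectories; by Lemma~\ref{lem:111} the perpendicular direction is periodic as well. By Theorem~\ref{thm:cylinders}, every cylinder in these directions has area $4$, and its core curve crosses four unit squares, so it has circumference $4$ and width $1$.

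\textbf{Step 2: the twist acts by $U$ or $L$.} Let $\calO$ be periodic and not parallel to $V$ (vertical). Inside each vertical cylinder of circumference $4$ and width $1$, replacing a crossing segment as in Definition~\ref{def:geomOperation} adds one full turn around the cylinder. The segment, which had horizontal displacement $1$ and vertical displacement $s$, becomes one with displacement $(1, s\pm 4)$.

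The sign is constant. This is because all cylinders are images of one another under $G$ (Proposition~\ref{prop:Mcylinders}), and by Lemma~\ref{lem:parallelism}(3) $G$ preserves parallel fields; taken together, this makes the developed direction well defined. Hence the slope transforms as $s\mapsto s\pm 4$, that is, by $U^{\pm1}$ acting on the vector $(1,s)$. Symmetrically, $T_H$ sends $1/s\mapsto 1/s\pm4$, which is the action of $L^{\pm1}$.

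Using $T_V^{-1}$, obtained by choosing clockwise in Definition~\ref{def:geomOperation}, we can realize either sign. Condition 3 of the definition only fixes the direction of departure, and the reversed choice is again a periodic trajectory. Since the new orbit is again periodic, iterating any word in $U^{\pm1}$ and $L^{\pm1}$ applied to the horizontal or vertical direction yields periodic slopes.

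\textbf{Step 3: matching continued fractions.} Induct on $n$, building from the innermost term outward. Start with the vertical direction, slope $\infty$. Apply $T_H^{a_n}$ to obtain slope $1/(4a_n)$; this is legitimate because vertical is not parallel to horizontal. Next apply $T_V^{a_{n-1}}$ to obtain $4a_{n-1}+1/(4a_n)$. Then take reciprocals by an $H$-twist, and continue alternating.

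More precisely, if $x=[4a_k;\dots,4a_n]$ is periodic, then $T_H^{a_{k-1}}$ applied to the slope $1/x$ gives slope $1/(4a_{k-1}+x)$. So the right induction statement is: both $x$ and $1/x$ are periodic slopes. This holds for the whole family by the $(p,q)\leftrightarrow(q,p)$ symmetry, which is Lemma~\ref{lem:111} again. Finally, apply $T_V^{a_0}$ to shift by $4a_0$; this step is allowed even when $a_0=0$. The nonvanishing of $a_i$ for $i\ge1$ is not actually needed for periodicity. It guarantees that the intermediate directions are never parallel to the twisting direction, so each twist is defined.

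\textbf{Main obstacle.} The crux is Step 2: rigorously identifying $T_V$ with the linear map $s\mapsto s\pm4$ uniformly across all vertical cylinders. This means checking that consecutive cylinders are glued so that the developed slope is the same after each crossing; the half-translation structure could a priori flip the sign. I would verify this by developing along $\calO$, using the fact that every vertical cylinder is a $G$-image of a fixed basic one and that $G$ fixes parallel fields. The counterclockwise rule then gives the same sign of the shift in every cylinder.
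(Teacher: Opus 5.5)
Your argument is correct and is essentially the paper's proof: the twist $T_V^{a}$ shifts slopes by $4a$, reciprocals of periodic slopes are periodic, and iterating from a horizontal or vertical seed produces every Fourey fraction. (Your alternation of $T_H$ and $T_V$, with the seed chosen according to the parity of $n$, even avoids the reciprocal step the paper uses.)

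There are two bookkeeping slips. In your ``more precisely'' step, the twist should be $T_V^{a_{k-1}}$ applied to slope $1/x$, which gives $4a_{k-1}+1/x=[4a_{k-1};4a_k,\dots,4a_n]$, or $T_H^{a_{k-1}}$ applied to slope $x$; as you wrote it, $T_H^{a_{k-1}}$ applied to $1/x$ gives $1/(4a_{k-1}+x)$, which is not the next fraction. Also, Lemma~\ref{lem:111} by itself only gives $s\mapsto -1/s$, so to get $s\mapsto 1/s$ you also need an orientation-reversing reflection of $M$, or the fact that the family is closed under $a_i\mapsto -a_i$.
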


\begin{proof}
  Let the direction $V$ be vertical with respect to a coordinatization
  of a square face of $M$. Observe that if $\calO$ is a trajectory of
  slope $s$, then for any $a\in\ZZ$, the trajectory $T_V^{a}(\calO)$
  has slope $4a + s$. Hence, if $s$ is a periodic slope, then so is
  $4a+s$. Since reciprocals of periodic slopes are also periodic, it
  follows that $\frac{1}{4a+s}$ is a periodic slope. The proposition
  follows by iterating this process finitely many times, beginning
  with the periodic slope $s=0$.
\end{proof}

One can also deduce Proposition~\ref{prop:foureyslopes} from the
algebraic description of periodic directions in
Theorem~\ref{thm:veechgrpchar}. See Remark~\ref{rem:thm1conseq}.

\subsection{Recurrent trajectories}\label{sec:recurrence}

With the aid of Proposition \ref{prop:foureyslopes}, we now construct
recurrent trajectories in a family of directions characterized by
their continued fraction expansions. In what follows, we consider the
set of possible slopes $\mathbb{R} \cup \infty$ to have the topology
of a circle (the one-point compactification of $\mathbb{R}$). That
way, it is homeomorphic to the circle of possible directions of
straight line flow.

\begin{definition}\label{def:recurrence}
  A linear trajectory $\gamma: [0, \infty) \rightarrow M$ on the
  Mucube is said to be \textbf{recurrent} if for every $\epsilon > 0$
  there exists a sequence $t_n \rightarrow +\infty$ such that for all
  $n$, $\gamma(t_n)$ is in the $\epsilon$-neighborhood of $\gamma(0)$.
\end{definition}

In other words, a trajectory is recurrent if it comes arbitrarily
close to its starting point infinitely often.

We recall the following basic facts from the theory of continued
fractions (see~\cite{RockettSzusz}). Given the formal expression
\begin{equation*}
  [a_0; a_1, a_2, a_3, \dots]
\end{equation*}
where the coefficients are treated as symbols, put $p_0 = a_0$,
$q_0 = 1$, $p_1 = a_1a_0 + 1$, $q_1= a_1$, and for $n\geq 1$,
\begin{gather}\label{eq:recurrencerelations}
  \begin{split}
          p_{n+1} &= a_{n+1}p_n + p_{n-1} \\
    q_{n+1} &= a_{n+1}q_n + q_{n-1}. 
  \end{split}
\end{gather}
Then for each $n\geq 0$, we have
\begin{equation*}
  \frac{p_n}{q_n} = [0; a_1, a_2, a_3, \dots, a_n]
\end{equation*}
and
\begin{equation}
  \label{eq:flipflop}
  p_{n-1}q_n-p_nq_{n-1} = (-1)^n.
\end{equation}
In particular, if the coefficients are integers, then so are
$p_n, q_n$, and we have $\gcd(p_n,q_n)=1$ for all $n\geq
1$. Furthermore, if $\abs{a_n}\geq 4$ for all $n\geq 1$, as is the
assumption in Theorem~\ref{thm:family}, then $\abs{q_n}$ is a strictly
increasing sequence. From~(\ref{eq:flipflop}) it follows that
\begin{equation*}
  \abs*{\frac{p_{n}}{q_{n}} - \frac{p_{n-1}}{q_{n-1}}} = \frac{1}{q_nq_{n-1}},
\end{equation*}
and the right-hand side forms a convergent series. Therefore, the
sequence of convergents $p_n/q_n$ converges to some real number, $\xi$.

In cases where $a_n\geq 1\, (n\geq 1)$, it is well known that the
convergents satisfy
\begin{equation}
  \label{eq:convapprox}
  \abs*{q_n\xi - p_n} < \frac{1}{q_{n+1}}
\end{equation}
for every $n\geq 0$. In fact, Hurwitz' theorem says that of any three
consecutive convergents, at least one of them satisfies the inequality
\begin{equation}\label{eq:hurwitzog}
  \abs{q_n\xi - p_n} < \frac{1}{\sqrt{5} q_n},
\end{equation}
and $1/\sqrt{5}$ is the smallest constant for which this is true
(see~\cite{niven}). We will prove an analogous theorem for the
continued fractions appearing in Theorem~\ref{thm:family}.

\begin{lemma}[Fourey Hurwitz]\label{lem:hurwitz}
  Let $k \in \ZZ_{\geq 1}$ and 
  \begin{equation*}
    \xi = [0; 4a_1, 4a_2, 4a_3, \dots]
  \end{equation*}
  where $a_i\in\ZZ\setminus\set{-(k-1), \dots, 0, \dots, k-1}$ for all $i\geq 1$. Then for every $n$, we have
  \begin{equation}\label{eq:hurwitz}
    \abs*{q_n\xi - p_n} < \frac{1}{2\sqrt{4k^2-1}q_n},
  \end{equation}
  and this is best possible. Furthermore, if the sequence
  $(a_n)$ does not have a tail of alternating $k$'s and $-k$'s, then
    \begin{equation*}
      \abs*{q_n\xi - p_n} < \frac{1}{4kq_n}
  \end{equation*}
  for infinitely many $n\geq 0$, and this is best possible.
\end{lemma}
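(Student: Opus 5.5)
The plan is to use the classical identity for continued fractions with arbitrary (here signed) integer partial quotients. Write $\xi_{n+1} = [4a_{n+1}; 4a_{n+2}, \dots]$ for the complete quotient and $r_n = q_{n-1}/q_n$. From \eqref{eq:recurrencerelations} and \eqref{eq:flipflop}, $\xi = (\xi_{n+1}p_n + p_{n-1})/(\xi_{n+1}q_n + q_{n-1})$, hence
\begin{equation*}
  \abs*{q_n\xi - p_n} = \frac{1}{q_n\,\abs{\xi_{n+1} + r_n}}.
\end{equation*}
Iterating the recurrence for $q_n$ gives $r_n = [0; 4a_n, 4a_{n-1}, \dots, 4a_1]$, a finite continued fraction. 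We also have $\xi_{n+1} = 4a_{n+1} + t_{n+1}$ with $t_{n+1} = [0; 4a_{n+2}, 4a_{n+3},\dots]$. So both statements reduce to lower bounds on $\abs{4a_{n+1} + t_{n+1} + r_n}$: we need it to exceed $2\sqrt{4k^2-1}$ always, and to exceed $4k$ infinitely often.

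\textbf{Step 1: bounding tails.} Let $\alpha = 2k - \sqrt{4k^2-1}$, the smaller root of $\alpha = 1/(4k-\alpha)$. Every finite or infinite $[0; 4b_1, 4b_2,\dots]$ with $\abs{b_i}\geq k$ has absolute value at most $\alpha$. This follows by induction on depth (or by a limiting argument), since $\abs{[0;4b_1,\dots]} \le 1/(4\abs{b_1} - \abs{[0;4b_2,\dots]}) \le 1/(4k-\alpha) = \alpha$. The bound is strict for finite expansions, hence strict for $r_n$. Also, the sign of such a tail equals the sign of $b_1$.

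\textbf{Step 2: the first bound.} We get $\abs{\xi_{n+1} + r_n} \ge 4k - \abs{t_{n+1}} - \abs{r_n} > 4k - 2\alpha = 2\sqrt{4k^2-1}$, which gives \eqref{eq:hurwitz}. For optimality, take the purely alternating sequence $k,-k,k,-k,\dots$. Along it, $a_{n+1}t_{n+1}<0$ and $a_{n+1}r_n<0$, and both $\abs{t_{n+1}}$ and $\abs{r_n}$ tend to $\alpha$. Thus $q_n\abs{q_n\xi-p_n} \to 1/(2\sqrt{4k^2-1})$, so no smaller constant works.

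\textbf{Step 3: the second bound.} Without loss of generality $a_{n+1}>0$ (multiply by $-1$). The target is $4a_{n+1} + t_{n+1} + r_n > 4k$. If $a_{n+1}\ge k+1$, this is automatic, since $4(k+1) - 2\alpha > 4k$. So assume $a_{n+1} = k$. If moreover $a_n$ and $a_{n+2}$ are both positive, then $t_{n+1}$ and $r_n$ are both positive and we are done. The remaining, genuinely delicate case is a tail in which $\abs{a_m} = k$ for all large $m$ but the signs are not purely alternating, and no index has both neighbours of the same sign as itself (e.g.\ the sign pattern $++--++--\cdots$). There $t_{n+1}$ and $r_n$ have opposite signs, and we need the positive one to dominate.

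\textbf{Main obstacle.} I expect this last comparison to be the hardest step. The plan is to look at a pair of consecutive equal signs $a_m = a_{m+1} = k$ (up to global sign) and test both $n = m$ and $n = m-1$. At index $n=m$ the term $r_m$ begins with $4a_m$ and is positive, while $t_{m+1}$ begins with the sign of $a_{m+2}$. At $n=m-1$ the term $t_m$ begins with $4a_{m+1}$ and is positive, while $r_{m-1}$ begins with the sign of $a_{m-1}$. Using the monotonicity of $x\mapsto 1/(4k+x)$ and the bound of Step 1, I would show that the two differences $r_m - \abs{t_{m+1}}$ and $t_m - \abs{r_{m-1}}$ cannot both be nonpositive. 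Failing a clean inequality, the fallback is to compare the leading terms $1/(4k\pm\cdot)$ of the competing continued fractions directly. Optimality of $4k$ is checked with the periodic sign pattern $++--$ (or sequences approaching the alternating one arbitrarily slowly), where the quantity $\abs{\xi_{n+1}+r_n}$ accumulates at $4k$.
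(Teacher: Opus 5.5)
Your reduction to lower bounds on $\abs{Q_n}$, $Q_n:=\xi_{n+1}+r_n$, matches the paper's proof, as do the tail bound by $\alpha=2k-\sqrt{4k^2-1}$, the first inequality with its optimality, and the Step 3 case split. That split is: either $\abs{a_{n+1}}\ge k+1$ infinitely often, or there are infinitely many same-sign pairs $a_m=a_{m+1}$, tested at $n=m$ and $n=m-1$. Your Step 1 justifies the tail bound more carefully than the paper's ``this is minimized when''. (Read $q_n$ as $\abs{q_n}$ throughout: signed partial quotients can make $q_n$ negative, a slip the paper shares.)

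The gap is that the decisive claim, that $Q_m-4k$ and $Q_{m-1}-4k$ cannot both be nonpositive, is only announced, and it is the whole content of the second assertion. The missing idea, which is what the paper does, is to add the two quantities rather than compare them term by term. Normalize $a_m=a_{m+1}=k$ and put $x=t_{m+1}$, $y=r_{m-1}$. Then
\[
  Q_m = 4k+x+\frac{1}{4k+y},\qquad Q_{m-1}=4k+y+\frac{1}{4k+x},
\]
so $Q_m+Q_{m-1}=8k+f(x)+f(y)$ with $f(t)=t+\frac{1}{4k+t}$. On $[-\alpha,\alpha]$ the function $f$ is increasing and $f(-\alpha)=-\alpha+\frac{1}{4k-\alpha}=0$. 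Moreover $y>-\alpha$ strictly, because $r_{m-1}$ is a finite expansion. Hence $Q_m+Q_{m-1}>8k$, and one of the two exceeds $4k$. This handles every sign configuration around the pair at once, so your separate triple case is unnecessary.

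The optimality witness you name first is wrong. For the periodic sign pattern $(+,+,-,-)$ one finds $\abs{Q_n}\to 4k+v-u$ at every index, where $u=[0;\overline{4k,4k,-4k,-4k}]$ and $v=[0;\overline{4k,-4k,-4k,4k}]$. These satisfy $u=1/(4k+v)$ and $v=1/(4k-u)$, and subtracting $4ku+uv=1$ from $4kv-uv=1$ gives $v-u=uv/(2k)>0$. The limit is about $4.031$ when $k=1$, so this $\xi$ satisfies the inequality at all large $n$ with a constant strictly larger than $4k$.

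Your alternative is the right witness: take alternating $\pm k$ interrupted by isolated same-sign pairs, with the alternating blocks between pairs growing in length. At the two indices attached to each pair, $x,y\to-\alpha$, so $Q\to 4k-\alpha+\frac{1}{4k-\alpha}=4k$. At every other index both neighbours of $a_{n+1}$ have the opposite sign, so $\abs{Q_n}=4k-\abs{t_{n+1}}-\abs{r_n}<4k$. Thus for each $C>4k$ only finitely many $n$ have $\abs{Q_n}>C$. The paper asserts this optimality without verifying it, so here you would be supplying something it omits.
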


\begin{proof}
  For each $n\geq 0$, let 
  \begin{equation*}
    \xi_n = [4a_n; 4a_{n+1}, 4a_{n+2}, \dots].
  \end{equation*}
  Then for each $n\geq 0$ we have
  \begin{align}
    q_n\xi - p_n
    &= q_n\frac{\xi_{n+1}p_n + p_{n-1}}{\xi_{n+1}q_n + q_{n-1}} - p_n \nonumber \\
    &= \frac{1}{\xi_{n+1}q_n + q_{n-1}}\parens*{q_n(\xi_{n+1}p_n + p_{n-1}) - p_n(\xi_{n+1}q_n + q_{n-1})} \nonumber \\
    &= \frac{p_{n-1}q_n-p_nq_{n-1}}{\xi_{n+1}q_n + q_{n-1}} \nonumber\\
    &\overset{(\ref{eq:flipflop})}{=} \frac{(-1)^n}{\xi_{n+1}q_n + q_{n-1}} \nonumber\\
    &= \parens*{\frac{(-1)^n}{\xi_{n+1} + \frac{q_{n-1}}{q_n}}}\frac{1}{q_n}.\label{eq:return}
  \end{align}
  Observe that
  \begin{equation*}
    \xi_{n+1} + \frac{q_{n-1}}{q_n}
    = [4a_{n+1}; 4a_{n+2}, 4a_{n+3}, \dots] + [0; 4a_n, 4a_{n-1}, 4a_{n-2}, \dots, 4a_1]. 
  \end{equation*}
  This is minimized (in absolute value) when
  
  \begin{equation*}
    [4a_{n+1}; 4a_{n+2}, 4a_{n+3}, \dots] = \pm [4k; -4k, 4k, -4k, \dots] = \pm \parens*{2k+\sqrt{4k^2-1}}
  \end{equation*}
  and
  
  \begin{equation*} [0; 4a_n, 4a_{n-1}, 4a_{n-2}, \dots, 4a_1] \sim
    \pm [0; -4k, 4k, -4k, \dots ] = \pm \parens*{\sqrt{4k^2-1}-2k}, 
  \end{equation*}
  that is, 
  \begin{equation*}
    \abs*{\xi_{n+1} + \frac{q_{n-1}}{q_n}}
    > 2\sqrt{4k^2-1}.
  \end{equation*}
  Returning this to~(\ref{eq:return}) proves~(\ref{eq:hurwitz}), and
  its optimality is exhibited by $[4k; \overline{-4k, 4k}]$.

  Suppose that the continued fraction does not have
  $\overline{[-4k,4k]}$ in its tail. Then either there are infinitely
  many $n$ for which $\abs{a_n}\geq k+1$, or there are infinitely many
  consecutive pairs $[\pm 4k, \pm 4k]$ appearing. For every $n$ such
  that $\abs{a_{n+1}}\geq k
  +1$, we have
  
  \begin{align*}
    \abs*{\xi_{n+1} + \frac{q_{n-1}}{q_n}}
    &= \abs*{[4a_{n+1}; 4a_{n+2}, 4a_{n+3}, \dots] + [0; 4a_n, 4a_{n-1}, 4a_{n-2}, \dots, 4a_1]} \\
    &\geq 4(k+1) - (2k-\sqrt{4k^2-1}) - (2k-\sqrt{4k^2-1})\\
    &= 4 + 2\sqrt{4k^2-1}.
  \end{align*}
  For every $n$ such that $a_n = a_{n+1} = \pm k$, we have
  
  \begin{align*}
    \xi_{n+1} + \frac{q_{n-1}}{q_n}
    &= [(\pm 4k); 4a_{n+2}, 4a_{n+3}, \dots] + [0; (\pm 4k), 4a_{n-1}, 4a_{n-2}, \dots, 4a_1] \\
    &= (\pm 4k) + \frac{1}{\xi_{n+2}} + \frac{1}{(\pm 4k) + \frac{q_{n-2}}{q_{n-1}}}
  \end{align*}
  and
  
  \begin{align*}
    \xi_{n} + \frac{q_{n-2}}{q_{n-1}}
    &= [(\pm 4k) ; (\pm 4k), 4a_{n+2}, 4a_{n+3}, \dots] + [0; 4a_{n-1}, 4a_{n-2}, \dots, 4a_1] \\
    &= (\pm 4k) + \frac{1}{(\pm 4k) + \frac{1}{\xi_{n+2}}} + \frac{q_{n-2}}{q_{n-1}}
  \end{align*}
  Summing, we have
 
  \begin{align*}
    \parens*{\xi_{n+1} + \frac{q_{n-1}}{q_n}} +  \parens*{\xi_{n} + \frac{q_{n-2}}{q_{n-1}}}
    &= (\pm 8k) + f_\pm \parens*{\frac{1}{\xi_{n+2}}} + f_\pm \parens*{\frac{q_{n-2}}{q_{n-1}}},
  \end{align*}
  
  where
  \begin{equation*}
    f_\pm (t) = t + \frac{1}{(\pm 4k) + t}. 
  \end{equation*}
  
  On the interval $[\sqrt{4k^2-1}-2k, 2k-\sqrt{4k^2-1}]$, $f_+(t)$ takes a minimum
  value of $0$ and $f_{-}(t)$ takes a maximum value of $0$, and these
  extrema are attained at the endpoints of the interval. Therefore,
  \begin{equation*}
    \abs*{\parens*{\xi_{n+1} + \frac{q_{n-1}}{q_n}} +  \parens*{\xi_{n} + \frac{q_{n-2}}{q_{n-1}}}}
    >8k,
  \end{equation*}
  and in particular
  \begin{equation*}
    \abs*{\xi_{n+1} + \frac{q_{n-1}}{q_n}} > 4k
    \quad\textrm{or}\quad
    \abs*{\xi_{n} + \frac{q_{n-2}}{q_{n-1}}} > 4k.
  \end{equation*}
  Combining these observations with~(\ref{eq:return}) proves the
  lemma.
\end{proof}

We are now ready to construct families of recurrent trajectories whose
slopes are characterized by certain continued fraction expansions. This will prove parts 2 and 3 of Theorem~\ref{thm:family}.

\begin{proof}[Proof of Theorem~\ref{thm:family}, Parts 2 and 3]
  Consider first the sequence of convergents ${p_n/q_n}$ of $\xi$. In
  both parts 2 and 3, by
  Proposition \ref{prop:foureyslopes}, the convergents are periodic
  directions. By \Cref{lem:areafour}, every cylinder in a
  periodic direction has area 4 and by Corollary~\ref{lem:midpoints}
  the length of a periodic trajectory with slope $p_n/q_n$ is
  $4\sqrt{p_n^2+q_n^2}$. As $n\to\infty$ we have
  $q_n \rightarrow \infty$, therefore, the widths of the cylinders
  converge to $0$.

  When  $\lim a_n a_{n+1}\neq -1$, by
  Lemma~\ref{lem:hurwitz}, the convergents $p_n/q_n$ satisfy
  \begin{equation}\label{eq:convergentapprox}
    q_n \,\abs*{q_n \xi - p_n} < \frac{1}{4}.
  \end{equation}
  infinitely often. In the other case, if $a_i \in \ZZ_\geq 1$ for all
  $i \geq 1$ and the sequence $(a_n)$ does not have a tail of $1s$,
  then~(\ref{eq:convapprox}) and the recurrence
  relations~(\ref{eq:recurrencerelations}) lead to
  \begin{equation*}
    q_n \,\abs*{q_n \alpha - p_n} < \frac{1}{4a_{n+1}}
  \end{equation*}
  for all $n\geq 1$. Because $a_{n+1}\geq 2$ for infinitely many $n$,
  we have
  \begin{equation}\label{eq:convergentapprox8}
    q_n \,\abs*{q_n \alpha - p_n} < \frac{1}{8}
  \end{equation}
  infinitely often. Likewise if
  $a_i \in \ZZ \setminus \set{-1,0,1}$
  for all $i \geq 1$ and the sequence $(a_n)$ does not have a tail of
  alternating $2s$ and $-2s$, using Lemma~\ref{lem:hurwitz}, we know
  $\xi$ satisfies (\ref{eq:convergentapprox8}) infinitely often.

  Now, let $\calO$ be a trajectory with slope $\xi$ and starting point
  $o$ and let $\epsilon >0$ be given. Then choose $k \in \NN$ large
  enough such that the width of every cylinder in the $p_k/q_k$
  direction is less than $\epsilon$. Furthermore, for $\xi$ satisfying
 Part \ref{part:caseConePoint} (respectively \ref{part:caseAll})
  choose $k$ such that $p_k/q_k$ satisfies \eqref{eq:convergentapprox}
  (respectively \eqref{eq:convergentapprox8}).

Next, let $C_k$ be the cylinder with slope $p_k/q_k$ containing $o$ (possibly in the boundary) and an initial, maximal closed segment of $\calO$ starting at $o$. If $o$ is in the interior of $C_k$ (for Part \ref{part:caseAll}), extend this segment back in negative time to meet the boundary of $C_k$ at a point $o_1$. Let $\calO_k$ be this segment starting at $o_1$ through $o$ and contained within $C_k$. 

Beginning at $o_1$ (which is precisely $o$ when $o$ is a cone point) consider a local developing map sending $\calO_k$ and the boundary of $C_k$ containing $o_1$ to $\RR^2$. In $\RR^2$, the image of $\calO_k$ has slope $\xi$ and the image of the boundary curve has slope $p_k/q_k$ and both line segments begin at the origin. See Figure~\ref{fig:stayingin}. If we develop the boundary
curve through its entire length, we obtain a half-open interval
spanning the origin and the point $(4q_k, 4p_k)$ (by Corollary \ref{lem:midpoints}). 

\begin{figure}[h!]
  \centering
\subfloat[Part \ref{part:caseConePoint}]{
  \begin{tikzpicture}[scale = 1]

    \fill[opacity = 0.1] (0,0) -- (4,1) -- (4, 0) -- (0, -1) -- (0,0);

    \draw[thick] (0,0) -- (4,1) -- (4, 0) -- (0, -1) -- (0,0);
    
    
    
    \fill[blue, opacity = 0.3] (0,0) -- (4,1) -- (4, 0.15) -- (0,0);
    \draw[-latex, blue, ultra thick] (0,0) -- (4, 0.15);
    \draw[-latex, blue, ultra thick, dotted] (0, -1+0.15)  --++ (0.2*4, 0.2*0.15);

    \filldraw (0,0) circle (2pt);
    \filldraw (0,-1+0.15) circle (2pt);
    
    \node[blue] at (4.3, 0.15){$\calO_k$};
    \node at (3.8, 1.5) {$(4q_k, 4p_k)$};
    \node at (-0.3, 0) {$o$};
    \node at (-0.3, -1+0.15) {$p$};

  \end{tikzpicture}
  }
  \subfloat[Part \ref{part:caseAll}]{
  \begin{tikzpicture}[scale = 1]

    \fill[opacity = 0.1] (0,0) -- (8,2) -- (8, 1) -- (0, -1) -- (0,0);

    \draw[thick] (0,0) -- (8,2) -- (8, 1) -- (0, -1) -- (0,0);
    
    \draw[thick] (4,1) --++ (0, -1);
    
    
    \fill[blue, opacity = 0.3] (0,0) -- (8,2) -- (8, 1.15) -- (0,0);
    \draw[-latex, blue, ultra thick] (0,0) -- (8, 1.15);
    \draw[-latex, blue, ultra thick, dotted] (0.5*8-4, 0.5*1.15-1)  --++ (0.5*8, 0.5*1.15);

    \filldraw (0.3*8,0.3*1.15) circle (2pt);
    \filldraw (0.3*8,0.3*1.15-0.43) circle (2pt);
    
    \node[blue] at (8.3, 1.15){$\calO_k$};
    \node at (8, 2.2) {$(8q_k, 8p_k)$};
    \node at (3.8, 1.5) {$(4q_k, 4p_k)$};
    \node at (-0.3, 0) {$o_1$};
    \node at (0.3*8,0.3*1.15-0.2) {$o$};
    \node at (0.3*8-0.2,0.3*1.15-0.65) {$p$};

  \end{tikzpicture}
  }
  
  \caption{Schematic showing $C_k$ developed in $\RR^2$ along with the segment $\calO_k$ of the trajectory $\calO$. In Part \ref{part:caseConePoint}, we ensure that the trajectory $\calO$ stays within the cylinder $C_k$ for at least one length of the cylinder such that the point $p$ is within $\epsilon$ of the cone point $o$. In Part \ref{part:caseAll}, we need to ensure that the trajectory $\calO$ stays within the cylinder for at least twice the length of the cylinder so that there is a point $p$  $\epsilon$ of $o$ that is part of the trajectory.}
  \label{fig:stayingin}
\end{figure}

When $\xi$ satisfies \ref{part:caseConePoint}, we developing the segment $\calO_k$ to where it meets the line $x=4q_k$. Then, consider the triangle formed by the origin, the end point of
this segment of $\calO_k$ and $(4q_k, 4p_k)$. The area of this triangle is 
$$8 \cdot q_k\abs{q_k \xi-p_k} < 2,$$ where the inequality follows from (\ref{eq:convergentapprox}). Since the area of $C_k$ is 4, this inequality means that the triangle is contained within $C_k$, ensuring that the segment $\calO_k$ stays inside cylinder $C_k$ for at least the length of the cylinder. Since the width of $C_k$ is less than $\epsilon$ this implies there exists a point $p \in \calO$ within $\epsilon$ of $o$  (on the Mucube), but at least $4\sqrt{p_k^2 + q_k^2}$ (the length of $C_k$) apart with respect to the intrinsic metric of the trajectory $\calO$. As $\epsilon$ was arbitrary, we conclude that the trajectory $\calO$ is recurrent.

When $\xi$ satisfies \ref{part:caseAll}, develop the segment $\calO_k$ to where it meets the line $x=8q_k$ instead, and consider the triangle formed by the origin, the end point of
this segment of $\calO_k$ and $(8q_k, 8p_k)$. In this case, the area of the triangle is
$$32 \cdot q_k\abs{q_k \alpha-p_k} < 4,$$ where the inequality follows from (\ref{eq:convergentapprox8}). Hence, the triangle is contained within \emph{two} developed copies of $C_k$, ensuring that the segment $\calO_k$ stays inside cylinder $C_k$ for at least \emph{twice} the length of the cylinder. Since the width of $C_k$ is less than $\epsilon$ this implies there exists a point $p \in \calO$ within $\epsilon$ of $o$  (on the Mucube) and we conclude recurrence in this case as well. 
\end{proof}

\begin{remark}\label{rem:HausdorffDimension}
    Let $\Xi$ be the set of slopes $\xi$ that satisfy \ref{part:caseConePoint} or \ref{part:caseAll} in Theorem ~\ref{thm:family}. It is contained in the limit set of the Fuchsian group 
    $$H = \left\langle \begin{bmatrix}1 & 4 \\ 0 & 1\end{bmatrix}, \begin{bmatrix} 0 & -1 \\ 1 & 0 \end{bmatrix} \right\rangle \subset \SL_2(\ZZ)$$ (see Section~\ref{sec:Fuchsian} for background on limit sets and Remark~\ref{rem:thm1conseq} for why $\Xi$ is contained in the limit set of $H$). The difference between $\Xi$ and the limit set of $H$ is countable: it is the set of rationals and the irrationals of the form $[4a_0;4a_1, \dots, ]$ which have a tail of $\overline{[-4,4]}$. Hence, by work of Fedosova \cite{Fedosova} on the Hausdorff dimension of limit sets of Fuchsian groups like $H$, we know that the Hausdorff dimension $\Xi$ is approximately 0.68.
\end{remark}

\part{Algebraic Characterization}\label{part:algebraicCharacterization}

In this part we study periodic directions from the algebraic point of
view.  We take a homological approach, with continued focus on
quotients of $M$, and we give an algebraic characterization of the
periodic directions in $M$. We also compute the Veech group of $Y$.

The Mucube is a square-tiled, half-translation surface. We begin by
recalling the basics of these structures, the relationship between
Veech groups of a surface and its cover, and relative homology on
square-tiled surfaces.

\section{Preliminaries for Part \ref{part:algebraicCharacterization}}\label{sec:bg}

\subsection{Translation and Half-translation Surfaces}

A \textbf{half-translation surface} can be defined geometrically as a
collection of polygons embedded in the plane and pairs of edges
identified by translation and possibly a $\pi$-rotation with the
restriction that as one moves along a glued edge, a polygon appears to
the left and to the right. These surfaces are defined up to
equivalence by cut, translate, $\pi$-rotate and paste operations.

Half-translation surfaces can also be defined as pairs $(S, \omega)$
where $\omega$ is a meromorphic quadratic differential on the Riemann
surface $S$ with at most simple poles, if any. Half-translation
surfaces are locally flat except at finitely many \textbf{cone points}
or \textbf{singularities} where they have a cone angle of $\pi \ell$
for some natural number $\ell \neq 2$. A cone point of angle of
$\pi \ell$ corresponds to a zero
 of order $\ell - 1$ when $\ell \gneqq 2$ and a simple pole
when $\ell = 1$.

For each point in $\mathbb{RP}^1$, there is an infinite family of
parallel lines in $\mathbb{R}^2$. One may intersect these lines with
the polygons in $\mathbb{R}^2$ used to define a half-translation
surface $S$.  The intersection creates a foliation of $S$ with
singularities. The singularities occur at the cone points. At times,
there are one-parameter families of pairwise homotopic leaves that are
all homeomorphic to a circle. We call the union of the leaves in one
such family a \textit{cylinder}.

The group $\SL_2(\RR)$ acts on half-translation surfaces via its
linear action on $\RR^2$---for $N \in \SL_2(\RR)$, and $S$ a
half-translation surface, $N\cdot S$ is obtained by acting on the
polygons of $S$ linearly. This preserves the number and angle of the
cone points.

A translation surface is similarly defined geometrically with the
exception that polygons are embedded in the plane up to
  translations and sides are paired via translations
only. Equivalently, they can be thought of as pairs $(S, \omega)$
where $S$ is a Riemann surface and $\omega$ a holomorphic one-form. An
analog of the Gauss-Bonnet theorem holds: the sum of the orders of the
one-form is $2g-2$ where $g$ is the genus of the underlying Riemann
surface.

\begin{figure}[h!!]
\begin{minipage}{0.20\textwidth}
\centering
\begin{tikzpicture}[oct/.style=
 {shape=regular polygon, regular polygon sides=8, draw, minimum width=2cm}]
 
 \node[oct] at (-6*22.5:1.85cm) {};

  \begin{scope}[shift={(-6*22.5:1.85cm)}]
 
  \draw (4*22.5:0.8cm) -- (4*22.5:1.05cm);
  \draw (12*22.5:0.8cm) -- (12*22.5:1.05cm);
  
  \draw [shift={(0,0.025)}] (8*22.5:0.8cm) -- (8*22.5:1.05cm);
  \draw [shift={(0,-0.025)}] (8*22.5:0.8cm) -- (8*22.5:1.05cm);
  
  \draw [shift={(0,0.025)}] (0:0.8cm) -- (0:1.05cm);
  \draw [shift={(0,-0.025)}] (0:0.8cm) -- (0:1.05cm);

  \draw [shift={(-0.05,0.05)}](2*22.5:0.8cm) -- (2*22.5:1.05cm);
 \draw (2*22.5:0.8cm) -- (2*22.5:1.05cm);
 \draw [shift={(0.05,-0.05)}](2*22.5:0.8cm) -- (2*22.5:1.05cm);

  \draw [shift={(-0.05,0.05)}](-6*22.5:0.8cm) -- (-6*22.5:1.05cm);
 \draw (-6*22.5:0.8cm) -- (-6*22.5:1.05cm);
 \draw [shift={(0.05,-0.05)}](-6*22.5:0.8cm) -- (-6*22.5:1.05cm);

 \draw [shift={(0.025,0.025)}](6*22.5:0.8cm) -- (6*22.5:1.05cm);
 \draw [shift={(-0.025,-0.025)}](6*22.5:0.8cm) -- (6*22.5:1.05cm);
 
 \draw [shift={(0.075,0.075)}](6*22.5:0.8cm) -- (6*22.5:1.05cm);
 \draw [shift={(-0.075,-0.075)}](6*22.5:0.8cm) -- (6*22.5:1.05cm);

 \draw [shift={(0.025,0.025)}](-2*22.5:0.8cm) -- (-2*22.5:1.05cm);
 \draw [shift={(-0.025,-0.025)}](-2*22.5:0.8cm) -- (-2*22.5:1.05cm);
 
 \draw [shift={(0.075,0.075)}](-2*22.5:0.8cm) -- (-2*22.5:1.05cm);
 \draw [shift={(-0.075,-0.075)}](-2*22.5:0.8cm) -- (-2*22.5:1.05cm);

  \end{scope}

\end{tikzpicture}
\end{minipage}
\begin{minipage}{0.24\textwidth}
\centering
\begin{tikzpicture}
    [sq/.style=
  {shape=regular polygon, regular polygon sides=4, draw, minimum width=1.414cm}]
  \node[sq] at (0,0){};
  \foreach \i in {1, -1}{
  	\node[sq] at (0,\i){};
  	\node[sq] at (\i,0){};
  	}
  \draw[thin] (0,1.35) -- ++(0,0.3);
  \draw[thin] (0,-1.35) -- ++(0,-0.3);
  
  \draw[thin] (-1.05,0.35) -- ++(0,0.3);
  \draw[thin] (-1.05,-0.35) -- ++(0,-0.3);
  \draw[thin] (-0.95,0.35) -- ++(0,0.3);
  \draw[thin] (-0.95,-0.35) -- ++(0,-0.3);
  
  \draw[thin] (1,0.35) -- ++(0,0.3);
  \draw[thin] (1,-0.35) -- ++(0,-0.3);
  \draw[thin] (1.05,0.35) -- ++(0,0.3);
  \draw[thin] (1.05,-0.35) -- ++(0,-0.3);
  \draw[thin] (0.95,0.35) -- ++(0,0.3);
  \draw[thin] (0.95,-0.35) -- ++(0,-0.3);
  
  \draw[thin] (-0.5,1.1) -- ++(-0.15,-0.15);
  \draw[thin] (-0.5,1.1) -- ++(0.15,-0.15);
  
  \draw[thin] (0.5,1.1) -- ++(-0.15,-0.15);
  \draw[thin] (0.5,1.1) -- ++(0.15,-0.15);
  
\draw[thin] (-0.5,-0.9) -- ++(-0.15,-0.15);
  \draw[thin] (-0.5,-0.9) -- ++(0.15,-0.15);
  \draw[thin] (-0.5,-1.1) -- ++(-0.15,-0.15);
  \draw[thin] (-0.5,-1.1) -- ++(0.15,-0.15);
  
  \draw[thin] (0.5,-0.9) -- ++(-0.15,-0.15);
  \draw[thin] (0.5,-0.9) -- ++(0.15,-0.15);
  \draw[thin] (0.5,-1.1) -- ++(-0.15,-0.15);
  \draw[thin] (0.5,-1.1) -- ++(0.15,-0.15);


\draw[thin] (-1.5,0.1) -- ++(-0.15,-0.15);
  \draw[thin] (-1.5,0.1) -- ++(0.15,-0.15);
  \draw[thin] (-1.5,0) -- ++(-0.15,-0.15);
  \draw[thin] (-1.5,0) -- ++(0.15,-0.15);
  \draw[thin] (-1.5,-0.1) -- ++(-0.15,-0.15);
  \draw[thin] (-1.5,-0.1) -- ++(0.15,-0.15);

  \draw[thin] (1.5,0.1) -- ++(-0.15,-0.15);
  \draw[thin] (1.5,0.1) -- ++(0.15,-0.15);
  \draw[thin] (1.5,0) -- ++(-0.15,-0.15);
  \draw[thin] (1.5,0) -- ++(0.15,-0.15);
  \draw[thin] (1.5,-0.1) -- ++(-0.15,-0.15);
  \draw[thin] (1.5,-0.1) -- ++(0.15,-0.15);

\end{tikzpicture}
\end{minipage}
\begin{minipage}{0.24\textwidth}
\centering
\begin{tikzpicture}
    [sq/.style=
  {shape=regular polygon, regular polygon sides=4, draw, minimum width=1.414cm}]
  \foreach \i in {0,1,2,3}{
  	\node[sq] at (\i,0){};
  	}

  \draw[thin] (0,0.5) -- ++(-0.15,0.15);
  \draw[thin] (0,0.5) -- ++(-0.15,-0.15);
  
  \draw[thin] (1,0.5) -- ++(0.15,0.15);
  \draw[thin] (1,0.5) -- ++(0.15,-0.15);

  \draw[thin] (1.9,-0.5) -- ++(-0.15,0.15);
  \draw[thin] (1.9,-0.5) -- ++(-0.15,-0.15);
   \draw[thin] (2.1,-0.5) -- ++(-0.15,0.15);
  \draw[thin] (2.1,-0.5) -- ++(-0.15,-0.15);

  \draw[thin] (2.9,-0.5) -- ++(0.15,0.15);
  \draw[thin] (2.9,-0.5) -- ++(0.15,-0.15);
   \draw[thin] (3.1,-0.5) -- ++(0.15,0.15);
  \draw[thin] (3.1,-0.5) -- ++(0.15,-0.15);
  
  \draw[thin] (1,-0.65) -- ++(0,0.3);
  \draw[thin] (2,0.65) -- ++(0,-0.3);
  
  \draw[thin] (-0.05,-0.65) -- ++(0,0.3);
  \draw[thin] (0.05,-0.65) -- ++(0,0.3);
  
  \draw[thin] (2.95,0.65) -- ++(0,-0.3);
  \draw[thin] (3.05,0.65) -- ++(0,-0.3);

\end{tikzpicture}

\end{minipage}
\begin{minipage}{0.3\textwidth}
\centering
\begin{tikzpicture}[hex/.style=
 {shape=regular polygon, regular polygon sides=6, draw, minimum width=2cm}]
 
 \node[hex] at (0:0cm) {};
 \node[hex] at (-30:1.72cm) {};
 
 
 	\draw[thin] (0,0.86) -- ++(-0.15,0.15);
  \draw[thin] (0,0.86) -- ++(-0.15,-0.15);
  
  \draw[thin] (1.5,0) -- ++(0.15,0.15);
  \draw[thin] (1.5,0) -- ++(0.15,-0.15);

 
 \draw[thin] (-0.1,-0.86) -- ++(-0.15,0.15);
  \draw[thin] (-0.1,-0.86) -- ++(-0.15,-0.15);
 \draw[thin] (0.1,-0.86) -- ++(-0.15,0.15);
  \draw[thin] (0.1,-0.86) -- ++(-0.15,-0.15);
  
  	 \draw[thin] (1.4,-1.72) -- ++(0.15,0.15);
  \draw[thin] (1.4,-1.72) -- ++(0.15,-0.15);
   \draw[thin] (1.6,-1.72) -- ++(0.15,0.15);
  \draw[thin] (1.6,-1.72) -- ++(0.15,-0.15);

\end{tikzpicture}
\end{minipage}

\caption{Examples of some translation and half-translation surfaces: The two surfaces on the left are translation surfaces and the two on the right are half-translation surfaces. Missing edge identifications denote edges identified with their opposites. }
\end{figure}

One advantage of a compact, square-tiled translation surface is it can
be easily inputted into a computer (there are SageMath packages
designed for this). Our Mucube is certainly tiled by squares, but it
is neither compact nor a translation surface. On the other hand, the
quotients $X$ and $Y$ we consider are compact, square-tiled,
half-translation surfaces. We work with translation surfaces related
to $X$ and $Y$ as a starting point to first study $X$ and $Y$, and
eventually $M$.

Given a half-translation surface $S$ defined by polygons
$P_1, \dots, P_n$, that is not a translation surface, one can build a

  \textbf{minimal translation cover}
$\tilde{S}$ as follows: Take one copy of $S$ and one of $-\Id \cdot S$
developed disjointly onto the plane $\mathbb{R}^2$. Call an edge $e$
of a half translation surface $\emph{special}$ if it is glued to its
pair via a half-translation (i.e. $\pi$ rotation + translation). For a
given edge $e$ in $S$, let $e'$ be its edge pair. Then we prescribe
the edge identifications to build $\tilde{S}$ as follows:
\begin{itemize}
\item If $e$ is special, glue $e$ to $-\Id\cdot e'$ by translation.
\item If $e$ is not special, glue to $e'$ by translation
\end{itemize}
See Figure \ref{fig:minimaltranslationcover} for an example of this
construction.

\begin{figure}
\begin{center}
\subfloat[]{
   \begin{tikzpicture}[hex/.style=
 {shape=regular polygon, regular polygon sides=6, draw, minimum width=2cm}]
 
 \node[hex, fill=gray!20] at (0:0cm) {};
 \node[hex, fill=gray!20] at (-30:1.72cm) {};
 
 
 	\draw[thick] (0,0.86) -- ++(-0.15,0.15);
  \draw[thick] (0,0.86) -- ++(-0.15,-0.15);
  
  \draw[thick] (1.5,0) -- ++(0.15,0.15);
  \draw[thick] (1.5,0) -- ++(0.15,-0.15);

 
 \draw[thick] (-0.1,-0.86) -- ++(-0.15,0.15);
  \draw[thick] (-0.1,-0.86) -- ++(-0.15,-0.15);
 \draw[thick] (0.1,-0.86) -- ++(-0.15,0.15);
  \draw[thick] (0.1,-0.86) -- ++(-0.15,-0.15);
  
  	 \draw[thick] (1.4,-1.72) -- ++(0.15,0.15);
  \draw[thick] (1.4,-1.72) -- ++(0.15,-0.15);
   \draw[thick] (1.6,-1.72) -- ++(0.15,0.15);
  \draw[thick] (1.6,-1.72) -- ++(0.15,-0.15);

\begin{scope}[shift={(-2,3.5)}]
\node[hex] at (0:0cm) {};
 \node[hex] at (-30:1.72cm) {};

 \node[] at (0, 1.1){$1$};
 \node[] at (30:1.1){$2$};
 \node[] at (-90:1.1){$7$};
 \node[] at (-150:1.1){$2$};
 \node[] at (150:1.1){$5$};

 \begin{scope}[shift={(-30:1.72cm)}]
 \node[] at (0, 1.1){$3$};
\node[] at (30:1.1){$4$};
\node[] at (-30:1.1){$5$};
\node[] at (-90:1.1){$6$};
\node[] at (-150:1.1){$4$};

 \end{scope}
 
 
  

 
  

  \begin{scope}[shift={(5,0)}, rotate=180]
    \node[hex] at (0:0cm) {};
 \node[hex] at (-30:1.72cm) {};

\node[] at (0, 1.1){$3$};
 \node[] at (30:1.1){$10$};
 \node[] at (-90:1.1){$6$};
 \node[] at (-150:1.1){$10$};
 \node[] at (150:1.1){$8$};

 \begin{scope}[shift={(-30:1.72cm)}]
 \node[] at (0, 1.1){$1$};
\node[] at (30:1.1){$9$};
\node[] at (-30:1.1){$8$};
\node[] at (-90:1.1){$7$};
\node[] at (-150:1.1){$9$};
\end{scope}


  

 
  
  
  \end{scope}
 \end{scope}

 \end{tikzpicture}

}
\hspace{0.1\textwidth}
 \subfloat[]{
 \begin{tikzpicture}
    [sq/.style=
  {shape=regular polygon, regular polygon sides=4, draw, minimum width=1.414cm}]
  \foreach \i in {0,1,2,3}{
  	\node[sq, fill=gray!20] at (\i,0){};
  	}

  \draw[thin] (0,0.5) -- ++(-0.15,0.15);
  \draw[thin] (0,0.5) -- ++(-0.15,-0.15);
  
  \draw[thin] (1,0.5) -- ++(0.15,0.15);
  \draw[thin] (1,0.5) -- ++(0.15,-0.15);

  \draw[thin] (1.9,-0.5) -- ++(-0.15,0.15);
  \draw[thin] (1.9,-0.5) -- ++(-0.15,-0.15);
   \draw[thin] (2.1,-0.5) -- ++(-0.15,0.15);
  \draw[thin] (2.1,-0.5) -- ++(-0.15,-0.15);

  \draw[thin] (2.9,-0.5) -- ++(0.15,0.15);
  \draw[thin] (2.9,-0.5) -- ++(0.15,-0.15);
   \draw[thin] (3.1,-0.5) -- ++(0.15,0.15);
  \draw[thin] (3.1,-0.5) -- ++(0.15,-0.15);
  
  \draw[thin] (1,-0.65) -- ++(0,0.3);
  \draw[thin] (2,0.65) -- ++(0,-0.3);
  
  \draw[thin] (-0.05,-0.65) -- ++(0,0.3);
  \draw[thin] (0.05,-0.65) -- ++(0,0.3);
  
  \draw[thin] (2.95,0.65) -- ++(0,-0.3);
  \draw[thin] (3.05,0.65) -- ++(0,-0.3);
  
  
  \begin{scope}[shift={(0, 2)}]
  
  \foreach \i in {0,1,2,3}{
  	\node[sq] at (\i,0){};
  	}

    \node[] at (-0.75,0){$1$};
     \node[] at (3.75,0){$1$};

\foreach \i [evaluate=\i as \j using {int(\i+2)}] in {0,1,2,3}{
  	\node[] at (\i,-0.75){\j};
  	}

\foreach \i [evaluate=\i as \j using {int(\i+6)}] in {0,1}{
  	\node[] at (\i,0.75){\j};
  	}    
      
\node[] at (2, 0.75){3};
\node[] at (3, 0.75){2};

  \end{scope}


   \begin{scope}[shift={(0, 4.5)}]
  
  \foreach \i in {0,1,2,3}{
  	\node[sq] at (\i,0){};
  	}

     \node[] at (-0.75,0){$10$};
     \node[] at (3.75,0){$10$};

\foreach \i [evaluate=\i as \j using {int(\i+8)}] in {0,1}{
  	\node[] at (\i,-0.75){\j};
  	}

\foreach \i [evaluate=\i as \j using {int(\i+4)}] in {2,3}{
  	\node[] at (\i,-0.75){\j};
  	}    

\foreach \i [evaluate=\i as \j using {int(\i+4)}] in {0,1}{
  	\node[] at (\i,0.75){\j};
  	}    
      
\node[] at (2, 0.75){9};
\node[] at (3, 0.75){8};

  \end{scope}
  
\end{tikzpicture}


\label{fig:translationcoverY}
}
\caption{Examples of half-translation surfaces and their minimal translation covers.}
 \label{fig:minimaltranslationcover}
\end{center}
\end{figure}

\subsection{Affine diffeomorphisms and the Veech group}

Given a half-translation surface $S$, let $\Aff(S)$ be the group of
affine diffeomorphisms of $S$. There is a derivative map
\begin{equation*}
  D: \Aff(S) \rightarrow \PSL_2(\RR)
\end{equation*}
and the \textbf{Veech group} of $S$ is defined to be the image
$V(S) := D(\Aff(S)) \leq \PSL_2(\RR)$. We note that since half-translation surfaces are equivalent to their images under $-\Id$, the derivative map $D$ takes its values in $\SL_2(\RR) / \ideal{\pm \Id}$ (which is well defined up to $\pm \Id$ for
half-translation surfaces). The translation group $\text{Aut}(S)$, defined as the group of
affine diffeomorphisms of $S$ whose derivative is in
$\{\text{Id}, -\text{Id}\}$ is the kernel of this derivative map $D$, yielding the following short exact sequence:
\[
  0 \rightarrow \text{Aut}(S) \rightarrow \text{Aff}(S)
  \xrightarrow{D} V(S) \rightarrow 0
\]
The first non-trival map in the sequence
is inclusion.

Alternately, the Veech group of $S$ is also the image in $\PSL_2(\RR)$
of the stabilizer of $S$ for the $\SL_2(\RR)$ action.

A translation surface $S$ is called a $\textbf{Veech surface}$ if the
Veech group $V(S)$ is a lattice in $\PSL_2(\RR)$.  Veech surfaces have
remarkable properties, one of which is given by the celebrated Veech
dichotomy (for finite type translation surfaces), which we state
below:

\begin{theorem}[Veech Dichotomy \cite{masur1,veech1}]
  Every straight-line flow on a Veech surface is either periodic or
  uniquely ergodic. 
\end{theorem}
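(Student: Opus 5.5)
The plan is to follow Veech's original strategy, recast through Teichmüller dynamics. Fix a direction $\theta$; after rotating $S$ we may assume $\theta$ is vertical. Let $g_t=\mathrm{diag}(e^{t},e^{-t})$ and $r_\theta$ the rotation taking $\theta$ to vertical, and study the Teichmüller geodesic ray $t\mapsto g_t r_\theta S$ in the stratum. The first step is Smillie's theorem: the orbit $\SL_2(\RR)\cdot S$ is closed in its stratum exactly when $V(S)$ is a lattice. So for a Veech surface the orbit map induces a proper embedding of the finite-area surface $\SL_2(\RR)/V(S)$, which projects to a Teichmüller curve $\HH/V(S)$ in moduli space. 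The ray $g_t r_\theta S$ projects to a geodesic ray in $\HH/V(S)$ heading to the boundary point $\theta\in\partial\HH$.

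The argument then splits into two cases, using that a lattice $V(S)$ is a geometrically finite Fuchsian group. Every point of $\partial\HH$ is therefore either a conical limit point or a parabolic fixed point.
\begin{enumerate}
\item \emph{Conical $\theta$.} The projected ray returns infinitely often to a fixed compact subset of $\HH/V(S)$. By properness, $g_t r_\theta S$ then returns infinitely often to a compact subset of the stratum. Masur's criterion (a nondivergent Teichmüller ray forces unique ergodicity of the vertical foliation) then gives that the flow in direction $\theta$ is uniquely ergodic.
\item \emph{Parabolic $\theta$.} Here $\theta$ is fixed by a parabolic element $P\in V(S)$; choose $\phi\in\Aff(S)$ with $D\phi=P$. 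Then $\phi$ preserves the direction $\theta$ and permutes the finitely many saddle connections in direction $\theta$, so some power $\phi^m$ fixes each of them. Cutting $S$ along these saddle connections yields components that are invariant under $\phi^m$.
\end{enumerate}

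In the parabolic case we must show each such component is a cylinder rather than a minimal component. The approach is to show that $\phi^m$ shears transversal segments unboundedly. On a minimal component, $\phi^m$ would have to preserve a transverse invariant measure while its derivative acts as a nontrivial shear on holonomy vectors of loops, which is impossible. Hence every component is a cylinder in direction $\theta$. Alternatively, one can quote Veech's explicit result that in a lattice surface any direction containing a saddle connection is a parabolic direction with commensurable cylinder moduli. Either way, the flow in direction $\theta$ is completely periodic.

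The main obstacle is Masur's criterion itself, which is a deep input: unique ergodicity from recurrence of the Teichmüller ray, proved via compactness and an argument that the vertical foliation cannot support two ergodic measures. I would take it as a black box, together with Smillie's closedness theorem. The remaining geometric-group-theoretic step—the dichotomy "conical versus parabolic" for lattice limit points, and the fact that divergence into a cusp corresponds to short saddle connections in direction $\theta$—is standard hyperbolic geometry once properness of the orbit map is known.
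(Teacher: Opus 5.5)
The paper does not prove this statement; it is quoted from Masur and Veech, so your proposal has to stand on its own. Its overall architecture is the standard one and is sound. For a lattice, every point of $\partial\HH$ is either conical or parabolic. In the conical case, recurrence of the projected ray together with Masur's criterion gives unique ergodicity. You do not even need properness or Smillie's theorem there: the orbit map $g\mapsto gS$ is continuous and constant on cosets of the Veech group, so compact return sets are carried to compact sets in the stratum.

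The genuine gap is in the parabolic case, at the step where you exclude a minimal component. The properties you cite are not contradictory. An affine multi-twist on a vertical cylinder decomposition also preserves the transverse measure $|dx|$ of the vertical foliation, also acts on holonomy by the shear $P$, and also shears transversals unboundedly. So none of these distinguishes a minimal component from a cylinder. Your fallback does not close the gap either. Veech's result about directions containing a saddle connection presupposes that direction $\theta$ contains one, which you have not shown; if it contains none, your cutting returns $S$ itself and you are back to excluding a minimal flow on all of $S$. Likewise, the claim in your last paragraph that divergence into a cusp gives short saddle connections \emph{in direction $\theta$} is not standard hyperbolic geometry. Divergence of $g_t r_\theta S$ only produces saddle connections whose images become short along the ray. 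On general surfaces, rays in directions without saddle connections but with non-uniquely-ergodic flow also diverge. For lattice surfaces the implication is true, but it is essentially the parabolic lemma you are trying to prove.

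The missing idea is rigidity along separatrices. This assumes $S$ has at least one cone point or marked point; the flat torus is classical. Rotate so that $\theta$ is vertical, and replace $\phi$ by a power so that $D\phi=\begin{bmatrix}1&0\\ c&1\end{bmatrix}$ with $c\neq 0$; this derivative fixes the unit vertical vector. Pass to a further power so that $\phi$ fixes each of the finitely many cone points and each of the finitely many upward vertical separatrices. Then $\phi$ maps each such separatrix onto itself, starting at the same fixed cone point, at unit speed and with the same orientation. Hence $\phi$ is the identity on every vertical separatrix.

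Now suppose some vertical separatrix were not a saddle connection. It would lie in a minimal component and be dense there, so $\phi$ would be the identity on an open set. That forces $D\phi=I$, a contradiction. Therefore every vertical separatrix is a saddle connection, and the complement of these finitely many saddle connections is a union of vertical cylinders. This is exactly the complete periodicity you need. With this lemma inserted, the rest of your argument goes through.
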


\begin{remark*}
  One important consequence of the Veech dichotomy is that for a Veech
  surface, in any direction, either the surface completely decomposes
  into cylinders or there are no cylinders in that direction at
  all. Note that this is also true of the Mucube $M$, by
  Theorem~\ref{thm:cylinders}. However, $M$ does not satisfy the Veech
  dichotomy because it has directions in which the flow is not
  uniquely ergodic, to wit, the drift directions. Nor is the Veech
  group of $M$ a lattice (Theorem~\ref{thm:veechgrpM}), as we shall
  see.
\end{remark*}

In general, the affine diffeomorphism groups of a regular topological
cover $\hat{S}$ and the base space $S$ are related as follows.  Given
a (half)-translation surface $S$, denote by $S^\circ$ the punctured
surface with cone points removed. Let $N \leq \pi_1( S^\circ)$ be a
normal subgroup and consider $\pi: \hat{S} \rightarrow S^\circ$ the
associated cover. Then the following proposition gives us necessary
and sufficient conditions for affine diffeomorphisms of a surface to
lift to the cover.

\begin{proposition}[Hooper--Weiss \cite{hoopweiss}]\label{prop:HWlift}
  ~
    \begin{enumerate}
    \item An element $f$ of $\Aff(S^\circ)$ lifts to an element of
      $\Aff(\hat{S})$ iff $f$ preserves the normal subgroup of
      $\pi_1(S^\circ)$ corresponding to the cover $\hat{S}$.
    \item An element $\hat{f}$ of $\Aff(\hat{S})$ pushes down to an
      element of $\Aff(S^\circ)$ iff $\hat{f}$ acting by conjugation
      preserves the deck transformation group.
    \end{enumerate}
\end{proposition}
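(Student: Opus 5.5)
The plan is to prove both parts with covering space theory for the regular cover $\pi:\hat S\to S^\circ$, whose deck group is $\Delta\cong\pi_1(S^\circ)/N$. I will fix basepoints $\hat x_0\in\hat S$ and $x_0=\pi(\hat x_0)$, so that $N=\pi_*\pi_1(\hat S,\hat x_0)$. Affine diffeomorphisms are homeomorphisms that are affine in charts. Any lift of an affine map through a local isometry is therefore again affine in the lifted charts, with the same derivative. So the whole question reduces to the topological lifting criterion together with the remark that the affine structure lifts and descends automatically.

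For part 1, let $f\in\Aff(S^\circ)$ and choose a path $\beta$ from $x_0$ to $f(x_0)$. Suppose $f$ preserves $N$; precisely, $f_*$ followed by the change-of-basepoint isomorphism along $\beta$ maps $N$ to itself. This is well defined because $N$ is normal, so the choice of $\beta$ does not matter. Then $(f\circ\pi)_*\pi_1(\hat S)$ is contained in $N$ up to the basepoint change. The lifting criterion for the map $f\circ\pi:\hat S\to S^\circ$ then gives a continuous $\hat f$ with $\pi\circ\hat f=f\circ\pi$. I will apply the same argument to $f^{-1}$, which also preserves $N$ because $f_*$ is an automorphism and $f_*(N)=N$. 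Composing the two lifts and using uniqueness of lifts, after adjusting by a deck transformation, shows that $\hat f$ is a homeomorphism. Since $\pi$ is a local isometry, $\hat f$ is affine with $D\hat f=Df$. Conversely, if $\hat f$ is a lift, then $f_*\pi_*=\pi_*\hat f_*$ carries $\pi_*\pi_1(\hat S)$ onto $\pi_*\pi_1(\hat S)$, up to conjugation by a basepoint path. This means $f_*(N)=N$, since conjugates of $N$ equal $N$.

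For part 2, let $\hat f\in\Aff(\hat S)$. If $\hat f\Delta\hat f^{-1}=\Delta$, then $\hat f$ maps $\Delta$-orbits to $\Delta$-orbits: we have $\hat f(\delta y)=\delta'\hat f(y)$ with $\delta'=\hat f\delta\hat f^{-1}$. Because the cover is regular, $\Delta$-orbits are exactly the fibers of $\pi$. So $\hat f$ descends to a bijection $f$ of $S^\circ=\hat S/\Delta$. This $f$ is continuous and open since $\pi$ is a local homeomorphism, and it is affine in charts for the same reason as above. Conversely, if $\hat f$ covers some $f$, then for $\delta\in\Delta$ we get $\pi\circ\hat f\delta\hat f^{-1}=f\circ\pi\circ\delta\circ\hat f^{-1}=f\circ\pi\circ\hat f^{-1}=\pi$. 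Hence $\hat f\delta\hat f^{-1}$ is a deck transformation, and the same holds for $\hat f^{-1}$, which gives equality of the groups.

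I expect the main obstacle to be bookkeeping rather than substance. First, the phrase ``preserves the normal subgroup'' must be made precise when $f$ moves the basepoint; normality is what makes this independent of the path. Second, one must ensure that the lift is a diffeomorphism and not merely a covering self-map. For the Mucube setting the lift should also extend over the cone points, which follows from completing the metric. I will handle the cone points by observing that the lift is an isometry off a discrete set and therefore extends.
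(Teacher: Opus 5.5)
The paper gives no proof of this proposition. It is quoted from Hooper--Weiss and used as a black box, so there is no internal argument to compare yours against.

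Your argument is the standard covering-space proof, and it is correct. For part 1 you apply the lifting criterion to $f\circ\pi$ and to $f^{-1}\circ\pi$, and then use uniqueness of lifts to get a homeomorphism. For part 2 you descend $\hat f$ along $\Delta$-orbits, and conversely use $\pi\circ\hat f\delta\hat f^{-1}=\pi$ for both $\hat f$ and $\hat f^{-1}$. You are right to read ``preserves'' as $f_*(N)=N$ up to a change of basepoint. Mere inclusion would only give a lift that is a covering self-map, not a homeomorphism.

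One correction to your closing remark about cone points: off the cone points the lift $\hat f$ is affine, not an isometry. What lets it extend over the discrete set of cone points is a Lipschitz bound. An affine map with derivative $A$ is $\|A\|$-Lipschitz for the flat path metric, and its inverse is $\|A^{-1}\|$-Lipschitz. Hence it extends uniquely to a homeomorphism of the metric completion.

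That extension is not needed for the statement as posed, since there $\hat S$ is a cover of the punctured surface $S^\circ$.
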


Following Hooper--Weiss \cite{hoopweiss}, we then define the
\textbf{affine diffeomorphism group of a cover}
$\pi:\tilde{S} \rightarrow S$ to be
$$\Aff(\tilde{S},S) =\{(\tilde{f}, f) \in \Aff(\tilde{S}) \times \Aff(S)\mid \pi \circ \tilde{f} = f \circ \pi \}.$$ 
It is known that for any $(\tilde{f}, f) \in \Aff(\tilde{S},S)$, the derivative $D(\tilde{f}) = D(f) \in \PGL_2(\RR)$. Let $V(\tilde{S}, S) = D(\Aff(M, X)) \leq \PGL_2(\RR)$ be defined as the \textbf{Veech group of the cover} $\pi: \tilde{S} \rightarrow S$.

In light of Proposition \ref{prop:HWlift}, we conclude that the Veech
group of a half-translation surface $S$ and its minimal translation
cover $\tilde{S}$ are equal.

\begin{proposition}\label{prop:transcover}
  Let $\tilde{S}$ be the minimal translation cover of a
  half-translation surface $S$. Then $V(S) = V(\tilde{S})$
\end{proposition}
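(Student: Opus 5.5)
We prove the two inclusions $V(S)\subseteq V(\tilde S)$ and $V(\tilde S)\subseteq V(S)$ separately, using Proposition~\ref{prop:HWlift} for lifting and pushing down, together with the derivative relation $D(\tilde f)=D(f)$ in $\PGL_2(\RR)$ for pairs in $\Aff(\tilde S,S)$. We view $\pi:\tilde S^\circ\to S^\circ$ as a degree-two regular cover with deck group $\{\Id,\tau\}$, where $\tau$ is the involution exchanging the copies $S$ and $-\Id\cdot S$. As a map on the plane pieces, $\tau$ is the $\pi$-rotation, so $D(\tau)=-\Id$. The subgroup $N\le\pi_1(S^\circ)$ corresponding to this cover is the kernel of the holonomy homomorphism $\rho:\pi_1(S^\circ)\to\{\pm\Id\}$. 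Here $\rho$ records whether parallel transport of a tangent vector around a loop returns it to itself or to its negative. By construction, a loop lifts to a closed loop in $\tilde S$ exactly when it crosses special edges an even number of times, which is exactly the condition that its holonomy is $+\Id$.

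\textbf{Lifting.} Let $f\in\Aff(S)$. It permutes the cone points, so it restricts to an element of $\Aff(S^\circ)$. The key step is that $\rho\circ f_*=\rho$. The reason is that $f$ is affine with constant derivative $A$ (defined up to sign), so it sends a parallel vector field along a loop $\gamma$ to a parallel field along $f\circ\gamma$. Parallel transport around $f\circ\gamma$ therefore equals $A$ conjugating the transport around $\gamma$. Since $\pm\Id$ are central, the holonomy is unchanged. Hence $f_*(N)=N$, and by Proposition~\ref{prop:HWlift}(1) $f$ lifts to some $\tilde f\in\Aff(\tilde S)$. This lift extends over the filled-in cone points, because it is a local isometry up to a linear map near the punctures. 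Thus $D(f)=D(\tilde f)\in V(\tilde S)$, and $V(S)\subseteq V(\tilde S)$.

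\textbf{Pushing down.} Let $\tilde f\in\Aff(\tilde S)$, which we may treat on $\tilde S^\circ$. We must check that $\tilde f$ normalizes $\{\Id,\tau\}$, i.e.\ that $\tilde f\tau\tilde f^{-1}=\tau$. The composite $\tilde f\tau\tilde f^{-1}$ is affine with derivative $D\tilde f\,(-\Id)\,D\tilde f^{-1}=-\Id$. So it is an affine automorphism of $\tilde S$ that acts by $-\Id$, and we need to show it equals $\tau$. This is the main obstacle, since a translation surface may carry several automorphisms with derivative $-\Id$.

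Our first attempt is to show that $\tau$ is canonically determined. Any $\sigma\in\Aff(\tilde S)$ with $D\sigma=-\Id$ is a half-translation automorphism, and $\tau\sigma$ is a translation automorphism of $\tilde S$. The intended argument is that $\tilde f\tau\tilde f^{-1}$ is a fixed-point-free involution whose quotient is $S$ as a half-translation surface, with the minimal translation cover recovering exactly $\tau$.

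If this fails, we fall back on the definition of $V(S)$ as the image of the $\SL_2(\RR)$-stabilizer, and argue directly. For $A=D\tilde f$, we have $A\cdot\tilde S\cong\tilde S$. The minimal translation cover construction commutes with the $\SL_2(\RR)$-action, in the sense that $\widetilde{A\cdot S}=A\cdot\tilde S$. The remaining task is to show that $A\cdot S\cong S$. Our plan is to take the quotient of $A\cdot\tilde S\cong\tilde S$ by the conjugated involution $\tilde f\tau\tilde f^{-1}$, which has derivative $-\Id$; this quotient identifies with $A\cdot S$. To show it is isomorphic to $S$ as a half-translation surface, we would use the uniqueness, up to isomorphism, of the half-translation quotient of a surface carrying a fixed-point-free involution with derivative $-\Id$, and we would verify this uniqueness in the situation at hand. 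Once this is established, $A$ stabilizes $S$, so $V(\tilde S)\subseteq V(S)$ and the two Veech groups are equal.
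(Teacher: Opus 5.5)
\textbf{Lifting half.} This part is correct, and it takes a cleaner route than the paper. The paper reassembles $g\cdot\tilde S$ from cut-up copies of $g\cdot S$ and $-\Id\cdot g\cdot S$. You instead note that an affine map preserves the holonomy character $\pi_1(S^\circ)\to\{\pm\Id\}$, and then apply Proposition~\ref{prop:HWlift}(1).

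\textbf{Push-down half: the gap.} You correctly reduce this direction to the following claim: for each $\tilde f\in\Aff(\tilde S)$ there is some $s\in\mathrm{Aut}(\tilde S)$ with $(\tilde f s)\tau(\tilde f s)^{-1}=\tau$. Neither of your two routes proves this. In this generality it is false, because the ``uniqueness of the half-translation quotient'' you plan to invoke fails. Separately, $\tau$ is not fixed-point-free: it fixes the preimage of every cone point of angle $\ell\pi$ with $\ell$ odd, e.g.\ the $3\pi$ points of $X$ and $Y$.

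\textbf{A counterexample.}
\begin{itemize}
\item Let $W$ be the Eierlegende Wollmilchsau: squares indexed by $Q_8$, with the right neighbour of $g$ equal to $gi$ and the top neighbour equal to $gj$. Every vertex is a cone point of angle $4\pi$, and $V(W)=\PSL_2(\ZZ)$.
\item Since $kik^{-1}=i^{-1}$ and $kjk^{-1}=j^{-1}$, rotating each square $g$ by $\pi$ onto $kgk^{-1}$ defines an involution $\phi$ with derivative $-\Id$. Its only fixed points are the centers of the squares $\pm1,\pm k$.
\item So $S=W/\phi$ is a half-translation surface whose minimal translation cover is $W$.
\item Any affine map of $W$ with derivative $\pm\begin{bmatrix}1&1\\0&1\end{bmatrix}$ preserves the vertex set. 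Hence it sends square centers to midpoints of vertical edges, and so conjugates $\phi$ to an involution with a different fixed set. By Proposition~\ref{prop:HWlift}(2), no such map descends to $S$.
\item Every affine map of $S$ lifts to $W$, by your own first half. Therefore this shear lies in $V(\tilde S)$ but not in $V(S)$.
\end{itemize}

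\textbf{What is actually needed.} The inclusion $V(\tilde S)\le V(S)$ requires an extra hypothesis. One sufficient condition is that every $\Aff(\tilde S)$-conjugate of $\tau$ is conjugate to $\tau$ within $\mathrm{Aut}(\tilde S)$. This holds automatically when $\tilde S$ has no nontrivial translation automorphisms, since then $\tau$ is the only automorphism with derivative $-\Id$.

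The paper's own argument does not close this gap either. ``$g$ commutes with $-\Id$'' is a statement about derivatives. It yields only what you already have, namely that $\tilde f\tau\tilde f^{-1}$ has derivative $-\Id$. For the applications to $X$ and $Y$, the extra hypothesis has to be checked on their specific translation covers.
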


\begin{proof}

Let $g \in V(S)$. We want to show that $g \in V(\tilde{S})$. Now,
write $\tilde{S} = S \cup -Id \cdot S$ where the union is with gluings
as specified above. Then,
$g \cdot \tilde{S} = g \cdot S \cup g \cdot (-Id \cdot S)$.

But note that $-Id$ commutes with $g$. So,
$g \cdot (-Id \cdot S) = -Id \cdot (g S)$.

Since $g$ is in $V(S)$, $gS$ is cut, half-translation and paste equivalent to $S$. We would like to show that $g(\tilde{S})$ is cut, translation and paste equivalent to $\tilde{S}$. We cut both copies $gS$, and $-Id \cdot (g S)$ as we would to rearrange them into $S$ and $-Id \cdot S$.

Again some of the edges the cut up half-translation surface $g S$ are \emph{special} and the others are not. We modify the reassembly of $S$ from a cut up $gS$ to give a reassembly of $\tilde{S}$. For $e$ a special edge of $gS$ and $e'$ its pair in $gS$, we have $-Id \cdot e$ and $-Id \cdot e'$, special edges in $-Id \cdot gS$. We then glue $e$ with $-Id \cdot e'$ and $e'$ with $-Id \cdot e$ via translation.

By construction, this will produce $S \cup -Id \cdot S$ developed in $\mathbb{R}^2$, which we started with. Therefore $g \in V(\tilde{S})$.

To prove the reverse inclusion: $V(\tilde{S}) \leq V(S)$ we begin with a linear transformation $g \in V(\tilde{S})$ and show that $g \in V(S)$. It is sufficient to show that $g$ normalizes the deck transformation group of the covering (see Proposition \ref{prop:HWlift}).

Constructing $\tilde{S}$ as in the proposition above, we can develop $\tilde{S}$ as $S \cup -Id \cdot S$ in $\mathbb{R}^2$. At this point it can be seen that $-Id$ acts on $S \cup -Id \cdot S$ as the generator of the deck transformation group, which is $\ZZ/2\ZZ$. Not only does $g$ normalize $-Id$, it commutes with it. Therefore $g$ is also an element of $V(S)$.
\end{proof}

With assistance from SageMath, Proposition~\ref{prop:transcover} allows us to compute the Veech groups of $X$ and $Y$ (see Section~\ref{sec:MucubeXYHalftranslation}).

\subsection{Square-tiled surfaces and homology}

\begin{wrapfigure}{R}{0.4\textwidth}
\centering
\begin{tikzpicture}
    [sq/.style=
  {shape=regular polygon, regular polygon sides=4, draw, minimum width=1.414cm}]
  \foreach \i in {0,1,2,3}{
  	\node[sq] at (\i,0){};
  	}
  	
  	\draw[->, color=green, line width = 3] (-0.5, -0.5) -- ++ (0,1);
  	\node[] at (-0.8, 0){$\tau_1$};
  	
  	\draw[->, color=green, line width = 3] (-0.5+1, -0.5) -- ++ (0,1);
  	\node[] at (-0.8+1, 0){$\tau_2$};
  	
  	\draw[->, color=green, line width = 3] (-0.5+1+1, -0.5) -- ++ (0,1);
  	\node[] at (-0.8+1+1, 0){$\tau_3$};
  	
  	\draw[->, color=green, line width = 3] (-0.5+1+2, -0.5) -- ++ (0,1);
  	\node[] at (-0.8+1+2, 0){$\tau_4$};

  	\draw[->, color=red, line width = 3] (-0.5, -0.5) -- ++ (1,0);
  	\node[] at (0, -0.9){$\sigma_1$};
  	
  	\draw[->, color=red, line width = 3] (-0.5+1, -0.5) -- ++ (1,0);
  	\node[] at (0+1, -0.9){$\sigma_2$};
  	
  	\draw[->, color=red, line width = 3] (-0.5+2, -0.5) -- ++ (1,0);
  	\node[] at (0+2, -0.9){$\sigma_3$};
  	
  	\draw[->, color=red, line width = 3] (-0.5, 0.5) -- ++ (1,0);
  	\node[] at (0, 0.9){$\sigma_4$};

  \draw[thin] (0,0.5) -- ++(-0.15,0.15);
  \draw[thin] (0,0.5) -- ++(-0.15,-0.15);
  
  \draw[thin] (1,0.5) -- ++(0.15,0.15);
  \draw[thin] (1,0.5) -- ++(0.15,-0.15);

  \draw[thin] (1.9,-0.5) -- ++(-0.15,0.15);
  \draw[thin] (1.9,-0.5) -- ++(-0.15,-0.15);
   \draw[thin] (2.1,-0.5) -- ++(-0.15,0.15);
  \draw[thin] (2.1,-0.5) -- ++(-0.15,-0.15);

  \draw[thin] (2.9,-0.5) -- ++(0.15,0.15);
  \draw[thin] (2.9,-0.5) -- ++(0.15,-0.15);
   \draw[thin] (3.1,-0.5) -- ++(0.15,0.15);
  \draw[thin] (3.1,-0.5) -- ++(0.15,-0.15);
  
  \draw[thin] (1,-0.65) -- ++(0,0.3);
  \draw[thin] (2,0.65) -- ++(0,-0.3);
  
  \draw[thin] (-0.05,-0.65) -- ++(0,0.3);
  \draw[thin] (0.05,-0.65) -- ++(0,0.3);
  
  \draw[thin] (2.95,0.65) -- ++(0,-0.3);
  \draw[thin] (3.05,0.65) -- ++(0,-0.3);

\end{tikzpicture}

\caption{Example of a half-translation square-tiled surface with a choice of relative homology generators.}
\label{fig:examplehomology}
\end{wrapfigure}

Next we recall the notion of relative and absolute homology in the
setting of surfaces tiled by squares. The first homology relative to
the vertices has two uses in our work: providing an intersection form
and a way to specify ramified covering maps from $M$ to $X$ or
$Y$. These two tools will be used heavily in
Section~\ref{sec:veechgrpchar} to give an algebraic characterization
of periodic directions on $M$.

A translation or half-translation surface is \textbf{square-tiled} if
the polygons used to build the surface are unit squares embedded in
the plane such that their sides are parallel to the axes. It is well
known that square-tiled surfaces are Veech surfaces.

For a square-tiled surface $S$, let $\Sigma$ be the set of corner
points of the squares. To understand the relative homology group
$H_1(S, \Sigma, \ZZ)$ of $S$, we start by orienting and labeling each
square edge. If two edges are glued via translation, then we assign
them the same orientation and label. The \textbf{relative homology
  group} $H_1(S, \Sigma, \ZZ)$ is the module generated by the oriented
and labeled edges along with the relations that the oriented boundary
of each square is 0. For instance, consider the surface in Figure
\ref{fig:examplehomology} with the square edges labeled and oriented
as $\sigma_i$ and $\tau_i$. The relative homology is generated by
$\{\sigma_i, \tau_i\}_{i=1}^4$ under the relations
$$\sigma_1 + \tau_2 - \sigma_4 - \tau_1 = 0; \hspace{0.5cm} \sigma_2 + \tau_3 + \sigma_4 - \tau_2 = 0; \hspace{0.5cm}\sigma_3 + \tau_4 - \sigma_2 - \tau_3 = 0; \hspace{0.5cm} -\sigma_3 + \tau_1 - \sigma_1 - \tau_4 = 0.$$

The \textbf{boundary} of an oriented cycle $\sigma$ from a point $p$ to point $q$ is denoted $\partial \sigma = q - p$. The \textbf{absolute homology} of surface $S$, denoted $H_1(S, \ZZ)$ is then defined to be the submodule of $H_1(S, \Sigma, \ZZ)$ consisting of all elements with $0$ boundary. It is known that the absolute homology $H_1(S, \ZZ)$ as an $\ZZ$-vector space has dimension twice the genus of $S$. Moreover, $H_1(S, \ZZ)$ comes equipped with a bilinear, anti-symmetric form called the algebraic intersection form which will be denoted $i_a(\cdot, \cdot)$. This gives $H_1(S, \ZZ)$ a symplectic structure. 

We  consider homology classes with integer coefficients. The homology group $H_1(S, \ZZ)$ also admits an action by the group of affine automorphisms of $S$, $\text{Aff}(S)$, which preserves the symplectic form so that we have a homomorphism 
\[\tilde{\rho}: \text{Aff}(S) \rightarrow \Sp(H_1(S, \ZZ)).\]

\subsection{Fuchsian groups and limit sets}\label{sec:Fuchsian}

Given a half-translation (or translation) surface $S$, recall that the Veech group $V(S)$ is the image in $\PSL_2(\RR)$ of the stabilizer of $S$ under the $\SL_2(\RR)$ action. Additionally, it is known that Veech groups of finite translation surfaces are discrete. Hence $V(S)$ is a Fuchsian group. As such, we conclude this section by recalling some classical background on Fuchsian groups and limit sets which will be used in Section \ref{sec:density} to prove Theorem \ref{thm:density}. See \cite{katokFuchsian, thurstonGeomTopBook} for comprehensive treatments. 

Recall that $\PSL_2(\RR)$ acts on the upper half plane and its boundary $\HH^2 \cup \partial \HH^2$ via linear fractional transformations where the action is isometric on $\HH^2$ and continuous on the boundary. Given a Fuchsian group $G$, denote by $\Lambda(G)\subseteq$ the \textbf{limit set} of $G$ which is defined to be the set of accumulation points of $G \cdot z$ for any $z \in \HH^2 \cup \{\infty\}$. Importantly, it is known that the definition of $\Lambda(G)$ is independent of the choice of $z$. It is also known that $\Lambda (G) \subseteq \partial \HH^2 \cong \RR \cup \{\infty\}$. Define a $\xi \in \partial \HH^2$ to be a \textbf{cusp point} of $G$ if there exists a parabolic element $g \in G$ such that $g \cdot \xi = \xi$. 

A Fuchsian group $G$ is called \textbf{elementary} if there exists $z \in \HH^2 \cup \partial \HH^2$ such that the $G\cdot z$ is finite, and \textbf{non-elementary} otherwise. It is known (see Exercise 3.8 of \cite{katokFuchsian}) that $G$ is non-elementary if and only if $\Lambda(G)$ contains more than 2 points. 

Additionally, we recall the following well known facts regarding limit sets of Fuchsian groups:

\begin{proposition}\label{prop:generalFuchsianprops}Let $G < \SL_2(\RR)$ be a non-elementary Fuchsian group.

\begin{enumerate}[label={(\arabic*)},ref={\theproposition~(\arabic*)}]
   
    \item \label{part:finiteindexlimitset}
    If $\HH^2/G$ has finite hyperbolic area, then $\Lambda(G) = \partial \HH^2 \simeq \RR \cup \{\infty\}$. Consequently, if $G$ is a finite index subgroup of $\PSL_2(\ZZ)$, then $\Lambda(G) = \partial \HH^2 \simeq \RR \cup \{\infty\}$.
    
    \item \label{part:fuchsianfirstkind} Assume $\Lambda(G) = \partial \HH^2$. Then $G$ is finitely generated if and only if the quotient $\HH^2 / G$ is finite volume. 
    \item \label{part:normallimitset}If $G$ is non-elementary and $N < G$ is a non-trivial normal subgroup of $G$, then $\Lambda(N) = \Lambda(G)$.

    \item \label{part:cuspsaredenseinlimitset} If $G$ contains any parabolic elements, then the closure of the set of cusps of $G$ coincides with $\Lambda(G)$.
\end{enumerate}
\end{proposition}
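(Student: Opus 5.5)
The statement is a list of four standard facts about limit sets of non-elementary Fuchsian groups. I would not reprove them from scratch but assemble each from classical arguments, citing \cite{katokFuchsian, thurstonGeomTopBook} where appropriate. For (1), I would argue by contradiction. If $\Lambda(G)\neq\partial\HH^2$, then some open arc $I\subset\partial\HH^2$ misses $\Lambda(G)$. The half-plane bounded by the geodesic with the same endpoints as $I$ then contains a region that is disjoint from all its $G$-translates after shrinking. This is the standard argument that the Dirichlet domain has a free side, so $\HH^2/G$ contains an infinite-area funnel, contradicting finite area. The consequence about finite index subgroups of $\PSL_2(\ZZ)$ follows because $\HH^2/\PSL_2(\ZZ)$ has finite area, and a finite index subgroup gives a finite-sheeted orbifold cover, whose area is multiplied by the index.

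For (2), I would quote the classical Siegel-type statement. If $G$ is of the first kind (i.e.\ $\Lambda(G)=\partial\HH^2$) and finitely generated, then $G$ is geometrically finite. A geometrically finite group of the first kind has a finite-sided fundamental polygon with no free sides, which yields finite area. Conversely, finite area forces a finite-sided Dirichlet polygon (Siegel's theorem), and the side pairings generate $G$, so $G$ is finitely generated.

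For (3), take $N\trianglelefteq G$ non-trivial. Clearly $\Lambda(N)\subseteq\Lambda(G)$. Next, $\Lambda(N)$ is $G$-invariant, since $g\Lambda(N)=\Lambda(gNg^{-1})=\Lambda(N)$. It is also nonempty and closed, because $N$ is infinite: a non-trivial normal subgroup of a non-elementary discrete group cannot be finite, as a finite normal subgroup would fix a point that $G$ would then preserve. By minimality of $\Lambda(G)$ among nonempty closed $G$-invariant subsets (the orbit of any limit point is dense in $\Lambda(G)$), we get $\Lambda(N)\supseteq\Lambda(G)$. The care needed here is justifying minimality: for $\xi\in\Lambda(G)$, approximate it by attracting fixed points of hyperbolic elements, which are dense in $\Lambda(G)$ for non-elementary $G$. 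Then push any point of $\Lambda(N)$ toward $\xi$ by high powers.

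For (4), cusps are fixed points of parabolics, hence lie in $\Lambda(G)$, and the set of cusps is $G$-invariant. Its closure is therefore a nonempty closed $G$-invariant set, and the same minimality gives equality with $\Lambda(G)$. I expect the main obstacle to be (2), whose proof genuinely relies on Siegel's theorem and geometric finiteness. I would cite it rather than reprove it.
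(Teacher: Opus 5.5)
Your proposal is correct and follows the paper, which simply cites \cite{katokFuchsian} (Theorems 4.5.2 and 4.5.1) for (1)--(2) and \cite{thurstonGeomTopBook} (Corollary 8.1.3) for (3), and proves (4) by noting that the $G$-orbit of a cusp consists of cusps and accumulates on all of $\Lambda(G)$. Your minimality argument is precisely the justification that last step needs, which the paper attributes only to continuity of the boundary action. One caveat: (3), and your claim that a nontrivial normal subgroup must be infinite, should be read in $\PSL_2(\RR)$, since if $-\Id\in G$ then $N=\{\pm\Id\}$ is a finite nontrivial normal subgroup with $\Lambda(N)=\emptyset$.
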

For \ref{part:finiteindexlimitset} and \ref{part:fuchsianfirstkind}, see Theorems 4.5.2 and 4.5.1 of \cite{katokFuchsian}. For \ref{part:normallimitset} see Corollary 8.1.3 of \cite{thurstonGeomTopBook}. For \ref{part:cuspsaredenseinlimitset}, we note that $\Lambda(G)$ by definition is the set of accumulation points of $G \cdot z$ for any $z \in \HH^2$. As the action of $G$ extends continuously to the boundary $\partial \HH^2$, $\Lambda(G)$ is also the set of accumulation points of $G \cdot \xi$ where $\xi$ is a cusp point. However, every point in $G \cdot \xi$ is a cusp point as well, so the set of cusp points of $G$ is dense in $\Lambda (G)$.

In the context of translation/half-translation surfaces, it is known (see for instance \cite{hubertschmidt}) that Veech groups are never co-compact (i.e. the quotient $\HH^2/V(S)$ is never compact). Therefore, Veech groups always have cusps.
Additionally, for a Veech surface, $S$, since the Veech group, $V(S)$ is a lattice, it is known that in any direction determined by a cusp in $\partial \HH^2$, the surface completely decomposes into cylinders of commensurable moduli. Moreover, the number of $V(S)$-equivalence classes of cusps is finite in this case.

\section{Half-translation structure on the Mucube and the quotients $X$ and $Y$}\label{sec:MucubeXYHalftranslation}

In this section we will endow the Mucube and its two relevant quotients $X$ and $Y$ with half-translation structures making the Mucube an infinite square-tiled half-translation surface and $X$ and $Y$ finite square-tiled half-translation surfaces. As a consequence, a global notion of a vertical direction will be well defined (up to $\pi$-rotation) in these three surfaces. Moreover, a half-translation structure on a surface also implies that any given affine diffeomorphism of the surface has a well-defined derivative in $\PSL_2(\RR)$, and hence a Veech group. The Veech groups of $X$ and $Y$ will play a crucial role in our algebraic characterization of periodic directions in Theorem \ref{thm:characterization}.

\begin{figure}[h!]
\centering 

\subfloat[]{
\begin{tikzpicture}[sq/.style=
  {shape=regular polygon, regular polygon sides=4, draw, minimum width=2.828cm}]

\draw[fill=gray!10] (-1,-1) -- (-1, 1) -- (-1+0.5, 1+0.5) -- (-1+0.5, -1+0.5)--cycle; 
\draw[fill=gray!10] (1,-1) -- (1+0.5, -1+0.5) --(-1+0.5,-1+0.5) -- (-1,-1) --cycle; 
\draw[fill=gray!40]  (-1, 1) -- (-1+0.5, 1+0.5) -- (1+0.5, 1+0.5) -- (1,1) --cycle; 
\draw[fill=gray!40] (1,1) -- (1+0.5, 1+0.5) --(1+0.5,-1+0.5) -- (1,-1) --cycle; 

  \node[sq] at (0,0){};
  \node[sq] at (0.5,0.5){};
  \draw[thin] (-1,-1) -- ++(0.5, 0.5);
  \draw[thin] (-1,1) -- ++(0.5, 0.5);
  \draw[thin] (1,-1) -- ++(0.5, 0.5);
  \draw[thin] (1,1) -- ++(0.5, 0.5);
  
\begin{scope}[shift = {(0.5, 2.5)}]


\draw[fill=gray!10] (-1,-1) -- (-1, 1) -- (-1+0.5, 1+0.5) -- (-1+0.5, -1+0.5)--cycle; 
\draw[fill=gray!10]  (-1+0.5, -1+0.5) -- (-1+0.5, 1+0.5) -- (1+0.5, 1+0.5) -- (1+0.5,-1+0.5) --cycle; 
\draw[fill=gray!40]  (-1, -1) -- (-1, 1) -- (1, 1) -- (1,-1) --cycle; 
\draw[fill=gray!40] (1,1) -- (1+0.5, 1+0.5) --(1+0.5,-1+0.5) -- (1,-1) --cycle; 

 \node[sq] at (0,0){};
  \node[sq] at (0.5,0.5){};
  \draw[thin] (-1,-1) -- ++(0.5, 0.5);
  \draw[thin] (-1,1) -- ++(0.5, 0.5);
  \draw[thin] (1,-1) -- ++(0.5, 0.5);
  \draw[thin] (1,1) -- ++(0.5, 0.5);
\end{scope}

\begin{scope}[shift = {(2.5, 0.5)}]


\draw[fill=gray!10]  (-1+0.5, -1+0.5) -- (-1+0.5, 1+0.5) -- (1+0.5, 1+0.5) -- (1+0.5,-1+0.5) --cycle; 
\draw[fill=gray!10] (1,-1) -- (1+0.5, -1+0.5) --(-1+0.5,-1+0.5) -- (-1,-1) --cycle; 
\draw[fill=gray!40]  (-1, 1) -- (-1+0.5, 1+0.5) -- (1+0.5, 1+0.5) -- (1,1) --cycle; 

\draw[fill=gray!40]  (-1, -1) -- (-1, 1) -- (1, 1) -- (1,-1) --cycle; 

 \node[sq] at (0,0){};
  \node[sq] at (0.5,0.5){};
  \draw[thin] (-1,-1) -- ++(0.5, 0.5);
  \draw[thin] (-1,1) -- ++(0.5, 0.5);
  \draw[thin] (1,-1) -- ++(0.5, 0.5);
  \draw[thin] (1,1) -- ++(0.5, 0.5);
\end{scope}

\draw[thin] (-0.65,0.5) -- ++(0.3, 0);
\draw[thin] (3.5-0.15,0.5) -- ++(0.3, 0);


  


\draw[thin, rotate = 90 ] (-0.15,-0.35-0.5) -- ++(0.15,-0.3);
  \draw[thin,rotate = 90 ] (-0.15+0.15,-0.35-0.3-0.5) -- ++(0.15,0.3);
  
  \draw[thin, rotate = 90 ] (-0.15+1,-0.35-0.5-1) -- ++(0.15,-0.3);
  \draw[thin,rotate = 90 ] (-0.15+0.15+1,-0.35-0.3-0.5-1) -- ++(0.15,0.3);

  \draw[thin] (7.90-8,-0.65-0.25) -- ++(0,-0.3);
  \draw[thin] (8.00-8,-0.65-0.25) -- ++(0,-0.3);
  \draw[thin] (8.10-8,-0.65-0.25) -- ++(0,-0.3);
  
  \draw[thin] (0.90,-0.65+4.5) -- ++(0,0.3);
  \draw[thin] (1.00,-0.65+4.5) -- ++(0,0.3);
  \draw[thin] (1.10,-0.65+4.5) -- ++(0,0.3);


\node[rotate = 90, thick, font=\fontsize{15}{0}\selectfont, thick] at (-1,0){{\color{olive}$>$}};
\node[rotate = 90, thick, font=\fontsize{15}{0}\selectfont, thick] at (4,1){{\color{olive}$>$}};


\node[thick, font=\fontsize{15}{0}\selectfont, thick] at (0,1){{\color{green}$>$}};
\node[ thick, font=\fontsize{15}{0}\selectfont, thick] at (1,2){{\color{green}$>$}};


\node[thick, font=\fontsize{15}{0}\selectfont, thick] at (0.5,-0.5){{\color{black}$>$}};
\node[ thick, font=\fontsize{15}{0}\selectfont, thick] at (0.5,3.5){{\color{black}$>$}};


\node[rotate = 45, thick, font=\fontsize{15}{0}\selectfont, thick] at (3.75,1.75-2){{\color{blue}$>$}};

\node[rotate = 45, thick, font=\fontsize{15}{0}\selectfont, thick] at (3.75-4,1.75+2){{\color{blue}$>$}};


\node[rotate = 45, thick, font=\fontsize{15}{0}\selectfont, thick] at (3.75-2,1.75-2){{\color{purple}$>$}};

\node[rotate = 45, thick, font=\fontsize{15}{0}\selectfont, thick] at (3.75-4+2,1.75+2){{\color{purple}$>$}};
  
\node[rotate = 45, thick, font=\fontsize{15}{0}\selectfont, thick] at (3.75,1.75){{\color{orange}$>$}};

\node[rotate = 45, thick, font=\fontsize{15}{0}\selectfont, thick] at (3.75-4,1.75){{\color{orange}$>$}};



 \begin{scope}[canvas is zy plane at x=1,transform shape]
 \draw ({-0.5},{0}) node[rotate = 90,font=\large]{$0$};
 \end{scope}

\begin{scope}[canvas is xz plane at y=0,transform shape]
\draw ({1},{2}) node[rotate = 180, font=\large]{$1$};
\end{scope}

\begin{scope}[canvas is zy plane at x=-1,transform shape]
\draw ({-0.5},{0}) node[rotate = 90, font=\large]{$2$};
\end{scope}

\begin{scope}[canvas is zx plane at y=2,transform shape]
\draw ({2},{1}) node[rotate=90, font=\large]{$3$};
\end{scope}



\begin{scope}[canvas is xz plane at y=0,transform shape]
\draw ({3},{0.5}) node[rotate = 90, font=\large]{$4$};
\end{scope}

 \begin{scope}[canvas is xy plane at z=-4,transform shape]
 \draw ({1},{-1}) node[rotate = 90, font=\large]{$5$};
 \end{scope}

 \begin{scope}[canvas is yx plane at z=-10,transform shape]
 \draw ({-3},{-1}) node[rotate = 180, font=\large]{$7$};
 \end{scope}

\begin{scope}[canvas is zx plane at y=2,transform shape]
\draw ({0.5},{3}) node[rotate=180,font=\large]{$6$};
\end{scope}


\begin{scope}[canvas is xy plane at z=-4,transform shape]
 \draw ({-1},{1}) node[font=\large]{$8$};
 \end{scope}

 \begin{scope}[canvas is yx plane at z=-10,transform shape]
 \draw ({-1},{-3}) node[rotate =270, font=\large]{$10$};
 \end{scope}

 \begin{scope}[canvas is yz plane at x=1,transform shape]
 \draw ({2},{-2}) node[rotate = 270,font=\large]{$9$};
 \end{scope}

  \begin{scope}[canvas is zy plane at x=-1,transform shape]
 \draw ({-2},{2}) node[font=\large]{$11$};
 \end{scope}
\end{tikzpicture}
\label{fig:threecubehandleX}
}\subfloat[]{\begin{tikzpicture}
    [sq/.style=
  {shape=regular polygon, regular polygon sides=4, draw, minimum width=1.414cm}]

  \foreach \i\j\k in {0/4/8,1/5/9,2/6/10,3/7/11}{
  	\node[sq] at (\i,0){\i};
  	\node[sq] at (\i+3, -2){\j};
  	\node[sq] at (\i+6 ,0){\k};
  	}
  	
  	\draw[thin, color=red, line width = 2] (4,-1.5) -- ++(-0.15,0.15);
  \draw[thin, color=red, line width = 2] (4,-1.5) -- ++(-0.15,-0.15);
  
  \draw[thin, color=red, line width = 2] (0,0.5) -- ++(0.15,0.15);
  \draw[thin, color=red, line width = 2] (0,0.5) -- ++(0.15,-0.15);
  
  	\draw[thin, line width = 2] (6,0.5) -- ++(-0.15,0.15);
  \draw[thin, line width = 2] (6,0.5) -- ++(-0.15,-0.15);
  
  \draw[thin, line width = 2] (1,0.5) -- ++(0.15,0.15);
  \draw[thin, line width = 2] (1,0.5) -- ++(0.15,-0.15);

  	\draw[thin, color=purple, line width = 2] (7,0.5) -- ++(-0.15,0.15);
  \draw[thin, color=purple, line width = 2] (7,0.5) -- ++(-0.15,-0.15);
  
  \draw[thin, color=purple, line width = 2] (3,-1.5) -- ++(0.15,0.15);
  \draw[thin, color=purple, line width = 2] (3,-1.5) -- ++(0.15,-0.15);

  	\draw[thin, color=olive, line width = 2] (2,-0.5) -- ++(-0.15,0.15);
  \draw[thin, color=olive, line width = 2] (2,-0.5) -- ++(-0.15,-0.15);
  
  \draw[thin, color=olive, line width = 2] (6,-2.5) -- ++(0.15,0.15);
  \draw[thin, color=olive, line width = 2] (6,-2.5) -- ++(0.15,-0.15);

  	\draw[thin, color=orange, line width = 2] (5,-2.5) -- ++(-0.15,0.15);
  \draw[thin, color=orange, line width = 2] (5,-2.5) -- ++(-0.15,-0.15);
  
  \draw[thin, color=orange, line width = 2] (9,-0.5) -- ++(0.15,0.15);
  \draw[thin, color=orange, line width = 2] (9,-0.5) -- ++(0.15,-0.15);
  
  	\draw[thin, color=green, line width = 2] (3,-0.5) -- ++(-0.15,0.15);
  \draw[thin, color=green, line width = 2] (3,-0.5) -- ++(-0.15,-0.15);
  
  \draw[thin, color=green, line width = 2] (8,-0.5) -- ++(0.15,0.15);
  \draw[thin, color=green, line width = 2] (8,-0.5) -- ++(0.15,-0.15);

  \draw[thin] (4,-2.65) -- ++(0,0.3);
  \draw[thin ] (2,0.65) -- ++(0,-0.3);
  
  
  \draw[thin] (5.95,-0.65) -- ++(0,0.3);
  \draw[thin] (6.05,-0.65) -- ++(0,0.3);
  
  \draw[thin] (2.95,0.65) -- ++(0,-0.3);
  \draw[thin] (3.05,0.65) -- ++(0,-0.3);
  
  \draw[thin] (7.90,0.65) -- ++(0,-0.3);
  \draw[thin] (8.00,0.65) -- ++(0,-0.3);
  \draw[thin] (8.10,0.65) -- ++(0,-0.3);
  
  \draw[thin] (0.90,-0.65) -- ++(0,0.3);
  \draw[thin] (1.00,-0.65) -- ++(0,0.3);
  \draw[thin] (1.10,-0.65) -- ++(0,0.3);
  

  
  \draw[thin ] (8.80,0.65) -- ++(0,-0.3);
  \draw[thin ] (8.85,0.65) -- ++(0.15,-0.3);
  \draw[thin ] (8.85+0.15,0.65-0.3) -- ++(0.15,0.3);
  
  \draw[thin ] (2.80,-2.35) -- ++(0,-0.3);
  \draw[thin ] (2.85,-2.35) -- ++(0.15,-0.3);
  \draw[thin ] (2.85+0.15,-2.35-0.3) -- ++(0.15,0.3);

  \draw[thin ] (-0.15,-0.35) -- ++(0.15,-0.3);
  \draw[thin ] (-0.15+0.15,-0.35-0.3) -- ++(0.15,0.3);

  \draw[thin ] (5.85,-1.35) -- ++(0.15,-0.3);
  \draw[thin ] (5.85+0.15,-1.35-0.3) -- ++(0.15,0.3);

  \draw[thin ] (6.85,-0.35) -- ++(0.15,-0.3);
  \draw[thin ] (6.85+0.15,-0.35-0.3) -- ++(0.15,0.3);
  \draw[thin ] (7.20,-0.65) -- ++(0,0.3);

  \draw[thin ] (4.85,-1.35) -- ++(0.15,-0.3);
  \draw[thin ] (4.85+0.15,-1.35-0.3) -- ++(0.15,0.3);
  \draw[thin ] (5.20,-1.65) -- ++(0,0.3);

%
%
  
  %

\end{tikzpicture}  	
\label{fig:halftranslationX}
}
\caption{Square-tiled representation of the genus 3 surface $X$}
\end{figure}

\begin{wrapfigure}{r}{0.3\textwidth}
\centering 
\subfloat[]{\begin{tikzpicture}
    [sq/.style=
  {shape=regular polygon, regular polygon sides=4, draw, minimum width=2.828cm}]

\draw[fill=gray!10] (-1,-1) -- (-1, 1) -- (-1+0.5, 1+0.5) -- (-1+0.5, -1+0.5)--cycle; 
\draw[fill=gray!10] (1,-1) -- (1+0.5, -1+0.5) --(-1+0.5,-1+0.5) -- (-1,-1) --cycle; 
\draw[fill=gray!40]  (-1, 1) -- (-1+0.5, 1+0.5) -- (1+0.5, 1+0.5) -- (1,1) --cycle; 
\draw[fill=gray!40] (1,1) -- (1+0.5, 1+0.5) --(1+0.5,-1+0.5) -- (1,-1) --cycle; 

  \node[sq] at (0,0){};
  \node[sq] at (0.5,0.5){};
  \draw[thin] (-1,-1) -- ++(0.5, 0.5);
  \draw[thin] (-1,1) -- ++(0.5, 0.5);
  \draw[thin] (1,-1) -- ++(0.5, 0.5);
  \draw[thin] (1,1) -- ++(0.5, 0.5);


\node[thick, font=\fontsize{15}{0}\selectfont, thick] at (0,1){{\color{green}$>$}};
\node[rotate=90, thick, font=\fontsize{15}{0}\selectfont, thick] at (-0.5,0.5){{\color{green}$>$}};

\node[rotate = 90, thick, font=\fontsize{15}{0}\selectfont, thick] at (-1,0){{\color{olive}$>$}};
\node[thick, font=\fontsize{15}{0}\selectfont, thick] at (0,-1){{\color{olive}$>$}};

\node[rotate=90, thick, font=\fontsize{15}{0}\selectfont, thick] at (1.5,0.5){{\color{blue}$>$}};
\node[thick, font=\fontsize{15}{0}\selectfont, thick] at (0.25,1.5){{\color{blue}$>$}};

\node[rotate=90, thick, font=\fontsize{15}{0}\selectfont, thick] at (1,0){{\color{black}$>$}};
\node[thick, font=\fontsize{15}{0}\selectfont, thick] at (0.5,-0.5){{\color{black}$>$}};

 \begin{scope}[canvas is zy plane at x=1,transform shape]
 \draw ({-0.5},{0}) node[rotate = 90,font=\large]{$0$};
 \end{scope}

\begin{scope}[canvas is xz plane at y=0,transform shape]
\draw ({1},{2}) node[rotate = 180, font=\large]{$1$};
\end{scope}

\begin{scope}[canvas is zy plane at x=-1,transform shape]
\draw ({-0.5},{0}) node[rotate = 90, font=\large]{$2$};
\end{scope}

\begin{scope}[canvas is zx plane at y=2,transform shape]
\draw ({2},{1}) node[rotate=90, font=\large]{$3$};
\end{scope}
\end{tikzpicture}
\label{fig:Yembedded}
}
\hspace{1cm}
\subfloat[]{\begin{tikzpicture}
    [sq/.style=
  {shape=regular polygon, regular polygon sides=4, draw, minimum width=1.414cm}]
  \foreach \i in {0,1,2,3}{
  	\node[sq] at (\i,0){\i};
  	}


\node[rotate=180, thick, font=\fontsize{15}{0}\selectfont, thick] at (0,-0.5){{\color{black}$>$}};

\node[thick, font=\fontsize{15}{0}\selectfont, thick] at (1,0.5){{\color{black}$>$}};

\node[rotate=180, thick, font=\fontsize{15}{0}\selectfont, thick] at (0,0.5){{\color{blue}$>$}};

\node[thick, font=\fontsize{15}{0}\selectfont, thick] at (3,0.5){{\color{blue}$>$}};

\node[thick, font=\fontsize{15}{0}\selectfont, thick] at (2,0.5){{\color{green}$>$}};

\node[thick, font=\fontsize{15}{0}\selectfont, thick] at (3,-0.5){{\color{green}$>$}};

\node[thick, font=\fontsize{15}{0}\selectfont, thick] at (2,-0.5){{\color{olive}$>$}};

\node[rotate=180, thick, font=\fontsize{15}{0}\selectfont, thick] at (1,-0.5){{\color{olive}$>$}};

\end{tikzpicture}
\label{fig:halftranslationY}
}
\caption{Square-tiled representation of the genus 1 surface $Y$. }
\end{wrapfigure}

Consider first the surface $X$ pictured in Figure \ref{fig:mucubeFD}. After some cuts, translations by vectors in $2\ZZ$ and pasting, we see that $X$ is made out of unit squares (see Figure \ref{fig:threecubehandleX}). Moreover, when the squares are laid out on the plane as in Figure \ref{fig:halftranslationX}, we see that $X$ is a square-tiled half-translation surface with eight cone points each of angle $3 \pi$. The half-translation structure of $X$ lifts to the Mucube $M$, making it a square-tiled infinite genus half-translation surface with all cone angles measuring $3\pi$.

Referring to Figure~\ref{fig:halftranslationX}, we draw attention to two sets of simple closed curves on $X$ that play an important role in understanding the periodic geodesics of $M$. There are three \textit{horizontal} simple closed geodesics avoiding all cone points that cut every square they meet in half. One traverses squares $0, 1, 2, 3$ and then returns to $0$. Another traverses squares $4, 5, 6, 7$ before returning to square $4$. The final horizontal simple closed curve we consider traverses squares $8, 9, 10, 11$ and then returns to square $8$. We note that these are precisely the core curves of the cylinders in the horizontal cylinder decomposition of $X$.

Similarly, there are three \textit{vertical} simple closed geodesics, avoiding all cone points, that cut every square they meet in half. One traverses $0, 5, 2, 7,$ and then returns to square $0$. Another traverses squares $1, 8, 3, 10$, and the final vertical curve traverses squares $4, 9, 6, 11$ in order. These are precisely the core curves of the cylinders in the vertical cylinder decomposition of $X$.

Next, recall from Section \ref{sec:descentToY} that the surface $Y$ is the quotient of $X$ under the rotational symmetry about the axis $L$ and point $p$ shown in Figure \ref{fig:rotation}. In the polygonal representation of $X$ as in Figure \ref{fig:halftranslationX}, the order 3 rotation about the axis $L$ amounts to cyclically permuting the horizontal cylinders. From the polygonal representation of $Y$ shown in Figure \ref{fig:halftranslationY} we can see that $Y$ is a square-tiled half-translation surface of genus one.

The Veech groups of finite square-tiled translation surfaces can be easily computed using the \verb|surface_dynamics| package in SageMath \cite{surfacedynamics}. To compute the Veech group of $Y$, we first construct its minimal translation cover (See Figure \ref{fig:translationcoverY}). 
Using \verb|surface_dynamics| we compute the Veech groups of the minimal translation covers and using Proposition \ref{prop:transcover} we then have the Veech group of $Y$ as well:

\begin{proposition}\label{prop:veechgrpY} The Veech groups $V(X)$ and $V(Y)$ of $X$ and $Y$ are equal and is the image of the matrix group 
$$\hat{V} := \langle \Theta , A, B | \Theta^4, \Theta^2 B\Theta^2B^{-1}, \Theta^2 A\Theta^{-2}A^{-1}\rangle$$ in $\PSL_2(\ZZ)$ where 
$$\Theta = \begin{bmatrix} 0 & -1 \\ 1 & 0 \end{bmatrix},\, A =  \begin{bmatrix} 1 & 4 \\ 0 & 1 \end{bmatrix},\text{ and }\, B = \begin{bmatrix} 5 & -8 \\ 2 & -3 \end{bmatrix}.$$
\end{proposition}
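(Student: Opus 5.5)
To prove Proposition~\ref{prop:veechgrpY}, I would use the square-tiled structure to pass to translation surfaces and then pin down the Veech groups by explicit affine maps together with a finite index/coset computation. By Proposition~\ref{prop:transcover}, $V(X)=V(\tilde X)$ and $V(Y)=V(\tilde Y)$, where $\tilde X,\tilde Y$ are the minimal translation covers. These are finite square-tiled translation surfaces: $\tilde Y$ has $8$ squares, as in Figure~\ref{fig:translationcoverY}, and $\tilde X$ has $24$. Each is encoded by a pair of permutations $(r,u)$ describing the right and up neighbours of each square. Their Veech groups are then finite-index subgroups of $\SL_2(\ZZ)$, and they can be computed from the orbit of the origami under the generators $T=\begin{bmatrix}1&1\\0&1\end{bmatrix}$ and $\Theta$ acting on origamis up to relabelling. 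The stabilizer is read off via a Schreier-graph (coset enumeration) argument. This is exactly what \texttt{surface\_dynamics} does, so the first step is to run that computation for both covers and record generators and index.

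Second, to turn this into a proof that can be read rather than just run, I would verify membership of the three generators by hand, using the geometry from Part~\ref{part:geometricCharacterization}.
\begin{itemize}
\item $\Theta\in V$: the quarter turn is realized on $X$ by the rigid $\pi/2$-rotations $\theta_x,\theta_y,\theta_z$ of Lemma~\ref{lem:parallelism}, which descend through $\ZZ^3$. Alternatively, Lemma~\ref{lem:111} gives an affine map with derivative $\Theta$. Since $\Theta$ normalizes the $\theta_{2\pi/3}$-action up to conjugation, it also descends to $Y$.
\item $A\in V$: every horizontal cylinder of $X$ and of $Y$ has circumference $4$ and height $1$, hence modulus $4$. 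So the multitwist in the horizontal cylinders is affine with derivative $A$.
\item $B\in V$: its eigendirection, which is a periodic direction of slope $1/2$ or $2/1$, also gives a cylinder decomposition. I would check that the cylinders in this direction have commensurable moduli with the appropriate ratio, so that $B$ is (up to sign) the product of the corresponding multitwist with a suitable power of $\Theta$.
\end{itemize}
The relations $\Theta^4$ and the centrality of $\Theta^2=-\Id$ are immediate in $\SL_2$.

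Third, to get equality rather than just containment, I would compare indices. The index of $\langle\Theta,A,B\rangle$ in $\SL_2(\ZZ)$ can be computed from its action on $\PP^1(\ZZ/4\ZZ)$, or on a suitable finite set of cusps, by building a fundamental domain or a coset table. This index should match the index of the stabilizer computed in the first step, which equals the size of the $\SL_2(\ZZ)$-orbit of $\tilde Y$, and likewise for $\tilde X$. Since $\tilde X$ covers $\tilde Y$ equivariantly and the orbit sizes agree, $V(X)=V(Y)$ follows.

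The main obstacle is this index comparison, in particular proving that $V(Y)$ contains nothing beyond $\hat V$. I would handle it by enumerating the $\SL_2(\ZZ)$-orbit of the 8-square origami $\tilde Y$ by hand or with computer assistance, and checking that its size equals $[\SL_2(\ZZ):\hat V]$.
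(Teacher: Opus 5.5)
Your first step is exactly the paper's proof. The paper passes to the minimal translation covers, invokes Proposition~\ref{prop:transcover}, and reads the Veech groups off a \texttt{surface\_dynamics} computation, with no hand verification. The proposal is therefore fine insofar as it rests on that computation. The by-hand argument you add to make it readable, however, has concrete errors.

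\textbf{The realization of $\Theta$.} Neither of your two realizations of $\Theta$ works.

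The rotations $\theta_x,\theta_y,\theta_z$ lie in $G$, and by Lemma~\ref{lem:parallelism}(3) every element of $G$ fixes every parallel direction field. So they descend to translation automorphisms of $X$ (derivative $\pm\Id$), not to maps with derivative $\Theta$.

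The translation by $(1,1,1)$ swaps the two complementary regions of $M$. As the paper itself notes, it therefore reverses the orientation of the surface, so its derivative has determinant $-1$. Compare it on the face $[\tfrac12,\tfrac32]\times[-\tfrac12,\tfrac12]\times\{\tfrac12\}$ with the element $(x,y,z)\mapsto(y+2,x,2-z)$ of $G$, which carries that face to its translate. This shows the derivative is $\begin{bmatrix}0&1\\1&0\end{bmatrix}$, which sends slope $s$ to $1/s$. Read literally, Lemma~\ref{lem:111} holds only for the horizontal and vertical fields.

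A correct rigid realization composes this translation with the reflection $(x,y,z)\mapsto(y,x,z)$. That reflection preserves $M$ and both of its sides, and has derivative $\mathrm{diag}(1,-1)$. The composite has derivative $\pm\Theta$ and normalizes $(2\ZZ)^3$. It also conjugates $\theta_{2\pi/3}$ to its inverse, so it descends to $X$ and to $Y$. Alternatively, check $\Theta$ directly on the four-square model of $Y$, as in Proposition~\ref{prop:veechgrprep}.

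For $B$, no power of $\Theta$ enters. Since $\mathrm{tr}\,B=2$, $B$ is the multitwist in the $(2,1)$-cylinders. What you must check is that $10$ is an integer multiple of their moduli; on $Y$ both cylinders have modulus $10$.

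\textbf{The index step.} Reduction mod $4$ cannot compute $[\SL_2(\ZZ):\hat V]$. The image of $\hat V$ in $\SL_2(\ZZ/4\ZZ)$ equals that of the theta group $\langle\Theta,\begin{bmatrix}1&2\\0&1\end{bmatrix}\rangle$, which has index $3$. Yet $\hat V$ does not contain $\begin{bmatrix}1&2\\0&1\end{bmatrix}$. If it did, an affine map of $Y$ with that derivative would act on $H_1(Y,\ZZ)$ by a matrix whose square is $\pm\rho([A])=\pm\begin{bmatrix}1&1\\0&1\end{bmatrix}$, and no element of $\SL_2(\ZZ)$ has such a square. So $\hat V$ is not of level $4$, and you need a genuine coset enumeration or Farey symbol.

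Finally, deducing $V(X)=V(Y)$ from an $\SL_2(\RR)$-equivariant covering plus equal orbit sizes is a non sequitur, because equivariance gives no containment in either direction. What works is this: once the generators are verified on each surface, both Veech groups contain $\P\hat V$, and both have the same finite index as $\P\hat V$.
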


Subsequently, we will utilize some properties of the group $\hat{V}$ (and consequently $V(Y) = \P\hat{V}$) which we state below:

\begin{proposition}\label{prop:YVeechGroupFacts}
The group $\hat{V}$ has finite index in $\SL_2(\ZZ)$ and has 3 cusp equivalence classes with representatives $\infty$, $1$ and $2$. Moreover, the $\infty$-cusp represents all the one-cylinder directions on $Y$.
\end{proposition}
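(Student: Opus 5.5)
The finite-index claim reduces to locating $\hat V$ relative to a congruence subgroup. Every generator is congruent to a power of $\Theta$ modulo $4$: $A\equiv \Id$ and $B=\begin{bmatrix}5&-8\\2&-3\end{bmatrix}\equiv\begin{bmatrix}1&0\\2&1\end{bmatrix}$. So I will pass to the subgroup $\Gamma_2$-type group $\{N\in\SL_2(\ZZ): N \equiv \Id \text{ or } \Theta^k \pmod 2\}$. The cleanest route is to exhibit two known finite-index subgroups inside $\hat V$. First, $\hat V\supseteq\langle A,\Theta\rangle$, the Hecke-type group containing $\Gamma(4)$-parabolics. A more robust plan is to use the fact that $\P\hat V=V(Y)$ is the Veech group of a finite square-tiled surface (Proposition~\ref{prop:veechgrpY}). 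Veech groups of square-tiled surfaces are finite index in $\SL_2(\ZZ)$ (Gutkin--Judge), so $\hat V$, containing $\pm\Id$ via $\Theta^2$, has finite index. I will cite this rather than compute, and cross-check it with the \verb|surface_dynamics| output, which also reports the index and the cusp widths.

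For the cusps, I will build the coset graph of $\P\hat V$ in $\PSL_2(\ZZ)$ by Todd--Coxeter on the generators $S=\Theta$ and $T=\begin{bmatrix}1&1\\0&1\end{bmatrix}$. Equivalently, I will use the $\SL_2(\ZZ)$-orbit of the surface $Y$: the cosets correspond to the square-tiled surfaces $N\cdot Y$, and cusps correspond to $T$-orbits on those cosets. Concretely, the cusp classes of $V(Y)$ are in bijection with the $\langle T\rangle$-orbits on the $\SL_2(\ZZ)$-orbit of $Y$. Each orbit is represented by a slope $N\cdot\infty$. I will compute the orbit with the same software, check that there are exactly three $T$-orbits, and identify representatives by applying the corresponding $N$ to $\infty$, getting $\infty$, $1$ and $2$. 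As a consistency check, the cusp widths must sum to the index.

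For the last claim, the one-cylinder directions, I rely on the fact that cylinder decompositions are affine invariants. If $N\in V(Y)$ maps slope $s$ to $s'$, then the cylinder decompositions in directions $s$ and $s'$ have the same number of cylinders, so the number of cylinders is constant on each cusp class. The horizontal direction on $Y$ (Figure~\ref{fig:halftranslationY}) has one cylinder, namely squares $0,1,2,3$ in a row. Direction $1$ and direction $2$ each yield at least two cylinders, which I will verify by drawing the slope-$1$ and slope-$2$ decompositions on the four-square picture. Every rational direction is a cusp of the finite-index group, and hence lies in one of the three classes. So the one-cylinder directions are exactly the $\hat V$-orbit of $\infty$.

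The main obstacle is the rigorous verification of exactly three cusp classes with those representatives, since it rests on a finite but tedious coset enumeration. I would present it as a computation, using the $\SL_2(\ZZ)$-orbit of $Y$ under $S,T$ together with the $T$-cycle structure, rather than by hand.
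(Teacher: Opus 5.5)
Your argument is correct and is essentially the paper's approach. The ingredients are: finite index because the $\SL_2(\ZZ)$-orbit of a square-tiled surface is finite (Gutkin--Judge, applied via the minimal translation cover); cusp classes counted as $\langle T\rangle$-orbits on that orbit by machine; and the invariance of the number of cylinders under affine maps, with direct checks at $1$ and $2$. The paper gives no written proof and reads these facts off the \texttt{surface\_dynamics} computation of $V(Y)$, so your version just makes explicit what that computation certifies.

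You should delete the abandoned remarks in your first paragraph, because they are false or irrelevant:
\begin{itemize}
\item $B\equiv\begin{bmatrix}1&0\\2&1\end{bmatrix}\pmod 4$ is not a power of $\Theta$ mod $4$.
\item Containment of $\hat V$ in the theta group says nothing about finite index, since it points the wrong way.
\item The Hecke group $\langle A,\Theta\rangle$ has infinite index in $\SL_2(\ZZ)$: its limit set is a proper subset of $\partial\HH^2$, of Hausdorff dimension about $0.68$. So it cannot serve as a finite-index subgroup inside $\hat V$.
\end{itemize}
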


\section{Fundamental group of the Mucube}\label{sec:MucubeFundamentalGroup}

\begin{figure}[h!]
\centering
\includegraphics[width=3.25in]{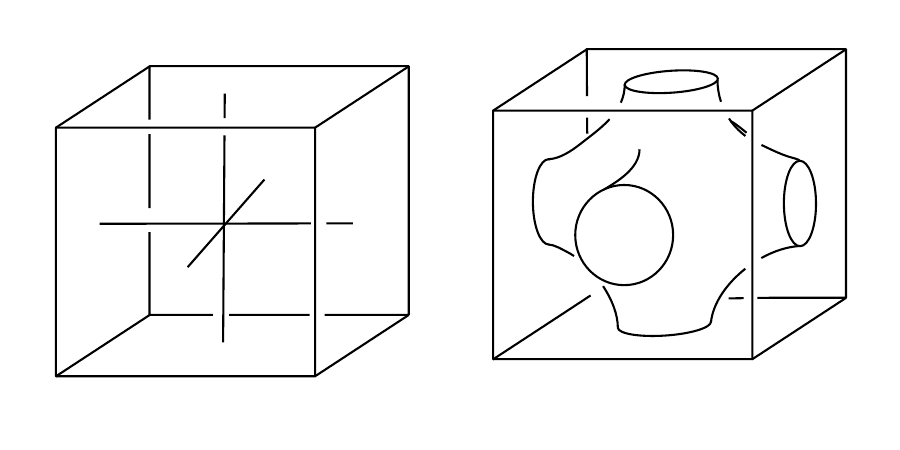}
\caption{Topological surface corresponding to the building block of the Mucube}
\label{fig:mucubetop}
\end{figure}

In subsequent sections we will use Proposition \ref{prop:HWlift} to
understand the affine diffeomorphisms of the Mucube, $M$, via the
affine diffeomorphisms of $X$ and $Y$. In order to do so, we need some
understanding of the fundamental group of $M$, which we develop in
this section.

We start by revisiting the construction of the Mucube in Section
\ref{sec:intro} through a topological lens. Recall that we embedded
a surface with boundary inside the cube $\cube$ with vertices
$(\pm 1, \pm 1, \pm 1)$. Now, suppose that opposite faces of $\cube$
were identified to one another (front face identified to back, top
identified to bottom, and right identified to left) to obtain a
3-manifold. This $3$-manifold is none other than the $3$-torus,
$S^1 \times S^1 \times S^1$, with a flat metric. The surface embedded
in the interior of $\cube$ (after the drilling) would then become the
surface $X$ of genus three embedded inside the $3$-torus (see Figure
\ref{fig:mucubetop}). If we then take the universal cover of the
$3$-torus, the lift of the genus three surface embedded within, will
give us the Mucube.

Let $\TT^3$ denote the $3$-torus. The embedding
$\iota: X\rightarrow \TT^3$ induces a group homomorphism
$\iota_*:\pi_1(X) \rightarrow \pi_1 (\TT^3)$, where
$\pi_1(\TT^3) \simeq \ZZ^3$. Let $\mathcal{K} \leq \pi_1(X)$ be the
kernel of the homomorphism $\iota_*$. Then then $M$ is the cover of
$X$ corresponding to $\mathcal{K}$. The set of three horizontal and
three vertical geodesics on $X$ mentioned in Section
\ref{sec:MucubeXYHalftranslation} can be lifted to $M$, and we call
their lifts \textit{horizontal} and \textit{vertical} geodesics in
$M$. The following proposition explains the significance of these
curves in relation to the fundamental group of $M$.


\begin{proposition}\label{prop:genforfundgroupM}
  There exists a generating set of $\pi_1(M)$ that consists of curves
  that are freely homotopic to horizontal or vertical geodesics.
\end{proposition}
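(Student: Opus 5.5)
We show that $\pi_1(M)$ is generated by loops, each freely homotopic to a horizontal or vertical geodesic, by building $M$ cell by cell from the building block $F$ and applying van Kampen at each gluing.

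\textbf{Step 1: the building block.} First describe $\pi_1(F)$, where $F = M\cap\cube$ is the surface of Figure~\ref{fig:mucubetop}. Topologically, $F$ is a sphere with six boundary circles, one on each face of $\cube$. These six circles are exactly the lifts of the three basic curves $I,J,K$ of Figure~\ref{fig:FwithBasicCurves}: two copies of each, on opposite faces. Each boundary circle is a core curve of a horizontal or vertical cylinder of $X$ (compare Figure~\ref{fig:FwithBasicCurves} with Figure~\ref{fig:halftranslationX}), so it is freely homotopic to a horizontal or vertical geodesic. A sphere with six holes has free fundamental group of rank $5$, generated by loops around five of its boundary circles, each joined to a basepoint by an arc. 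Hence $\pi_1(F)$ is generated by conjugates of boundary curves, which are the required kind of loops.

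\textbf{Step 2: gluing translates.} Write $M=\bigcup_{\bt\in\ZZ^3}F_\bt$, where adjacent blocks meet along one boundary circle. Exhaust $M$ by connected finite unions $U_n$ of blocks, adding one block or one gluing at a time. Each gluing is along a circle $c$, and there are two cases:
\begin{enumerate}
\item If the new block is attached along a single circle, then $\pi_1(U\cup F_\bt)=\pi_1(U)*_{\langle c\rangle}\pi_1(F_\bt)$, and the generators of both factors are kept.
\item If the gluing identifies two circles already in $U$, closing up a loop in the adjacency graph, this is an HNN extension. It adds one stable letter $s$, represented by a loop crossing $c$ once.
\end{enumerate}
Since $\pi_1(M)=\varinjlim \pi_1(U_n)$, it suffices to control the generators introduced in Steps 1 and 2.

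\textbf{Step 3: the stable letters (main obstacle).} The difficulty is that each stable letter must be expressed through horizontal and vertical geodesics. The adjacency graph of the blocks is the cubic lattice $\ZZ^3$, whose cycle space is generated by unit squares. So only the stable letter of each unit square of four blocks, say in the $xy$-plane, needs to be handled.

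For such a square, we will find a horizontal or vertical geodesic of $M$ that threads through those four blocks around the drilled tunnel. The candidate is the lift of the $K$-direction core curve passing between them, which is a closed curve since it stays at constant $z$. We then check that this geodesic represents the stable letter modulo the previously obtained generators.

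This check can be done concretely using the explicit horizontal cylinders of $X$ (squares $0,1,2,3$; $4,5,6,7$; $8,9,10,11$) and their lifts. The cleanest route is probably to compute in $\pi_1(X)$ instead. There, $\mathcal K=\ker\iota_*$ is normally generated by commutator-type relations, and we show that $\mathcal K$ is generated by elements freely homotopic to lifts of closed horizontal and vertical core curves, i.e.\ those with zero displacement, such as the order-4 cycles of Theorem~\ref{thm:cylinders}. This needs care because not every horizontal geodesic of $X$ lifts to a closed curve in $M$.
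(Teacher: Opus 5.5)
Your Steps 1 and 2 are sound. $F$ is a sphere with six holes. Its boundary circles are the lifts of $I,J,K$, i.e.\ the cores of the four-square tubes straddling the faces of the cubes $\cube+2\bt$, hence horizontal geodesics of $M$. So $M$ is a graph of spaces over the cubic lattice graph $\Lambda$, with these circles as edge spaces.

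\textbf{The gap.} The proposal stops exactly at the step you call the main obstacle. ``We then check that this geodesic represents the stable letter'' is never carried out. The fallback (``compute in $\pi_1(X)$ \dots\ we show that $\mathcal K$ is generated by elements freely homotopic to \dots'') is just a restatement of the proposition, namely the paper's route left unexecuted.

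\textbf{The curve is also misidentified.} No lift of $K$ threads through four blocks, since the lifts of $I,J,K$ are exactly the gluing circles, each lying in only two blocks. For the blocks $F_\bt$ with $\bt\in\{(0,0,0),(1,0,0),(1,1,0),(0,1,0)\}$, the right curve is $\partial([\tfrac12,\tfrac32]^2)\times\{0\}$. This is the core of the four-square tube of $M$ centred at $(1,1,0)$, which lies in the complementary region (the one exchanged with the tunnels by translation by $(1,1,1)$). It is a \emph{vertical} geodesic, because on each of its faces it is perpendicular to a gluing circle (cf.\ Lemma~\ref{lem:111}). It crosses the four gluing circles once each, transversally, at $(1,\tfrac12,0)$, $(\tfrac32,1,0)$, $(1,\tfrac32,0)$, $(\tfrac12,1,0)$, in cyclic order.

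\textbf{How to close the gap.} With that curve the missing check is formal.
\begin{enumerate}
\item Collapse each block to a vertex and each collar of a gluing circle to an edge, giving $q\colon M\to\Lambda$. Then $q_*$ is onto, and $\ker q_*$ is the normal closure of the vertex groups $\pi_1(F_\bt)$: killing the vertex groups in your amalgam/HNN presentation leaves only the free group on the stable letters.
\item The tube core above maps under $q_*$ to a conjugate of the unit-square loop.
\item Your remark that unit squares generate the cycle space only controls $H_1(\Lambda)$. Moreover, for a fixed spanning tree a non-tree edge need not close a unit square. What you need is that $\pi_1(\Lambda)$ is \emph{normally} generated by unit-square loops. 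This holds because attaching a $2$-cell to every unit square gives the $2$-skeleton of the cubulation of $\RR^3$, which is simply connected.
\item Hence $\pi_1(M)$ is the normal closure of the gluing circles (horizontal) and the complementary tube cores (vertical). The set of all their conjugates is a generating set of the required kind, since conjugation does not change free homotopy class.
\end{enumerate}

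\textbf{Comparison with the paper.} Completed this way, your argument is a genuinely different route. The paper works in $\pi_1(X)$ via the $12$-gon presentation, writes $\mathcal K=\langle\langle a,b,c,[x,y],[y,z],[x,z]\rangle\rangle$, and trades the commutators for explicit words $k,l,m$ shown pictorially to be freely homotopic to vertical curves. Yours avoids the presentation and the word manipulations, and identifies the generators geometrically as the two families of tube cores.
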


\begin{proof}
  We begin by finding a generating set of $\pi_1(X)$ that consists of
  elements that are freely homotopic to horizontal or vertical
  geodesics.

  Consider the six based loops $a,b,c,x,y,z \subset X$ shown in
  Figure~\ref{fig:generatorsX}.  An Euler characteristic computation verifies that
  \begin{equation*}
    X - a\cup b\cup c\cup x\cup y\cup z
  \end{equation*}
  is a topological disk. Therefore, the set $\set{a,b,c,x,y,z}$ generates
  $\pi_1(X)$.

\begin{figure}
\centering
\begin{tikzpicture}
\node (img) at (0,0) {\includegraphics[scale=0.145]{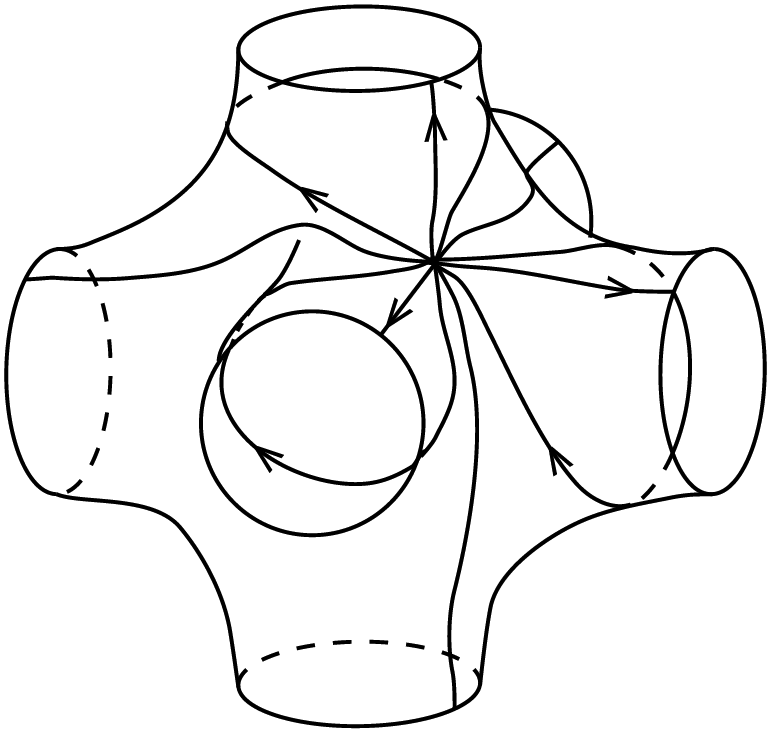}};
\node at (35pt, -24pt) {$a$};
\node at (-30pt, 44pt) {$b$};
\node at (-10pt, -10pt) {$c$};
\node at (-38pt, 10pt) {$x$};
\node at (32pt, 40pt) {$y$};
\node at (5pt, -30pt) {$z$};
\begin{scope}[xshift=150, xscale=0.7, yscale=0.7]
    [
  ultra thick,
  every node/.style={font=\small}
]

  \coordinate (O) at (0,0);

  \foreach \i [evaluate=\i as \ang using {105-30*(\i-1)}] in {1,...,12} {
    \coordinate (P\i) at (\ang:3);
  }

  \fill[gray!20] (P1) -- (P2) -- (P3) -- (P4) -- (P5) -- (P6)
                 -- (P7) -- (P8) -- (P9) -- (P10) -- (P11) -- (P12) -- cycle;

  \draw[red] (P1) -- (P2);
  \draw[-{Latex[length=6pt,width=6pt]}] ($(P1)!0.44!(P2)$) -- ($(P1)!0.56!(P2)$);
  \node at ($(O)!1.18!($(P1)!0.5!(P2)$)$) {$a$};

  \draw[yellow] (P2) -- (P3);
  \draw[-{Latex[length=6pt,width=6pt]}] ($(P2)!0.44!(P3)$) -- ($(P2)!0.56!(P3)$);
  \node at ($(O)!1.18!($(P2)!0.5!(P3)$)$) {$x$};

  \draw[blue] (P3) -- (P4);
  \draw[-{Latex[length=6pt,width=6pt]}] ($(P3)!0.56!(P4)$) -- ($(P3)!0.44!(P4)$);
  \node at ($(O)!1.18!($(P3)!0.5!(P4)$)$) {$c$};

  \draw[green] (P4) -- (P5);
  \draw[-{Latex[length=6pt,width=6pt]}] ($(P4)!0.56!(P5)$) -- ($(P4)!0.44!(P5)$);
  \node at ($(O)!1.18!($(P4)!0.5!(P5)$)$) {$z$};

  \draw[orange] (P5) -- (P6);
  \draw[-{Latex[length=6pt,width=6pt]}] ($(P5)!0.44!(P6)$) -- ($(P5)!0.56!(P6)$);
  \node at ($(O)!1.18!($(P5)!0.5!(P6)$)$) {$b$};

  \draw[green] (P6) -- (P7);
  \draw[-{Latex[length=6pt,width=6pt]}] ($(P6)!0.44!(P7)$) -- ($(P6)!0.56!(P7)$);
  \node at ($(O)!1.18!($(P6)!0.5!(P7)$)$) {$z$};

  \draw[red] (P7) -- (P8);
  \draw[-{Latex[length=6pt,width=6pt]}] ($(P7)!0.56!(P8)$) -- ($(P7)!0.44!(P8)$);
  \node at ($(O)!1.18!($(P7)!0.5!(P8)$)$) {$a$};

  \draw[pink] (P8) -- (P9);
  \draw[-{Latex[length=6pt,width=6pt]}] ($(P8)!0.56!(P9)$) -- ($(P8)!0.44!(P9)$);
  \node at ($(O)!1.18!($(P8)!0.5!(P9)$)$) {$y$};

  \draw[blue] (P9) -- (P10);
  \draw[-{Latex[length=6pt,width=6pt]}] ($(P9)!0.44!(P10)$) -- ($(P9)!0.56!(P10)$);
  \node at ($(O)!1.18!($(P9)!0.5!(P10)$)$) {$c$};

  \draw[pink] (P10) -- (P11);
  \draw[-{Latex[length=6pt,width=6pt]}] ($(P10)!0.44!(P11)$) -- ($(P10)!0.56!(P11)$);
  \node at ($(O)!1.18!($(P10)!0.5!(P11)$)$) {$y$};

  \draw[orange] (P11) -- (P12);
  \draw[-{Latex[length=6pt,width=6pt]}] ($(P11)!0.56!(P12)$) -- ($(P11)!0.44!(P12)$);
  \node at ($(O)!1.18!($(P11)!0.5!(P12)$)$) {$b$};

  \draw[yellow] (P12) -- (P1);
  \draw[-{Latex[length=6pt,width=6pt]}] ($(P12)!0.56!(P1)$) -- ($(P12)!0.44!(P1)$);
  \node at ($(O)!1.18!($(P12)!0.5!(P1)$)$) {$x$};

\end{scope}
\end{tikzpicture}
\captionof{figure}{Generators of the fundamental group of $X$}
\label{fig:generatorsX}
\end{figure}

Recall that $\iota:X \rightarrow \TT^3$ is the embedding of $X$ into
the three-dimensional torus, shown in Figure~\ref{fig:mucubetop}. Let
$\iota_* : \pi_1(X) \rightarrow \pi_1(\TT^3)$ be the homomorphism
induced on fundamental groups by $\iota$. Note that
$\iota_*(a)=\iota_*(b)=\iota_*(c)= \mathds{1}$ and that
$\iota_*(x)=x, \iota_*(y)=y$ and $\iota_*(z)=z$, where
$\pi_1(\TT^3)= \mathbb{Z}^3 = \langle x, y, z | [x,y] = [y,z] =[x,z] = \mathds{1} \rangle$. The Mucube is the cover of $X$
corresponding to the kernel $\mathcal{K}$ of $\iota_*$, therefore,
$\pi_1(M)= \mathcal{K}$. We will show that $\mathcal{K}$ is generated
by elements that are freely homotopic to horizontal and vertical
curves.

From the inclusion $\iota: X \rightarrow \TT^3$, it is clear that
$a, b, c$ are in $\mathcal{K}$, and $x, y, z$ are not. Since $\pi_1(\TT^3)$ is a free abelian
group and kernels are normal, $\mathcal{K}$ must be the normal
closure of $a, b, c$ and the commutator subgroup of
$\langle x, y, z \rangle \leq \pi_1 (X)$. In other words,
$\mathcal{K} = \langle \langle a, b, c, [x,y], [y,z], [x,z] \rangle
\rangle$. The generators $a, b, c$ are clearly freely homotopic to
horizontal geodesics. It remains to show that the commutator subgroup
of $\langle x,y,z \rangle$ is generated by
elements that are freely homotopic to horizontal and vertical curves.

Consider the element $[z,x]c = zxz^{-1}x^{-1}c \in \mathcal{K}$. Conjugating by $x^{-1}z^{-1}$, we obtain   $z^{-1}x^{-1}czx \in \mathcal{K}$. This element is homotopic to a vertical cylinder. See Figure~\ref{verticalgenerator}, where a lift of this curve to $M$ is shown. In this setting, the free homotopy to a vertical curve is evident. This homotopy is then projected to $X$ to give the desired result.

\begin{figure}
\centering
\begin{tikzpicture}
\node (img) at (0,0) {\includegraphics[scale=0.7]{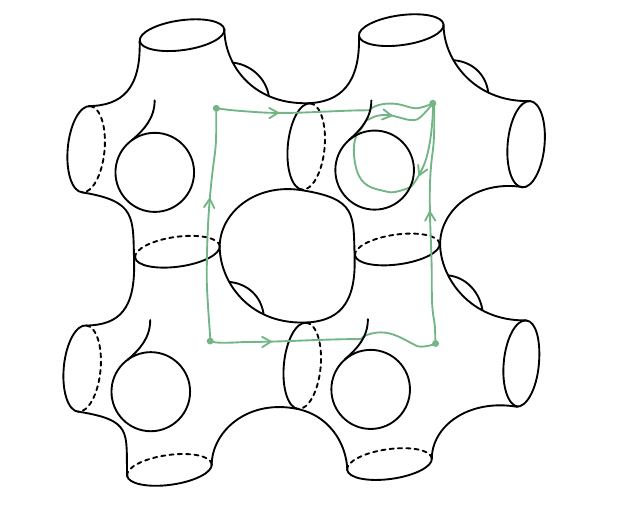}};
\node at (32pt, -10pt) {$z$};
\node at (-20pt, 46pt) {$x$};
\node at (-43pt, 15pt) {$z$};
\node at (32pt, 43pt) {$c$};
\node at (5pt, -30pt) {$x$};
\end{tikzpicture}
\caption{A lift of $z^{-1}x^{-1}czx$ is shown in $M$. Reading concatenation from right to left and starting at the bottom left corner, one can see the shown curve matches the given element.}
\label{verticalgenerator}
\end{figure}

Similarly, the element $yxy^{-1}x^{-1}b^{-1}$ is in $\mathcal{K}$. When conjugated by
$y$ it becomes $xy^{-1}x^{-1}b^{-1}y$ which is in $\mathcal{K}$ as well,
since $\mathcal{K}$ is normal. This element is homotopic to a vertical
curve. Finally, $z^{-1}y^{-1}a^{-1}zy$ is in
$\mathcal{K}$ and is homotopic to a vertical cylinder. Let
$k=xy^{-1}x^{-1}b^{-1}y$, $l=z^{-1}y^{-1}czy$, and
$m=z^{-1}a^{-1}x^{-1}zx$.

We have just shown that $\mathcal{K}\geq \langle \langle a, b, c, k, l, m \rangle \rangle$.
We will now show
that the the normal closure
$\langle \langle a, b, c, k, l, m \rangle \rangle$ is equal to
$\mathcal{K}$. To show that $\mathcal{K}$ is a subgroup of
$\langle \langle a, b, c, k, l, m\rangle \rangle$, we show containment
of each of the generators.

The first three generators $a, b$ and $c$ are contained trivially. The generator $[x, y]$ can be written as $(yky^{-1}b^{-1})^{-1}$, which consists of the product of conjugates of $ a, b, c, k, l, m$. Therefore $[x,y]$ must be contained in the normal closure $\langle \langle a, b, c, k, l, m \rangle \rangle$. Similarly $[y,z]$ can be written as $((zy) l (zy)^{-1}c^{-1})^{-1}$, and $[x,z]$ can be written as $(x(azmz^{-1})x^{-1})^{-1}$. Since the $\langle \langle a, b, c, k, l, m \rangle \rangle$ is normal, it must contain the smallest normal group containing $a, b, c, [x,y], [y,z], [x,z]$ (i.e. the normal closure, $\mathcal{K}$). Thus $\mathcal{K} = \langle \langle a, b, c, k, l, m \rangle \rangle$.
\end{proof}


\section{Characterization of periodic directions on $M$ via $V(Y)$}\label{sec:veechgrpchar}

In this section we will prove Theorem \ref{thm:veechgrpchar}, which gives us a characterization of periodic directions on the Mucube, $M$, via an infinitely generated group $\Gamma$. From Lemma \ref{prop:veechgrpY}, we know that the $V(Y) =  \P\ideal{\Theta , A, B} < \SL_2(\ZZ)$ where $\Theta = \begin{bmatrix}0 & -1 \\ 1 & 0 \end{bmatrix}$, $A =  \begin{bmatrix}1 & 4 \\ 0 & 1 \end{bmatrix}$, and $B=  \begin{bmatrix}5 & -8 \\ 2 & -3 \end{bmatrix}$. Recall that for a group $G < \SL_2\RR$, $\P G$ is its projective image in $\PSL_2\RR$. Define,
    $$\Gamma = \ideal{A, \{h \Theta h^{-1} : h \in \ideal{\Theta, A, B}\}}$$
Note that $\P\Gamma < V(Y)$. 

We then have the following characterization:
\begin{customtheorem}{1}
A slope $(p,q)$ is periodic on $M$ if and only if there exists $N \in \Gamma$ such that $N\cdot (1,0) = (p,q)$.
\end{customtheorem}

To prove the theorem, we take a closer look at the absolute homology $H_1(Y, \ZZ)$.

\subsection{Homology of $Y$}

The homology of $Y$ relative to $\Sigma$ denoted $H_1(Y, \Sigma, \ZZ)$ is the module generated by $\{\sigma_i, \tau_i\}_{i=1}^4$ under the relations
\begin{align}\label{eqn:homologyrelations}
\begin{split}
\sigma_1 + \tau_2 &= \tau_1 + \sigma_4,\\
\sigma_2 + \tau_3 &= \tau_2 - \sigma_4,\\
\sigma_3+ \tau_4 &= \tau_3 + \sigma_2,\\
-\sigma_3 + \tau_1 &= \tau_4 + \sigma_1.\\
\end{split}
\end{align}
These relations represent the fact that any curve that bounds a disk is trivial in homology.

\begin{figure}[h!]
\begin{center}
\subfloat[Relative Homology generators for $Y$]
{
\begin{tikzpicture}
    [sq/.style=
  {shape=regular polygon, regular polygon sides=4, draw, minimum width=1.414cm}]
  \foreach \i in {0,1,2,3}{
  	\node[sq] at (\i,0){};
  	}
  	
  	\draw[->, color=green, line width = 3] (-0.5, -0.5) -- ++ (0,1);
  	\node[] at (-0.8, 0){$\tau_1$};
  	
  	\draw[->, color=green, line width = 3] (-0.5+1, -0.5) -- ++ (0,1);
  	\node[] at (-0.8+1, 0){$\tau_2$};
  	
  	\draw[->, color=green, line width = 3] (-0.5+1+1, -0.5) -- ++ (0,1);
  	\node[] at (-0.8+1+1, 0){$\tau_3$};
  	
  	\draw[->, color=green, line width = 3] (-0.5+1+2, -0.5) -- ++ (0,1);
  	\node[] at (-0.8+1+2, 0){$\tau_4$};

  	\draw[->, color=red, line width = 3] (-0.5, -0.5) -- ++ (1,0);
  	\node[] at (0, -0.9){$\sigma_1$};
  	
  	\draw[->, color=red, line width = 3] (-0.5+1, -0.5) -- ++ (1,0);
  	\node[] at (0+1, -0.9){$\sigma_2$};
  	
  	\draw[->, color=red, line width = 3] (-0.5+2, -0.5) -- ++ (1,0);
  	\node[] at (0+2, -0.9){$\sigma_3$};
  	
  	\draw[->, color=red, line width = 3] (-0.5, 0.5) -- ++ (1,0);
  	\node[] at (0, 0.9){$\sigma_4$};

  \draw[thin] (0,0.5) -- ++(-0.15,0.15);
  \draw[thin] (0,0.5) -- ++(-0.15,-0.15);
  
  \draw[thin] (1,0.5) -- ++(0.15,0.15);
  \draw[thin] (1,0.5) -- ++(0.15,-0.15);

  \draw[thin] (1.9,-0.5) -- ++(-0.15,0.15);
  \draw[thin] (1.9,-0.5) -- ++(-0.15,-0.15);
   \draw[thin] (2.1,-0.5) -- ++(-0.15,0.15);
  \draw[thin] (2.1,-0.5) -- ++(-0.15,-0.15);

  \draw[thin] (2.9,-0.5) -- ++(0.15,0.15);
  \draw[thin] (2.9,-0.5) -- ++(0.15,-0.15);
   \draw[thin] (3.1,-0.5) -- ++(0.15,0.15);
  \draw[thin] (3.1,-0.5) -- ++(0.15,-0.15);
  
  \draw[thin] (1,-0.65) -- ++(0,0.3);
  \draw[thin] (2,0.65) -- ++(0,-0.3);
  
  \draw[thin] (-0.05,-0.65) -- ++(0,0.3);
  \draw[thin] (0.05,-0.65) -- ++(0,0.3);
  
  \draw[thin] (2.95,0.65) -- ++(0,-0.3);
  \draw[thin] (3.05,0.65) -- ++(0,-0.3);

\end{tikzpicture}}
\hspace{1cm}
\subfloat[Basis for absolute homology of $Y$]
{\begin{tikzpicture}
    [sq/.style=
  {shape=regular polygon, regular polygon sides=4, draw, minimum width=1.414cm}]
  \foreach \i in {0,1,2,3}{
  	\node[sq] at (\i,0){};
  	}

  \draw[thin] (0,0.5) -- ++(-0.15,0.15);
  \draw[thin] (0,0.5) -- ++(-0.15,-0.15);
  
  \draw[thin] (1,0.5) -- ++(0.15,0.15);
  \draw[thin] (1,0.5) -- ++(0.15,-0.15);

  \draw[thin] (1.9,-0.5) -- ++(-0.15,0.15);
  \draw[thin] (1.9,-0.5) -- ++(-0.15,-0.15);
   \draw[thin] (2.1,-0.5) -- ++(-0.15,0.15);
  \draw[thin] (2.1,-0.5) -- ++(-0.15,-0.15);

  \draw[thin] (2.9,-0.5) -- ++(0.15,0.15);
  \draw[thin] (2.9,-0.5) -- ++(0.15,-0.15);
   \draw[thin] (3.1,-0.5) -- ++(0.15,0.15);
  \draw[thin] (3.1,-0.5) -- ++(0.15,-0.15);
  
  \draw[thin] (1,-0.65) -- ++(0,0.3);
  \draw[thin] (2,0.65) -- ++(0,-0.3);
  
  \draw[thin] (-0.05,-0.65) -- ++(0,0.3);
  \draw[thin] (0.05,-0.65) -- ++(0,0.3);
  
  \draw[thin] (2.95,0.65) -- ++(0,-0.3);
  \draw[thin] (3.05,0.65) -- ++(0,-0.3);
  
  \node[] at (-0.8, 0){};
  \node[] at (0, -0.9){{\color{white}$\sigma_1$}};
  
\node[color=purple] at (2,-0.15){$\sigma$};  
  \draw[->, line width = 2, color = purple] (-0.5, 0) -- ++ (1.5, 0);
  \draw[line width = 2, color = purple] (1, 0) -- ++ (2.5, 0);
  
  \node[color=blue] at (2, 0.9){$\eta$};
  \draw[->, line width = 2, dashed, color = blue] (1, -0.5) -- (1.75, 0.25);
  \draw[dashed, line width = 2, color = blue] (1.75, 0.25) -- (2, 0.5);

\end{tikzpicture}}
\caption{Homology of $Y$}
\label{fig:Yhomologyetasigma}
\end{center}

\end{figure}

Now let $\sigma$ be the homology class of $\gamma_{(1,0)}$, the core curve of the horizontal cylinder. Note that $\sigma \in H_1(Y, \ZZ)$. Let $\eta \in H_1(Y, \ZZ)$ be the homology class of core curve of the area 1 cylinder in the (1,1) direction as shown in Figure \ref{fig:Yhomologyetasigma}. The following proposition gives a basis for $H_1(Y, \ZZ)$. 

\begin{proposition}\label{prop:Yhomologybasis} The set $\mathcal{B} := \{\sigma, \eta\}$ is a basis for $H_1(Y, \ZZ)$.
\end{proposition}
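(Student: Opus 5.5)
Since $Y$ has genus one (Section~\ref{sec:MucubeXYHalftranslation}), $H_1(Y,\ZZ)\cong\ZZ^2$. It therefore suffices to show two things: that $\sigma$ and $\eta$ lie in $H_1(Y,\ZZ)$, and that $i_a(\sigma,\eta)=\pm1$. Two classes in a rank-two unimodular symplectic lattice whose intersection number is $\pm1$ automatically form a basis. Indeed, if $\sigma=a_1e_1+a_2e_2$ and $\eta=b_1e_1+b_2e_2$ for a symplectic basis $e_1,e_2$, then $i_a(\sigma,\eta)=a_1b_2-a_2b_1$ is the determinant of the change-of-basis matrix. That determinant is $\pm1$ exactly when the matrix lies in $\GL_2(\ZZ)$.

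First I would record that both classes are absolute. Each is represented by a closed curve avoiding $\Sigma$: $\sigma$ by the core curve of the horizontal cylinder through squares $0,1,2,3$, and $\eta$ by the core curve of the area-one cylinder in direction $(1,1)$. Both have zero boundary.

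I would express both classes in terms of the relative generators $\sigma_i,\tau_i$ by homotoping each core curve rel nothing onto edge paths. The horizontal core curve crosses the four squares at mid-height, so it is homologous to the bottom edges of the cylinder, namely $\sigma_1+\sigma_2+\sigma_3+\pm\sigma_4$ (the sign is read off from Figure~\ref{fig:Yhomologyetasigma}). The $(1,1)$ curve through one square is homologous to one $\sigma_j$ followed by one $\tau_k$, which serves as a sanity check that its boundary vanishes. The relations~\eqref{eqn:homologyrelations} are only needed to confirm consistency.

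The main step is computing $i_a(\sigma,\eta)$ geometrically rather than through the edge expansion. The closed geodesics are transverse, so I would count their intersection points with signs. The $(1,1)$ curve of Figure~\ref{fig:Yhomologyetasigma}(b) lies inside a single square (square $2$ after the gluings), so it crosses the horizontal mid-line of the cylinder exactly once, traveling upward. Both curves are straight lines of fixed directions $(1,0)$ and $(1,1)$ in a half-translation chart. Their crossings therefore all have the same local sign, with no cancellation, and each point contributes the same sign. Hence $i_a(\sigma,\eta)=\pm1$, which proves the proposition.

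The likely obstacle is the half-translation structure. Orientations can flip across special edges, so I must check that along each closed curve the orientation is consistent. This amounts to checking that the chosen representatives of $\sigma$ and $\eta$ do not traverse an edge glued by a $\pi$-rotation in a way that reverses their direction vector. If $\eta$ did, it would not be a closed straight oriented curve. The area-one $(1,1)$ cylinder is claimed in the paper, so its core curve closes up with consistent direction. The alternative is to verify the count with the combinatorial formula on edges, or by lifting to the minimal translation cover of Figure~\ref{fig:translationcoverY}, where orientations are global, and dividing the resulting intersection count appropriately.
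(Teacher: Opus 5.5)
Your proof is correct and uses the same geometric input as the paper: $\sigma$ and $\eta$ are absolute classes meeting transversally in exactly one point, so $i_a(\sigma,\eta)=\pm1$. The difference lies in the final deduction.

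The paper uses the intersection number only to show that $\sigma$ and $\eta$ are linearly independent, and then appeals to rank two. Over $\ZZ$ that only shows they span a finite-index sublattice (compare $2e_1, e_2$ in $\ZZ^2$). You observe that $i_a(\sigma,\eta)$ is the determinant of the change-of-basis matrix to a symplectic basis, so it is $\pm1$ exactly when that matrix lies in $\GL_2(\ZZ)$. That is the step that actually yields a $\ZZ$-basis, so your route is the more complete one.

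Several of your supporting descriptions misread Figure~\ref{fig:Yhomologyetasigma}, though none of them carries weight in the argument. Number the squares $0,\dots,3$ from left to right.
\begin{itemize}
\item Pushing $\sigma$ to the bottom of the horizontal cylinder gives $\sigma_1,\sigma_2,\sigma_3$ together with the bottom edge of square $3$. That edge is glued to the bottom of square $2$ by a half-translation, so it contributes $-\sigma_3$. Hence $\sigma=\sigma_1+\sigma_2$, as the paper states. By contrast, $\sigma_4$ is the top edge of square $0$, and $\sigma_1+\sigma_2+\sigma_3\pm\sigma_4$ does not even have zero boundary.
\item $\eta$ is not contained in one square. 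It runs from the midpoint of the bottom edge of square $1$, through the midpoint of the edge shared by squares $1$ and $2$, to the midpoint of the top edge of square $2$, which is glued by translation to the bottom of square $1$. So $\eta=\sigma_2+\tau_3$, and its single crossing with $\sigma$ is at the midpoint of that shared edge.
\item A cleaner reason for the single crossing: $Y$ is one horizontal cylinder of height $1$ and $\eta$ has holonomy $\pm(1,1)$, so $\eta$ traverses that cylinder exactly once.
\item Your general claim that all crossings of two straight closed curves on a half-translation surface have the same sign is false, since the two tangent vectors can flip independently across $\pi$-gluings. With only one crossing, you do not need it.
\end{itemize}
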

\begin{proof}
First, $H_1(Y, \ZZ)$ is two dimensional since $Y$ is genus 1. So, it suffices to show that $\sigma$ and $\eta$ are linearly independent. Assume to the contrary that $c_1\sigma = c_2  \eta$ for some $c_i \in \ZZ$. Then, using the 
intersection form $i_a$ (which is symplectic), we get
$$ 0 = i_a(\sigma, c_1\sigma) = i_a(\sigma, c_2 \eta) = c_2 \cdot i_a(\sigma, \eta) = c_2.$$
The final equality is observed from the representative for $\sigma$ and $\eta$ seen in Figure~\ref{fig:Yhomologyetasigma} (b). Likewise, $c_1 = 0$ since we assumed $c_1\sigma = c_2\eta$ and $\sigma$ is non-trivial. Thus $\mathcal{B}$ forms a basis for $H_1(Y, \ZZ)$. 
\end{proof}

We claim that $\text{Aut}(Y) \cong \ZZ_2$. Any element of $\text{Aut}(Y)$ must permute the cone points. There are exactly two cone points of angle $\pi$ on $Y$, call them $\alpha$ and $\beta$. Let $f \in \text{Aut}(Y)$. If $\alpha$ is fixed by $f$, then $f$ restricted to a neighbourhood of $\alpha$ must be the identity, in order to preserve the half-translation structure. Since $f$ is an isometry, this implies $f$ is the identity globally. Similarly if $f$ swaps $\alpha$ and $\beta$, this information alone determines $f$ entirely. We note that in this case, $f$ may be visualized as rotation by $\pi$ around the midpoint of $\tau_3$. This is an involution, and thus $\text{Aut}(Y) \cong \ZZ_2$.

Further, the generator of $\text{Aut}(Y)$ (given by rotation around the midpoint of $\tau_3$ by $\pi$) acts on $\sigma$ and $\eta$ by sending them to $-\sigma$ and $-\eta$ respectively. Consequently, the action of $N \in \SL_2(\RR)$ on $Y$ gives two homeomorphisms $f_N^+: Y \to Y$ and $f_N^-: Y \to Y$, which induce actions on the homology defined by $N \cdot \sigma = {f_N^{\pm}}_*(\sigma)$. Thus, we get the following action.
\[\rho : V(Y)(=\text{Aff}(Y)/\text{Aut}(Y)) \rightarrow \Sp(H_1(Y, \ZZ))/\ideal{ -\text{Id}}\]
In what follows, we drop the $\pm$, with the understanding that $N \cdot \sigma$ is only defined up to sign.

After choosing the oriented basis $\mathcal{B}$ for $H_1(Y, \ZZ)$, we can identify $\Sp(H_1(Y, \ZZ))$ with $\Sp(2, \ZZ) = \SL_2(\ZZ)$ and $\Sp(H_1(Y, \ZZ)/\ideal{ -\text{Id}}$ with $\PSL_2(\ZZ)$. Under this identification, we have the following proposition:

\begin{proposition}The image \label{prop:veechgrprep}$\rho(V(Y)) = \P\left\langle\begin{bmatrix} 1 & 1 \\ 0 & 1 \end{bmatrix},\begin{bmatrix}3 & -1\\ 4 & -1 \end{bmatrix}\right\rangle\leq \PSL_2(\ZZ)$. 
\end{proposition}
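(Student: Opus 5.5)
The plan is to compute $\rho$ on the three generators $\Theta, A, B$ of $V(Y)$ from Proposition~\ref{prop:veechgrpY}, writing each image as a matrix in the basis $\mathcal B=\{\sigma,\eta\}$ of Proposition~\ref{prop:Yhomologybasis}. Then I would check that the three resulting matrices generate the same subgroup of $\PSL_2(\ZZ)$ as the two matrices in the statement. Throughout, coordinates of a class $x$ are read off by intersection numbers, normalised by $i_a(\sigma,\eta)=1$:
\[
x=i_a(x,\eta)\,\sigma-i_a(x,\sigma)\,\eta .
\]
Since everything is only defined up to $\pm\Id$, signs can be absorbed at the end.

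For the parabolic generators I would use the Dehn-twist description of affine multitwists. By Proposition~\ref{prop:YVeechGroupFacts}, the horizontal direction of $Y$ is a single cylinder, with core $\sigma$, circumference $4$ and height $1$. So $A=\begin{bmatrix}1&4\\0&1\end{bmatrix}$ is exactly one right Dehn twist about $\sigma$, acting by
\[
x\mapsto x+i_a(x,\sigma)\,\sigma .
\]
Since $\sigma$ is fixed and $\eta\mapsto\eta\pm\sigma$, $\rho(A)=\pm\begin{bmatrix}1&1\\0&1\end{bmatrix}$. The generator $B$ also has trace $2$, and its fixed direction is $(2,1)$, since $B-I$ has kernel spanned by $(2,1)$. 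So $B$ is the affine multitwist in the $(2,1)$ direction. I would determine the cylinder decomposition of $Y$ in direction $(2,1)$: the core curves $c_j$, their moduli, and the twist powers $k_j$, which equal the parabolic's translation length divided by the modulus. Then $B$ acts by
\[
x\mapsto x+\sum_j k_j\, i_a(x,c_j)\, c_j .
\]
The homology classes of the $c_j$ are obtained by tracing the cores through the squares and taking intersection numbers with $\sigma$ and $\eta$ from Figure~\ref{fig:Yhomologyetasigma}.

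For $\Theta$ I would work directly on the square-tiled picture. Rotating the four squares by $\pi/2$ and regluing gives a half-translation surface that must be cut-and-paste equivalent to $Y$. Following the images of the horizontal and diagonal core curves then gives $\Theta_*\sigma$ as the class of the vertical core curve of $Y$. By the same Proposition~\ref{prop:YVeechGroupFacts}, the vertical direction is a one-cylinder direction, so this class is primitive. It also gives $\Theta_*\eta$ as the class of an area-one core in direction $(-1,1)$. Reading off coordinates gives $\rho(\Theta)$. As a consistency check, $\rho(\Theta)$ should have square $\pm\Id$ and determinant $1$.

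Finally I would verify the equality of groups. Call the group generated by $\rho(\Theta),\rho(A),\rho(B)$ the image group, and the group generated by $\begin{bmatrix}1&1\\0&1\end{bmatrix}$ and $\begin{bmatrix}3&-1\\4&-1\end{bmatrix}$ the target group. Both groups contain $\begin{bmatrix}1&1\\0&1\end{bmatrix}$. I would then exhibit $\begin{bmatrix}3&-1\\4&-1\end{bmatrix}$ as an explicit word in the computed images; I expect it to be essentially $\rho(B)$ or a conjugate of it by a power of $\rho(A)$, since it is parabolic. Conversely, I would write $\rho(\Theta)$ and $\rho(B)$ as words in the two target generators. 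The main obstacle is the explicit identification of the $(2,1)$ cylinder decomposition and of the regluing under $\Theta$. These are finite but error-prone combinatorial computations, and they could be cross-checked against the \verb|surface_dynamics| computation used for Proposition~\ref{prop:veechgrpY}.
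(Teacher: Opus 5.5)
Your plan matches the paper's proof. The paper computes $\rho$ on $\Theta$, $A$, $B$ exactly as you propose: $A$ is a single twist about $\sigma$, $\Theta$ is handled by rotating a square and reducing the image curves with the edge relations, and $B$ is the simultaneous twist in the two modulus-$10$ cylinders of direction $(2,1)$. Only the cylinder with core $\gamma=\sigma+2\eta$ contributes; the other, capped by the two angle-$\pi$ points, has null-homologous core. Carrying this out gives $\rho([\Theta])=\Id$ (the vertical core is homologous to $\sigma$, and the area-one $(-1,1)$ core to $\eta$), $\rho([A])=\left[\begin{smallmatrix}1&1\\0&1\end{smallmatrix}\right]$ and $\rho([B])=\left[\begin{smallmatrix}3&-1\\4&-1\end{smallmatrix}\right]$ on the nose, so the final comparison of groups is immediate.
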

\begin{proof}
Since $V(Y) \leq \PSL_2(\ZZ)$ is generated by images in $\PSL_2(\ZZ)$ of matrices $\Theta = \begin{bmatrix}0 & -1 \\ 1 & 0 \end{bmatrix}$, $A =  \begin{bmatrix}1 & 4 \\ 0 & 1 \end{bmatrix}$, and $B=  \begin{bmatrix}5 & -8 \\ 2 & -3 \end{bmatrix}$, it suffices to show $\rho([\Theta])$, $\rho([A])$ and $\rho([B])$ generate the given subgroup. To begin, note that $\sigma = \sigma_1 + \sigma_2$ and $\eta = \sigma_2 + \tau_3 = \tau_4+\sigma_3$.

First, we show that $\rho([\Theta]) = \text{Id}$ by showing that an affine diffeomorphism with derivative of $\pm\Theta$ preserves the homology classes of $\sigma$ and $\eta$ or sends $\sigma$ to $-\sigma$ and $\eta$ to $-\eta$.

Consider the affine diffeomorphism $f_{\Theta}$ given by rotating the left most square of $Y$ (in its representation in Figure~\ref{fig:Yhomologyetasigma} for example) by $\frac{\pi}{2}$ and isometrically continuing the map to the rest of $Y$. Then the derivative of $f_{\Theta}$ is $[\Theta]$. Let $\tau$ be the homology class of $\gamma_{(0,1)}$, the core curve of the vertical cylinder. Note that $ {f_{\Theta}}_*(\sigma) = \tau = \tau_1 - \tau_3 = \sigma_1+\sigma_2=\sigma$ where the two final equalities are seen using the relations (\ref{eqn:homologyrelations}). 
Similarly, using the relations (\ref{eqn:homologyrelations}), we have ${f_{\Theta}}_*(\eta) =-\sigma_1 + \tau_1 =  \tau_4+ \sigma_3 = \eta.$  Thus $\rho([\Theta]) = \text{Id} \in \PSL_2(\ZZ)$.

Next, note that $f_A$ defined as the affine map that accomplished a right Dehn twist about the curve $\sigma$ has constant derivative $[A]$. Thus ${f_A}_*(\sigma) = \sigma$. Since the algebraic intersection $i_a(\sigma, \eta) = 1$ we have ${f_A}_* (\eta) = \sigma + \eta$. Therefore $\rho([A]) \in \PSL_2(\ZZ)$ is the projective image of $\begin{bmatrix}1 & 1 \\ 0 & 1\end{bmatrix}$ in  $\PSL_2(\ZZ)$.

To compute $\rho([B])$, we consider the affine map $f_B$ given by simultaneously applying a left Dehn twist around each of the cylinders in the $(2,1)$ direction. One can show that $[B]$ is the derivative of $f_B$ by noting that $B$ is the composition of rotating clockwise until the $(2,1)$ direction is horizontal, the horizontal shear $\begin{bmatrix}1 & -10 \\ 0 & 1\end{bmatrix}$, followed by rotation counterclockwise, returning the the horizontal to the $(2,1)$ direction. Noting that the modulus of the cylinders is $10$, we see this corresponds to applying a Dehn twist along them.

The understanding of $f_B$ as left Dehn twisting around each of the two cylinders in the $(2,1)$ direction allows for quick computation of ${f_B}_*( \sigma)$. Let $\gamma$ be the core curve of the cylinder in the (2,1) direction that intersects $\sigma_1$ and $\sigma_2$. Note that 
$$\gamma = \sigma_1 + \sigma_2 + \tau_3 + \sigma_2 + \sigma_3 + \tau_4 = \sigma + 2 \eta $$
where the last equality uses the relations \ref{eqn:homologyrelations}. 

Since the algebraic intersection $i_a(\gamma, \sigma_1) = i_a(\gamma, \sigma_2) = -1$, the left Dehn twist $f_B$ gives the action of $B$ on $\sigma_1$ and $\sigma_2$ as, 
$${f_B}_*(\sigma_1) = \sigma_1 + \sigma + 2 \eta \text{ and } {f_B}_* (\sigma_2) = \sigma_2 + \sigma + 2 \eta$$

Hence, we get the image of $\sigma$ under $B$ as,
$$ {f_B}_*(\sigma) = {f_B}_*( \sigma_1) + {f_B}_*(\sigma_2) = \sigma_1 + \sigma_2 + 2(\sigma + 2 \eta) = 3 \sigma+ 4 \eta$$

Finally, since $i_a(\gamma, \eta) = 1$, the left Dehn twist $f_B$ acts on $\eta$ to give the image of $\eta$ under $B$ as,
$${f_B}_*(\eta) = \eta - \gamma = \eta - 2 \eta - \sigma = -\sigma - \eta$$
Therefore, $\rho([B]) \in \PSL_2(\ZZ)$ is the projective image of $ \begin{bmatrix}3 & -1\\ 4 & -1 \end{bmatrix}$ in $\PSL_2(\ZZ)$. \end{proof}

\begin{remark}\label{rem:SLvsPSL}
    Going forward, given any element $M \in V(Y) \leq \PSL_2(\ZZ)$, for convenience of notation, we will work a pre-image of it in $\SL_2(\ZZ)$. Since there are only two choices of pre-images for a given $M \in V(Y)$ and they differ from each other by multiplication by $-\text{Id}$ (which commutes with matrix multiplication and only changes the action on homology by $-\text{Id}$), we will choose a pre-image freely without loss of generality in our arguments. We will denote the pre-image in $SL_2(\ZZ)$ of an element $M \in \PSL_2(\ZZ)$ by $\hat{M}$.

\end{remark}

Since we now know how $V(Y)$ acts on homology, as a corollary, we obtain the image of $\Gamma$ under the representation $\rho$:

\begin{corollary}The image \label{cor:Gammarep}$\rho(\P\Gamma) = \P\left\{\begin{bmatrix} 1 & n \\ 0 & 1\end{bmatrix}\mid n \in \ZZ\right\} < \PSL_2(\ZZ)$.
\end{corollary}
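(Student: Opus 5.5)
Since $\rho$ is a homomorphism on $V(Y)$ and $\P\Gamma$ is generated by $[A]$ together with the conjugates $[h\Theta h^{-1}]$ for $h\in\ideal{\Theta,A,B}$, the image $\rho(\P\Gamma)$ is generated by $\rho([A])$ and the elements $\rho([h])\rho([\Theta])\rho([h])^{-1}$. The plan is to evaluate these generators using Proposition~\ref{prop:veechgrprep} and its proof, which give $\rho([\Theta])=\mathrm{Id}$ and $\rho([A])=\P\begin{bmatrix}1&1\\0&1\end{bmatrix}$.

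The first step handles the conjugates of $\Theta$. Because $\rho([\Theta])$ is the identity in $\PSL_2(\ZZ)$, we get
\begin{equation*}
\rho([h\Theta h^{-1}]) = \rho([h])\,\mathrm{Id}\,\rho([h])^{-1} = \mathrm{Id}
\end{equation*}
for every $h$. Well-definedness up to $\pm\mathrm{Id}$ is harmless by Remark~\ref{rem:SLvsPSL}. So every conjugate generator maps trivially under $\rho$.

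Next, $\rho(\P\Gamma)$ is therefore the cyclic subgroup generated by $\rho([A])$, which is $\P\begin{bmatrix}1&1\\0&1\end{bmatrix}$. Its powers are exactly $\P\begin{bmatrix}1&n\\0&1\end{bmatrix}$ for $n\in\ZZ$. This gives both inclusions at once: each generator's image lies in this unipotent group, and the group is generated by $\rho([A])$, which lies in the image.

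There is no real obstacle here. The only point to check is that $\rho$ is a genuine homomorphism on $V(Y)$, even though $\Theta$ and the conjugates are only defined up to sign. This follows because $\mathrm{Aut}(Y)\cong\ZZ_2$ acts on homology by $\pm\mathrm{Id}$, so $\rho$ descends to $V(Y)\to\PSL_2(\ZZ)$, as set up just before Proposition~\ref{prop:veechgrprep}.
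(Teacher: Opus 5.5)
Your proposal is correct and follows the paper's proof. The paper simply says the corollary follows from the computations $\rho([\Theta])=\mathrm{Id}$ and $\rho([A])=\P\begin{bmatrix}1&1\\0&1\end{bmatrix}$ in the proof of Proposition~\ref{prop:veechgrprep}. You make explicit the step left implicit there: because $\rho$ is a homomorphism, every conjugate $h\Theta h^{-1}$ maps to the identity, so $\rho(\P\Gamma)$ is the cyclic group generated by $\rho([A])$.
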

\begin{proof}
This follows directly from the computation of $\rho([\Theta])$ and $\rho([A])$ in the proof of Proposition \ref{prop:veechgrprep}.
\end{proof}
Henceforth, let $$\mathcal{P} := \P \left\{\begin{bmatrix} 1 & n \\ 0 & 1\end{bmatrix}\mid n \in \ZZ\right\}.$$
Next, motivated by Theorem \ref{thm:characterization}, we define the following submodule of $H_1(Y, \ZZ)$:
$$ W := \{\gamma \in H_1(Y, \ZZ)| i_a(\gamma, \sigma) = 0\}$$
Note that since $\sigma \in W$ and $\eta \not \in W$, and $H_1(Y, \ZZ)$ is of rank 2, we must have $W = \ideal{\sigma}$. Moreover, the stabilizer of $W$ in $V(Y)$ acts as $\mathcal{P}$ on homology.

\begin{proposition}\label{prop:parabolicstabilizer}
The image $\rho(\Stab_{V(Y)}(W)) = \mathcal{P}$ where $\Stab_{V(Y)}(W)$ is the stabilizer in $V(Y)$ of the submodule $W$. 
\end{proposition}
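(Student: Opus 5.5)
Since $W=\ideal{\sigma}$ is a rank one submodule of $H_1(Y,\ZZ)\cong\ZZ^2$ spanned by the first basis vector of $\mathcal{B}=\{\sigma,\eta\}$, the plan is to reduce the statement to a computation inside $\rho(V(Y))\le\PSL_2(\ZZ)$. An element of $V(Y)$ stabilizes $W$ exactly when its image $\rho(N)$ sends $\sigma$ to $\pm\sigma$, i.e.\ when any lift $\widehat{\rho(N)}\in\SL_2(\ZZ)$ is upper triangular with diagonal entries $\pm1$. Such matrices are precisely $\pm\begin{bmatrix}1&n\\0&1\end{bmatrix}$, so projectively $\rho(\Stab_{V(Y)}(W))$ lies in $\mathcal{P}$, which is the stabilizer of $\infty$ in $\PSL_2(\ZZ)$. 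Hence $\rho(\Stab_{V(Y)}(W))\subseteq\mathcal{P}$. (Working up to sign is harmless by Remark~\ref{rem:SLvsPSL}.)

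For the reverse inclusion, I will use Corollary~\ref{cor:Gammarep}: $\rho(\P\Gamma)=\mathcal{P}$. So I only need to check that every element of $\P\Gamma$ stabilizes $W$. It suffices to check this on generators. $A$ is realized by a Dehn twist about $\sigma$, so ${f_A}_*\sigma=\sigma$. Each conjugate $h\Theta h^{-1}$ has $\rho(h\Theta h^{-1})=\rho(h)\rho(\Theta)\rho(h)^{-1}=\Id$, because $\rho([\Theta])=\Id$ by Proposition~\ref{prop:veechgrprep}. Hence it acts trivially on homology, in particular fixing $\sigma$ up to sign. Therefore $\P\Gamma\le\Stab_{V(Y)}(W)$, and applying $\rho$ gives $\mathcal{P}=\rho(\P\Gamma)\subseteq\rho(\Stab_{V(Y)}(W))$.

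The only subtle point is the sign ambiguity. Stabilizing $W$ as a submodule allows $\sigma\mapsto-\sigma$, and the action of $V(Y)$ is only defined modulo $-\Id$. I will handle both by working in $\PSL_2(\ZZ)$ throughout and noting that $-\Id$ preserves $W$. Also, "stabilizer of $W$" should be read via the homological action $\rho$, and this is well defined on $V(Y)=\Aff(Y)/\mathrm{Aut}(Y)$ because the generator of $\mathrm{Aut}(Y)$ acts as $-\Id$. I do not expect any real obstacle beyond stating these conventions carefully.
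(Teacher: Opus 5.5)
Your proposal is correct and essentially matches the paper's proof: the forward inclusion is the same argument, namely that $\hat M\sigma\in W$ forces the lower-left entry to vanish and hence $\hat M=\pm\begin{bmatrix}1&n\\0&1\end{bmatrix}$. For the reverse inclusion, the paper simply notes that $A^n$ fixes $\sigma$ and that $\rho([A^n])$ is the given element of $\mathcal{P}$. Your detour through Corollary~\ref{cor:Gammarep} and the conjugates of $\Theta$ is valid but not needed, although it also gives $\P\Gamma\le\Stab_{V(Y)}(W)$, which is the easy half of Corollary~\ref{cor:stabilizergamma}.
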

\begin{proof}
Let $M \in \rho(\Stab_{V(Y)}(W))$ and let $\hat{M} = \begin{bmatrix}a & b \\ c & d\end{bmatrix}$ be one of its pre-images in $\SL_2(\ZZ)$. By definition, $M \cdot \sigma \in W$. Hence, $c=0$.
But since $\rho(V(Y)) \leq \PSL_2(\ZZ)$, this means $a = \pm 1$, $b \in \ZZ$ and $d = \pm 1$ such that $\hat{M} =  \pm \begin{bmatrix} 1 & n \\ 0 & 1\end{bmatrix}$ for some $n \in \ZZ$ and $\rho(\Stab_{V(Y)}(W)) < \mathcal{P}$.

On the other hand, we would like to show that  $\mathcal{P} \subseteq \rho(\Stab_{V(Y)}(W))$. Let $N \in \mathcal{P}$. Then, $\hat{N} = \begin{bmatrix} 1 & n \\ 0 & 1\end{bmatrix}$ for some $n \in \ZZ$ (without loss of generality, by Remark \ref{rem:SLvsPSL}). Recall from the proof of Proposition~\ref{prop:veechgrprep} that $A \cdot \sigma = \sigma$ so that $\rho([A^n]) = [\hat{N}] = N$. Hence, $N \in \rho(\Stab_{V(Y)}(W))$.

\end{proof}

Next, we show that $\rho(\P\Gamma)$ is the free group on two generators, which will be helpful in precisely establishing the stabilizer of $W$ in $V(Y)$.

\begin{lemma}\label{lem:free}The subgroup $\ideal{\rho([A]), \rho([B])} \leq  \PSL_2(\ZZ)$ is isomorphic to $F_2$. Consequently, $\rho(V(Y)) \simeq F_2$. 
\end{lemma}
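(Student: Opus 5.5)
Let $a=\begin{bmatrix}1&1\\0&1\end{bmatrix}$ and $b=\begin{bmatrix}3&-1\\4&-1\end{bmatrix}$. The plan is a ping-pong argument for the action of $\ideal{a,b}$ on $\partial\HH^2=\RR\cup\{\infty\}$ by linear fractional transformations. The second assertion then follows from the first: by Proposition~\ref{prop:veechgrprep}, $\rho(V(Y))=\P\ideal{a,b}$.

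First identify the dynamical type of each generator. The matrix $a$ is parabolic with fixed point $\infty$. The matrix $b$ has trace $2$, so it is also parabolic. Its fixed point solves $z=(3z-1)/(4z-1)$, i.e.\ $4z^2-4z+1=0$, so $z=1/2$. Conjugating $b$ to a translation makes this explicit: with $c=\begin{bmatrix}1&0\\-2&1\end{bmatrix}$ one computes $cbc^{-1}=\begin{bmatrix}1&-1\\0&1\end{bmatrix}$ (to be double-checked), which is conjugate in $\PSL_2(\ZZ)$ to $a^{-1}$. Thus we have two parabolics with distinct fixed points $\infty$ and $1/2$.

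Next choose disjoint ping-pong regions in $\RR\cup\{\infty\}$. For $a^{\pm1}$ (translation by $\pm1$), take $U=\{x:|x-1/2|>1\}\cup\{\infty\}$. Then every $a^n$ with $n\neq0$ sends the complement $[-1/2,3/2]$ into $U$, because a shift by at least $1$ moves the interval of length $2$ off itself up to endpoints. The closedness issue is handled in the standard way for $\Gamma(2)$-type arguments, by using half-open intervals or checking that endpoints do not cause trouble. For $b$, pull back by the conjugation: $b^n$ sends $\{x: |x-1/2|>r\}$ into $\{x:|x-1/2|<r'\}$ with respect to the horocycle picture at $1/2$. Concretely, I would compute $b^n(z)$ through $w=1/(z-1/2)$, in which $b$ should act as a translation $w\mapsto w+k$. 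Then compare widths to find a neighborhood $D=\{|x-1/2|<\delta\}$ with $D\cap U=\emptyset$ and $b^n(\RR\cup\{\infty\}\setminus D)\subset D$ for all $n\neq0$. This requires the translation length of $b$ in the $w$-coordinate to be large enough relative to $1/\delta$; explicitly, $|k|\ge 2/\delta$ together with $\delta\le 1$. Since $\rho(V(Y))$ is a subgroup of $\PSL_2(\ZZ)$, this is a finite check.

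Finally apply the ping-pong lemma: any nontrivial reduced word in $a^{\pm1},b^{\pm1}$ moves a point of $U$ or $D$ into the other set, so it is not the identity. Hence $\P\ideal{a,b}\cong F_2$.

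The main obstacle is that both generators are parabolic with fairly close fixed points, so the regions must be fitted precisely. If the translation length of $b$ in the $w$-coordinate is too small for disjointness, I would instead look for a conjugate of the whole group, or pass to the standard fact that two parabolics $\begin{bmatrix}1&s\\0&1\end{bmatrix}$ and $\begin{bmatrix}1&0\\t&1\end{bmatrix}$ generate a free group when $|st|\ge4$, after conjugating so that $1/2$ goes to $0$.
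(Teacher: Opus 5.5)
Your ping-pong set-up fails as written, and the failure comes from $a$, not from $b$. The claim that every $a^n$ ($n\neq 0$) carries $[-1/2,3/2]$ into $U=\{|x-1/2|>1\}\cup\{\infty\}$ is false. For example, $a([-1/2,3/2])=[1/2,5/2]$ overlaps $[-1/2,3/2]$ in $[1/2,3/2]$; a shift by $1$ cannot move an interval of length $2$ off itself. Worse, no $D=\{|x-1/2|<\delta\}$ can be paired with this $U$. Ping-pong needs $a(D)\subset U$, but $a(D)$ is a neighborhood of $3/2$ and so contains points just below $3/2$, which are not in $U$.

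The condition coming from $a$ forces $D$ to have length at most $1$, so $\delta\le 1/2$. It also forces $U$ to be essentially the complement of $D$, not the complement of a length-$2$ interval.

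The quantity you left as $k$ can be computed. With $w=1/(x-1/2)$ one checks $w(bx)=w(x)+4$. Equivalently, conjugating by $x\mapsto x-1/2$ keeps $a$ a translation by $1$ and turns $b$ into $\begin{bmatrix}1&0\\4&1\end{bmatrix}$. So your condition $|k|\ge 2/\delta$ forces $\delta\ge 1/2$, and hence $\delta=1/2$ exactly. You are in the borderline case $|st|=4$, where endpoints genuinely matter and cannot be waved away.

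A working choice is $D=[0,1)$ and $U=(\RR\cup\{\infty\})\setminus[0,1)$:
\begin{itemize}
\item $a^n(D)=[n,n+1)\subset U$ for $n\neq 0$.
\item In the $w$-coordinate, $U$ is $(-2,2]$ and $D$ is $\{w\le -2\}\cup\{w>2\}\cup\{\infty\}$.
\item The map $w\mapsto w+4n$ with $n\neq0$ sends $(-2,2]$ into $D$.
\end{itemize}
Alternatively, restrict to irrational points, which $\PSL_2(\ZZ)$ preserves, and use the open sets $|x-1/2|>1/2$ and $|x-1/2|<1/2$.

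Your fallback is correct and is essentially the paper's proof: with $s=1$ and $t=4$ you get $|st|=4$. The paper conjugates by $\begin{bmatrix}1&1\\0&2\end{bmatrix}$, i.e.\ by $x\mapsto 2x-1$. This sends $1/2\mapsto 0$ and rescales so that $a,b$ become the Sanov pair $\begin{bmatrix}1&2\\0&1\end{bmatrix}$, $\begin{bmatrix}1&0\\2&1\end{bmatrix}$. That pair is classically free, so no region-fitting is needed. Your deduction of $\rho(V(Y))\cong F_2$ from Proposition~\ref{prop:veechgrprep} (using $\rho([\Theta])=\mathrm{Id}$) is fine.
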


\begin{proof}
Let $\rho_A = \begin{bmatrix}1 & 1 \\ 0 & 1 \end{bmatrix}$ and $\rho_B = \begin{bmatrix}3 & -1 \\ 4 & -1 \end{bmatrix}$. With this notation, recall from Proposition \ref{prop:veechgrprep} that $\rho([A]) = [\rho_A]$ and $\rho([B]) = [\rho_B]$. Note then, the following two equations
  \begin{equation*}
    \begin{bmatrix}
      1 & 1 \\ 0 & 2
    \end{bmatrix}^{-1}\cdot \rho_A \cdot
    \begin{bmatrix}
      1 & 1 \\ 0 & 2
    \end{bmatrix}
    =
    \begin{bmatrix}
      1 & 1 \\ 0 & 2
    \end{bmatrix}^{-1}
    \cdot
    \begin{bmatrix}
      1 & 1 \\ 0 & 1
    \end{bmatrix}
    \cdot
    \begin{bmatrix}
      1 & 1 \\ 0 & 2
    \end{bmatrix}
    = 
    \begin{bmatrix}
      1 & 2 \\ 0 & 1
    \end{bmatrix}
  \end{equation*}
  
  \begin{equation*}
    \begin{bmatrix}
      1 & 1 \\ 0 & 2
    \end{bmatrix}^{-1}\cdot \rho_B \cdot
    \begin{bmatrix}
      1 & 1 \\ 0 & 2
    \end{bmatrix}
    =
    \begin{bmatrix}
      1 & 1 \\ 0 & 2
    \end{bmatrix}^{-1}
    \cdot
    \begin{bmatrix}
      3 & -1 \\ 4 & -1
    \end{bmatrix}
    \cdot
    \begin{bmatrix}
      1 & 1 \\ 0 & 2
    \end{bmatrix}
    = 
    \begin{bmatrix}
      1 & 0 \\ 2 & 1
    \end{bmatrix}
  \end{equation*}
So, $\langle\rho([A]), \rho([B])\rangle$, as a subgroup of
  $\PGL(2,\RR)$, is conjugate to the group $P \ideal{\begin{bmatrix} 1 & 2 \\ 0 & 1
  \end{bmatrix}, \begin{bmatrix}
    1 & 0 \\ 2 & 1 \end{bmatrix}}$. This group is isomorphic to $F_2$. Consequently, since $\rho([\Theta]) = \text{Id}$, we see that $\rho(V(Y)) = \ideal{\rho([A]), \rho([B])} \simeq F_2$.
\end{proof}

Putting the last two results together, we find $\rho(\Stab_{V(Y)}(W)) = \rho(\P\Gamma)$. We shall see below that in fact the stabilizer of $W$ in $V(Y)$ is precisely $\P\Gamma$.

\begin{corollary}\label{cor:stabilizergamma}The stabilizer of $W$ in $V(Y)$ is $\Stab_{V(Y)}(W) = \P\Gamma$. 

\end{corollary}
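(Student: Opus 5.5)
The plan is to prove the two inclusions separately, passing between $V(Y)$ and the homology representation $\rho$ and exploiting the fact that $\rho(V(Y))$ is free.

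\emph{Inclusion $\P\Gamma\subseteq\Stab_{V(Y)}(W)$.} Since $\Gamma$ is generated by $A$ and the conjugates $h\Theta h^{-1}$ with $h\in\ideal{\Theta,A,B}$, it suffices to check each generator. By the proof of Proposition~\ref{prop:veechgrprep}, $\rho([A])$ fixes $\sigma$. Also $\rho([\Theta])=\Id$, so $\rho([h\Theta h^{-1}])=\rho([h])\rho([h])^{-1}=\Id$, and these elements fix $\sigma$ up to sign. Hence every generator preserves $W=\ideal{\sigma}$, and so does the group they generate.

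\emph{Inclusion $\Stab_{V(Y)}(W)\subseteq\P\Gamma$.} Take $M\in\Stab_{V(Y)}(W)$. By Proposition~\ref{prop:parabolicstabilizer}, $\rho(M)=\rho([A])^n$ for some $n\in\ZZ$, so $M[A]^{-n}\in\ker\rho$. It therefore suffices to show $\ker\rho\subseteq\P\Gamma$; then $M=(M[A]^{-n})[A]^n\in\P\Gamma$.

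\emph{Identifying $\ker\rho$.} Let $N$ be the normal closure of $[\Theta]$ in $V(Y)=\P\ideal{\Theta,A,B}$. It is generated by the elements $h\Theta h^{-1}$, so $N\subseteq\P\Gamma$, and $N\subseteq\ker\rho$ since $\rho([\Theta])=\Id$. The quotient $V(Y)/N$ is generated by the images $\bar A,\bar B$ of $[A],[B]$. The map $\rho$ descends to a surjection $V(Y)/N\to\rho(V(Y))$, which by Lemma~\ref{lem:free} is free on $\rho([A]),\rho([B])$. A free group of rank two is Hopfian, so no word in $\bar A,\bar B$ is sent to the identity unless it is already trivial. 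Concretely, the homomorphism $F_2\to V(Y)/N$ sending the generators to $\bar A,\bar B$ is surjective, and its composite with the descended $\rho$ is an isomorphism $F_2\to\rho(V(Y))$. Hence the descended $\rho$ is injective, so $\ker\rho=N\subseteq\P\Gamma$.

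\emph{Main obstacle.} The delicate step is the bookkeeping between $\SL_2$ and $\PSL_2$, together with the sign ambiguity in the action on homology. I will handle this via Remark~\ref{rem:SLvsPSL}, working with $\rho$ into $\PSL_2(\ZZ)$ throughout so that ``fixing $W$'' and ``$\rho=\Id$'' are both taken up to $\pm$. I also need the relations of $\hat V$ not to obstruct the quotient argument. They do not: we only used that $V(Y)/N$ is generated by $\bar A,\bar B$, and the relations $\Theta^4=\Theta^2A\Theta^{-2}A^{-1}=\Theta^2B\Theta^2B^{-1}=1$ all lie in $N$ up to trivial elements.
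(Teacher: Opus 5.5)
Your proof is correct and is essentially the paper's argument. The paper rewrites a word for $\hat M$ in $A,B,\Theta$ as $w'(A,B)$ followed by a product of conjugates of $\Theta$, then uses that $\rho([A]),\rho([B])$ freely generate $\rho(V(Y))$ to force $[w'(A,B)]=[A^n]$; this is exactly your identification $\ker\rho=\langle\langle[\Theta]\rangle\rangle\subseteq\P\Gamma$, which you phrase through the quotient $V(Y)/N$ instead of explicit word manipulation (the appeal to Hopficity is unnecessary, since your surjection-then-isomorphism argument already gives injectivity).
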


\begin{proof}

We will show that the pre-image set $\rho^{-1}(\mathcal{P}) = \P\Gamma$ and then apply Proposition \ref{prop:parabolicstabilizer} to obtain the statement $\rho^{-1} \circ \rho (\Stab_{V(Y)}(W)) = \P\Gamma$. Since the defining property of elements of $\Stab_{V(Y)}(W)$ depends on their image by $\rho$, $\Stab_{V(Y)}(W) = \rho^{-1} \circ \rho (\Stab_{V(Y)}(W))$. This gives us the desired equality $\Stab_{V(Y)}(W) = \P\Gamma$.

First note that $\P\Gamma \leq \rho^{-1}(\mathcal{P})$ by Corollary \ref{cor:Gammarep}. For the reverse inclusion, we will show $\rho^{-1}(\calP) \leq \P\Gamma$.

Let $M \in \rho^{-1}(\mathcal{P})$ and consider $\hat{M}$, a pre-image in $\SL_2(\ZZ)$. Let $\rho_A = \begin{bmatrix} 1 & 1 \\ 0 & 1  \end{bmatrix}$. Note that $\rho(M) = [(\rho_A)^n]$.

Since $M \in V(Y)$, $\hat{M} \in \hat{V}$ and it can be written as a finite product of matrices from the set $\{A, B, \Theta\}$. Let $w (A, B, \Theta)$ be the reduced word given by the sequence of matrices whose product is $\hat{M}$. To show $\hat{M} \in \Gamma$ (equivalent to showing $M \in \P\Gamma$), we will rearrange $w(A,B, \Theta)$ to an equivalent word given by the product of an element of $\langle A \rangle$ and a conjugate of $\Theta$. We use the following finite process:
\begin{enumerate}
    \item Moving right to left in $w(A, B, \Theta)$, find the maximal disjoint subwords containing each power of $\Theta$ so that each is a conjugate of $\Theta^n$ for some $n\in \mathbb{N}$. Note that a conjugate of a power of $\Theta$ is equal to the product of conjugates of $\Theta$ ($X\Theta^n X^{-1} =(X \Theta X^{-1})^n$). Thus, each of these conjugates is an element of $\Gamma$. If the union of the subwords forms all of $w(A, B, \Theta)$, we are finished. If not, we continue to the next step.
    \item Let $C$ be a conjugate of theta. Note that for any matrix $X$,  $CX = XX^{-1}CX = XD$ where $D=X^{-1}CX$ is another conjugate of $\Theta$. Thus we can ``move'' $X$ from the right to the left of $C$, at the cost of replacing $C$ with a different conjugate of $\Theta$. In this way, we "move" any matrices not part of a conjugate obtained in step (1) to the left. At this point, $w(A, B, \Theta)$ has been transformed so that it consists of a word $w'$ in the letters $A, B$, followed by a word $w''$ in conjugates of $\Theta$ ($w(A,B, \Theta)= w'(A,B)w''(X\Theta X^{-1})$).
\end{enumerate}
To finish the proof, we argue that $w'$ is actually an element of $\langle A \rangle$. From the proof of Proposition~\ref{prop:veechgrprep}, we know that for $C$, a conjugate of $\Theta$, $\rho([C]) = \mathrm{Id}$. Thus $\rho([w''])= \mathrm{Id}$. This gives us
    \[[(\rho_A)^n]=\rho(M)=\rho([w]) = \rho([w'])\rho([w''])=\rho([w']).\]
    The proof of Proposition~\ref{prop:veechgrprep} shows us that $\rho([A^n]) = \rho ([w'])$. Since $[w']=[w'(A,B)]$ and $\rho$ is a homomorphism, we have $\rho([A])^n = [w'(\rho(A),\rho(B))]$. Since $\langle \rho([A]), \rho([B]) \rangle$ is free (Lemma~\ref{lem:free}), we must have $[w'(A,B)]=[A^n]$. Therefore $M \in \P\Gamma$, completing the proof.


\end{proof}

\subsection{Proof of Theorem \ref{thm:veechgrpchar}}

We are now ready to prove Theorem \ref{thm:veechgrpchar}.

For ease of reading, we start by recalling and setting some notation. For any one-cylinder direction $(p,q)$ in $Y$, let $\gamma_{(p,q)}$ denote the homology class in $H_1(Y, \ZZ)$ of the core curve of the cylinder in direction $(p,q)$. Recall from Proposition \ref{prop:Yhomologybasis}, we denote $\gamma_{(1,0)} = \sigma$. For any $N \in \hat{V} = \ideal{\Theta, A, B}$ and any one-cylinder direction $(p,q)$ in $Y$, note that $N \cdot (p,q)$ is a one-cylinder direction as well, as affine diffeomorphisms preserve one-cylinder directions. So, let $N \cdot \gamma_{(p,q)}:= \gamma_{N \cdot(p,q)}$. Recall that $i_a(\gamma_1, \gamma_2)$ denotes the algebraic intersection between homology classes of curves $\gamma_1$ and $\gamma_2$. Finally, recall that 
$W = \{\gamma \in H_1(Y, \ZZ)| i_a(\gamma, \sigma) = 0\} = \ideal{\sigma} \subseteq H_1(Y, \ZZ).$

\begin{proof}[Proof of Theorem \ref{thm:veechgrpchar}]
$(\Leftarrow)$ Let $N \in \Gamma$. We want to show that $N \cdot (1,0)$ is a periodic direction in $M$. Using Theorem \ref{thm:characterization}, it suffices to show that 
\begin{itemize}
\item $N\cdot (1,0)$ is a one-cylinder direction on $Y$ and
\item $i_a(\gamma_{N\cdot (1,0)}, \gamma_{(1,0)}) = 0$.
\end{itemize}
First, $N \cdot (1,0)$ is a one-cylinder direction on $Y$ since $[N] \in \P\Gamma < V(Y)$ and $(1,0)$ is a one-cylinder direction on $Y$. 
Secondly, by Corollary \ref{cor:stabilizergamma}, note that $[N] \in \Stab_{V(Y)}(W)$. This implies that $\gamma_{N \cdot (1,0)} = N \cdot \gamma_{(1,0)} \in W$ as $\gamma_{(1,0)} = \sigma \in W$. Hence, by definition of $W$, $i_a(\gamma_{N \cdot (1,0)}, \gamma_{(1,0)}) = 0$.

$(\Rightarrow)$ Let $(p,q)$ be a periodic direction in $M$. By Theorem \ref{thm:characterization}, we know that $(p,q)$ is a one-cylinder direction for $Y$ and that $i_a(\gamma_{(p,q)}, \gamma_{(1,0)}) = 0$. Since $(p,q)$ is a one-cylinder direction in $Y$, by Proposition \ref{prop:YVeechGroupFacts}, we can write $(p,q) = N \cdot (1,0)$ for some $N \in \ideal{\Theta, A, B}$.
By Theorem \ref{thm:characterization}, the surface $Y$ decomposes into a single cylinder in the $N \cdot (1,0)$ direction and that the core curve of this cylinder has intersection 0 with the horizontal core curve, i.e. $\gamma_{N \cdot (1,0)} \in W$. But $\gamma_{N \cdot (1,0)} = N \cdot \gamma_{(1,0)} = \sigma$ implying that $[N] \in \Stab_{V(Y)}(W)$. But by Corollary~\ref{cor:stabilizergamma}, $[N] \in \P\Gamma$ which implies $N \in \Gamma$.
\end{proof}

  \begin{remark}\label{rem:thm1conseq}
    Theorem~\ref{thm:veechgrpchar} provides and alternate proof of
    Proposition \ref{prop:foureyslopes}: A rational number of the form
  \begin{equation*}
    \frac{p}{q} = [4a_0; 4a_1, 4a_2, \dots,4a_n]
  \end{equation*}
  where $a_0\in\ZZ$ and $a_i \in\ZZ\setminus\set{0}$ for all
  $1\leq i \leq n$ , has its numerator and denominator in the first column of the matrix
  \begin{align*} &\Theta A^{-a_0} \Theta A^{a_1} \Theta A^{-a_2}
    \Theta \dots \Theta A^{(-1)^{n+1} a_n} \Theta\\
    =& \begin{bmatrix}0 & -1\\ 1 & 0 \end{bmatrix}\begin{bmatrix}1 &
      -4a_0 \\ 0 & 1 \end{bmatrix} \begin{bmatrix}0 & -1\\ 1 &
      0 \end{bmatrix}\begin{bmatrix}1 & 4a_1 \\ 0 &
      1 \end{bmatrix} \begin{bmatrix}0 & -1\\ 1 & 0 \end{bmatrix}
    \dots \begin{bmatrix}0 & -1\\ 1 & 0 \end{bmatrix} \begin{bmatrix}1
      & (-1)^{n+1}4a_n \\ 0 & 1 \end{bmatrix}\begin{bmatrix}0 & -1\\ 1
      & 0 \end{bmatrix}.
  \end{align*}
  By Theorem \ref{thm:veechgrpchar}, trajectories with slope $\frac{p}{q}$ are periodic. 
\end{remark}

\begin{remark}
  Rational numbers of the form
  \begin{equation*}
    \frac{p}{q} = [4a_0; 4a_1, 4a_2, \dots,4a_n]
  \end{equation*}
  are bounded away from $1$, and experimental searches for
  periodic directions reveal what appear to be gaps in the set of
  periodic directions. (See~\Cref{fig:experimentalperiodicsoncircle}.)
  In Section~\ref{sec:density} we show that in fact there are no
  gaps---the set of periodic directions is dense. Already, a
  calculation shows that for $n\geq 0$,
  \begin{equation*}
    (B^{-1}A\Theta)^n B^{-1}A \Theta A^{-1}B (B^{-1}A \Theta)^{-n} = \begin{bmatrix}18n^2 + 36 n + 18 & -18n^2-42n-25 \\ 18n^2 +30n + 13 & -18n^2 - 36n - 18 \end{bmatrix}.
  \end{equation*}
  By Theorem \ref{thm:veechgrpchar},
  $(18n^2 + 36 n + 18, 18n^2 +30n + 13)$ is a periodic
  direction. Hence we obtain
$$\frac{p_n}{q_n} := \frac{18n^2 + 30n + 13}{18n^2 + 36n + 18}$$ 
as a family of periodic slopes that approaches $1$ as
$n \rightarrow \infty$, showing that $1$ is not isolated, as experimental evidence and Fourey fractions would suggest.
\end{remark}

\begin{figure}[h!]
\centering
    \includegraphics[scale=0.25]{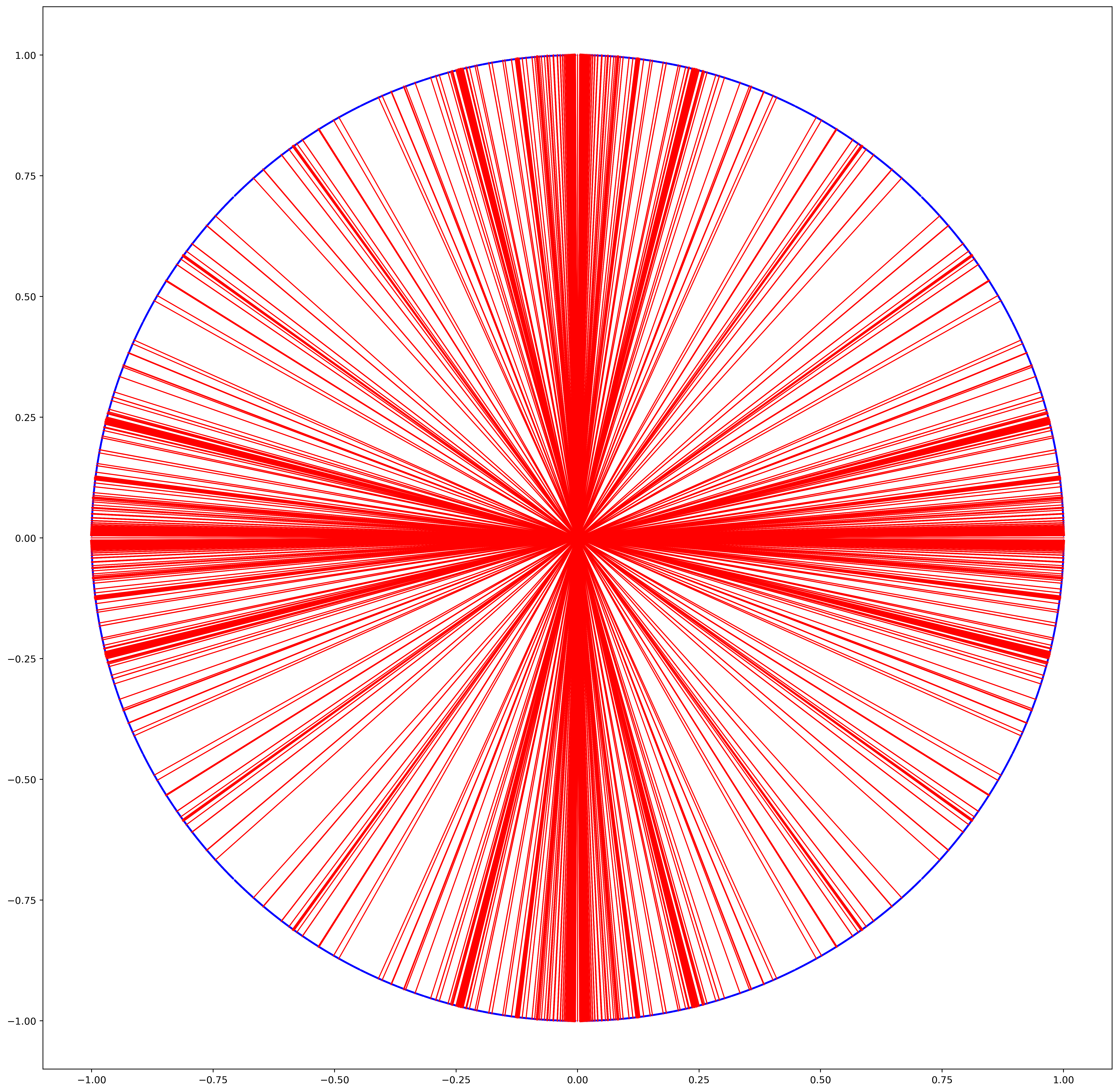}
    \caption{Rational slopes $\frac{p}{q}$ (with $|p|,|q| \leq 230$) on the unit circle. Each ray from the center of the disk represents a periodic slope while the points not associated to rays represent drift-periodic rational slopes.}
    \label{fig:experimentalperiodicsoncircle}
\end{figure}

\section{The Veech group of $M$}\label{sec:veech}

As another consequence of Theorem \ref{thm:veechgrpchar}, we are able to determine $V(M)$, the Veech group of the Mucube.

Recall that $M$ is a $\ZZ^3$ cover of $X$ defined by the map  $\iota_*: \pi_1(X) \rightarrow \pi_1(\TT^3) \simeq \ZZ^3$ induced by the inclusion map $\iota: X \rightarrow \TT^3$ so that $\pi_1(M) = \ker(\iota_*)$. Recall also that $\Aff(M,X)$ denotes the affine diffeomorphism group of the covering, comprised of elements $(\tilde{f}, f) \in \Aff(M) \times \Aff(X)$ such that $\pi \circ \tilde{f} = f \circ \pi$. Recall that $V(M, X) := D(\Aff(M, X))$ denotes the Veech group of the cover (i.e. the group of derivatives, well-defined up to multiplication by $-\text{Id}$ of the affine diffeomorphisms of the covering).

We then have the following corollary stating that the Veech group of the covering $\pi:M \rightarrow X$ is precisely $\P\Gamma$.

\begin{corollary}\label{cor:veechgroup} The Veech group $V(M,X)$ of the covering $\pi:M \rightarrow X$ is $\P\Gamma$. In particular, $\P\Gamma < V(M)$.
\end{corollary}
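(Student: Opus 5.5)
We prove the two inclusions $\P\Gamma \subseteq V(M,X)$ and $V(M,X)\subseteq \P\Gamma$ separately. The tool is Proposition~\ref{prop:HWlift}: an element $f\in\Aff(X)$ lifts to $M$ if and only if $f_*$ preserves $\mathcal{K}=\ker(\iota_*)=\pi_1(M)$.

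For $\P\Gamma\subseteq V(M,X)$ it suffices to treat the generators of $\Gamma$, since $V(M,X)$ is a group.

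The first generator is $A$. The horizontal direction is periodic on $M$, and by Theorem~\ref{thm:cylinders} $M$ decomposes into horizontal cylinders of area $4$. Each of these projects isometrically to a horizontal cylinder of $X$ (Proposition~\ref{prop:isometricprojection}). The horizontal cylinders of $X$ have modulus $1/4$ (length $4$, height $1$), so the affine multitwist $f_A$ with derivative $A$ is a single Dehn twist in each cylinder. This twist lifts cylinder by cylinder to a multitwist on $M$, giving an element of $\Aff(M,X)$ with derivative $A$.

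The other generators are $h\Theta h^{-1}$ with $h\in H$. I would not lift $f_h$ itself, since it need not lift. Instead I would use the rigid motions of $G$. By Lemma~\ref{lem:parallelism}, the $\pi/2$-rotation $\theta_z\in G$ acts on $M$ as an affine map with derivative $\pm\Theta$ and commutes with the $\ZZ^3$ deck group up to conjugation, so it descends to $X$. That settles $\Theta$.

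For a conjugate $h\Theta h^{-1}$, I would use Theorem~\ref{thm:veechgrpchar} together with the cylinder structure. Every $h\in H$ maps the horizontal direction to a one-cylinder direction of $Y$, which is a three-cylinder direction on $X$ cycled by $\theta_{2\pi/3}$. Pick an affine $f_h$ of $X$ with derivative $h$. The conjugate $g=f_h\theta f_h^{-1}$ of an affine lift $\theta$ of $\Theta$ on $X$ has derivative $h\Theta h^{-1}$. To show that $g$ lifts, I would verify the criterion on the generators of $\pi_1(M)$ from Proposition~\ref{prop:genforfundgroupM}: curves freely homotopic to horizontal and vertical geodesics. Their images under $g$ are curves freely homotopic to core curves in directions $h(1,0)$ and $h(0,1)$. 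So it suffices that $g$ sends horizontal and vertical core curves of $X$ to closed curves with zero displacement vector, that is, into $\mathcal{K}$.

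I expect this displacement check to be the main obstacle. My plan is to compute it homologically. $\mathcal{K}$ contains the commutator subgroup and is determined by $\iota_*$ on $H_1(X)$, so membership in $\mathcal{K}$ depends only on homology. The map $g_*$ on $H_1(X,\ZZ)$ is a conjugate of $\theta_*$, and $\theta_*$ preserves $\ker(H_1(X)\to H_1(\TT^3))$ because $\theta$ comes from a rigid motion of $\TT^3$. The subtlety is that $f_{h*}$ need not preserve this kernel. I would check that the failure cancels in the conjugate, using equivariance under $\theta_{2\pi/3}$ and the descent to $Y$ (Theorem~\ref{thm:displacementvector}). If this gets stuck, the fallback is to work directly with $W$ and Corollary~\ref{cor:stabilizergamma}.

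For $V(M,X)\subseteq \P\Gamma$, take $(\tilde f,f)\in\Aff(M,X)$ with derivative $N$. Then $\tilde f$ maps periodic cylinders of $M$ to periodic cylinders. Since horizontal is periodic, $N\cdot(1,0)$ is a periodic direction. The map $f$ also commutes with the $\ZZ^3$ deck action up to conjugation, so it normalizes $\ZZ^3\rtimes$ and descends (after composing with rigid motions) to an affine map of $Y$. Hence $[N]\in V(Y)$.

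It then suffices to show $[N]\in\Stab_{V(Y)}(W)$, because Corollary~\ref{cor:stabilizergamma} gives $\Stab_{V(Y)}(W)=\P\Gamma$. By Theorem~\ref{thm:characterization}, the core curve $\gamma_{N\cdot(1,0)}$ has zero algebraic intersection with $\sigma$, so $N\cdot\sigma\in W$, and $N$ stabilizes $W=\ideal{\sigma}$. This gives $V(M,X)=\P\Gamma$. Since $V(M,X)\le V(M)$ by definition, $\P\Gamma<V(M)$ follows.
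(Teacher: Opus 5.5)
There is a genuine gap, and it is in the inclusion $\P\Gamma\subseteq V(M,X)$. Your reverse inclusion $V(M,X)\subseteq\P\Gamma$ is essentially the paper's argument: periodicity of $N\cdot(1,0)$, then Theorem~\ref{thm:characterization} to get $N\cdot\sigma\in W$, then Corollary~\ref{cor:stabilizergamma}. The one weak point there is the claimed descent of $f$ to $Y$, which would need $f$ to normalize the order-$3$ rotation rather than $\ZZ^3$. You don't need it, since $D(f)\in V(X)=V(Y)$ by Proposition~\ref{prop:veechgrpY}.

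The forward inclusion fails at two places.
\begin{enumerate}
\item The $\pi/2$-rotation $\theta_z\in G$ does not have derivative $\pm\Theta$. Lemma~\ref{lem:parallelism}(3) says every element of $G$ fixes every parallel direction field, so every element of $G$ has derivative $\pm\Id$. In fact the $\pi/2$-rotations are exactly the maps that translate periodic cylinders along themselves (Theorem~\ref{thm:cylinders}(a)). So ``that settles $\Theta$'' is unsupported. Your homological plan needs an affine $\theta$ with derivative $\Theta$ coming from a motion of $\TT^3$, and it has no such $\theta$.
\item For $g=f_h\theta f_h^{-1}$, the images of horizontal and vertical core curves lie in directions $h\Theta h^{-1}\cdot(1,0)$ and $h\Theta h^{-1}\cdot(0,1)$, not $h\cdot(1,0)$ and $h\cdot(0,1)$. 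For $h=B$, $h\cdot(1,0)=(5,2)$ is a drift direction, so the check as you framed it would fail. The step you flag as the main obstacle, that the failure of $f_{h*}$ to preserve the kernel ``cancels'', is never carried out.
\end{enumerate}
As written, no conjugate $h\Theta h^{-1}$ is shown to lift.

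The missing idea is that you need neither generators, explicit symmetries, nor the homology of $X$: Theorem~\ref{thm:veechgrpchar} already does the work. Take any $N\in\Gamma$ and any $f\in\Aff(X)$ with $D(f)=[N]$. By Proposition~\ref{prop:genforfundgroupM} and normality of $\mathcal{K}=\ker\iota_*$, it suffices that $f$ sends each loop freely homotopic to a horizontal or vertical core curve into $\mathcal{K}$. Its image is freely homotopic to a cylinder core curve in direction $N\cdot(1,0)$ or $N\cdot(0,1)=N\Theta\cdot(1,0)$. Since $N$ and $N\Theta$ lie in $\Gamma$, the ($\Leftarrow$) direction of Theorem~\ref{thm:veechgrpchar} makes both directions periodic on $M$. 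In a periodic direction, every cylinder of $X$ is the isometric image of a closed cylinder of $M$ (Theorem~\ref{thm:cylinders}, Proposition~\ref{prop:isometricprojection}). So its core curve lifts to a closed curve and lies in $\mathcal{K}$ up to conjugacy. Hence $f_*\mathcal{K}\subseteq\mathcal{K}$; applying the same to $f^{-1}$ gives equality, and Proposition~\ref{prop:HWlift} lifts $f$. This is the paper's proof. It handles $A$, $\Theta$ and all conjugates uniformly, so your separate multitwist argument for $A$, while correct, becomes unnecessary.
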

\begin{proof}
First we note that $V(M, X) = D(\Aff(M, X)) < V(X) = V(Y)$ since for any $(\tilde{f}, f) \in \Aff(M,X)$, $D(\tilde{f}) = D(f) \in V(X) = V(Y)$. Similarly, we note that $V(M,X) < V(M)$.
Now, let $N \in V(M,X)$. Since $(1,0)$ is a periodic direction in $M$ and $V(M,X) < V(M)$, $\hat{N} \cdot (1,0)$ is periodic direction in $M$. By Theorem \ref{thm:veechgrpchar}, $\hat{N} \in \Gamma$ so that $N \in \P\Gamma$. So, $V(M,X) <\P\Gamma$.

On the other hand, let $N \in \P\Gamma$ and let $f \in \Aff(X)$ such that $D(f) = N$. For $\iota_*: \pi_1(X) \rightarrow \ZZ^3$ defined above, we will show that 
$f_*(\ker(\iota_*)) < \ker(\iota_*)$. Then, using Proposition \ref{prop:HWlift}, we will conclude that $N \in D(\Aff(M, X) = V(M,X)$. 

Towards this, let $\alpha \in \ker(\iota_*) = \pi_1(M) < \pi_1(X)$. 

By Proposition \ref{prop:genforfundgroupM}, we know that each element of $\ker(\iota_*)$ can be written as a product of horizontal and vertical cylinder curves (defined in Section \ref{sec:MucubeFundamentalGroup}) and since $f_*$ is a homomorphism, it suffices to show that $f_*(\alpha) \in \ker(\iota_*)$ for any $\alpha$ that is freely homotopic to a horizontal or vertical cylinder curve.

So, assume $\alpha$ is freely homotopic to a horizontal or vertical cylinder curve in $X$. This means $f(\alpha)$ is freely homotopic to a cylinder curve $c$ in $X$ since $f$ is an affine diffeomorphism. Assume for the sake of contradiction that $f_*(\alpha) \not \in \ker(\iota_*)$. Then the cylinder curve $c$ does not lift to a closed curve in $M$. Let $(p,q)$ be the direction of this curve and note that $\hat{N} \cdot (1,0) = \pm (p,q)$ as $N = D(f)$. As $c$ does not lift to a closed curve in $M$, $\hat{N} \cdot (1,0) = \pm(p,q)$ is not a periodic direction for $M$. This means $N \not \in \P\Gamma$ by Theorem \ref{thm:veechgrpchar}, a contradiction.
\end{proof}

Before we obtain the Veech group of $M$, we first have the following basic fact about $(1,n)$ directions where $n \in \ZZ$.

\begin{lemma}\label{lem:1n} A direction $(1,n)$ for $n \in \ZZ$ is periodic if and only if $n \equiv 0 \mod 4$.
\end{lemma}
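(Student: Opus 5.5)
The plan has two directions. For ``if'' I use the Fourey fractions; for ``only if'' I use the parity obstruction for odd $n$ and the $Y$-characterization for $n\equiv 2 \pmod 4$.

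\emph{If.} Suppose $n\equiv 0\pmod 4$ and write $n=4a$. The direction $(1,n)$ has slope $n/1=4a=[4a]$, which is a Fourey fraction. By Proposition~\ref{prop:foureyslopes} it is therefore periodic. Equivalently, the direction is $A^{a}\Theta\cdot(1,0)$ up to sign and coordinate swap, with $A^a\in\Gamma$ and $\Theta\in\Gamma$. Here I need to fix the convention: under the paper's $(p,q)$ convention, slope $n$ corresponds to $(1,n)$, so I would use the transpose-symmetric form $\Theta A^{-a}\Theta\cdot(1,0)$, as in Remark~\ref{rem:thm1conseq}, together with the fact that $(p,q)$ and $(q,p)$ are periodic simultaneously (Lemma~\ref{lem:111}).

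\emph{Only if, odd $n$.} If $n$ is odd, then the numerator and denominator of the slope are both odd. Corollary~\ref{cor:oddodd} then says $(1,n)$ is not periodic.

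\emph{Only if, $n\equiv 2\pmod 4$.} I will use Theorem~\ref{thm:characterization} on $Y$. I will use the square-tiled model of $Y$ in Figure~\ref{fig:halftranslationY}, with relative homology generators $\sigma_i,\tau_i$. The direction $(1,n)$ is the image of the vertical direction $(0,1)$ under the horizontal shear $\begin{bmatrix}1&0\\ n&1\end{bmatrix}$ read appropriately. More usefully, it is the image of $(1,2)$ or $(1,-2)$ under a power of the shear $\Theta A^{k}\Theta^{-1}$, which fixes the vertical and shifts slope-reciprocal by $4$. Since $\Theta A\Theta^{-1}$ lies in $\Gamma$, up to the same transpose bookkeeping, applying it preserves periodicity in both directions.

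It therefore suffices to show that the single direction with slope $2$, namely $(1,2)$, is not periodic. For this I would compute directly on $Y$. Either $Y$ is not one-cylinder in direction $(1,2)$ (note that $(2,1)$ is the two-cylinder direction used for $B$ in Proposition~\ref{prop:veechgrprep}, so by the $\Theta$-symmetry $(1,2)$ is also two-cylinder), and then Theorem~\ref{thm:characterization} immediately gives non-periodicity. That observation alone settles the case.

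The main obstacle is the convention bookkeeping between slope, the pair $(p,q)$, and which shear lies in $\Gamma$. The aim is to make sure the reduction from all $n\equiv 2 \pmod 4$ to $n=2$ uses an element of $\Gamma$, so that periodicity is equivalent before and after, rather than merely an element of $\hat V$. I would double-check by also verifying via Theorem~\ref{thm:veechgrpchar} that $(1,4a)$ lies in $\Gamma\cdot(1,0)$.
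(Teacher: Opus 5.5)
Your proposal is correct and follows the paper's proof: Fourey fractions handle $n\equiv 0\pmod 4$, Corollary~\ref{cor:oddodd} handles odd $n$, and the shear $\Theta A^{-k}\Theta^{-1}=\begin{bmatrix}1&0\\4k&1\end{bmatrix}\in\Gamma$ reduces $n\equiv 2\pmod 4$ to a single direction, while your appeal to Theorem~\ref{thm:characterization} usefully supplies the argument (which the paper only asserts) that this direction is not periodic. One slip to fix: $\Theta\cdot(2,1)=(-1,2)$ has slope $-2$, so the $\Theta$-symmetry shows that $(1,-2)$, not $(1,2)$, is a two-cylinder direction on $Y$; this is harmless, since $-2\equiv 2\pmod 4$ lets you reduce to $(1,-2)$ instead, or you can apply $\Theta A^{-1}\Theta^{-1}$ to reach $(1,2)$.
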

\begin{proof}
Due to Corollary \ref{cor:oddodd} and Proposition \ref{prop:foureyslopes}, it suffices to argue that $(1,4k+2)$ is not periodic for $k \in \ZZ$. But note that $(1,4k+2) = \begin{bmatrix}1 &  0 \\ 4k & 1 \end{bmatrix}\cdot(1, 2)$. As the projective image of $\begin{bmatrix}1 &  0 \\ 4k & 1 \end{bmatrix}$ is in $\P\Gamma < V(M)$ and $(1,2)$ is not periodic in $M$, $(1,4k+2)$ is not is periodic for any $k \in \ZZ$. 
\end{proof}

Finally, we obtain the entire Veech group of $M$:

\veechgrpM*

\begin{proof}

By Corollary \ref{cor:veechgroup},  $\P\Gamma = D(\Aff(M, X))< V(M)$. So, it remains to show $V(M) < \P\Gamma$.

Let $Q  \in V(M)$ and $\hat{Q} =  \begin{bmatrix}a & b \\ c & d \end{bmatrix}$ be a pre-image in $\SL_2(\ZZ)$. Since $(1,0)$ is a periodic direction in $M$ and $Q \in V(M)$, $\hat{Q}.(1,0) = (a,c)$ is a periodic direction in $M$. By Theorem \ref{thm:veechgrpchar}, there exists $N = \begin{bmatrix} a & x \\ c & y \end{bmatrix} \in \Gamma$ where $x, y \in \ZZ$. 

Now, consider 
$$Q' := \Theta^{-1} N^{-1} \hat{Q}\Theta = \begin{bmatrix}0 & 1 \\ -1 & 0 \end{bmatrix}N^{-1}  \hat{Q} \begin{bmatrix}0 & -1 \\ 1 & 0 \end{bmatrix}  = \begin{bmatrix} ad-bc & 0 \\ dx-by & ay-cx \end{bmatrix} = \begin{bmatrix} 1 & 0 \\ dx-by & 1 \end{bmatrix}.$$

Since $(1,0)$ is a periodic direction, $Q'\cdot(1,0) = (1, dx-by)$ is a periodic direction for $M$. By Lemma \ref{lem:1n}, we have $dx - by = 4k$ for some $k \in \ZZ$. 
Next, consider 
$$Q'' := N^{-1}\hat{Q} = \begin{bmatrix} 1 & by-dx \\ 0 & 1\end{bmatrix}$$
Since $dx - by = 4k$, we have $N^{-1}\hat{Q} = \begin{bmatrix} 1 & -4k \\ 0 & 1\end{bmatrix} = \left(\begin{bmatrix}1 & 4 \\ 0 & 1 \end{bmatrix}\right)^{-k}$. This means $[N^{-1}\hat{Q}] \in \P\Gamma$ since $\begin{bmatrix}1 & 4 \\ 0 & 1 \end{bmatrix} = A \in \Gamma$. Finally, as $[N] \in \P\Gamma$ we must have $[\hat{Q}] = Q \in \P\Gamma$ so that $V(M) < \P\Gamma$. 
\end{proof}

\section{Density of Periodic Directions}\label{sec:density}

The characterization of periodic directions via Theorem \ref{thm:veechgrpchar} allows us to prove the following result about the density of the periodic directions of the Mucube in the set of all directions:

\densityofperiodics*

Before proving Theorem~\ref{thm:density}, we comment on ergodic directions. The decomposition of $M$ into vertical and horizontal cylinders makes evident that $M$ is a Hooper--Thurston--Veech surface (see \cite{Hooper-Thurston-Veech}). Using the work of Hooper (See \cite{Hooper-Thurston-Veech}, Theorem G.3 and Corollary G.4) together with Theorem~\ref{thm:density}, one can then deduce the following corollary. 

\ergodicdirections*

\begin{remark} We make a few remarks on Corollary~\ref{cor:ergodic directions}.
\begin{enumerate}
    \item The Lebesgue measure on $M$ referred here is the $\chi$-Maharam measure with $\chi$ being the trivial homomorphism from $\mathbb{Z}^3$ to $\mathbb{R}$.
    \item The lower bound on the Hausdorff dimension comes from the fact that using Hooper's work it can be shown that non-periodic directions in the limit set of the group $\left\langle\begin{bmatrix}1 & 4 \\ 0 & 1\end{bmatrix}, \begin{bmatrix}1 & 0 \\ 4 & 1\end{bmatrix}\right\rangle$ are ergodic directions for the straight-line flow. As this group is normal in $\left\langle\begin{bmatrix}1 & 4 \\ 0 & 1\end{bmatrix}, \begin{bmatrix}0 & -1 \\ 1 & 0\end{bmatrix}\right\rangle$, the limit sets coincide. Then, using Fedosova's work \cite{Fedosova}, we see that the Hausdorff dimension is bounded below by 0.68. 
    \end{enumerate}
\end{remark}

We now turn our attention to proving Theorem \ref{thm:density}. Since the periodic directions of the Mucube are characterized using the group $\Gamma$, we approach Theorem \ref{thm:density} by first considering its properties as a Fuchsian group. Recall that $\Gamma = \ideal{A,\{g \Theta g^{-1} | g \in \hat{V}\}}$
where $\Theta = \begin{bmatrix}0 & -1 \\ 1 & 0\end{bmatrix}$, $A = \begin{bmatrix} 1 & 4 \\ 0 & 1\end{bmatrix}$, $B = \begin{bmatrix}5 & -8 \\ 2 & - 3 \end{bmatrix}$ and $\hat{V} =\ideal{\Theta, A, B | \Theta^4, \Theta^2 B\Theta^2B^{-1}, \Theta^2 A\Theta^{-2}A^{-1}}.$

The next proposition outlines the properties of $\Gamma$ we will utilize.

\begin{proposition}[Properties of $\Gamma$]\label{prop:gammaproperties} The group $\Gamma$ has the following properties:

\begin{enumerate}
\item It has infinite index in $\SL_2(\ZZ)$.
\item The limit set of $\Gamma$, $\Lambda(\Gamma) = \partial \HH^2$ and consequently, $\Gamma$ is infinitely generated.
\item The set of cusps of $\Gamma$ is dense in $\partial \HH^2 \cong \RR \cup \{\infty\}$.
\end{enumerate}
\end{proposition}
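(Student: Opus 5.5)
The plan is to exploit that $\Gamma$ is, projectively, a normal subgroup of a finite-index subgroup of $\SL_2(\ZZ)$. By Corollary~\ref{cor:stabilizergamma}, $\P\Gamma = \rho^{-1}(\calP)$, where $\rho : V(Y) \to \PSL_2(\ZZ)$ is the homology representation. By Lemma~\ref{lem:free}, $\rho(V(Y)) = \langle \rho([A]), \rho([B]) \rangle \cong F_2$. Consider the normal closure $K$ of $\Theta$ in $\hat V$. Then $\P K \le \ker\rho \le \P\Gamma$, since $\rho([\Theta]) = \Id$. We first record that $\P K$ is normal in $V(Y)$ and that $\P K$ is non-trivial and non-elementary.

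For (1), the index computation is as follows. Proposition~\ref{prop:YVeechGroupFacts} gives $[\SL_2(\ZZ):\hat V] < \infty$, so it suffices to show $[\hat V:\Gamma] = \infty$ (mod $\pm\Id$). Under $\rho$ the group $\P\Gamma$ maps onto $\calP \cong \ZZ$, a cyclic subgroup of the free group $F_2 = \rho(V(Y))$. A cyclic subgroup of $F_2$ has infinite index, so the cosets $\rho([B])^k\calP$, $k \in \ZZ$, are distinct. Their preimages $[B]^k\P\Gamma$ are then distinct cosets of $\P\Gamma$ in $V(Y)$. Hence $\Gamma$ has infinite index in $\hat V$, and therefore in $\SL_2(\ZZ)$.

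For (2), $V(Y)$ has finite index in $\PSL_2(\ZZ)$, so by Proposition~\ref{part:finiteindexlimitset} we have $\Lambda(V(Y)) = \partial\HH^2$. The group $\P K$ is a non-trivial normal subgroup of the non-elementary group $V(Y)$, so Proposition~\ref{part:normallimitset} gives $\Lambda(\P K) = \partial\HH^2$. Since $\P K \le \P\Gamma \le V(Y)$ and limit sets are monotone under inclusion, $\Lambda(\Gamma) = \partial\HH^2$ as well. Now suppose $\Gamma$ were finitely generated. Then Proposition~\ref{part:fuchsianfirstkind} would make $\HH^2/\P\Gamma$ of finite area, so $\P\Gamma$ would be a lattice. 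A lattice contained in the lattice $V(Y)$ has finite index in it. This contradicts (1), so $\Gamma$ is infinitely generated.

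For (3), $\Gamma$ contains the parabolic element $A$, so Proposition~\ref{part:cuspsaredenseinlimitset} shows that the cusps of $\Gamma$ are dense in $\Lambda(\Gamma)$. By (2), $\Lambda(\Gamma) = \partial\HH^2$, which gives the claim.

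The step I expect to be the main obstacle is (2), specifically the non-triviality and non-elementarity inputs and the monotonicity of limit sets. These are standard: $\P K$ contains the elliptic elements $h\Theta h^{-1}$ for $h \in \hat V$, and these fix infinitely many distinct points of $\HH^2$, which rules out $\P K$ being elementary. The relation between $\Gamma$, $K$, and $\ker\rho$ from Corollary~\ref{cor:stabilizergamma} also needs care, but only the inclusion $K \le \Gamma$ is actually required, and that inclusion holds by the definition of $\Gamma$.
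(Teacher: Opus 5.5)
Your proof is correct and follows essentially the same route as the paper. The normal closure of $\Theta$ gives $\Lambda(\Gamma)=\partial\HH^2$, and density of cusps follows from Proposition~\ref{part:cuspsaredenseinlimitset}, where you rightly point out the parabolic $A\in\Gamma$ that this needs. The one variation is in (1): you separate the cosets $[B]^k\P\Gamma$ using $\rho$ and the freeness of $\rho(V(Y))$ from Lemma~\ref{lem:free}, whereas the paper uses the $B$-exponent sum read off the presentation of $\hat V$. You also usefully make explicit the lattice-in-lattice index step that the paper leaves implicit when it deduces infinite generation.

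Two small cautions, neither affecting the result:
\begin{itemize}
\item Distinctness of the cosets $\rho([B])^k\calP$ does not follow from $\calP$ merely having infinite index. It follows from $\rho([A])$ and $\rho([B])$ being free generators, so that $\rho([B])^j\notin\langle\rho([A])\rangle$ for $j\neq 0$.
\item Your closing argument that $\P K$ is non-elementary, because the elliptics $h\Theta h^{-1}$ have infinitely many fixed points, is invalid. An infinite dihedral group generated by two order-$2$ elliptics is elementary yet has this property. You never need the claim, though: Proposition~\ref{part:normallimitset} only requires $V(Y)$ non-elementary and $\P K$ non-trivial.
\end{itemize}
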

\begin{proof} 
\begin{enumerate} 
\item First note that $[\SL_2(\ZZ): \hat{V}] < \infty$. So it suffices to show that $[\hat{V}: \Gamma] = \infty$. 

We will show that $B^n \Gamma = \Gamma$ implies $n = 0$. Assume $B^n \Gamma = \Gamma$ for some $n$. As, $A \in \Gamma$ we can write $A = B^n g$ for some $g \in \Gamma$. Since $\Gamma$ is generated by $A$ and conjugates of $\Theta$, together with the presentation of $\hat{V}$ we conclude that the sum of exponents of $B$ in $g$ must be 0. This then implies $n = 0$ as $\langle A, B \rangle \simeq F_2$. 
\item Note that the group $\mathcal{N} := \ideal{\{g \Theta  g^{-1}| g \in \hat{V}\}} < \hat{V}$ is a normal subgroup of $G$. By Proposition \ref{part:normallimitset}, the limit sets of these two groups are equal, i.e. $\Lambda(\mathcal{N}) = \Lambda (\hat{V})$. Note that since $\hat{V}$ is finitely generated and finite index in $\SL_2(\ZZ)$, by Proposition \ref{part:finiteindexlimitset}  we have $\Lambda(\hat{V}) = \Lambda(\SL_2(\ZZ)) =  \partial \HH^2$.  But $\mathcal{N} < \Gamma$ which implies $\Lambda (\hat{V}) < \Lambda(\Gamma)$. Hence, $\Lambda(\Gamma) = \partial \HH^2$. By Proposition \ref{part:fuchsianfirstkind} we also conclude that $\Gamma$ is infinitely generated. 

\item By Proposition \ref{part:cuspsaredenseinlimitset}, the cusps of $\Gamma$ are dense in $\Lambda(\Gamma)$ which we just proved is equal to $\partial \HH^2 \cong \RR \cup \{\infty\}$.
\end{enumerate}

\end{proof}

We now use Proposition \ref{prop:gammaproperties} to prove Theorem \ref{thm:density}.

\begin{proof}[Proof of Theorem \ref{thm:density}]
We first show that the set of periodic directions of the Mucube is dense in the set of cusps of $\Gamma$. 

Let $\xi \in \partial \HH^2$ be a cusp of $\Gamma$. 
Since $\Gamma < \SL_2(\ZZ)$, we can write $\xi = \frac{p}{q}$ for some $\frac{p}{q} \in \QQ$.
Since $\xi$ is  cusp of $\Gamma$, there exists a parabolic element $N \in \Gamma$ such that $N \xi = \xi$. By Theorem \ref{thm:veechgrpchar}, $N^m \cdot (1,0)$ is a periodic direction for any $m \in \ZZ$. However, since $N$ is a parabolic element $\lim_{m \rightarrow \infty} N^m \cdot (1,0) = \xi$. This proves that the set of periodic directions is dense in the set of cusps of $\Gamma$.

Finally, by Proposition \ref{prop:gammaproperties}, we know that the set of cusps of $\Gamma$ is dense in $\partial \HH^2 \cong \RR \cup \{\infty\}$. Therefore, the set of periodic directions must be dense in $\RR \cup \{\infty\}$ as well. 
\end{proof}

\bibliographystyle{abbrv}
\bibliography{references}

\end{document}